\def\csname ver@etex.sty\endcsname{3000/12/31}
\numberwithin{equation}{section}
\newtheorem{theorem}{Theorem}[section]
\newtheorem{lemma}[theorem]{Lemma}
\newtheorem{corollary}[theorem]{Corollary}
\newtheorem{proposition}[theorem]{Proposition}
\theoremstyle{definition}
\newtheorem{definition}[theorem]{Definition}
\theoremstyle{definition}
\newtheorem{remark}[theorem]{Remark}
\newtheorem{problem}{Problem}
\newtheorem{example}[theorem]{Example}
\let\emptyset\varnothing
\renewcommand{\d}{{\mathrm{\,d}}}
\DeclareMathAlphabet{\mathbbmsl}{U}{bbm}{m}{sl}
\renewcommand\subsubsection{\@startsection{subsubsection}{3}{\z@}%
                                     {.5\linespacing\@plus.7\linespacing}{-.5em}
                                     {\normalfont\bfseries}}
\title[{Fractional Sobolev paths on Wasserstein spaces and energy minimization}]{
{Fractional Sobolev paths on Wasserstein spaces and their energy-minimizing particle representations}}
\author{Ehsan Abedi$^\dagger$}
\address{$^\dagger$Faculty of Mathematics \\
         Bielefeld University \\
         Postfach 10 01 31 \\
         33501 Bielefeld \\
         Germany.}
\email{ehsan.abedi@math.uni-bielefeld.de}
\thanks{}
\keywords{Calculus of variations, optimal transportation, stochastic processes, fractional Sobolev regularity, Besov regularity, continuity equation.}
\subjclass[2020]{30H25, 49Q22, 60G07}
\begin{document}

\begin{abstract}
    We study a generalization of the Monge--Kantorovich optimal transport problem. Given a prescribed family of time-dependent probability measures $(\mu_t)$, we aim to find, among all path-continuous stochastic processes whose one-dimensional time marginals coincide with $(\mu_t)$ (if there is any), a process that minimizes a given energy. After discussing a sufficient condition for the energy to ensure the existence of a minimizer, we investigate fractional Sobolev energies. Given a deterministic path $(\mu_t)$ on a $p$-Wasserstein space with fractional Sobolev regularity $W^{\alpha,p}$, where $1/p < \alpha < 1$, we provide conditions under which we prove the existence of a process that minimizes the energy and construct a process that realizes the regularity of $(\mu_t)$. While continuous paths of low regularity on Wasserstein spaces naturally appear in stochastic analysis, they can also arise deterministically as solutions to the continuity equation.
This paper is devoted to the deterministic setting to gain some understanding of the required conditions. The subsequent companion paper (arXiv:2503.10859) focuses on the stochastic setting and applications to SPDEs.
\end{abstract}

%\begingroup % This disables uppercasing
%\def\uppercasenonmath#1{} % this disables uppercasing title
%\let\MakeUppercase\relax % This disables uppercasing authors
\maketitle
%\endgroup % This disables uppercasing

%\tableofcontents

\section{Introduction and main results}\label{sec:introduction_d}
\subsection{Problem formulation and main results}\label{subsec:intro_main_det}
Let $(\mathcal{X},d)$ be a complete separable metric space and $\mathcal{B}(\mathcal{X})$ be the $\sigma$-algebra of Borel sets of $\mathcal{X}$. Let $P(\mathcal{X})$ denote the set of Borel probability measures on $\mathcal{X}$ and $P_p(\mathcal{X}) \subset P(\mathcal{X})$ denote the subset of measures with finite $p$-th moment for $p \geq 1$.
Consider a prescribed family of probability measures $(\mu_t) \coloneqq (\mu_t)_{t \in I} \subset P(\mathcal{X})$ indexed by $t$ in a time interval $I \coloneqq [0,T] \subset \mathbb{R}$.
In many applications, it is of interest to have a stochastic process $(X_t)$ on a suitable path space $\Gamma_T \subset \mathcal{X}^{[0,T]} $ whose one-dimensional time marginals coincide with $(\mu_t)$.
We know that this requirement does not uniquely determine the process in general.
In this paper, we are interested in processes whose path laws, denoted by $\pi \in P(\Gamma_T)$ and referred to as lifts,
carry further intrinsic information about $(\mu_t)$, as a result of a minimization problem analogous to the transport problem by Kantorovich in 1942 \cite{Kantorovich2006}, following Monge \cite{Monge1781}.
Typical choices for $\Gamma_T$ are the space of continuous paths and the space of c\`adl\`ag paths.
Throughout this paper, we consider $\Gamma_T \coloneqq C([0,T];\mathcal{X})$ endowed with the $\sigma$-algebra $\mathcal{C}$ generated by the corresponding evaluation maps. We also equip $\Gamma_T$ with the supremum distance and recall that the Borel $\sigma$-algebra generated by the corresponding open sets coincides with $\mathcal{C}$. 
We formulate the problem mentioned above as a variational problem on the space of path measures on $\Gamma_T$:

\vspace{6pt}
    
\begin{minipage}{0.97\textwidth}
    \begin{problem}\label{prob:variational_problem}
        Given $(\mu_t)_{t \in I} \subset P(\mathcal{X})$ with $I \coloneqq [0,T] \subset \mathbb{R}$ and a measurable functional $\Psi: \Gamma_T \to [0,+\infty]$, consider the variational problem
        \begin{equation}\label{eq:variational_problem}
            \inf_{\pi \in \mathrm{Lift}(\mu_t)} \int_{\Gamma_T} \Psi (\gamma) \d \pi (\gamma),
        \end{equation}
        where the infimum is taken over the set of \emph{lifts} of $(\mu_t)$ defined by
        \begin{equation}\label{eq:set_lift_intro}
            \mathrm{Lift}(\mu_t) \coloneqq \Big\{\pi \in P(\Gamma_T) : \quad (e_t)_{\#} \pi  = \mu_t \text{ for all } t \in I \Big\},
        \end{equation}
        where $e_t: \Gamma_T \to \mathcal{X}$ is the evaluation map defined by $e_t(\gamma) \coloneqq \gamma_t$ for $\gamma \in \Gamma_T$. 
    \end{problem}
\end{minipage}

\vspace{6pt}

This extends Monge--Kantorovich's classical problem, with two fixed marginals $\mu,\nu \in P(\mathcal{X})$, to a time-dependent setting with infinitely many fixed marginals $(\mu_t)_{t \in I} \subset P(\mathcal{X})$.
In contrast to the set of couplings $\mathrm{Cpl}(\mu,\nu) \subset P(\mathcal{X}^2)$, which is always non-empty, the set of liftings $\mathrm{Lift}(\mu_t)$ $ \subset P(\Gamma_T) \subset P(\mathcal{X}^I)$ can be empty. In this case, the infimum \eqref{eq:variational_problem} is $+\infty$ by the usual convention.

One can always obtain a process from a lift $\pi$ by looking at its associated canonical process.
From a physical perspective, this provides a particle representation, and the objective function in \cref{prob:variational_problem} can be interpreted as the total cost of the particles' motion. We thus call $\int \Psi \d \pi$ the \emph{energy} of $\pi$ with respect to the \emph{energy functional} $\Psi$. 
The probabilistic formulation of the problem above is given in \cref{rmk:probabilistic_formulation}.

When $\mu,\nu \in P_p (\mathcal{X})$ for some $p \geq 1$ and the cost function in Monge--Kantorovich’s problem is taken as $d(x,y)^p$, we know an optimal coupling, denoted by $\Upsilon \in \mathrm{OptCpl} (\mu,\nu)$, exists and defines the $p$-(Kantorovitch--Rubinstein--)Wasserstein distance: $W^p_p(\mu,\nu) = \int_{\mathcal{X}^2} d(x,y)^p \d \Upsilon (x,y) $.

Now let $(\mu_t)_{t \in I} \subset P_p(\mathcal{X})$ be a path on the Wasserstein space $(P_p(\mathcal{X}),W_p)$ with a certain path regularity $|\mu|< +\infty$. 
Here, $|\cdot| \in [0,+\infty]$ represents a functional that captures a certain regularity of curves in metric spaces (e.g. absolute continuity or H\"{o}lder regularity).
In view of the two-marginal case, one can similarly ask whether there exists a lift $\pi$ that realizes this regularity in the sense that
 \begin{equation}\label{eq:optimality_intro}
    |\mu|^p =  {\scaleobj{0.74}{\int}}_{\hspace{-2pt}\Gamma_T} |\gamma|^p \d \pi (\gamma).
\end{equation} 
Similarly, one expects that any alternative lift produces higher energy.
This is indeed the case for the regularities previously studied and those in this paper.
This observation has motivated introducing \cref{prob:variational_problem}, where the general functional $\Psi$ is chosen here as $|\cdot|^p$.
We call a lift \emph{minimizing} or \emph{optimal} if it achieves the minimum possible energy. 
We further call it \emph{realizing} if it attains equality \eqref{eq:optimality_intro} (we will observe that in some cases, a minimizing lift exists but a realizing one doesn't). Lifts that realize the regularity of Wasserstein curves have been previously constructed under different assumptions on regularity, some of which are highlighted here:
\begin{itemize}[leftmargin=20pt]
    \item[\textbf{-}] {Constant-speed geodesics in $p$-Wasserstein spaces with $p=2$}: Lott--Villani \cite{LottVillani2009,Villani2009} and Sturm \cite{Sturm2006I} constructed a lift $\pi$ on $C(I;\mathcal{X})$ that realizes $W_p^p(\mu_s,\mu_t) = \int_{C} d(\gamma_s,\gamma_t)^p \d \pi $ for all $s,t \in I$. This lift has well-known applications in the geometry of metric measure spaces.
    %\item[\textbf{-}] {$p$-absolutely continuous curves in $p$-Wasserstein spaces with $p>1$ on separable Hilbert spaces}: Ambrosio--Gigli--Savar\'e \cite{AGS2008GFs} characterize these curves via the continuity equation and obtained a vector field $v_t$ that realizes the metric speed i.e. $|\dot\mu_t|^p = \int_{\mathcal{X}} |v_t|^p \d \mu_t$ for a.e. $t \in I$.
    \item[\textbf{-}] {$p$-absolutely continuous curves in $p$-Wasserstein spaces with $p>1$ on separable Hilbert spaces}: Ambrosio--Gigli--Savar\'e \cite{AGS2008GFs} characterize these curves via the continuity equation and obtained a vector field $v_t$ and a lift $\pi$ on $C(I;\mathcal{X})$ that realizes the metric speed i.e. $|\dot\mu_t|^p = \int_{\mathcal{X}} |v_t|^p \d \mu_t = \int_{C} |\dot{\gamma}_t|^p \d \pi $ for a.e. $t \in I$. %[Theorem 8.3.1]
    \item[\textbf{-}] {$p$-absolutely continuous curves in $p$-Wasserstein spaces with $p>1$ on complete separable metric spaces}:
    Lisini \cite{Lisini2007,Lisini2016} significantly extended the above results and constructed a lift $\pi$ on $ C(I;\mathcal{X})$ that realizes the metric speed i.e. $|\dot{\mu}_t|^p = \int_{C} |\dot{\gamma}_t|^p \d \pi$ for a.e. $t \in I$.
    \item[\textbf{-}] {C\`adl\`ag curves of bounded variation in $p$-Wasserstein spaces with $p=1$ on complete separable metric spaces}: Recently, \cite{AbediLiSchultz2024} constructed a lift $\pi$ on $D(I;\mathcal{X})$ that realizes the total variation measure i.e. $|D\mu|=\int_{D}|D\gamma|\d\pi$ as measures. The results are applied to the current equation. 
\end{itemize}
The reason for adopting a larger path space in the last case, namely the space of c\`adl\`ag curves $D(I;\mathcal{X})$, is that even $1$-absolutely continuous curves in 1-Wasserstein spaces, unlike the case $p>1$, cannot generally be lifted to measures on continuous paths. This can also be noticed through the Kolmogorov–\v Centsov continuity criterion, where $p=1$ is excluded. In this work, we consider $p> 1$ and study continuous Wasserstein curves with lower regularity than above. 

Our study of low-regularity paths on Wasserstein spaces is motivated by measure-valued solutions to conservative stochastic PDEs, such as stochastic Fokker--Planck--Kolmogorov equations. Low-regularity paths can also arise deterministically as solutions to the continuity equation, though less trivially. In this paper, we focus on the deterministic case, and study the random setting in the subsequent companion paper \cite{Abedi2025processes}.
Since, in low regularity, it is less straightforward which norm can be minimized and potentially realized, our first step is to identify a sufficient condition on the energy to guarantee the existence of a minimizer. Below is \cref{prop:existence_of_minimizer}---a simple observation using the direct method in the calculus of variations.

\begin{proposition}[Existence of a minimizer]\label{prop:existence_of_minimizer_intro}
    Let $(\mathcal{X}, d)$ be a complete separable metric space, and $I \coloneqq [0,T] \subset \mathbb{R}$. Let $\Psi: C(I;\mathcal{X}) \to [0,+\infty]$ be {\normalfont \textcircled{\small{1}}} a lower semi-continuous map {\normalfont \textcircled{\small{2}}} whose sublevels are relatively compact in $C(I;\mathcal{X})$. Assume that the infimum  \eqref{eq:variational_problem} is finite. Then there exists a minimizer $\pi \in P(C(I;\mathcal{X})) $ to \cref{prob:variational_problem}. 
\end{proposition}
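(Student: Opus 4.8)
The plan is to run the direct method of the calculus of variations in the narrow topology on $P(\Gamma_T)$, where $\Gamma_T = C(I;\mathcal{X})$. Since $\mathcal{X}$ is complete and separable and $I$ is compact, $\Gamma_T$ equipped with the supremum distance is again a complete separable metric space, so Prokhorov's theorem applies: a family of probability measures on $\Gamma_T$ is relatively compact for narrow convergence if and only if it is tight. The three ingredients to assemble are then compactness of a minimizing sequence, closedness of the constraint set $\mathrm{Lift}(\mu_t)$, and lower semicontinuity of the energy, all with respect to narrow convergence.

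First I would fix a minimizing sequence $(\pi_n) \subset \mathrm{Lift}(\mu_t)$ with $\int_{\Gamma_T} \Psi \d \pi_n \to m$, where $m$ denotes the (finite) value of the infimum in \eqref{eq:variational_problem}; discarding finitely many terms we may assume $\int_{\Gamma_T} \Psi \d \pi_n \le m+1 \eqqcolon M$ for all $n$. The step I expect to be the main obstacle is converting hypothesis \textcircled{\small{2}} into tightness of $(\pi_n)$. For $R>0$ set $K_R \coloneqq \overline{\{\Psi \le R\}}$, which is compact by assumption. Using nonnegativity of $\Psi$ and Markov's inequality,
\[
    \pi_n\big(\Gamma_T \setminus K_R\big) \le \pi_n\big(\{\Psi > R\}\big) \le \frac{1}{R}\int_{\Gamma_T} \Psi \d \pi_n \le \frac{M}{R},
\]
uniformly in $n$. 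Given $\varepsilon>0$, choosing $R > M/\varepsilon$ produces a compact set $K_R$ with $\pi_n(K_R) \ge 1-\varepsilon$ for every $n$, which is exactly tightness. Prokhorov's theorem then yields a subsequence, not relabeled, with $\pi_n \rightharpoonup \pi$ narrowly for some $\pi \in P(\Gamma_T)$.

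Next I would verify that the limit respects the marginal constraint, i.e. that $\mathrm{Lift}(\mu_t)$ is closed under narrow limits. Each evaluation map $e_t: \Gamma_T \to \mathcal{X}$ is continuous for the supremum distance, so $\pi \mapsto (e_t)_{\#}\pi$ is continuous for narrow convergence; hence $(e_t)_{\#}\pi_n \rightharpoonup (e_t)_{\#}\pi$. Since $(e_t)_{\#}\pi_n = \mu_t$ for every $n$ and every $t \in I$, uniqueness of narrow limits forces $(e_t)_{\#}\pi = \mu_t$ for all $t$, so $\pi \in \mathrm{Lift}(\mu_t)$.

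Finally, I would invoke lower semicontinuity of the energy. Because $\Psi$ is nonnegative and lower semicontinuous by assumption \textcircled{\small{1}}, it is the pointwise increasing limit of the bounded continuous functions $\Psi_j \coloneqq \min\{\,\inf_{y}(\Psi(y) + j\, d(\cdot,y)),\, j\,\}$. For each fixed $j$, narrow convergence gives $\int_{\Gamma_T}\Psi_j \d \pi = \lim_n \int_{\Gamma_T}\Psi_j \d \pi_n \le \liminf_n \int_{\Gamma_T}\Psi \d \pi_n = m$, and letting $j \to \infty$ by monotone convergence yields $\int_{\Gamma_T}\Psi \d \pi \le m$. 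Combined with $\pi \in \mathrm{Lift}(\mu_t)$, which forces $\int_{\Gamma_T}\Psi \d \pi \ge m$, we obtain $\int_{\Gamma_T}\Psi \d \pi = m$, so $\pi$ is the sought minimizer.
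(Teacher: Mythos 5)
Your proof is correct and follows essentially the same route as the paper: the direct method with tightness of the minimizing sequence obtained from the compact sublevels of $\Psi$ (which you re-derive explicitly via Markov's inequality rather than citing the integral tightness criterion), Prokhorov compactness, narrow closedness of $\mathrm{Lift}(\mu_t)$, and lower semicontinuity of $\pi \mapsto \int \Psi \d\pi$ under narrow convergence (which you re-prove via Moreau--Yosida approximation instead of invoking the standard fact). No gaps.
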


Now, we consider energy functionals $\Psi: C(I;\mathcal{X}) \to [0,+\infty]$ of the form
\begin{equation}\label{eq:energy_functional_intro}
    \Psi (\gamma) \coloneqq d(\gamma_0,\bar{x}) + | \gamma | ,
\end{equation}
where $\bar{x} \in \mathcal{X}$ is an arbitrary point and  $| \cdot | : C(I;\mathcal{X}) \to [0,+\infty]$ is a (semi-)norm. 
When $(\mathcal{X},d)$ has enough structure so that closed bounded sets are compact, Arzel\`a-Ascoli theorem can verify which norms $|\gamma|$ make the sublevels of $\Psi$ relatively compact. 
This is summarized for some commonly used norms in low-regularity settings in \textbf{\cref{table:norms}} at the end of this section.
We note that the modulus of continuity $w_\delta (\gamma)$, $p$-variation $| \gamma |_{p\textrm{-}\mathrm{var}}$, and its infinitesimal characterization $ | \gamma |_{p\textrm{-}\mathrm{var}\textrm{-}\mathrm{limsup}}$ fail these conditions. 
In particular, quadratic variation $[\gamma]$ in the sense of stochastic analysis also fails; see \cref{rmk:quadratic_variation}.
By contrast, H\"{o}lder regularity $| \gamma |_{\upgamma\textrm{-}\mathrm{H\ddot{o}l}} $, fractional Sobolev regularity $| \gamma |_{W^{\alpha,p}}$, certain Besov regularity $| \gamma |_{b^{\alpha,p}}$, and Sobolev regularity $| \gamma |_{W^{1,p}}$ satisfy the required conditions. Definitions and properties of these norms are given in \cref{subsec:cont_paths,subsec:Holder_paths,subsec:var_paths,subsec:Walphap,subsec:W1p}.
The function space $W^{1,p} (I;\mathcal{X})$ used in this paper coincides with the set of $p$-absolutely continuous curves $AC^p (I;\mathcal{X})$; thus, the last item in the table is already addressed by Lisini's results \cite{Lisini2007}.
% even in the more general case of complete separable metric spaces.

Following this observation, we consider paths in the \emph{fractional Sobolev} space $W^{\alpha,p}$ or certain \emph{Besov} space $b^{\alpha,p}$. These spaces also contain H\"{o}lder spaces for specific exponents.
Given a metric space $(\mathcal{X},d)$, an integrability parameter
$1\leq p<\infty$, and a regularity parameter $0<\alpha < 1$, these function spaces are defined as:

\begin{itemize}
    \item $W^{\alpha,p}([0,1];\mathcal{X})$ is the space of measurable functions $X:[0,1] \to \mathcal{X}$ such that
    \begin{equation}\label{eq:Walphap_integral_intro}
                | X |_{W^{\alpha,p}} \coloneqq \left( \iint_{[0,1]^2} \frac{d(X_s,X_t)^p}{|t-s|^{1+\alpha p}} \d s \d t \right)^{1/p} < + \infty.
            \end{equation}
    \item $b^{\alpha,p}([0,1];\mathcal{X})$ is the space of continuous functions $X:[0,1] \to \mathcal{X}$ such that
    \begin{equation}\label{eq:Walphap_sum_intro}
                | X |_{b^{\alpha,p}} \coloneqq \left( \sum_{m=0}^{\infty} 2^{m(\alpha p -1)} \sum_{k=0}^{2^m-1} d \big(X_{t_{k}^{(m)}}, X_{t_{k+1}^{(m)}}\big)^p \right)^{1/p} < + \infty,
            \end{equation}
             where $t_k^{(m)} \coloneqq \frac{k}{2^m}$.
\end{itemize}
These norms have found several applications in the theory of rough paths.
It is proven by Liu--Pr\"{o}mel--Teichmann \cite{LiuPromelTeichmann2020} that under the condition  $1<p<\infty$ and $\frac{1}{p}<\alpha < 1$, the integral
\eqref{eq:Walphap_integral_intro} and the sum \eqref{eq:Walphap_sum_intro} provide us with equivalent norms on the space of continuous paths, and thus we have $W^{\alpha,p}([0,1];\mathcal{X}) = b^{\alpha,p}([0,1];\mathcal{X})$. 

As the next step, we start from a path measure $\pi$ of finite $W^{\alpha,p}$-energy and show that its curve of one-dimensional time marginals $t \mapsto \mu_t \coloneqq (e_t)_{\#} \pi$ inherits the same kind of regularity, and, moreover, its regularity is bounded from above by the energy of $\pi$.
In fact, this transfer of regularity from the path measure to its curve of marginals holds for all lower-semi continuous norms listed in \cref{table:norms}, as shown in \cref{subsec:from_pi_to_mu}, but is not reported here.
Below is \cref{thm:lift_to_mu_Walphap}. 

\begin{theorem}\label{thm:lift_to_mu_Walphap_intro}
    Let $(\mathcal{X},d)$ be a complete separable metric space.
    Let $\pi \in P(C([0,T];\mathcal{X})) $ satisfy
     \begin{equation}\label{eq:lift_to_mu_Walphap_integrability_intro}
        \int_{\Gamma_T} \Big( d(\gamma_0,\bar{x})^p + | \gamma |_{W^{\alpha,p}}^p  \Big) \d \pi(\gamma) < + \infty
    \end{equation}
    for some $1<p<\infty$ and $ \frac{1}{p} < \alpha < 1$ and $\bar{x}\in \mathcal{X}$.
    Then, $t \mapsto \mu_t \coloneqq {(e_t)}_{\#} \pi $ is in $ W^{\alpha, p} ([0,T];P_p(\mathcal{X}))$, and moreover,
    \begin{equation}
        |\mu|_{W^{\alpha,p}}^p \leq \int_{\Gamma_T} |\gamma|_{W^{\alpha,p}}^p \d \pi(\gamma).
    \end{equation}
    The same statement holds for $|\cdot|_{b^{\alpha,p}}$.
\end{theorem}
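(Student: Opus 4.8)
The plan is to exploit the elementary but crucial fact that for each pair $(s,t)$ the joint law $(e_s,e_t)_{\#}\pi$ is an admissible coupling of its own marginals $\mu_s$ and $\mu_t$. Since $W_p$ is an infimum over couplings, this yields the pointwise bound
\begin{equation*}
    W_p(\mu_s,\mu_t)^p \le \int_{\mathcal{X}^2} d(x,y)^p \d (e_s,e_t)_{\#}\pi(x,y) = \int_{\Gamma_T} d(\gamma_s,\gamma_t)^p \d\pi(\gamma),
\end{equation*}
valid for all $s,t\in[0,T]$. Both claimed inequalities then follow by integrating (respectively summing) this estimate against the fractional kernel $|t-s|^{-(1+\alpha p)}$ (respectively the dyadic weights) and interchanging the order of integration. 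I expect the substantive preliminary work to be the verification that $\mu_t\in P_p(\mathcal{X})$ for every $t$, so that the left-hand sides are even well defined.

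First I would record the moment bound. Since $\alpha p>1$, the fractional Sobolev seminorm controls increments pointwise: by the Garsia--Rodemich--Rumsey inequality (equivalently, the Morrey-type embedding $W^{\alpha,p}\hookrightarrow C^{0,\alpha-1/p}$ from \cref{subsec:Walphap}) there is a constant $C=C(\alpha,p)$ with $d(\gamma_s,\gamma_t)\le C\,|t-s|^{\alpha-1/p}\,|\gamma|_{W^{\alpha,p}}$ for $\pi$-a.e.\ $\gamma$. Taking $s=0$, the triangle inequality gives $d(\gamma_t,\bar x)^p\le 2^{p-1}\big(d(\gamma_0,\bar x)^p+C^pT^{(\alpha-1/p)p}|\gamma|_{W^{\alpha,p}}^p\big)$, and integrating against $\pi$ shows $\int_{\mathcal{X}}d(x,\bar x)^p\d\mu_t(x)=\int_{\Gamma_T}d(\gamma_t,\bar x)^p\d\pi<\infty$ by hypothesis \eqref{eq:lift_to_mu_Walphap_integrability_intro}; hence $\mu_t\in P_p(\mathcal{X})$. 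Joint measurability of $(t,\gamma)\mapsto \gamma_t$ on $[0,T]\times\Gamma_T$ (continuous for the sup distance) together with Fubini makes $t\mapsto \mu_t$ measurable into $P_p(\mathcal{X})$, so $(s,t)\mapsto W_p(\mu_s,\mu_t)$ is measurable and the outer integral below is meaningful.

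Next I would prove the $W^{\alpha,p}$ estimate. The integrand $(s,t,\gamma)\mapsto d(\gamma_s,\gamma_t)^p|t-s|^{-(1+\alpha p)}$ is nonnegative and measurable, so Tonelli's theorem applies and, combined with the coupling bound,
\begin{align*}
    |\mu|_{W^{\alpha,p}}^p
    &= \iint_{[0,T]^2}\frac{W_p(\mu_s,\mu_t)^p}{|t-s|^{1+\alpha p}}\d s\d t
     \le \iint_{[0,T]^2}\frac{1}{|t-s|^{1+\alpha p}}\bigg(\int_{\Gamma_T} d(\gamma_s,\gamma_t)^p \d\pi(\gamma)\bigg)\d s\d t \\
    &= \int_{\Gamma_T}\bigg(\iint_{[0,T]^2}\frac{d(\gamma_s,\gamma_t)^p}{|t-s|^{1+\alpha p}}\d s\d t\bigg)\d\pi(\gamma)
     = \int_{\Gamma_T} |\gamma|_{W^{\alpha,p}}^p\d\pi(\gamma),
\end{align*}
which is finite by \eqref{eq:lift_to_mu_Walphap_integrability_intro}; in particular $\mu\in W^{\alpha,p}([0,T];P_p(\mathcal{X}))$ and the stated inequality holds.

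Finally, the $b^{\alpha,p}$ case is identical in spirit and technically cleaner, as only a countable sum is involved. Applying the coupling bound to each dyadic pair $\big(\mu_{t_k^{(m)}},\mu_{t_{k+1}^{(m)}}\big)$ and using Tonelli for the product of the counting measures with $\pi$ to exchange $\sum_{m}\sum_{k}$ with $\int_{\Gamma_T}$ gives $|\mu|_{b^{\alpha,p}}^p\le \int_{\Gamma_T}|\gamma|_{b^{\alpha,p}}^p\d\pi(\gamma)$. The only genuine obstacle is the embedding and measurability bookkeeping of the first step; the coupling inequality---which carries the whole argument---is immediate from the definition of $W_p$.
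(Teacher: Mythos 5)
Your proposal is correct and follows essentially the same route as the paper's proof of \cref{thm:lift_to_mu_Walphap}: the Garsia--Rodemich--Rumsey embedding to secure the $p$-moment bound and continuity, the coupling inequality $W_p^p(\mu_s,\mu_t)\le\int d(\gamma_s,\gamma_t)^p\d\pi$, and Tonelli (respectively Beppo Levi for the dyadic sum) to interchange the order of integration. No gaps.
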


By combining \cref{prop:existence_of_minimizer_intro}, \cref{thm:lift_to_mu_Walphap_intro}, and the observation on $|\cdot|_{W^{\alpha,p}}$ in \cref{table:norms}, we immediately draw the following conclusion as the first main result: 

\begin{theorem}[Existence of a minimizing lift]\label{thm:existence_of_minimizer_Walphap_intro}
    Let $(\mathcal{X},d)$ be a complete separable metric space in which closed bounded sets are compact, and $I \coloneqq [0,T] \subset \mathbb{R}$.
    Let $(\mu_t)_{t \in I} \subset P(\mathcal{X})$ be such that $\mu_0 \in P_p(\mathcal{X})$ and it has a lift with finite $W^{\alpha,p}$-energy with $1<p<\infty$ and $\frac{1}{p}< \alpha <  1$, i.e., \eqref{eq:lift_to_mu_Walphap_integrability_intro} holds for a lift. Then, there exists a minimizer $\pi \in P(C(I;\mathcal{X}) ) $ to \cref{prob:variational_problem} for the energy $\Psi(\gamma)  = |\gamma|^p_{W^{\alpha,p}}$. In particular, 
    \begin{enumerate}[label=(\roman*), font=\normalfont]
        \item $\pi$ is concentrated on $W^{\alpha,p}(I;\mathcal{X}) \subset C(I;\mathcal{X})$; 
        \item $(e_t)_\#\pi=\mu_t$ for all $t\in I$;
        \item $\pi$ satisfies
        \begin{align}\label{eq:minimizer_pi_Walphap}
        |\mu|_{W^{\alpha,p}}^p \leq \int_{\Gamma_T} |\gamma|_{W^{\alpha,p}}^p \d \pi (\gamma) < + \infty.
        \end{align}
    \end{enumerate}
     The same statement holds for $|\cdot|_{b^{\alpha,p}}$.
\end{theorem}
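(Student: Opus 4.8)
The plan is to reduce the statement to \cref{prop:existence_of_minimizer_intro} by minimizing an \emph{augmented} energy whose extra term is constant along lifts. Fix the point $\bar{x}$ from \eqref{eq:lift_to_mu_Walphap_integrability_intro} and set $\tilde{\Psi}(\gamma) \coloneqq d(\gamma_0,\bar{x})^p + |\gamma|_{W^{\alpha,p}}^p$, i.e. the $p$-th power version of \eqref{eq:energy_functional_intro}. The key bookkeeping observation is that for every $\pi' \in \mathrm{Lift}(\mu_t)$ the starting-point term integrates to $\int_{\mathcal{X}} d(x,\bar{x})^p \d\mu_0(x)$, which is finite since $\mu_0 \in P_p(\mathcal{X})$ and, crucially, \emph{independent} of $\pi'$ because the marginal $(e_0)_{\#}\pi' = \mu_0$ is prescribed. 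Hence $\pi \mapsto \int \tilde{\Psi}\,\d\pi$ and $\pi \mapsto \int |\gamma|_{W^{\alpha,p}}^p\,\d\pi$ differ by this fixed constant on $\mathrm{Lift}(\mu_t)$ and therefore have exactly the same minimizers; it suffices to produce a minimizer of $\tilde{\Psi}$. (Note the additive $p$-th power form is what makes this reduction exact, as opposed to the first-power functional \eqref{eq:energy_functional_intro}.)

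First I would verify the two hypotheses of \cref{prop:existence_of_minimizer_intro} for $\tilde{\Psi}$. Lower semicontinuity is immediate: if $\gamma^n \to \gamma$ uniformly then $d(\gamma^n_s,\gamma^n_t) \to d(\gamma_s,\gamma_t)$ pointwise, so Fatou's lemma applied to the double integral \eqref{eq:Walphap_integral_intro} gives $|\gamma|_{W^{\alpha,p}}^p \le \liminf_n |\gamma^n|_{W^{\alpha,p}}^p$, while $d(e_0(\cdot),\bar{x})^p$ is continuous. Relative compactness of the sublevels of $\tilde{\Psi}$ is precisely the $|\cdot|_{W^{\alpha,p}}$ entry of \cref{table:norms}: since $\alpha > 1/p$, a simultaneous bound on $d(\gamma_0,\bar{x})$ and $|\gamma|_{W^{\alpha,p}}$ forces the paths to be uniformly bounded and equicontinuous (through the Morrey-type embedding $W^{\alpha,p}\hookrightarrow C^{\alpha-1/p}$), and Arzel\`a--Ascoli applies because closed bounded sets of $\mathcal{X}$ are compact. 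The sublevels of $\tilde{\Psi}$ and of the functional \eqref{eq:energy_functional_intro} are mutually nested, so compactness for the latter transfers to the former.

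Next I would check finiteness of the infimum. By assumption there is a lift $\pi_0$ satisfying \eqref{eq:lift_to_mu_Walphap_integrability_intro}, i.e. $\int \tilde{\Psi}\,\d\pi_0 < +\infty$; this simultaneously shows $\mathrm{Lift}(\mu_t)\neq\emptyset$ and bounds the infimum. \cref{prop:existence_of_minimizer_intro} then yields a minimizer $\pi$ of $\int \tilde{\Psi}\,\d\pi$ over $\mathrm{Lift}(\mu_t)$, which by the constant-term observation is also a minimizer for $\Psi = |\cdot|_{W^{\alpha,p}}^p$. To conclude I read off the three properties. Since $\int |\gamma|_{W^{\alpha,p}}^p\,\d\pi \le \int\tilde{\Psi}\,\d\pi \le \int\tilde{\Psi}\,\d\pi_0 < +\infty$, the integrand is finite $\pi$-a.e., so $\pi$ concentrates on $W^{\alpha,p}(I;\mathcal{X})$, giving (i) and the right-hand finiteness in (iii); (ii) holds because $\pi\in\mathrm{Lift}(\mu_t)$ by construction; and the left-hand inequality in (iii) is exactly \cref{thm:lift_to_mu_Walphap_intro} applied to $\pi$. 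The $b^{\alpha,p}$ statement follows verbatim, the seminorm \eqref{eq:Walphap_sum_intro} being lower semicontinuous by Fatou on the series and its augmented sublevels relatively compact by the same \cref{table:norms} entry (equivalently, via $W^{\alpha,p}=b^{\alpha,p}$ with equivalent norms from \cite{LiuPromelTeichmann2020}).

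The only genuinely delicate point is the first paragraph's bookkeeping---that augmenting the energy by the starting-point cost changes the objective only by a lift-independent constant, so that the compactness-yielding functional and the functional actually minimized share their minimizers. Everything else is a direct assembly of \cref{prop:existence_of_minimizer_intro}, \cref{thm:lift_to_mu_Walphap_intro}, and the compactness already recorded in \cref{table:norms}.
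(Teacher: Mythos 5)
Your proof is correct and follows essentially the same route the paper intends: it assembles \cref{prop:existence_of_minimizer_intro}, \cref{thm:lift_to_mu_Walphap_intro}, and the compactness/lower semicontinuity facts recorded in \cref{table:norms}, with the (necessary and correctly handled) observation that augmenting the energy by $d(\gamma_0,\bar{x})^p$ only shifts the objective by a lift-independent constant since $(e_0)_\#\pi'=\mu_0$ is prescribed. No gaps.
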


A natural question arises as to when a minimizer $\pi$ attains equality in \eqref{eq:minimizer_pi_Walphap}.
Although $W^{\alpha,p}$ is a larger function space than $W^{1,p}$, the next proposition shows that if the $W^{\alpha, p}$-regularity of a curve $(\mu_t)$ is realized by a lift on continuous paths $C(I;\mathcal{X})$, it imposes a global and rather restrictive condition on the collection $(\mu_t)_{t \in I} \subset P_p(\mathcal{X})$.
This is due to the non-local structure of the norms $|\cdot|_{W^{\alpha,p}}$ and $|\cdot|_{b^{\alpha,p}}$.
We refer to the imposed condition as \emph{compatibility}, a term adopted from \cite{Boissard2015,PanaretosZemel2020}.
Its definition is given after the statement and is discussed in detail in \cref{subsubsec:compatibility}.
The observation below is \cref{prop:equality_implies_compatibility}. 

\begin{proposition}\label{prop:equality_implies_compatibility_intro}
Let $(\mathcal{X},d)$ be a complete separable metric space, and $I \coloneqq [0,T] \subset \mathbb{R}$. Let $(\mu_t) \in  W^{\alpha,p}(I;P_p(\mathcal{X}))$ with $1<p<\infty$ and $\frac{1}{p}< \alpha <  1$. Assume that $(\mu_t)$ has a lift $\pi \in P(C(I;\mathcal{X}) ) $ whose $W^{\alpha,p}$-energy satisfies the equality
    \begin{align}\label{eq:equality_implies_compatibility_intro}
        |\mu|_{W^{\alpha,p}}^p = \int_{\Gamma_T} |\gamma|_{W^{\alpha,p}}^p \d \pi (\gamma),
    \end{align}
    then $(\mu_t)_{t\in I}$ is compatible in $P_p(\mathcal{X})$.
\end{proposition}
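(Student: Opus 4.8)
The plan is to convert the integral equality \eqref{eq:equality_implies_compatibility_intro} into pointwise optimality of the two–point marginals of $\pi$, and then to promote this from almost every pair of times to every pair. For the first step I would unfold both sides. By Tonelli's theorem,
\begin{equation*}
    \int_{\Gamma_T} |\gamma|_{W^{\alpha,p}}^p \d\pi(\gamma) = \iint_{I^2} \frac{1}{|t-s|^{1+\alpha p}} \left( \int_{\Gamma_T} d(\gamma_s,\gamma_t)^p \d\pi(\gamma) \right) \d s \d t ,
\end{equation*}
whereas $|\mu|_{W^{\alpha,p}}^p = \iint_{I^2} |t-s|^{-(1+\alpha p)} W_p(\mu_s,\mu_t)^p \d s \d t$. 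Since $(e_s,e_t)_{\#}\pi \in \mathrm{Cpl}(\mu_s,\mu_t)$, one has the pointwise bound $W_p(\mu_s,\mu_t)^p \leq \int_{\Gamma_T} d(\gamma_s,\gamma_t)^p \d\pi(\gamma)$. Both iterated integrals are finite because $(\mu_t)\in W^{\alpha,p}(I;P_p(\mathcal{X}))$, so the assumed equality forces the nonnegative difference of the two integrands to vanish for $|t-s|^{-(1+\alpha p)}\d s\d t$–almost every, hence Lebesgue–almost every, pair $(s,t)\in I^2$. Equivalently, $(e_s,e_t)_{\#}\pi \in \mathrm{OptCpl}(\mu_s,\mu_t)$ for a.e. $(s,t)$.

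Next I would discard the exceptional null set. Since $\frac1p<\alpha$, every curve in $W^{\alpha,p}(I;P_p(\mathcal{X}))$ is continuous, even H\"older continuous, in $(P_p(\mathcal{X}),W_p)$ (see \cref{subsec:Walphap}); this applies to $t\mapsto\mu_t$. Fix an arbitrary pair $(s_0,t_0)\in I^2$. The full–measure set of ``good'' pairs obtained above is dense, so I can choose good pairs $(s_n,t_n)\to(s_0,t_0)$. Then $\mu_{s_n}\to\mu_{s_0}$ and $\mu_{t_n}\to\mu_{t_0}$ in $W_p$, while $(e_{s_n},e_{t_n})_{\#}\pi\to(e_{s_0},e_{t_0})_{\#}\pi$ narrowly, because $\gamma\mapsto(\gamma_{s_n},\gamma_{t_n})$ converges to $\gamma\mapsto(\gamma_{s_0},\gamma_{t_0})$ pointwise on $\Gamma_T$ by path–continuity, and bounded continuous test functions pass to the limit by dominated convergence. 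The $W_p$–convergence of the marginals gives $W_p^p(\mu_{s_n},\mu_{t_n})\to W_p^p(\mu_{s_0},\mu_{t_0})$, and combined with the joint lower semicontinuity of $\gamma\mapsto\int_{\mathcal{X}^2} d(x,y)^p \d\gamma(x,y)$ under narrow convergence, the standard stability of optimal plans yields that the narrow limit $(e_{s_0},e_{t_0})_{\#}\pi$ is again optimal. As $(s_0,t_0)$ was arbitrary, every two–point marginal of $\pi$ is an optimal coupling, which is exactly the assertion that $(\mu_t)_{t\in I}$ is compatible in $P_p(\mathcal{X})$ in the sense defined below.

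I expect the only delicate point to be this upgrade from almost–every to every pair: one must ensure that the narrow limit of optimal couplings remains optimal, which hinges on the convergence of the transport cost $W_p^p(\mu_{s_n},\mu_{t_n})$ along the approximating sequence, and this is precisely what the $W_p$–continuity of $t\mapsto\mu_t$ (available because $\alpha p>1$) provides. The first step is routine once the coupling inequality and Tonelli are in place.
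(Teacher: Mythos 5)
Your argument is correct. The first half --- Tonelli, the coupling inequality $W_p^p(\mu_s,\mu_t)\le\int_{\Gamma_T} d(\gamma_s,\gamma_t)^p\,\d\pi$, and the conclusion that the nonnegative integrand vanishes almost everywhere, so that $(e_s,e_t)_\#\pi$ is optimal for a.e.\ $(s,t)$ --- is exactly the paper's proof of \cref{prop:equality_implies_compatibility}. Where you diverge is the upgrade from almost every pair to every pair. The paper shows that the nonnegative function $f(s,t)=|t-s|^{-(1+\alpha p)}\bigl(\int_{\Gamma_T} d(\gamma_s,\gamma_t)^p\,\d\pi-W_p^p(\mu_s,\mu_t)\bigr)$ is \emph{continuous}: the second term by $W_p$-continuity of $(\mu_t)$, the first by dominated convergence with the Garsia--Rodemich--Rumsey bound $d(\gamma_s,\gamma_t)^p\le\bar c^{\,p}T^{\alpha p-1}|\gamma|^p_{W^{\alpha,p}}$ as integrable dominating function; a continuous function vanishing a.e.\ vanishes identically. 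You instead approximate an arbitrary pair by pairs from the full-measure (hence dense) good set and run the stability argument for optimal plans: narrow convergence of $(e_{s_n},e_{t_n})_\#\pi$ to $(e_{s_0},e_{t_0})_\#\pi$ (path continuity plus bounded test functions), lower semicontinuity of the transport cost under narrow convergence as in \eqref{eq:narrow_conv_lsc_func}, and convergence of the optimal costs via $W_p$-continuity of the marginal curve; since $(e_{s_0},e_{t_0})_\#\pi$ is itself a coupling of $\mu_{s_0}$ and $\mu_{t_0}$, the two inequalities close and the limit plan is optimal. Both routes are valid and rest on the same GRR-type continuity input; yours needs only \emph{lower} semicontinuity of $(s,t)\mapsto\int d(\gamma_s,\gamma_t)^p\,\d\pi$ and thus avoids the dominated-convergence step (the domination is available anyway, because the assumed equality makes $\int|\gamma|^p_{W^{\alpha,p}}\,\d\pi$ finite), while the paper's is marginally shorter once that bound is in hand. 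The final passage from pairwise optimality at all times to compatibility via the multi-coupling $(e_{t_1},\dots,e_{t_N})_\#\pi$ is identical in both.
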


\begin{definition}[Compatibility of measures in $P_p(\mathcal{X})$]\label{def:compatibility_intro}
    We say a collection of measures $\mathcal{M} \subset P_p(\mathcal{X})$ is compatible if, for every finite subcollection of $\mathcal{M}$, there exists a multi-coupling such that all of its two-dimensional marginals are optimal.
\end{definition}
Some examples of compatibility ($\bullet$) and non-compatibility ($\circ$) include the following:
\begin{itemize}
    \item[$\bullet$] Any collection of measures lying on a Wasserstein geodesics is compatible, as discussed in \cref{rmk:geodesics_are_compatible}. A compatible collection, however, need not lie on a Wasserstein geodesic.
    \item[$\bullet$] All probability measures on $\mathbb{R}$ with finite $p$-moment are compatible.
    \item[$\bullet$] Gaussian measures on $\mathbb{R}^{\mathrm{d}}$  can form a compatible collection under suitable conditions on their covariance matrices. See \cite[page 49]{PanaretosZemel2020}. 
    \item[$\bullet$] For more nontrivial compatible examples, see \cite[Proposition 4.1]{Boissard2015} and \cite[Section 2.3.2]{PanaretosZemel2020}.
    \item[$\circ$] The compatibility can easily fail---for instance, under rotation. See \cref{exp:compatibility_vs_noncompatibility}.
\end{itemize} 

\cref{prop:equality_implies_compatibility_intro} tells us that it is impossible to have a lift on continuous paths satisfying the equality \eqref{eq:equality_implies_compatibility_intro} without the compatibility of $(\mu_t)$. Therefore, our next step is to assume the compatibility property and construct a lift that realizes the regularity of $(\mu_t)$.
In this regard, we make two remarks:
First, compatibility in the sense above provides a multi-coupling only for finite sub-collections.
Second, for an infinite compatible collection of measures $(\mu_t)_{t \in I}$, the multi-couplings of finite subcollections arising in the definition of compatibility need not be consistent because optimal couplings may not be unique.
Thus, applying Kolmogorov’s extension theorem to obtain a unique lift on the infinite product space $\mathcal{X}^I$ is not immediate, even in the case of Wasserstein geodesics.
 
To achieve a lift realizing  $W^{\alpha,p}$-regularity of  $(\mu_t)$, we adapt Lisini's construction  \cite{Lisini2007}, which glues consecutive optimal couplings (\textbf{{\normalfont \textcircled{\small{A}}}} below), into our construction (\textbf{{\normalfont \textcircled{\small{B}}}} below), which takes the multi-coupling provided by the compatibility assumption. 
The key difference is that, here, two-dimensional marginals on non-consecutive time points matter. This distinction is also illustrated in \cref{fig:construction}. 
Here, we work in a geodesic space $(\mathcal{X},d)$ and, for simplicity, take $I = [0,1]$. 
In what follows,  $\text{Pr}^{i,j}: \mathcal{X}^N \to {\mathcal{X}^2}$ is the projection map to $(i,j)$-th component for $i,j \in \{1,\cdots, N \in \mathbb{N} \}$. 

\vspace{6pt}

\noindent
\textbf{Construction.}
Let $(\mu_t)_{t\in [0,1]} \subset P_p(\mathcal{X})$ be a collection on a geodesic space $\mathcal{X}$. For each $n \in \mathbb{N}_0$,
    \begin{enumerate}
        \item[1.] Divide the time interval $[0,1]$ into the dyadic dissection $t_i^{(n)} \coloneqq \frac{i}{2^n}, i \in \{0,1,\cdots 2^n\}$. 
        \item[2.]
        \begin{itemize}
            \item[\textbf{{\normalfont \textcircled{\small{A}}}}]\label{itm:ConstructionA}
            When $(\mu_t) \subset P_p(\mathcal{X})$ is an \emph{arbitrary} collection:
            \\
            Let $\Upsilon_{n} \in P(\mathcal{X}^{2^n+1})$ be a multi-coupling such that
            \begin{equation}
                    (\text{Pr}^{i,i+1})_{\#} \Upsilon_{n} \in \text{OptCpl}\big(\mu_{t^{(n)}_{i}},\mu_{t^{(n)}_{i+1}}\big) 
            \end{equation}
            for all $i \in \left\{ 0,1, \cdots, 2^n-1 \right\}$.
            \\
            The existence of such a measure follows from the \emph{gluing lemma} for optimal couplings.
        \item[\textbf{{\normalfont \textcircled{\small{B}}}}]\label{itm:ConstructionB}
        When $(\mu_t)\subset P_p(\mathcal{X})$ is a \emph{compatible} collection:
        \\
        Let $\Upsilon_{n} \in P(\mathcal{X}^{2^n+1})$ be a multi-coupling such that
        \begin{equation}\label{eq:compatibility_dyadic}
                (\text{Pr}^{i,i+\frac{2^n}{2^m}})_{\#} \Upsilon_{n} \in \text{OptCpl}\big(\mu_{t^{(n)}_{i}},\mu_{t^{(n)}_{i+\frac{2^n}{2^m}}}\big) 
        \end{equation}
        for all $i \in \big\{k \frac{2^n}{2^m} \big| k \in \{ 0,1,\cdots, 2^m-1\} \big\} $ and $ m \in \{0,1,\cdots n\}$.
        \\
        The existence of such a measure follows from the very \emph{assumption of compatibility}. 
        \end{itemize}
        
        \item[3.] Construct the path measure $\pi_n \coloneqq (\ell)_{\#} \Upsilon_n \in P(\Gamma_1)$, where $\ell : \mathcal{X}^{2^n+1} \to \Gamma_1$ is a $(\mathcal{B}(\mathcal{X}^{2^n+1})_{\Upsilon_n},\mathcal{C})$-measurable geodesic selection and interpolation map connecting the points with constant-speed geodesics. $\mathcal{B}(\mathcal{X}^{2^n+1})_{\Upsilon_n}$ denotes $\Upsilon_n$-completion of $\mathcal{B}(\mathcal{X}^{2^n+1})$.
        
        \item[4.] Take the limit $n \to \infty$ and verify the narrow convergence of the sequence  $\{\pi_n\}_{n \in \mathbb{N}}$. 
    \end{enumerate}

\begin{figure}%[h]
    \centering
    \begin{tikzpicture}[scale=1.25]
    % time-line
    \draw (0,0) -- (4,0);
    \draw (5,0) -- (9,0);
    % -------------------------------
    % time points
    % time point: 0
    \filldraw [black] (0,0) circle (1pt);
    \draw (0,-0.1) node[anchor=north] {\scriptsize $\mu_{t_0^{(2)}}$};
    % time point: 1
    \filldraw [black] (1,0) circle (1pt);
    \draw (1,-0.1) node[anchor=north] {\scriptsize $\mu_{t_1^{(2)}}$};
    % time point: 2
    \filldraw [black] (2,0) circle (1pt);
    \draw (2,-0.1) node[anchor=north] {\scriptsize $\mu_{t_2^{(2)}}$};
    % time point: 4
    \filldraw [black] (3,0) circle (1pt);
    \draw (3,-0.1) node[anchor=north] {\scriptsize $\mu_{t_3^{(2)}}$};
    % time point: 4
    \filldraw [black] (4,0) circle (1pt);
    \draw (4,-0.1) node[anchor=north] {\scriptsize $\mu_{t_4^{(2)}}$};
    % time point: 0
    \filldraw [black] (5,0) circle (1pt);
    \draw (5,-0.1) node[anchor=north] {\scriptsize $\mu_{t_0^{(2)}}$};
    % time point: 1
    \filldraw [black] (6,0) circle (1pt);
    \draw (6,-0.1) node[anchor=north] {\scriptsize $\mu_{t_1^{(2)}}$};
    % time point: 2
    \filldraw [black] (7,0) circle (1pt);
    \draw (7,-0.1) node[anchor=north] {\scriptsize $\mu_{t_2^{(2)}}$};
    % time point: 3
    \filldraw [black] (8,0) circle (1pt);
    \draw (8,-0.1) node[anchor=north] {\scriptsize $\mu_{t_3^{(2)}}$};
    % time point: 4
    \filldraw [black] (9,0) circle (1pt);
    \draw (9,-0.1) node[anchor=north] {\scriptsize $\mu_{t_4^{(2)}}$};
    % -------------------------------
    % arrows: level m = 3
    \draw [black][-] (0,0.1) .. controls (0.2,0.5) and (0.8,0.5) .. (1,0.1);
    \draw [black][-] (1,0.1) .. controls (1.2,0.5) and (1.8,0.5) .. (2,0.1);
    \draw [black][-] (2,0.1) .. controls (2.2,0.5) and (2.8,0.5) .. (3,0.1);
    \draw [black][-] (3,0.1) .. controls (3.2,0.5) and (3.8,0.5) .. (4,0.1);
    % space
    \draw [black][-] (5,0.1) .. controls (5.2,0.5) and (5.8,0.5) .. (6,0.1);
    \draw [black][-] (6,0.1) .. controls (6.2,0.5) and (6.8,0.5) .. (7,0.1);
    \draw [black][-] (7,0.1) .. controls (7.2,0.5) and (7.8,0.5) .. (8,0.1);
    \draw [black][-] (8,0.1) .. controls (8.2,0.5) and (8.8,0.5) .. (9,0.1);
    % arrows: level m = 2
    %\draw [black, dashed][-] (0,0.2) .. controls (0.4,1) and (1.6,1) .. (2,0.2);
    %\draw [black, dashed][-] (2,0.2) .. controls (2.4,1) and (3.6,1) .. (4,0.2);
    \draw [black][-] (5,0.2) .. controls (5.4,1) and (6.6,1) .. (7,0.2);
    \draw [black][-] (7,0.2) .. controls (7.4,1) and (8.6,1) .. (9,0.2);
    % arrows: level m = 1
    %\draw [black, dashed][-] (0,0.3) .. controls (0.8,1.8) and (3.2,1.8) .. (4,0.3);
    \draw [black][-] (5,0.3) .. controls (5.8,1.8) and (8.2,1.8) .. (9,0.3);
    % -------------------------------
    % assignments of OptCpl
    % (A)
    \draw (0.5,0.7) node[anchor=north] {{\color{darkgray}\scriptsize Opt}};
    \draw (1.5,0.7) node[anchor=north] {{\color{darkgray}\scriptsize Opt}};
    \draw (2.5,0.7) node[anchor=north] {{\color{darkgray}\scriptsize Opt}};
    \draw (3.5,0.7) node[anchor=north] {{\color{darkgray}\scriptsize Opt}};
    % (B) m = 0
    \draw (5.5,0.7) node[anchor=north] {{\color{darkgray}\scriptsize Opt}};
    \draw (6.5,0.7) node[anchor=north] {{\color{darkgray}\scriptsize Opt}};
    \draw (7.5,0.7) node[anchor=north] {{\color{darkgray}\scriptsize Opt}};
    \draw (8.5,0.7) node[anchor=north] {{\color{darkgray}\scriptsize Opt}};
    % (B) m = 1
    \draw (6,1.1) node[anchor=north] {{\color{darkgray}\scriptsize Opt}};
    \draw (8,1.1) node[anchor=north] {{\color{darkgray}\scriptsize Opt}};
    % (B) m = 2
    \draw (7,1.72) node[anchor=north] {{\color{darkgray}\scriptsize Opt}};
    % Construction text
    \draw (2,-0.5) node[anchor=north] {\scriptsize Construction \hyperref[itm:ConstructionB]{{\normalfont \textcircled{\tiny{A}}}} for arbitrary collections};
    \draw (7,-0.5) node[anchor=north] {\scriptsize Construction \hyperref[itm:ConstructionB]{{\normalfont \textcircled{\tiny{B}}}} for compatible collections};
    \end{tikzpicture}
    \captionsetup{font=scriptsize}
    \caption{An illustration of two different constructions, showing which two-dimensional marginals of $\Upsilon_n$ are optimal. Here, $n=2$ and the time interval $[0,1]$ is divided into $2^2=4$ equal pieces.}
    \label{fig:construction}
\end{figure}
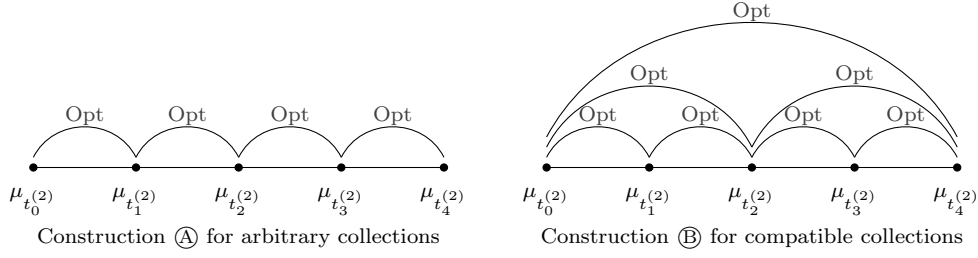

Now, relying only on the properties of the Wasserstein curve (its regularity and compatibility), we construct a lift that realizes its regularity, thereby removing the assumption of the existence of a lift with finite energy in \cref{thm:existence_of_minimizer_Walphap_intro}. %In particular, this provides a superposition principle.
As the second main result, we present \cref{thm:optimal_lift_mu_Walphap_compatible}:

\begin{theorem}[Construction of a realizing lift]\label{thm:optimal_lift_mu_Walphap_compatible_intro}
    Let $(\mathcal{X}, d)$ be a complete, separable, and locally compact length metric space (e.g. $\mathbb{R}^\mathrm{d}$), and $I \coloneqq [0,T] \subset \mathbb{R}$.
    Let $(\mu_t) \in  W^{\alpha,p}(I;P_p(\mathcal{X}))$ with $1<p<\infty$ and $\frac{1}{p}< \alpha <  1$.
    Assume that $(\mu_t)_{t \in I}$ is compatible in $P_p(\mathcal{X})$.
    Then, construction \hyperref[itm:ConstructionB]{{\normalfont \textcircled{\small{B}}}} converges narrowly (up to a subsequence) to a probability measure $\pi \in P(C(I;\mathcal{X}))$ satisfying
    \begin{enumerate}[label=(\roman*), font=\normalfont]
        \item $\pi$ is concentrated on $W^{\alpha,p}(I;\mathcal{X}) \subset C(I;\mathcal{X})$; 
        \item $(e_t)_\#\pi=\mu_t$ for all $t\in I$;
        \item $(e_s,e_t)_\# \pi \in \mathrm{OptCpl}(\mu_s, \mu_t)$ for all $s,t \in I$; and in particular,
        \begin{align}\label{eq:optimality_pi_1_intro}
        |\mu|_{W^{\alpha,p}}^p = \int_{\Gamma_T} |\gamma|_{W^{\alpha,p}}^p \d \pi (\gamma) .
        \end{align}
    \end{enumerate}
    The same statement holds for $|\cdot|_{b^{\alpha,p}}$.
    \end{theorem}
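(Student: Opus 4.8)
The plan is to carry out construction \hyperref[itm:ConstructionB]{\textnormal{\textcircled{\small{B}}}} on $I=[0,1]$ (the general interval following by an affine time change) while working with the dyadic functional $|\cdot|_{b^{\alpha,p}}$ from \eqref{eq:Walphap_sum_intro}, which is adapted to the construction, and to transfer the conclusions to $|\cdot|_{W^{\alpha,p}}$ at the very end via the norm equivalence of Liu--Pr\"omel--Teichmann. Since $(\mathcal{X},d)$ is a complete, separable, locally compact length space, it is proper (closed bounded sets are compact) and geodesic, so the measurable geodesic interpolation $\ell$ is available and $\pi_n \coloneqq \ell_\# \Upsilon_n$ is a well-defined measure concentrated on piecewise-geodesic, hence Lipschitz, paths.

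The first and central step is the energy identity $\int_{\Gamma_1} |\gamma|_{b^{\alpha,p}}^p \, \d\pi_n \to |\mu|_{b^{\alpha,p}}^p$, which in particular yields the uniform bound $\sup_n \int_{\Gamma_1} |\gamma|_{b^{\alpha,p}}^p \, \d \pi_n < \infty$. I would split the dyadic sum defining $|\gamma|_{b^{\alpha,p}}^p$ into a \emph{coarse} part (scales $m \le n$) and a \emph{fine} part (scales $m>n$). On the coarse scales every dyadic node is a level-$n$ node, so the compatibility marginals \eqref{eq:compatibility_dyadic} are optimal and $\int d(\gamma_{t_k^{(m)}}, \gamma_{t_{k+1}^{(m)}})^p \d\pi_n = W_p^p(\mu_{t_k^{(m)}}, \mu_{t_{k+1}^{(m)}})$; summation reproduces exactly the partial sums of the convergent series $|\mu|_{b^{\alpha,p}}^p$. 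On the fine scales each increment lies inside a single constant-speed geodesic segment, so $d(\gamma_{t_k^{(m)}},\gamma_{t_{k+1}^{(m)}}) = 2^{n-m} d(\gamma_{t_i^{(n)}},\gamma_{t_{i+1}^{(n)}})$ on the $i$-th segment, and a direct computation summing the resulting geometric series in $m$ (convergent because $\alpha<1$) bounds the fine part by a constant multiple of $2^{n(\alpha p-1)} \sum_i d(\gamma_{t_i^{(n)}}, \gamma_{t_{i+1}^{(n)}})^p$; integrating against $\pi_n$ and using optimality once more identifies this with a constant times the $n$-th term of $|\mu|_{b^{\alpha,p}}^p$, which vanishes as $n\to\infty$.

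Next I would establish tightness and pass to the limit. The embedding $b^{\alpha,p} \hookrightarrow C^{\alpha-1/p}$, which holds because $\alpha>1/p$, turns the uniform energy bound into a uniform-in-probability H\"older bound; since every time-zero marginal equals the single (tight) measure $\mu_0$, a Chebyshev argument produces for each $\varepsilon>0$ a set of paths starting in a fixed compact set and with uniformly bounded H\"older seminorm, of $\pi_n$-measure exceeding $1-\varepsilon$ for all $n$. Properness of $\mathcal{X}$ makes such sets relatively compact in $C(I;\mathcal{X})$ by the Arzel\`a--Ascoli theorem, so $\{\pi_n\}$ is tight and Prokhorov's theorem yields a subsequence $\pi_{n_j} \to \pi$ narrowly. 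Then, for (i), lower semicontinuity of $\gamma \mapsto |\gamma|_{b^{\alpha,p}}$ under uniform convergence together with the energy identity gives $\int |\gamma|_{b^{\alpha,p}}^p \d\pi \le |\mu|_{b^{\alpha,p}}^p < \infty$, so $\pi$ is concentrated on $W^{\alpha,p}(I;\mathcal{X})$. For (ii), at each dyadic $t$ one has $(e_t)_\# \pi_{n_j} = \mu_t$ for large $j$, hence $(e_t)_\# \pi = \mu_t$; as $t\mapsto \mu_t$ and $t \mapsto (e_t)_\#\pi$ are both narrowly continuous and agree on the dense dyadic set, they agree on all of $I$. For (iii), fixing dyadic $s,t$ at a common level, \eqref{eq:compatibility_dyadic} makes $(e_s,e_t)_\#\pi_{n_j}$ optimal for large $j$; since $d^p$ is lower semicontinuous, the narrow limit $(e_s,e_t)_\#\pi$ is a coupling of $(\mu_s,\mu_t)$ of cost $\le W_p^p(\mu_s,\mu_t)$, hence optimal, and approximating arbitrary $s,t$ by dyadic points---using continuity of $(s,t)\mapsto (e_s,e_t)_\#\pi$ and of $(s,t) \mapsto W_p^p(\mu_s,\mu_t)$ (the latter from the $W^{\alpha,p}$-regularity)---extends optimality to all $s,t\in I$ by the same argument. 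The equality \eqref{eq:optimality_pi_1_intro} then follows from (iii) by Tonelli's theorem, integrating the identity $\int d(\gamma_s,\gamma_t)^p \d\pi = W_p^p(\mu_s,\mu_t)$ against $|t-s|^{-1-\alpha p}$, and the norm equivalence transfers all statements to $|\cdot|_{W^{\alpha,p}}$.

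I expect the main obstacle to be the fine-scale estimate in the energy identity: controlling the contribution of the geodesic interpolation at scales finer than $n$ and showing it vanishes in the limit is what pins down the exact limit value $|\mu|_{b^{\alpha,p}}^p$ and simultaneously furnishes the uniform bound needed for tightness. A second delicate point is the precise role of compatibility: it is exactly the optimality of the non-consecutive dyadic two-dimensional marginals in \eqref{eq:compatibility_dyadic} that survives the narrow limit and upgrades to optimality of \emph{every} pair $(e_s,e_t)_\#\pi$---a conclusion unavailable from the consecutive-only construction \hyperref[itm:ConstructionA]{\textnormal{\textcircled{\small{A}}}}.
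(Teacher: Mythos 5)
Your proposal is correct and follows essentially the same route as the paper: the same coarse/fine dyadic decomposition of the $b^{\alpha,p}$-seminorm of the piecewise-geodesic interpolants (the paper's \cref{lemma:balphap_Xn}), compatibility converting dyadic increments into Wasserstein distances, tightness via the H\"older embedding and Arzel\`a--Ascoli, optimality of all two-dimensional marginals of the limit obtained first at dyadic pairs and then extended by continuity, and the final equality via Tonelli. The only (harmless) deviations are that you prove the full convergence $\int|\gamma|^p_{b^{\alpha,p}}\,\d\pi_n\to|\mu|^p_{b^{\alpha,p}}$ where the paper only records the uniform upper bound $\le \big(1-2^{-(p-\alpha p)}\big)^{-1}|\mu|^p_{b^{\alpha,p}}$, and that you identify $(e_t)_\#\pi=\mu_t$ by dyadic density plus narrow continuity of both marginal curves rather than the paper's direct estimate with Lipschitz test functions and the Garsia--Rodemich--Rumsey bound.
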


    \begin{remark}[A weaker compatibility condition]\label{rmk:ompatibility_dyadic}
        What we need for the result above is actually the compatibility of measures only on \emph{dyadic} time points in the way specified in \eqref{eq:compatibility_dyadic}, which is, a priori, weaker than compatibility for \emph{all} finite time points (\cref{def:compatibility_intro}).
        % Now, combining the theorem above with \cref{prop:equality_implies_compatibility_intro}, we conclude that these two conditions are equivalent for $W^{\alpha,p}$-curves.
    \end{remark}

    An immediate consequence of the theorem above and the following well-known embeddings for $\frac{1}{p}< \alpha < \upgamma \leq  1$, which are based on \emph{Garsia--Rodemich--Rumsey inequality} \cite{Garsia_Rodemich_Rumsey1970,FrizVictoir2010book,FrizVictoir2006}, \vspace{-2pt}
    \begin{equation}
    C^{\upgamma\textrm{-}\mathrm{H\ddot{o}l}}  \overset{\textrm{trivial}}{ \subset} W^{\alpha, p} \,  \, \subset C^{\alpha - \frac{1}{p}\textrm{-}\mathrm{H\ddot{o}l}}, \qquad
        W^{\alpha, p} \,  \, \subset C^{\frac{1}{\alpha}\textrm{-}\mathrm{var}},
    \end{equation}
    is the construction of a lift for $\upgamma$-H\"{o}lder compatible $p$-Wasserstein paths. 
    Below is \cref{crl:optimal_lift_mu_Holder_compatible}. 

    \begin{corollary}\label{crl:optimal_lift_mu_Holder_compatible_intro}
        Let $(\mathcal{X}, d)$ be a complete, separable, and locally compact length metric space, and $I \coloneqq [0,T] \subset \mathbb{R}$.
        Let $(\mu_t) \in C^{\upgamma\textrm{-}\mathrm{H\ddot{o}l}} (I;P_p(\mathcal{X}))$ for some $1<p<\infty$ and $\frac{1}{p}<\upgamma \leq 1$.
        Assume that $(\mu_t)_{t \in I}$ is  compatible in $P_p(\mathcal{X})$.
        Then, construction {\normalfont \textcircled{\small{B}}} converges narrowly (up to a subsequence) to a probability measure $\pi \in P(C(I;\mathcal{X}))$ satisfying
        \begin{enumerate}[label=(\roman*), font=\normalfont]
            \item $\pi$ is concentrated on $W^{\alpha,p}(I;\mathcal{X}) \subset C^{(\alpha - \frac{1}{p})\textrm{-} \mathrm{H\ddot{o}l}}(I;\mathcal{X})$ for any $\alpha \in (\frac{1}{p},\upgamma)$;
            \item $(e_t)_\#\pi=\mu_t$ for all $t\in I$;
            \item $(e_s,e_t)_\# \pi \in \mathrm{OptCpl}(\mu_s, \mu_t)$ for all $s,t \in I$; and for any $\alpha \in (\frac{1}{p},\upgamma)$, we have  \eqref{eq:optimality_pi_1_intro} and 
            \begin{align}\label{eq:optimal_lift_mu_Holder_compatible}
               | \mu |_{\upgamma\textrm{-}\mathrm{H\ddot{o}l}}^p \geq c \int_{\Gamma_T} | \gamma |^p_{\alpha - \frac{1}{p}\textrm{-}\mathrm{H\ddot{o}l}} \d \pi (\gamma) \geq c | \mu |_{\alpha-\frac{1}{p}\textrm{-}\mathrm{H\ddot{o}l}}^p,
            \end{align}
            where $c = c(\upgamma,\alpha,p,T)$ is an explicit positive constant.
        \end{enumerate}
    \end{corollary}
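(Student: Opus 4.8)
The plan is to treat this corollary as a repackaging of \cref{thm:optimal_lift_mu_Walphap_compatible_intro} through the embeddings displayed just above its statement, so that no new construction is required. First I would fix an exponent $\alpha \in (\tfrac1p,\upgamma)$ and record the trivial embedding $C^{\upgamma\textrm{-}\mathrm{H\ddot{o}l}} \subset W^{\alpha,p}$ quantitatively: if $X$ is $\upgamma$-Hölder, then
\[
|X|_{W^{\alpha,p}}^p = \iint_{[0,T]^2} \frac{d(X_s,X_t)^p}{|t-s|^{1+\alpha p}}\d s\d t \le |X|_{\upgamma\textrm{-}\mathrm{H\ddot{o}l}}^p \iint_{[0,T]^2} |t-s|^{(\upgamma-\alpha)p-1}\d s\d t = C_1\, |X|_{\upgamma\textrm{-}\mathrm{H\ddot{o}l}}^p,
\]
with $C_1 = C_1(\upgamma,\alpha,p,T) = \tfrac{2T^{(\upgamma-\alpha)p+1}}{(\upgamma-\alpha)p\,((\upgamma-\alpha)p+1)}$ finite precisely because $\alpha<\upgamma$. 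Applied to $(\mu_t)$, this shows $(\mu_t)\in W^{\alpha,p}(I;P_p(\mathcal{X}))$, so that, together with the assumed compatibility, all hypotheses of \cref{thm:optimal_lift_mu_Walphap_compatible_intro} are satisfied.

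Crucially, construction \textcircled{\small{B}} and its narrow limit do not reference $\alpha$; they depend only on the compatible dyadic structure and the geodesic interpolation. Hence I would invoke the theorem once to extract a single subsequence and limit $\pi \in P(C(I;\mathcal{X}))$, and read off properties (ii) and (iii) verbatim, since these are $\alpha$-independent. For each $\alpha \in (\tfrac1p,\upgamma)$ the equality \eqref{eq:optimality_pi_1_intro} then follows from (iii) alone: by Tonelli,
\[
\int_{\Gamma_T} |\gamma|_{W^{\alpha,p}}^p \d\pi = \iint_{[0,T]^2}\frac{1}{|t-s|^{1+\alpha p}}\Big(\int_{\mathcal{X}^2} d(x,y)^p\,\d[(e_s,e_t)_\#\pi]\Big)\d s\d t = \iint_{[0,T]^2}\frac{W_p(\mu_s,\mu_t)^p}{|t-s|^{1+\alpha p}}\d s\d t = |\mu|_{W^{\alpha,p}}^p,
\]
using optimality of the two-dimensional marginals. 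For the concentration claim (i), I would combine this equality with $|\mu|_{W^{\alpha,p}}<\infty$ to conclude $|\gamma|_{W^{\alpha,p}}<\infty$ for $\pi$-a.e.\ $\gamma$; choosing a sequence $\alpha_n\uparrow\upgamma$ and intersecting the corresponding full-measure sets gives $\pi$-a.e.\ concentration on $\bigcap_{\alpha<\upgamma} W^{\alpha,p} \subset W^{\alpha,p}$, and the Garsia--Rodemich--Rumsey embedding $W^{\alpha,p}\subset C^{(\alpha-\frac1p)\textrm{-}\mathrm{H\ddot{o}l}}$ then yields the stated Hölder concentration for every $\alpha\in(\tfrac1p,\upgamma)$.

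It remains to establish the two-sided estimate \eqref{eq:optimal_lift_mu_Holder_compatible}. For the left inequality I would chain the two embeddings with the equality just proved: writing $C_2=C_2(\alpha,p,T)$ for the GRR constant in $|\gamma|_{(\alpha-\frac1p)\textrm{-}\mathrm{H\ddot{o}l}}\le C_2|\gamma|_{W^{\alpha,p}}$,
\[
\int_{\Gamma_T} |\gamma|_{(\alpha-\frac1p)\textrm{-}\mathrm{H\ddot{o}l}}^p \d\pi \le C_2^p \int_{\Gamma_T}|\gamma|_{W^{\alpha,p}}^p\d\pi = C_2^p\,|\mu|_{W^{\alpha,p}}^p \le C_2^p C_1\,|\mu|_{\upgamma\textrm{-}\mathrm{H\ddot{o}l}}^p,
\]
so the bound holds with $c=(C_2^p C_1)^{-1}=c(\upgamma,\alpha,p,T)$. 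The right inequality is the lift-to-marginal direction and needs nothing beyond $\pi$ being a lift: for fixed $s\neq t$, since $(e_s,e_t)_\#\pi$ couples $\mu_s$ and $\mu_t$,
\[
\frac{W_p(\mu_s,\mu_t)^p}{|t-s|^{(\alpha-\frac1p)p}} \le \int_{\Gamma_T}\frac{d(\gamma_s,\gamma_t)^p}{|t-s|^{(\alpha-\frac1p)p}}\d\pi \le \int_{\Gamma_T}|\gamma|_{(\alpha-\frac1p)\textrm{-}\mathrm{H\ddot{o}l}}^p\d\pi,
\]
and taking the supremum over $s\neq t$ on the left gives $|\mu|_{(\alpha-\frac1p)\textrm{-}\mathrm{H\ddot{o}l}}^p \le \int_{\Gamma_T}|\gamma|_{(\alpha-\frac1p)\textrm{-}\mathrm{H\ddot{o}l}}^p\d\pi$.

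I expect no serious obstacle, as the heavy machinery---narrow convergence of \textcircled{\small{B}}, optimality of all two-dimensional marginals, and the GRR inequality---is already furnished by \cref{thm:optimal_lift_mu_Walphap_compatible_intro} and the cited embeddings. The only points requiring genuine care are the bookkeeping of the explicit constant $c$ and, more substantively, arranging a \emph{single} $\pi$ that serves all $\alpha\in(\tfrac1p,\upgamma)$ simultaneously; the latter is handled by the $\alpha$-independence of construction \textcircled{\small{B}} together with the countable exhaustion $\alpha_n\uparrow\upgamma$ used in the concentration step.
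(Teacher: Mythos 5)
Your proposal is correct and follows essentially the same route as the paper: properties (i)--(iii) and \eqref{eq:optimality_pi_1_intro} are read off from \cref{thm:optimal_lift_mu_Walphap_compatible_intro} via the quantitative embedding $C^{\upgamma\textrm{-}\mathrm{H\ddot{o}l}}\subset W^{\alpha,p}$, and \eqref{eq:optimal_lift_mu_Holder_compatible} is obtained by chaining that embedding with the Garsia--Rodemich--Rumsey bound and the lift-to-marginal estimate for H\"older regularity, yielding the same constant $c=\tilde{c}/\bar{c}^{\,p}$. The extra care you take about a single $\pi$ serving all $\alpha$ simultaneously (via the $\alpha$-independence of construction \textcircled{\small B}) is a point the paper leaves implicit, but your handling of it is sound.
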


    \begin{remark}
        In addition to the estimate above, we have for any $\alpha \in (\frac{1}{p},\upgamma)$: 
        \begin{equation}
             | \mu |_{\upgamma\textrm{-}\mathrm{H\ddot{o}l}}^p \geq c \, \int_{\Gamma_T} | \gamma |^p_{\frac{1}{\alpha} \textrm{-}\mathrm{var}} \d \pi (\gamma)  \geq c \, | \mu |_{p\textrm{-}\mathrm{var}}^p,
        \end{equation}
        where $c = c(\upgamma,\alpha,p,T)$ is another explicit positive constant.
    \end{remark}

    %The elementary  in $\mathbb{R}$
    \cref{exp:p_Holder_p_Wasserstein}  shows that $1/p$-H\"{o}lder paths on $p$-Wasserstein space do not generally have lifts on continuous paths. This shows the sharpness of the assumption $ 1/p < {\upgamma}$ in the result above.
    
    Another natural question is whether it is possible to remove the assumptions of the existence of a lift with finite energy and compatibility from  \cref{thm:existence_of_minimizer_Walphap_intro} and \cref{thm:optimal_lift_mu_Walphap_compatible_intro}, respectively, while still obtaining a minimizing lift.
    We demonstrate that the answer to this question can be negative. In other words, if compatibility is dropped, the infimum \eqref{eq:variational_problem} for the energy $\Psi(\gamma) = |\gamma|^p_{W^{\alpha,p}}$ can be $+\infty$. More specifically, in \cref{exp:non_compatibile_infinite_energy}, we show:

    \begin{proposition}\label{prop:main_counterexample}
        There exists a non-compatible curve $(\mu_t) \in  W^{\alpha,p}(I;P_p(\mathcal{X}))$ for some $1<p<\infty$ and $\frac{1}{p}< \alpha< 1$, whose only existing lift $\pi$ on continuous paths has infinite $|\cdot|^p_{W^{\alpha,p}}$-energy.
    \end{proposition}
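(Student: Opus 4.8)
\medskip
\noindent\textbf{Proof proposal.}
The plan is to exhibit a curve whose \emph{pairwise} optimal couplings are inexpensive---so that $(\mu_t)\in W^{\alpha,p}$---but mutually \emph{inconsistent}, so that no single lift can realize them and the lift that the marginals do force is irregular. The mechanism I would use is the one behind the rotation examples: a rigid rotation of a tightly wound spiral is, to leading order, a measure-preserving \emph{radial shift}. Hence an optimal plan from $\mu_s$ to $\mu_t$ can slide mass cheaply along the spiral, with cost of order $1/c$ times the naive rotation cost, where $c$ is the winding rate; but realizing this slide by a flow of continuous trajectories is globally obstructed, because the radial extent of the support is bounded, whereas the total rotation will be arranged to diverge. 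The trajectories are therefore forced to wind, and the gap between the cheap pairwise slides and the expensive forced winding is exactly the quantitative failure of compatibility.

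\medskip
\noindent\textbf{Construction and steps.}
On $\mathbb{R}^2\cong\mathbb{C}$ I would take $\mu_t$ to be the image of a fixed uniform reference measure on a spiral under a $t$-dependent scaling $\lambda_t\downarrow 0$, winding rate $c_t\uparrow\infty$, and rotation angle $\theta_t\uparrow\infty$ as $t\downarrow 0$, tuned so that $\mu_t\to\delta_0$ and the curve is continuous up to $t=0$. I would then carry out:
\begin{enumerate}[label=(\roman*), leftmargin=22pt]
  \item \emph{Curve regularity (upper bound).} For each pair $s,t$ use the explicit slide-plus-rescale competitor to estimate $W_p(\mu_s,\mu_t)\lesssim \lambda_t|\theta_t-\theta_s|/c_t+|\lambda_t-\lambda_s|+\lambda_t|c_t-c_s|$, and choose the rates so that $\iint W_p(\mu_s,\mu_t)^p|t-s|^{-1-\alpha p}\,\d s\,\d t<\infty$, i.e.\ $(\mu_t)\in W^{\alpha,p}(I;P_p(\mathbb{R}^2))$.
  \item \emph{Lift irregularity (lower bound).} Show that any $\pi\in\mathrm{Lift}(\mu_t)$ is concentrated on trajectories living on the shrinking spiral; since the radial room is bounded while $\theta_t\to\infty$, such trajectories cannot follow the slide indefinitely and must accumulate infinitely many ``re-windings'' as $t\downarrow 0$. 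This bounds the cost of $(e_s,e_t)_\#\pi$ from below by the rotation cost on a non-negligible set of pairs and forces $\int_{\Gamma_T}|\gamma|_{W^{\alpha,p}}^p\,\d\pi=\iint \big(\text{cost of }(e_s,e_t)_\#\pi\big)\,|t-s|^{-1-\alpha p}\,\d s\,\d t=+\infty$.
  \item \emph{Non-compatibility.} For three times the consecutive optimal slides compose additively in the reference radius, but over a sufficiently long span the composed slide exceeds the radial range and is no longer an admissible coupling; hence the long-range optimal coupling differs from the composition, and no multi-coupling can have all its two-dimensional marginals optimal.
  \item \emph{Uniqueness.} Finally argue that the vanishing radial extent $\lambda_t\to 0$ pins the trajectories, so that the forced winding lift is in fact the only element of $\mathrm{Lift}(\mu_t)$.
\end{enumerate}

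\medskip
\noindent\textbf{Main obstacle.}
The crux is to run (i) and (ii) with a single choice of rates: the optimal-versus-forced gap must be a genuinely unbounded factor $c_t\to\infty$, while the auxiliary costs coming from rescaling ($|\dot\lambda_t|$) and re-winding ($\lambda_t|\dot c_t|$) remain subdominant, so that the curve energy converges exactly when the lift energy diverges. The subtlety is that a lift may imitate the cheap slide \emph{locally}, so the divergence cannot come from the bulk near-diagonal behaviour---there the forced and optimal costs essentially agree---and must instead be extracted from the boundary re-windings that accumulate at $t=0$; one then has to show these are unavoidable for \emph{every} lift and carry non-summable cost. This is precisely where non-compatibility enters quantitatively, since the cheap pairwise rearrangements are fundamentally non-gluable, and establishing the matching lower bound for an arbitrary lift, together with the uniqueness claim in (iv), is the hardest part.
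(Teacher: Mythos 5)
There is a genuine gap, and it sits exactly where you yourself locate the ``main obstacle.'' Your steps (ii) and (iv) are the entire content of the proposition, and neither is carried out; moreover, (iv) as stated is very likely false for your construction. If $\mu_t$ is the image of a \emph{uniform (absolutely continuous) reference measure on a spiral}, then its support at each time is a connected set, and a continuous trajectory constrained to lie in a moving connected support is not pinned: particles can be permuted along the spiral, so $\mathrm{Lift}(\mu_t)$ is far from a singleton and the claim that ``the vanishing radial extent pins the trajectories'' does not hold. The paper's construction (\cref{exp:non_compatibile_infinite_energy}) gets uniqueness for free precisely by using \emph{atoms}: countably many circles on a cylinder $\mathbb{S}^1\times\mathbb{R}$ separated by a distance $a>2$ exceeding any possible displacement, with finitely many equidistant atoms rotating on each circle; a continuous path valued in a continuously moving finite set must follow a single atom, so the lift is forced. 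Without uniqueness you would instead have to prove that \emph{every} lift has infinite $W^{\alpha,p}$-energy, which is your step (ii); but there you only gesture at ``forced re-windings'' without a lower bound valid for an arbitrary lift, and this is genuinely delicate because, as you note, near the diagonal a lift can imitate the cheap slide.

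Conceptually your mechanism is the right one and is the same as the paper's: a rotational symmetry makes the pairwise optimal couplings cheap (mass is rearranged by a short cyclic shift rather than transported the full rotation angle), while any continuous particle representation must pay the full rotation; the mismatch is exactly non-compatibility (cf.\ \cref{exp:compatibility_vs_noncompatibility}). But the paper discretizes this so that all three difficulties disappear simultaneously: on circle $j$ there are $2^{j+1}$ equidistant atoms of total mass $w_j=\bar w\,2^{-jp\alpha}$ rotating at speed $2^{j+1}$, so the particle oscillation has amplitude $1$ while the Wasserstein oscillation has amplitude $2^{-(j+1)}$; the lift is unique by the pinning argument; both $\int|\gamma|^p_{b^{\alpha,p}}\,\d\pi$ and $|\mu|^p_{b^{\alpha,p}}$ are computed \emph{exactly} via the piecewise-geodesic formula of \cref{lemma:balphap_Xn}, giving a divergent series for the former and a convergent one for the latter; and non-compatibility is checked on four dyadic times as in \cref{fig:compatibility3_counterexample}. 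To salvage your route you would either have to replace the uniform measure on the spiral by a discrete one whose atoms are uniformly separated (at which point you have essentially rebuilt the paper's example), or prove a lower bound on $\int d(\gamma_s,\gamma_t)^p\,\d\pi$ uniform over all lifts, which is a substantially harder topological-obstruction argument that the proposal does not supply.
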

    
    This shows that, to remove the aforementioned assumptions, we need alternative conditions, at least to ensure the finiteness of the energy, which we need for proving the convergence of the constructions.
    A case where Construction \hyperref[itm:ConstructionA]{{\normalfont \textcircled{\small{A}}}} is used for Wasserstein paths of low regularity appears in an independent work by Taghvaei–Mehta \cite{Taghvaei2016}, where the authors formally applied it and obtained a lift for solutions $(\mu_t)$ of a stochastic PDE in linear-Gaussian filtering. Since Gaussians form a compatible collection, this was essentially Construction \hyperref[itm:ConstructionB]{{\normalfont \textcircled{\small{B}}}}, making convergence expected and highlighting why generalizing to nonlinear filtering---where compatibility can easily fail---is challenging. We leave further discussion in the stochastic setting to subsequent work.

    As a final result, we give a dynamic formulation of the Wasserstein distance using Besov energy. We first show in \cref{lemma:bound_distance_balphap,lemma:characterization_geodesics_balphap} that on a metric space $(\mathcal{X},d)$ and for any $1< p < \infty$ and $0<\alpha<1$, the following are equivalent:
    \begin{itemize}
        \item[{\normalfont 1.}] $\gamma: [0,1] \to \mathcal{X}$ is a constant-speed geodesic.
        \item[{\normalfont 2.}] $\gamma: [0,1] \to \mathcal{X}$ is continuous and 
        $
        d(\gamma_0,\gamma_1)^p = \left(1-2^{-(p-\alpha p)}\right) |\gamma|^p_{b^{\alpha, p}}. 
        $
    \end{itemize}
    Notice that $\left(1-2^{-(p-\alpha p)}\right) \in (0,1)$ on the parameter range above. This characterization allows us to give the result below, which is a generalization of the \emph{metric Benamou--Brenier formula} \cite{BenamouBrenier2000}, \cite[Eq.\,(8.0.3)]{AGS2008GFs}, \cite[Corollary 4]{Lisini2007}, to the fractional setting. 
    Below is \cref{crl:dynamic_formulation_Wp} and $Geo([0,1];\mathcal{X})$ denotes the set of constant-speed geodesics.

    \begin{corollary}\label{crl:dynamic_formulation_Wp_bap_intro}
    Let $(\mathcal{X}, d)$ be a complete, separable, and geodesic metric space.  Let $1<p<\infty$ and $ 0< \alpha <  1$. Then for every $\mu,\nu \in P_p(\mathcal{X})$, we have 
    \begin{multline}
        W_p^p(\mu,\nu) = \\
        \left(1-2^{-(p-\alpha p)}\right)\min \left\{ \int_{\Gamma_1} |\gamma|_{b^{\alpha,p}}^p \d \pi (\gamma) : \quad \pi \in P(C([0,1];\mathcal{X})), \, (e_0)_\# \pi  = \mu, \, (e_1)_\# \pi  = \nu \right\}. 
    \end{multline}
    In addition, $\pi$ is a minimizer if and only if  $(e_0,e_1)_{\#} \pi \in \mathrm{OptCpl}(\mu,\nu)$ and  $\pi (Geo([0,1];\mathcal{X})) = 1$. %it is concentrated on $Geo([0,1];\mathcal{X})$.
\end{corollary}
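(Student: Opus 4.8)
The plan is to reduce everything to the two equivalences recorded in \cref{lemma:bound_distance_balphap,lemma:characterization_geodesics_balphap}. Write $c \coloneqq 1-2^{-(p-\alpha p)} \in (0,1)$ and let $\mathcal{A}(\mu,\nu) \coloneqq \{\pi \in P(C([0,1];\mathcal{X})) : (e_0)_\#\pi = \mu,\ (e_1)_\#\pi = \nu\}$ be the admissible set. From \cref{lemma:bound_distance_balphap} I read off the pointwise inequality $d(\gamma_0,\gamma_1)^p \le c\,|\gamma|_{b^{\alpha,p}}^p$, valid for every continuous $\gamma$, and from \cref{lemma:characterization_geodesics_balphap} that equality holds precisely on $Geo([0,1];\mathcal{X})$. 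Both $\gamma \mapsto d(\gamma_0,\gamma_1)$ and $\gamma \mapsto |\gamma|_{b^{\alpha,p}}$ are Borel on $C([0,1];\mathcal{X})$ (the latter is lower semicontinuous, being the increasing supremum of its continuous dyadic partial sums), so the integrals below are well defined and $Geo([0,1];\mathcal{X})$ is Borel. The corollary then splits into the two inequalities defining the $\min$ plus the characterization of minimizers.

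For the inequality ``$\le$'', fix $\pi \in \mathcal{A}(\mu,\nu)$ and integrate the pointwise bound to get $\int_{\Gamma_1} d(\gamma_0,\gamma_1)^p \d\pi \le c \int_{\Gamma_1} |\gamma|_{b^{\alpha,p}}^p \d\pi$. Since $(e_0,e_1)_\#\pi$ is a coupling of $\mu$ and $\nu$, the left-hand side equals $\int_{\mathcal{X}^2} d(x,y)^p \d[(e_0,e_1)_\#\pi]$ and is therefore at least $W_p^p(\mu,\nu)$ by definition of the Wasserstein distance; taking the infimum over $\mathcal{A}(\mu,\nu)$ yields $W_p^p(\mu,\nu) \le c\,\inf_{\pi} \int_{\Gamma_1}|\gamma|_{b^{\alpha,p}}^p \d\pi$.

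For the reverse inequality together with attainment, I would build an explicit competitor. Choose an optimal coupling $\Upsilon \in \mathrm{OptCpl}(\mu,\nu)$, which exists since $\mu,\nu \in P_p(\mathcal{X})$ on a complete separable space, and push it forward through a measurable geodesic-selection map $\ell \colon \mathcal{X}^2 \to C([0,1];\mathcal{X})$ sending $(x,y)$ to a constant-speed geodesic from $x$ to $y$ --- the same device already used in step~3 of the Construction. Set $\pi \coloneqq \ell_\#\Upsilon$. The endpoint identities $e_0\circ\ell = \mathrm{Pr}^1$ and $e_1\circ\ell = \mathrm{Pr}^2$ give $\pi \in \mathcal{A}(\mu,\nu)$, and $\pi$ is concentrated on geodesics, so $|\gamma|_{b^{\alpha,p}}^p = c^{-1} d(\gamma_0,\gamma_1)^p$ for $\pi$-a.e.\ $\gamma$ by \cref{lemma:characterization_geodesics_balphap}. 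Hence $c\int_{\Gamma_1}|\gamma|_{b^{\alpha,p}}^p\d\pi = \int_{\Gamma_1} d(\gamma_0,\gamma_1)^p\d\pi = \int_{\mathcal{X}^2} d(x,y)^p \d\Upsilon = W_p^p(\mu,\nu)$, which matches the lower bound and shows the infimum is attained. The one genuine obstacle is the measurability of $\ell$: I would invoke a Kuratowski--Ryll-Nardzewski-type selection for the continuous surjection $(e_0,e_1)\colon Geo([0,1];\mathcal{X}) \to \mathcal{X}^2$ (or directly reuse the map from the Construction), passing to the $\Upsilon$-completion if needed.

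Finally, for the characterization of minimizers I would trace equality through the chain $W_p^p(\mu,\nu) \le \int_{\Gamma_1} d(\gamma_0,\gamma_1)^p\d\pi \le c\int_{\Gamma_1}|\gamma|_{b^{\alpha,p}}^p\d\pi = W_p^p(\mu,\nu)$, which holds for any minimizer $\pi$. Both inequalities must then be equalities: equality in the first forces $(e_0,e_1)_\#\pi$ to be an optimal coupling, while equality in the second, combined with the pointwise bound, forces $d(\gamma_0,\gamma_1)^p = c\,|\gamma|_{b^{\alpha,p}}^p$ for $\pi$-a.e.\ $\gamma$, i.e.\ $\pi(Geo([0,1];\mathcal{X})) = 1$ by \cref{lemma:characterization_geodesics_balphap}. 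The converse is exactly the computation from the previous paragraph: if $(e_0,e_1)_\#\pi \in \mathrm{OptCpl}(\mu,\nu)$ and $\pi$ lives on geodesics, then $c\int_{\Gamma_1}|\gamma|_{b^{\alpha,p}}^p\d\pi = \int_{\mathcal{X}^2} d(x,y)^p \d[(e_0,e_1)_\#\pi] = W_p^p(\mu,\nu)$, so $\pi$ is optimal. I expect the only real work to be the geodesic selection; everything else is integration of the two pointwise lemmas.
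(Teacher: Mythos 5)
Your proposal is correct and follows essentially the same route as the paper's proof: the lower bound by integrating \cref{lemma:bound_distance_balphap}, attainment via pushing an optimal coupling through a measurable geodesic selection (the paper handles the selection with Aumann's theorem as in \cref{rmk:measurable_selection_geodesics}, rather than Kuratowski--Ryll-Nardzewski, but this is immaterial), and the minimizer characterization by tracing equality through the same two inequalities using \cref{lemma:characterization_geodesics_balphap}.
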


\begin{table}%[h][tbhp]
    \begin{center}
    \captionsetup{font=footnotesize}
    \caption{Under the assumption that $(\mathcal{X},d)$ is a complete separable metric space in which closed bounded sets are compact, the table shows when the energy functional \eqref{eq:energy_functional_intro} satisfies the two properties in \cref{prop:existence_of_minimizer_intro}, which guarantees the existence of a minimizer.
    %We also emphasize that quadratic variation $[\gamma]$ in the sense of stochastic analysis also fails.
    } 
    \label{table:norms}
    {\footnotesize
    \renewcommand{\arraystretch}{1.5}
    \begin{tabular}{l|c|c|c|c|c}
        \hline
        \multirow{2}{*}{${\lvert \gamma \rvert}$} &
        \multirow{2}{*}{\shortstack{\textbf{parameter} \\ \textbf{range}}} &
        \multirow{2}{*}{\textbf{definition}} &
        \multirow{2}{*}{\shortstack{ \textcircled{\scriptsize{1}} \textbf{lower} \\\textbf{  semi-continuity} }} &
       \multirow{2}{*}{\shortstack{  \textcircled{\scriptsize{2}} \textbf{relatively} \\\textbf{ compact sublevels}}} &
       \multirow{2}{*}{\textbf{OK}}
       \\
       &&&&& 
       \\
        \hline
        % -------------------------------------
        $w_\delta (\gamma)$
        &
        $0 < \delta \leq T$
        &
        \cref{def:modulus_of_continuity}
        &
        $\checkmark$
        &
        $\times$
        &
        $\times$
        \\
        \hline
        % -------------------------------------
        $| \gamma |_{\upgamma\textrm{-}\mathrm{H\ddot{o}l}} $
        &
        $0<\upgamma \leq 1$
        &
        \cref{def:Holder_regularity}
        &
        $\checkmark$
        &
        $\checkmark$
        &
        $\checkmark$
        \\
        \hline
        % -------------------------------------
        $ | \gamma |_{p\textrm{-}\mathrm{var}} $
        &
        \multirow{2}{*}{$1 < p < \infty $}
        &
        \cref{def:p-variation}
        &
        $\checkmark$
        &
        $\times$
        &
        $\times$
        \\
        % -------------------------------------
        $ | \gamma |_{p\textrm{-}\mathrm{var}\textrm{-}\mathrm{limsup}}$
        &
        &
        \cref{eq:def_limsup_var}
        &
        $\times$
        &
        $\times$
        &
        $\times$
        \\
    \hline
        % -------------------------------------
        $| \gamma |_{W^{\alpha,p}}$
        &
        $1<p<\infty$ 
        &
        \cref{def:Walphap} 
        &
        \multirow{ 2}{*}{$\checkmark$}
        &
        \multirow{ 2}{*}{$\checkmark$}
        &
        \multirow{ 2}{*}{$\checkmark$}
        \\
        % -------------------------------------
        $| \gamma |_{b^{\alpha,p}}$
        &
        $\frac{1}{p}<\alpha < 1$
        &
        \cref{eq:Walphap_sum}
        &
        &
        \\
        \hline
        % -------------------------------------
        $| \gamma |_{W^{1,p}}$
        &
        $1<p<\infty$
        &
        \cref{def:W1p}
        &
        $\checkmark$
        &
        $\checkmark$
        &
        $\checkmark$
        \\
        \hline
    \end{tabular}
    }
    \end{center}
\end{table}

We conclude this section with a couple of remarks.
\begin{remark}[{Probabilistic formulation of \cref{prob:variational_problem} and measurability concerns}]\label{rmk:probabilistic_formulation}
    The variational problem for the case $\Gamma_T = C(I;\mathcal{X}) $ reads as follows in the language of stochastic analysis:
    
    \vspace{6pt}
    \begin{minipage}{0.97\textwidth}    \begin{problem}\label{prob:variational_problem_probabilistic}
        Let $\Psi: C(I;\mathcal{X}) \to [0,+\infty]$ be a measurable functional. Given $(\mu_t)_{t \in I} \subset P(\mathcal{X})$ with $I \coloneqq [0,T] \subset \mathbb{R}$, consider the variational  problem
        \begin{equation}\label{eq:variational_problem_probability_intro}
                \inf_{\mathrm{Process}(\mu_t)} \mathbb{E} \Big[ \Psi(X) \Big]
        \end{equation}
        where the infimum is taken over the set of all \emph{path-continuous stochastic processes} whose one-dimensional time marginals coincides with $(\mu_t)$:
        \begin{equation}\label{eq:set_process_intro}
            \mathrm{Process}(\mu_t) \coloneqq \Big\{ (\Omega,\mathcal{F},\mathbb{P}) \textrm{ and } X: (\Omega,\mathcal{F},\mathbb{P}) \to (C(I;\mathcal{X}),\mathcal{C}) : \quad (X_t)_{\#} \mathbb{P}  = \mu_t \text{ for all } t \in I \Big\}. 
        \end{equation}
    \end{problem}
    \end{minipage}
    
\vspace{6pt}
\noindent
In other words, the task is to find a probability space $(\Omega,\mathcal{F},\mathbb{P})$ and a stochastic process $(X_t)$ with marginals $(\mu_t)$ such that the expectation \eqref{eq:variational_problem_probability_intro} is minimized.
As before, if $\mathrm{Process}(\mu_t) = \emptyset$, the infimum \eqref{eq:variational_problem_probability_intro} is set $+\infty$.
As in the classical optimal transport, only the distribution of these processes enters the cost function. Thus, we will only focus on the formulation of \cref{prob:variational_problem}. 
%From a lift $\pi$, one can always obtain a process by considering its associated canonical process.
% the underlying probability space is irrelevant 

\noindent We emphasize a remark on measurability.
The choices for $\Psi$ that we study involve a measurable norm $|\cdot| : C(I;\mathcal{X}) \to [0,+ \infty]$. However, this is not necessarily measurable as a function from $\mathcal{X}^I \to [0,+ \infty]$.
This is the case, for instance, for $|\cdot|_{\upgamma\textrm{-}\mathrm{H\ddot{o}l}}$ and $|\cdot|_{W^{\alpha,p}}$. Hence, it is important to keep in mind that the set $\mathrm{Process}(\mu_t)$ is \emph{not} the set of \emph{all} processes $X: (\Omega, \mathcal{F}, \mathbb{P}) \to (\mathcal{X}^I,\mathcal{B}(\mathcal{X})^I)$ but rather the set of \emph{path-continuous} processes $X: (\Omega,\mathcal{F},\mathbb{P}) \to (C(I;\mathcal{X}),\mathcal{C})$.
Otherwise, the integral \eqref{eq:variational_problem_probability_intro} is not defined.
We recall that $\mathcal{B}(\mathcal{X})^I$ and $\mathcal{C}$ are the corresponding $\sigma$-algebras defined as the smallest $\sigma$-algebras such that all evaluation maps in these path spaces are measurable.
The aforementioned measurability issue arises for the same reason that  $C(I;\mathcal{X}) \notin \mathcal{B}(\mathcal{X})^I$. See e.g. \cite{RogersWilliams2000_V1,StroockVaradhan2006,FrizVictoir2010book} and the discussion in \cref{subsec:path_measures}.
\end{remark}

\begin{remark}[Failure of quadratic variation in the sense of stochastic analysis]\label{rmk:quadratic_variation}
Given $\gamma \in C([0,T];\mathcal{X})$, the stochastic calculus quadratic variation $[\gamma]_T$ over $[0,T]$ is usually defined by taking a limit over shrinking partitions of $[0,T]$.  
By the example in \cref{rmk:non-lsc-inf-variation}, one sees that the map $\gamma \mapsto [\gamma]_T$ from $C([0,T];\mathcal{X})\to [0,+\infty]$ is not lower semi-continuous. Furthermore, in \cref{exp:infinitesimal_variation}, by taking $\upgamma = \tfrac{1}{2}$ and $p>2$, one sees that there exists a determinist curve $(\mu_t) \subset P_p(\mathcal{X})$ with non-zero quadratic variation with respect to $W_p$, whose only path-continuous process $(X_t)$, equivalently its lift $\pi$, has absolutely continuous sample paths. In particular, 
    \begin{equation}
        [\mu]_T^2 > \mathbb{E} \Big[ [X]_T^2 \Big] = \int_{\Gamma_T} [\gamma]_T^2 \d \pi = 0,
    \end{equation}
    which shows that $[\mu]_T$ cannot even be bounded by the quadratic variation energy of its lift.  
\end{remark}

\subsection{Motivating example}\label{subsec:motivating_examples}
Here, we would like to draw attention to the fact that low-regularity paths can also arise deterministically as solutions to the continuity equation, with an explicit example given in \cref{exp:infinitesimal_variation}. 
    
\begin{example}[\textbf{Continuity equation with a weaker integrability}]
    In the first case of this example, we revisit the well-known relationship between the continuity equation and absolutely continuous Wasserstein curves under the $p$-integrability condition, \cite[Chapter 8]{AGS2008GFs} and \cite[Section 1]{Lisini2007}.
    In the second case, we assume a weaker integrability condition and highlight the estimates applied in the fractional setting.

     \vspace{2pt}
    
    \noindent
    \textbf{Setting.} Let $\mathrm{d} \in \mathbb{N}$ and  $\mathcal{X} = \mathbb{R}^\mathrm{d}$ be equipped with the Euclidean distance. As before, $I \coloneqq [0,T] \subset \mathbb{R}$ is a time interval. Suppose that $(\mu_t)_{t \in [0,T]}$ is a narrowly continuous Borel probability measure-valued solution to the continuity equation (CE)
    \begin{align}\label{eq:CE}
        \partial_t \mu_t  & = - \nabla \cdot (\mu_t v_t) \quad \qquad \text{in } (0,T) \times \mathbb{R}^{\mathrm{d}}  
    \end{align}
        with initial condition $\mu_0 \in P(\mathbb{R}^{\mathrm{d}})$ and for some Borel velocity vector field $v : [0,T] \times \mathbb{R}^{\mathrm{d}} \to \mathbb{R}^{\mathrm{d}}$, where $v_t(x) \coloneqq v(t,x)$.
        Throughout this example, we assume that 
        the couple $(\mu_t,v_t)$ satisfies the (global) 1-integrability condition:        \begin{equation}\label{eq:integrability_CE_1}
            \int_{0}^{T} \int_{\mathbb{R}^{\mathrm{d}}} |v_t | \d \mu_t \d t < + \infty,
        \end{equation}
        which guarantees the conservation of mass.
        We interpret \eqref{eq:CE} in the distributional sense (i.e., in duality with smooth functions with compact support). 
        Under the assumption \eqref{eq:integrability_CE_1}, the superposition principle of \cite{Ambrosio2008} ensures the existence of a path measure $\pi \in P (C(I;\mathbb{R}^\mathrm{d}))$ such that
        \begin{enumerate}[label=(\roman*), font=\normalfont]
            \item $\pi$ is concentrated on the set of curves $\gamma \in AC(I;\mathbb{R}^\mathrm{d})$ with $\dot{\gamma}_t = v_t (\gamma_t)$ for a.e. $t \in I$;
            \item $(e_t)_\#\pi=\mu_t$ for all $t\in I$.
        \end{enumerate}

        \vspace{2pt}
    
        \noindent
        \textbf{Case 1 ($W^{1,p}$-regularity).}
        Assume $\mu_0  \in P_p(\mathbb{R}^{\mathrm{d}})$ and the $p$-integrability condition,
        \begin{equation}\label{eq:integrability_CE_p}
            \int_{0}^{T} \int_{\mathbb{R}^{\mathrm{d}}} |v_t |^p \d \mu_t \d t < + \infty,
        \end{equation}
        for some $1<p<\infty$. The first implication of these conditions is that they ensure $\mu_t \in P_p(\mathbb{R}^\mathrm{d})$ for all $t \in (0,T]$.
        We can rewrite \eqref{eq:integrability_CE_p} as the $W^{1,p}$-energy of the lift $\pi$:
        \begin{equation}\label{eq:integrability_CE_W1p}
            \int_{\Gamma_T} |\gamma|^p_{W^{1,p};[0,T]} \d \pi (\gamma) < + \infty,
        \end{equation}
        where $W^{1,p}$-regularity of an $p$-absolutely continuous curve $\gamma : [0,T] \to \mathbb{R}^{\mathrm{d}}$ over an interval $[s,t] \subset [0,T]$ is given by (see \cref{subsec:W1p} for the definition of $W^{1,p}$-regularity) $$|\gamma|_{W^{1,p};[s,t]} = \left( \int_{[s,t]} |\dot{\gamma}_r|^p \d r \right)^{1/p},$$
        and we omit the time interval when $[s,t] = [0,T]$. An application of \emph{H\"{o}lder's inequality} yields
        \begin{equation}\label{eq:Holder_ineq_intro}
            |\gamma_t - \gamma_s| \leq  |t-s|^{1 - \frac{1}{p}} | \gamma |_{W^{1,p};[s,t]}.
        \end{equation}
    Raising it to power $p$, integrating with respect to the measure $\pi$, and noting that $(e_{s},e_{t})_{\#} \pi \in \text{Cpl} (\mu_{s}, \mu_{t})$ is not necessarily optimal, we obtain an upper bound for the Wasserstein distance 
    \begin{equation}
        W_p^p(\mu_s,\mu_t) \leq |t-s|^{p - 1} \int_{\Gamma_T} | \gamma |^p_{W^{1,p};[s,t]} \d \pi (\gamma).
    \end{equation}
    This can be used to infer that $(\mu_t)$ is an absolutely continuous curve. Moreover, as shown in \cite[Theorem 4]{Lisini2007} by the Lebesgue differentiation theorem, we obtain an upper bound for the metric speed $|\dot{\mu}_t| \leq \Vert \dot{\gamma}_t \Vert_{L^p (\pi)}$ for a.e. $t \in I$. Raising it to power $p$ and integrating in time yields
    $$
    |\mu|_{W^{1,p}}^p \leq \int_{\Gamma_T} |\gamma|_{W^{1,p}}^p \d \pi (\gamma),
    $$
    where again $|\mu|_{W^{1,p}}^p = \int_{[0,T]} |\dot{\mu}_t|^p \d t $. 
    This norm will be replaced by $|\cdot|_{W^{\alpha,p}}$ in the second case. 
    
    \vspace{2pt}
    
    \noindent
    \textbf{Case 2 ($W^{\alpha,p}$-regularity).}
    We still assume that $(\mu_t,v_t)$ satisfy the 1-integrability condition \eqref{eq:integrability_CE_1}, but not the $p$-integrability or its equivalent formulation \eqref{eq:integrability_CE_W1p} for $p>1$ (we also assume that there is no other velocity vector field satisfying the $p$-integrability and the continuity equation). 
    Instead, let us assume a \emph{weaker} integrability condition directly on $\pi$:
    \begin{equation}\label{eq:integrability_CE_Walphap}
            \int_{\Gamma_T} |\gamma|^p_{W^{\alpha,p};[0,T]} \d \pi (\gamma) < + \infty,
    \end{equation}
    for some $1<p<\infty$ and $1/p<\alpha < 1$, with the same initial condition $\mu_0  \in P_p(\mathbb{R}^{\mathrm{d}})$. These again ensure $(\mu_t) \subset P_p(\mathbb{R}^{\mathrm{d}})$ by \cref{thm:lift_to_mu_Walphap_intro}. 
    Such a situation as above is present in \cref{exp:infinitesimal_variation}, where the superposition of absolutely continuous curves results in, instead of an absolutely continuous Wasserstein curve, a H\"{o}lder curve of infinite length.
    To proceed, let us denote the $W^{\alpha,p}$-regularity of a curve $\gamma : [0,T] \to \mathbb{R}^{\mathrm{d}}$ over an interval $[s,t] \subset [0,T]$ by
    \begin{equation}
        | \gamma |_{W^{\alpha,p};[s,t]} \coloneqq \left( \iint_{[s,t]^2} \frac{|\gamma_u - \gamma_v|^p}{|u-v|^{1+\alpha p}} \d u \d v \right)^{1/p}.
    \end{equation}
    In the fractional setting, the H\"{o}lder's inequality \eqref{eq:Holder_ineq_intro} is replaced by the \emph{Garsia--Rodemich--Rumsey inequality} \cite{Garsia_Rodemich_Rumsey1970,FrizVictoir2010book}
    \begin{equation}
        |\gamma_t - \gamma_s| \leq \bar{c} |t-s|^{\alpha - \frac{1}{p}} | \gamma |_{W^{\alpha,p};[s,t]},
    \end{equation}
    where $ \bar{c} = \bar{c} (\alpha,p)$ is a constant. 
    Again, raising it to power $p$ and integrating it with respect to $\pi$, we get
    \begin{equation}
        W_p^p(\mu_s,\mu_t) \leq \bar{c}^p |t-s|^{\alpha p - 1} \int_{\Gamma_T} | \gamma |^p_{W^{\alpha,p};[s,t]} \d \pi (\gamma),
    \end{equation}
    which immediately implies that $(\mu_t)$ is a continuous curve. 
    Similar to the previous case, one can easily show, as stated in \cref{thm:lift_to_mu_Walphap_intro}, that $W^{\alpha,p}$-regularity of $(\mu_t)$ is finite: 
    $$
    |\mu|_{W^{\alpha,p}}^p \leq \int_{\Gamma_T} |\gamma|_{W^{\alpha,p}}^p \d \pi (\gamma).
    $$
\end{example}

%\subsection{Organization of this chapter}
%The rest of this chapter is structured as follows:
\subsection{Organization of the paper}
The rest of the paper is structured as follows:

\begin{itemize}
    \item \textbf{\cref{sec:preliminaries_d} (Preliminaries).}
     We collect the required notions and results. In addition, the following two elementary results may be of independent interest: 
    \begin{itemize}[label=$\circ$,leftmargin=*]
        \item \cref{lemma:balphap_Xn}. Computation of $b^{\alpha,p}$-regularity for piece-wise geodesic curves. 
        \item \cref{crl:tightness_Holder_discrete}. A discrete relaxation of the well-known Kolmogorov--Lamperti tightness condition for deterministic path measures.
    \end{itemize} 
    \item \textbf{\cref{sec:results_d} (Main results).} We prove the main theorems. 
    \item \textbf{\cref{sec:counterexamples_d} (Counterexamples).}  We give examples to understand non-compatibility. 
\end{itemize}

\subsection{Acknowledgements}
The author would like to thank his supervisor, Matthias Erbar, for invaluable guidance and numerous discussions. He also thanks Timo Schultz for insightful conversations, which significantly improved the results and eventually led to  \cref{exp:infinitesimal_variation,exp:compatibility_vs_noncompatibility,exp:non_compatibile_infinite_energy}. He also thanks Zhenhao Li for helpful comments on some technical details. Additionally, he extends his thanks to Kohei Suzuki and Vitalii Konarovskyi for insightful discussions. This work is supported by the Deutsche Forschungsgemeinschaft (DFG, German Research Foundation) -- Project-ID 317210226 -- SFB 1283. 

\section{Preliminaries}\label{sec:preliminaries_d}
\subsection{Continuous paths on metric spaces}\label{subsec:cont_paths}
      Throughout the paper, $(\mathcal{X},d)$ is a metric space and $[0,T] \subset \mathbb{R}$ is a time interval. Any additional structure on $(\mathcal{X},d)$ will be explicitly stated.
      Let $\Gamma_T \coloneqq C([0, T ]; \mathcal{X})$ denote the space of all continuous paths $X: [0,T] \to \mathcal{X}$.
      When $T=1$, we write $\Gamma \coloneqq \Gamma_1$.       
      For any two paths $X,Y \in \Gamma_T$, the supremum distance is defined by 
    \begin{equation}\label{eq:sup_distance}
            d_{\infty}(X,Y) \coloneqq \sup_{t \in [0,T]} d(X_t,Y_t),
    \end{equation}
    and the topology induced by which is called supremum or uniform topology. If $(\mathcal{X}, d)$ is complete and separable, so is  $(\Gamma_T, d_\infty)$. For compactness in $\Gamma_T$, we need the modulus of continuity:
    \begin{definition}[Modulus of continuity]\label{def:modulus_of_continuity}
        Given $0 < \delta \leq T$, the modulus of continuity of a path $X \in C([0,T];\mathcal{X})$ is defined by 
        \begin{equation}\label{eq:modulus_of_continuity}
        w_\delta (X) \coloneqq \sup_{|t-s|\leq \delta } d(X_s,X_t).
    \end{equation}
    When $\delta = T$, we write $|X|_0 \coloneqq w_T(X)$.
    \end{definition}
    
    The Arzel\`a-Ascoli theorem provides a characterization of relative compactness in $\Gamma_T$ (see, e.g., \cite[Theorem 7.2]{Billingsley1999} or \cite[Theorem 1.4]{FrizVictoir2010book}).
    \begin{theorem}[Arzel\`a-Ascoli]\label{thm:Arzela-Ascoli}
        Let $(\mathcal{X},d)$ be a complete metric space in which closed bounded sets are compact. 
        A set $\mathcal{A} \subset C([0,T];\mathcal{X})$ is relatively compact if and only if the following holds
         \begin{enumerate}[label=(\roman*), font=\normalfont]
            \item $\mathcal{A}$ is bounded at $t=0$, i.e., given an arbitrary fixed point $\bar{x} \in \mathcal{X},
            $\begin{equation}
                \sup_{\gamma \in \mathcal{A}}  d (\gamma_0, \bar{x}) < \infty,
            \end{equation} 
            \item $\mathcal{A}$ is equicontinuous, i.e., 
            \begin{equation}
                \lim_{\delta \to 0} \sup_{\gamma \in \mathcal{A} } w_{\delta} ( \gamma) = 0. 
            \end{equation} 
        \end{enumerate}
    \end{theorem}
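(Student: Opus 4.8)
The plan is to exploit that $(\Gamma_T, d_\infty) = (C([0,T];\mathcal{X}), d_\infty)$ is itself a complete metric space (since $(\mathcal{X},d)$ is complete), so that relative compactness of $\mathcal{A}$ is equivalent to relative \emph{sequential} compactness: every sequence in $\mathcal{A}$ admits a $d_\infty$-convergent subsequence with limit in $\Gamma_T$. I would then prove the two implications separately, and it is only in one of them that the Heine–Borel hypothesis on $\mathcal{X}$ is actually used.

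For the sufficiency direction ($(i)+(ii) \Rightarrow$ relative compactness) I would first upgrade the two hypotheses to a \emph{uniform} bound. Chaining a point $s$ to a point $t$ through at most $\lceil T/\delta\rceil$ intermediate nodes at mutual gaps $\le \delta$ gives $w_T(\gamma) \le \lceil T/\delta\rceil\, \sup_{\eta \in \mathcal{A}} w_\delta(\eta)$, which by (ii) is finite for $\delta$ small; combined with (i) and $d(\gamma_t,\bar{x}) \le d(\gamma_0,\bar{x}) + w_T(\gamma)$, this produces a radius $R$ with $\gamma_t \in \bar{B}(\bar{x},R)$ for all $\gamma \in \mathcal{A}$ and $t \in [0,T]$. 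By the Heine–Borel hypothesis, the closed ball $K \coloneqq \bar{B}(\bar{x},R)$ is compact. Fixing a countable dense set $\{t_j\} \subset [0,T]$, a diagonal extraction over $j$ produces, for any given sequence $(\gamma_n) \subset \mathcal{A}$, a subsequence along which $\gamma_n(t_j)$ converges in $K$ for every $j$. Finally I would promote this pointwise convergence to uniform convergence by the standard $\varepsilon/3$ argument: choose $\delta$ with $\sup_\eta w_\delta(\eta) < \varepsilon/3$, cover $[0,T]$ by finitely many of the $t_j$ at mutual gaps $\le \delta$, and estimate $d(\gamma_n(t),\gamma_m(t))$ through the nearest such $t_j$. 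This shows the subsequence is uniformly Cauchy, hence $d_\infty$-convergent in the complete space $\Gamma_T$.

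For the necessity direction (relative compactness $\Rightarrow (i)+(ii)$) I would work on the compact closure $\overline{\mathcal{A}}$. For (i), the map $\gamma \mapsto d(\gamma_0,\bar{x})$ is $1$-Lipschitz for $d_\infty$, hence continuous, so it is bounded on the compact set $\overline{\mathcal{A}}$. For (ii), I would use total boundedness: given $\varepsilon>0$, pick a finite $\varepsilon/3$-net $\gamma^1,\dots,\gamma^M \in \mathcal{A}$; each $\gamma^i$ is uniformly continuous on the compact interval $[0,T]$, so there is $\delta_i$ with $w_{\delta_i}(\gamma^i) < \varepsilon/3$, and with $\delta \coloneqq \min_i \delta_i$ a triangle inequality through the nearest net element gives $w_\delta(\gamma) \le \varepsilon$ uniformly in $\gamma \in \mathcal{A}$, which is precisely equicontinuity.

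The main obstacle sits in the sufficiency direction: the hypothesis that closed bounded sets in $\mathcal{X}$ are compact is exactly what makes the diagonal extraction yield pointwise-convergent subsequences (it would fail for a general infinite-dimensional $\mathcal{X}$), and the delicate point is the $\varepsilon/3$ interplay between equicontinuity and density that converts pointwise convergence on $\{t_j\}$ into uniform convergence on all of $[0,T]$. Everything else reduces to bookkeeping with the triangle inequality and the completeness of $\Gamma_T$.
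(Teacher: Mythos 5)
Your proof is correct. Note, however, that the paper does not prove this statement at all: it is quoted as a classical result with references to \cite[Theorem 7.2]{Billingsley1999} and \cite[Theorem 1.4]{FrizVictoir2010book}, so there is no in-paper argument to compare against. Your two-step argument (upgrading (i)+(ii) to a uniform bound so that the Heine--Borel hypothesis yields a compact ball, then diagonal extraction over a countable dense set followed by the $\varepsilon/3$ chaining to get uniform Cauchyness, and conversely total boundedness plus uniform continuity of a finite net for the necessity direction) is precisely the standard textbook proof given in those references, and all the steps check out.
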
 
    \begin{remark}
        Arzel\`a-Ascoli theorem is sometimes formulated with the following condition replaced with (i)
        \begin{enumerate}
            \item[(i$'$)] $\mathcal{A}$ is bounded, i.e., 
            \begin{equation}
                \sup_{\gamma \in \mathcal{A}} \sup_{t \in [0,T]}d (\gamma_t, \bar{x}) < \infty.
            \end{equation}
        \end{enumerate}
        Clearly, (i$'$) $\Rightarrow$ (i). 
        Conversely, (i) and equicontinuity imply (i$'$).
        Hence, one can use either of these conditions.
        We will use the version (i) as stated in the theorem. 
    \end{remark}
    \begin{lemma}[{Lower semi-continuity of $\gamma \mapsto w_\delta(\gamma)$}]
            Let $0<\delta \leq T$.
            The map $\gamma \mapsto w_\delta(\gamma) $ from $ \Gamma_T \to [0,\infty)$ is lower semi-continuous with respect to pointwise convergence, and in particular, with respect to uniform convergence. 
        \end{lemma}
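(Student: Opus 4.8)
The plan is to realize $w_\delta$ as a pointwise supremum of a family of functionals each of which is continuous with respect to pointwise convergence, and then invoke the elementary fact that a supremum of lower semi-continuous functions is lower semi-continuous.

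First I would fix a pair $(s,t) \in [0,T]^2$ with $|t-s| \leq \delta$ and consider the functional $\gamma \mapsto d(\gamma_s,\gamma_t) = d(e_s(\gamma),e_t(\gamma))$. Since the evaluation maps $e_s$ and $e_t$ are, by their very definition, continuous with respect to pointwise convergence, and the metric $d \colon \mathcal{X}\times\mathcal{X} \to [0,\infty)$ is continuous, this functional is continuous with respect to pointwise convergence on $\Gamma_T$: if $\gamma^n \to \gamma$ pointwise, then $\gamma^n_s \to \gamma_s$ and $\gamma^n_t \to \gamma_t$, whence $d(\gamma^n_s,\gamma^n_t) \to d(\gamma_s,\gamma_t)$.

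Next, using $w_\delta(\gamma) = \sup_{|t-s|\leq\delta} d(\gamma_s,\gamma_t)$, I would argue lower semi-continuity directly. Suppose $\gamma^n \to \gamma$ pointwise. For every admissible pair $(s,t)$ the previous step gives $\liminf_n w_\delta(\gamma^n) \geq \liminf_n d(\gamma^n_s,\gamma^n_t) = d(\gamma_s,\gamma_t)$. Taking the supremum over all $(s,t)$ with $|t-s|\leq\delta$ on the right-hand side yields $\liminf_n w_\delta(\gamma^n) \geq w_\delta(\gamma)$, which is exactly lower semi-continuity with respect to pointwise convergence. Finally, since uniform convergence implies pointwise convergence, the same lower semi-continuity holds with respect to the uniform topology.

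Since the argument is entirely elementary, I expect no real obstacle; the single point deserving care is the order of operations in the last display, where one must bound $\liminf_n w_\delta(\gamma^n)$ below by a \emph{single} (arbitrary) term $d(\gamma_s,\gamma_t)$ and only afterwards pass to the supremum over $(s,t)$, rather than attempting to interchange $\liminf_n$ with the supremum.
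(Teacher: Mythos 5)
Your argument is correct and coincides with the paper's own proof: both establish $d(\gamma_s,\gamma_t)=\lim_n d(\gamma^n_s,\gamma^n_t)\leq \liminf_n w_\delta(\gamma^n)$ for each fixed admissible pair $(s,t)$ and then take the supremum over such pairs afterwards. No further comment is needed.
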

        \begin{proof}
            Let $(\gamma^n) \subset \Gamma_T$, $n \in \mathbb{N}$, be a sequence of paths such that $\gamma^n \to \gamma$ pointwise on $[0,T]$.
            Take arbitrary time points $s,t \in [0,T]$ such that $|t-s|\leq  \delta$. We have 
            \begin{align}
                d(\gamma_s,\gamma_t)
                 = \liminf_{n\to \infty } d(\gamma_s^n,\gamma_t^n)  \leq \liminf_{n\to \infty } \sup_{|v-u|\leq \delta}  d(\gamma_u^n,\gamma_v^n) = \liminf_{n\to \infty } w_\delta (\gamma^n)
            \end{align}
            Taking the supremum over all $|t-s| \leq  \delta$ yields the result. 
        \end{proof}

        \begin{remark}[Non-compactness of sublevels of $\gamma \mapsto d (\gamma_0, \bar{x} ) +  w_\delta (\gamma)$]\label{lemma:non_compact_sublevel_modulus}
             Let $\mathcal{X} = \mathbb{R}^{\mathrm{d}}$, $\bar{x} = 0$, and $\delta = T$. Consider continuous curves in the sublevels of $\gamma \mapsto |\gamma_0| +  w_T (\gamma)$. These are continuous functions bounded by a constant, which obviously do not need to be equicontinuous.
        \end{remark}

\subsection{H\"{o}lder paths on metric spaces}\label{subsec:Holder_paths}

    \begin{definition}[{$\upgamma$-H\"{o}lder} continuity]\label{def:Holder_regularity}
        Given $\upgamma \in [0,1]$, the $\upgamma$-H\"{o}lder continuity of a  path $X \in C ([0,T] ; \mathcal{X})$ over $[s,t] \subset [0,T]$ is defined by 
        \begin{equation}
            | X |_{\upgamma\textrm{-}\mathrm{H\ddot{o}l};[s,t]} \, \coloneqq \sup_{s \leq u < v \leq t} \frac{d(X_u,X_v)}{|v-u|^\upgamma}.
        \end{equation}
        $C^{\upgamma\textrm{-} \mathrm{H\ddot{o}l}} ([0,T];\mathcal{X})$ denotes the set of all paths
        $X \in C ([0,T] ; \mathcal{X})$ such that 
        \begin{equation}
            | X |_{\upgamma\textrm{-}\mathrm{H\ddot{o}l}}  \coloneqq | X |_{\upgamma\textrm{-}\mathrm{H\ddot{o}l};[0,T]} < \infty.
        \end{equation}
    \end{definition}
     From the above and \cref{def:modulus_of_continuity}, we have that $C^{0\textrm{-} \mathrm{H\ddot{o}l}} ([0,T];\mathcal{X}) = C ([0,T];\mathcal{X})$ and 
    \begin{equation}\label{eq:def:0_Holder_cont}
         | X |_{0\textrm{-}\mathrm{H\ddot{o}l}} \coloneqq \sup_{0\leq u < v \leq T} d(X_u,X_v) \eqqcolon | X |_{0}.
    \end{equation}
    Below, a simple yet useful characterization of H\"{o}lder continuity is provided. It states that for a continuous curve $X:[0,1] \to \mathcal{X}$ to be H\"{o}lder, it is enough to verify the H\"{o}lder condition only at the dyadic time points of $[0,1]$.
    The result is taken from \cite[Lemma 2]{LyonsVictoir2007}, whose proof is inspired by the classical proof of Kolmogorov--\v Centsov continuity theorem (see e.g. \cite[Theorem 2.8]{KaratzasShreve2012}).
     
    \begin{theorem}[{A discrete characterization of $C^{\upgamma\textrm{-} \mathrm{H\ddot{o}l}}$ \cite[Lemma 2]{LyonsVictoir2007}}]\label{thm:disc_char_Holder}
            Given $0 < \upgamma \leq 1$, the following are equivalent:
            \begin{itemize}
                \item $X:[0,1] \to \mathcal{X}$ satisfies
                \begin{equation}\label{eq:Holder_cont_cond}
                    d\big(X_s, X_t\big) \leq c |t-s|^\upgamma \qquad \forall t,s \in [0,1],
                \end{equation}
                for some constant $c$.
                \item $X:[0,1] \to \mathcal{X}$ is continuous and satisfies
                \begin{equation}\label{eq:Holder_disc_cond}
                     d \big(X_{t_{k}^{(m)}},X_{t_{k+1}^{(m)}}\big) \leq \tilde{c} |\Delta t _m |^\upgamma \qquad \forall m \in \mathbb{N}_0, \, k \in \{0,1,\cdots, 2^m-1\},
                \end{equation}
                  for some  constant $\tilde{c}$, where $t_k^{(m)} \coloneqq \frac{k}{2^m} $ and $\Delta t_m \coloneqq \frac{1}{2^m}$.  
            \end{itemize}
        \end{theorem}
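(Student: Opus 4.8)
The plan is to prove the two implications separately; the forward direction is immediate and essentially all of the work lies in the converse.

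For \eqref{eq:Holder_cont_cond} $\Rightarrow$ \eqref{eq:Holder_disc_cond}: if $d(X_s,X_t) \le c\,|t-s|^\upgamma$ holds for all $s,t$, then $X$ is in particular continuous, and restricting the inequality to the consecutive dyadic pairs $s = t_k^{(m)}$, $t = t_{k+1}^{(m)}$ yields \eqref{eq:Holder_disc_cond} with $\tilde c = c$. No further argument is needed here.

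For the converse \eqref{eq:Holder_disc_cond} $\Rightarrow$ \eqref{eq:Holder_cont_cond}, which is the heart of the statement, I would write $D_m \coloneqq \{t_k^{(m)} : 0 \le k \le 2^m\}$ for the level-$m$ dyadic grid, $D \coloneqq \bigcup_m D_m$, and set the level bound $K_m \coloneqq \sup_{0\le k \le 2^m-1} d(X_{t_k^{(m)}}, X_{t_{k+1}^{(m)}}) \le \tilde c\, 2^{-m\upgamma}$. The engine is a dyadic chaining estimate, proved by induction on $m$: for every $n \ge 0$ and every pair $s,t \in D_m$ with $m \ge n$ and $t - s \le 2^{-n}$, one has $d(X_s,X_t) \le 2\sum_{j=n}^m K_j$. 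The base case $m = n$ reduces to equal or consecutive level-$n$ points, so $d \le K_n$. For the step from $m$ to $m+1$, given $s,t \in D_{m+1}$ with $t-s \le 2^{-n}$, I replace $s$ by the nearest level-$m$ point $s'$ on its right and $t$ by the nearest level-$m$ point $t'$ on its left; then $s'-s,\, t-t' \in \{0, 2^{-(m+1)}\}$, so each of $d(X_s,X_{s'})$, $d(X_{t'},X_t)$ is at most $K_{m+1}$, while $s',t' \in D_m$ still satisfy $t'-s' \le 2^{-n}$ and are controlled by the inductive hypothesis; the triangle inequality closes the induction. The degenerate configuration $t-s < 2^{-m}$, where $s,t$ are already consecutive at level $m+1$ and no intermediate grid point exists, is handled directly.

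Granting the chaining estimate, I would fix dyadic $s < t$ and choose the integer $n$ with $2^{-(n+1)} < t-s \le 2^{-n}$; since $s,t$ lie in a common $D_m$ with $m \ge n$ and $\upgamma > 0$, summing the geometric series gives
\[
  d(X_s, X_t) \le 2\sum_{j=n}^\infty K_j \le \frac{2\tilde c}{1-2^{-\upgamma}}\, 2^{-n\upgamma}.
\]
Using $2^{-n} < 2(t-s)$ converts the right-hand side into $c\,|t-s|^\upgamma$ with the explicit constant $c = \tfrac{2^{1+\upgamma}}{1-2^{-\upgamma}}\,\tilde c$, which establishes \eqref{eq:Holder_cont_cond} on the dense set $D$; the assumed continuity of $X$ then extends the bound to arbitrary $s,t \in [0,1]$. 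I expect the main obstacle to be the bookkeeping in the chaining lemma, namely ensuring that crossing one dyadic level costs only a bounded number (here two) of elementary steps rather than a number growing with the level, and handling the degenerate cases correctly. This is exactly where the convergence of $\sum_j 2^{-j\upgamma}$ — and hence the necessity of $\upgamma > 0$ — enters, and why the naive telescoping over all consecutive level-$m$ points, which would accumulate $\sim 2^m$ terms, must be avoided.
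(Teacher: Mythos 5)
Your proof is correct. The paper does not prove this statement itself but cites \cite[Lemma 2]{LyonsVictoir2007}, noting that the cited proof is modeled on the classical Kolmogorov--\v Centsov argument; your dyadic chaining (crossing each level at the cost of two increments, summing the geometric series $\sum_j 2^{-j\upgamma}$, and using the assumed continuity to pass from the dyadics to all of $[0,1]$) is exactly that argument, carried out correctly and with an explicit constant.
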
    
        While one direction is trivial, we stated the theorem in this way for its resemblance to the forthcoming \cref{thm:Walphap_balphap}. It is important to note that in the second condition, we evaluate $X$ only at a countable dense subset of the interval $[0,1]$. Thus, continuity needs to be added as an extra assumption.
        The result above differs slightly from the original statement in \cite{LyonsVictoir2007}, where a (unique) continuous modification is constructed (for which one needs the space to be complete). Here due to the continuity assumption, the continuous modification coincides with the original curve at all times (and no additional assumption on the metric space is needed).
        
        We recall some properties of the H\"{o}lder regularity. For below, see e.g. \cite[Lemma 5.12]{FrizVictoir2010book}. 
        \begin{lemma}[{Lower semi-continuity of $\gamma \mapsto | \gamma |_{\upgamma\textrm{-}\mathrm{H\ddot{o}l}}$}]
            Let $0<\upgamma\leq 1$.
            The map $\gamma \mapsto | \gamma |_{\upgamma\textrm{-}\mathrm{H\ddot{o}l}} $ from $\Gamma_T \to [0,\infty]$ is lower semi-continuous with respect to pointwise convergence, and in particular, with respect to uniform convergence. 
        \end{lemma}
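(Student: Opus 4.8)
The plan is to follow verbatim the structure of the preceding lemma on $w_\delta$, exploiting that the Hölder seminorm is a supremum of maps each of which is continuous (hence lower semi-continuous) under pointwise convergence. Since uniform convergence implies pointwise convergence, it suffices to establish the claim for the latter.

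First I would fix a sequence $(\gamma^n) \subset \Gamma_T$, $n \in \mathbb{N}$, with $\gamma^n \to \gamma$ pointwise on $[0,T]$, and then fix arbitrary time points $u,v \in [0,T]$ with $u < v$. Pointwise convergence yields $\gamma^n_u \to \gamma_u$ and $\gamma^n_v \to \gamma_v$, so the continuity of the metric $d$ gives $d(\gamma^n_u,\gamma^n_v) \to d(\gamma_u,\gamma_v)$. Dividing by the fixed positive quantity $|v-u|^\upgamma$ and bounding the single ratio by the supremum defining the seminorm, I obtain
\begin{equation}
\frac{d(\gamma_u,\gamma_v)}{|v-u|^\upgamma} = \liminf_{n\to\infty} \frac{d(\gamma^n_u,\gamma^n_v)}{|v-u|^\upgamma} \leq \liminf_{n\to\infty} |\gamma^n|_{\upgamma\textrm{-}\mathrm{H\ddot{o}l}}.
\end{equation}
Taking the supremum over all $0 \leq u < v \leq T$ on the left-hand side then yields $|\gamma|_{\upgamma\textrm{-}\mathrm{H\ddot{o}l}} \leq \liminf_{n\to\infty} |\gamma^n|_{\upgamma\textrm{-}\mathrm{H\ddot{o}l}}$, which is precisely lower semi-continuity. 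The inequality is valid in $[0,+\infty]$, so no finiteness of the seminorm need be assumed.

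I do not expect a genuine obstacle here: the argument only ever evaluates curves at two fixed times, so pointwise convergence is already enough, and the passage to the supremum is just the standard fact that a supremum of lower semi-continuous functions is lower semi-continuous. The one point worth keeping in mind is that the bound on the right-hand side is \emph{uniform} in the pair $(u,v)$ — it does not depend on $u,v$ — which is exactly what licenses taking the supremum in $(u,v)$ before passing to the $\liminf$ in $n$.
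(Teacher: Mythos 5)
Your proof is correct and is exactly the standard argument: the paper itself gives no proof for this lemma (it cites \cite[Lemma 5.12]{FrizVictoir2010book}), but your argument is the verbatim adaptation of the paper's own proof of the preceding lemma on $w_\delta$, evaluating at two fixed times, passing to the limit via continuity of $d$, and then taking the supremum over pairs $(u,v)$. Nothing further is needed.
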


        As a straightforward consequence of Arzel\`a-Ascoli, we have:
        \begin{lemma}[Compact sublevels of $\gamma \mapsto d (\gamma_0, \bar{x} ) +   | \gamma |_{\upgamma\textrm{-}\mathrm{H\ddot{o}l}}$]\label{lemma:compact_sublevel_Holder}
            Let $(\mathcal{X},d)$ be a complete metric space in which closed bounded sets are compact.
            Given $0<\upgamma\leq 1$ and an arbitrary point $\bar{x}\in \mathcal{X}$, the map $\gamma \mapsto \Psi (\gamma) \coloneqq d (\gamma_0, \bar{x} ) +  | \gamma |_{\upgamma\textrm{-}\mathrm{H\ddot{o}l}}$ from $\Gamma_T \to [0,+\infty]$ has compact sublevels in $\Gamma_T$. 
        \end{lemma}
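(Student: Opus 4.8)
The plan is to fix an arbitrary level $C \ge 0$, set $\mathcal{A}_C := \{\gamma \in \Gamma_T : \Psi(\gamma) \le C\}$, and prove that $\mathcal{A}_C$ is compact in $(\Gamma_T, d_\infty)$ by showing it is both relatively compact (via Arzel\`a--Ascoli) and closed (via lower semi-continuity of $\Psi$). Since a closed subset of a set with compact closure is itself compact, these two facts combined yield the claim.

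First I would verify the two hypotheses of \cref{thm:Arzela-Ascoli} on $\mathcal{A}_C$. Boundedness at $t=0$ is immediate: for every $\gamma \in \mathcal{A}_C$ both summands of $\Psi$ are nonnegative, so $d(\gamma_0,\bar{x}) \le \Psi(\gamma) \le C$, whence $\sup_{\gamma \in \mathcal{A}_C} d(\gamma_0,\bar{x}) \le C < \infty$. For equicontinuity, the same nonnegativity gives $|\gamma|_{\upgamma\textrm{-}\mathrm{H\ddot{o}l}} \le \Psi(\gamma) \le C$, which yields the uniform pointwise bound $d(\gamma_s,\gamma_t) \le C|t-s|^{\upgamma}$ for all $s,t \in [0,T]$ and all $\gamma \in \mathcal{A}_C$. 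Consequently $w_\delta(\gamma) \le C\delta^{\upgamma}$ uniformly in $\gamma \in \mathcal{A}_C$, and since $\upgamma > 0$ we obtain $\lim_{\delta \to 0}\sup_{\gamma \in \mathcal{A}_C} w_\delta(\gamma) = 0$. Invoking that $(\mathcal{X},d)$ is complete with closed bounded sets compact, \cref{thm:Arzela-Ascoli} then gives that $\mathcal{A}_C$ is relatively compact in $\Gamma_T$.

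Second I would argue that $\mathcal{A}_C$ is closed. The map $\gamma \mapsto d(\gamma_0,\bar{x})$ is continuous for the uniform topology (it factors through the continuous evaluation $e_0$), while $\gamma \mapsto |\gamma|_{\upgamma\textrm{-}\mathrm{H\ddot{o}l}}$ is lower semi-continuous with respect to uniform convergence by the preceding lemma. Their sum $\Psi$ is therefore lower semi-continuous, so every sublevel set $\mathcal{A}_C = \{\Psi \le C\}$ is closed. Combining closedness with relative compactness gives $\mathcal{A}_C = \overline{\mathcal{A}_C}$, which is compact.

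I do not anticipate a genuine obstacle: the statement is essentially a bookkeeping combination of Arzel\`a--Ascoli and lower semi-continuity, and this is precisely why it is flagged as a \emph{straightforward consequence}. The only points requiring a moment of care are that one may split the single bound $\Psi(\gamma)\le C$ into the two separate estimates $d(\gamma_0,\bar{x})\le C$ and $|\gamma|_{\upgamma\textrm{-}\mathrm{H\ddot{o}l}}\le C$ (legitimate since both summands are nonnegative), and that the hypothesis $\upgamma > 0$ is exactly what turns the H\"older bound $C\delta^{\upgamma}$ into uniform equicontinuity as $\delta \to 0$.
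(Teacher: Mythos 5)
Your proof is correct and follows exactly the argument the paper intends (and spells out explicitly for the analogous $W^{\alpha,p}$ case in \cref{lemma:compact_sublevel}): split the bound $\Psi(\gamma)\le C$ into the two nonnegative summands, get boundedness at $t=0$ and uniform equicontinuity from $d(\gamma_s,\gamma_t)\le C|t-s|^{\upgamma}$, apply Arzel\`a--Ascoli for relative compactness, and close the sublevel set via lower semi-continuity of $\Psi$. The only difference from the $W^{\alpha,p}$ proof is that here the modulus-of-continuity estimate is immediate from the definition rather than requiring the Garsia--Rodemich--Rumsey inequality, which is precisely why the paper labels this case a straightforward consequence.
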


\subsection{\texorpdfstring{$p$}{p}-variation paths on metric spaces}\label{subsec:var_paths}
    Let $$D \coloneqq \big\{ s = t_0 < t_1 < \cdots < t_N = t \big\}, \quad N \in \mathbb{N}$$
    be a dissection of the time interval $[s,t] \subset \mathbb{R}$.
    The mesh of $D$ is defined as
    $$
    | D | \coloneqq \max_{ i \in \{1,\cdots, N \} }  |t_{i} - t_{i-1}|. 
    $$
    The $p$-variation ($p \geq 1 $) of a path $X:[0,T]\to \mathcal{X}$ over a fixed dissection $D$ is defined as
    \begin{equation}
        \sum_{t_i \in D} d(X_{t_{i} }, X_{t_{i+1}} )^p,
    \end{equation}
    with the convention $t_{N+1} = t_{N}$.
    We let $\mathcal{D}([s,t])$ denote the set of all partitions of $[s,t]$ and $\mathcal{D}_{\delta}([s,t])$ denote the set of all partitions of $[s,t]$ whose mesh size is less than or equal to $\delta>0$.

    \begin{definition}[{$p$-variation}]\label{def:p-variation}
         Given  $p\geq 1$, the $p$-variation of a path $X\in C([0,T]; \mathcal{X})$ over $[s,t] \subset [0,T]$ is defined by 
         \begin{equation}
             | X |_{p\textrm{-}\mathrm{var};[s,t]} \coloneqq \left( \sup_{t_i \in \mathcal{D}([s,t])} \sum_{i} d(X_{t_i}, X_{t_{i+1}})^p\right)^{1/p}. 
         \end{equation}
         $C^{p\textrm{-} \mathrm{var}} ([0,T];\mathcal{X})$ denotes the set of all continuous paths $X$ such that
        \begin{equation}
            | X |_{p\textrm{-}\mathrm{var}}  \coloneqq | X |_{p\textrm{-}\mathrm{var};[0,T]} < \infty.
        \end{equation}
    \end{definition}
    
    We collect some useful results on $p$-variation. First, we recall that if $1 \leq q \leq p < \infty $, then
    \begin{equation}\label{eq:var_p_q_embedding}
        C^{q\textrm{-} \mathrm{var}}([0,T];\mathcal{X}) \subset C^{p\textrm{-} \mathrm{var}}([0,T];\mathcal{X}).
    \end{equation}
    Two subsequent lemmas are taken from \cite[Proposition 5.6]{FrizVictoir2010book}.

    \begin{lemma}[An infinitesimal characterization of $C^{p\textrm{-} \mathrm{var}}$]\label{lemma:infinitesimal_char_var}
        Let $p\geq 1$ and $X \in C([0,T];\mathcal{X})$. Then the following are equivalent:
        \begin{itemize}
            \item $X$ is of finite $p$-variation, i.e., 
            \begin{equation}\label{eq:def_sup_var}
                 | X |_{p\textrm{-}\mathrm{var}} \coloneqq \left( \sup_{t_i \in \mathcal{D}([0,T])} \sum_{i} d(X_{t_i}, X_{t_{i+1}})^p\right)^{1/p} < \infty . 
             \end{equation}                \item $X$ satisfies
            \begin{equation}\label{eq:def_limsup_var}
                | X |_{p\textrm{-}\mathrm{var}\textrm{-}\mathrm{limsup}} \coloneqq \left( \limsup_{\delta \to 0 } \sup_{t_i \in \mathcal{D}_{\delta}([0,T])} \sum_{i} d(X_{t_i}, X_{t_{i+1}})^p \right)^{1/p} < \infty. 
            \end{equation}
        \end{itemize}
    \end{lemma}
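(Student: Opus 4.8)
The plan is to work with the full $p$-variation to the power $p$, namely $\omega(s,t) \coloneqq |X|_{p\textrm{-}\mathrm{var};[s,t]}^p$, together with the mesh-restricted quantity $V_\delta \coloneqq \sup_{D \in \mathcal{D}_\delta([0,T])} \sum_i d(X_{t_i},X_{t_{i+1}})^p$, so that $|X|_{p\textrm{-}\mathrm{var}}^p = \omega(0,T)$ and $|X|_{p\textrm{-}\mathrm{var}\textrm{-}\mathrm{limsup}}^p = \limsup_{\delta\to 0} V_\delta$. The first observation is that $\delta \mapsto V_\delta$ is non-decreasing, since $\mathcal{D}_{\delta_1}([0,T]) \subseteq \mathcal{D}_{\delta_2}([0,T])$ whenever $\delta_1 \le \delta_2$; hence the $\limsup$ as $\delta \to 0$ is in fact a monotone limit equal to $\inf_{\delta>0} V_\delta$. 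This immediately yields the easy implication: since $V_\delta \le \omega(0,T)$ for every $\delta$, one gets $|X|_{p\textrm{-}\mathrm{var}\textrm{-}\mathrm{limsup}} \le |X|_{p\textrm{-}\mathrm{var}}$, so finite $p$-variation forces the infinitesimal quantity to be finite.

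For the converse — the substantive direction — suppose $|X|_{p\textrm{-}\mathrm{var}\textrm{-}\mathrm{limsup}}^p = \inf_{\delta>0} V_\delta < \infty$. By monotonicity I can fix $\delta_0 > 0$ with $M \coloneqq V_{\delta_0} < \infty$. The first key step is to note that any subinterval $[s,t] \subseteq [0,T]$ of length at most $\delta_0$ satisfies $\omega(s,t) \le M$: every dissection of such a short interval automatically has mesh $\le \delta_0$, and padding it with a fine grid on $[0,s]$ and $[t,T]$ extends it to a mesh-$\le\delta_0$ dissection of $[0,T]$ whose sum only increases, hence is bounded by $M$.

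The second, and main, step is a quasi-subadditivity estimate for $\omega$. Given $s<u<t$ and any dissection $D$ of $[s,t]$, I insert the point $u$; the unique interval of $D$ straddling $u$ is split, and the elementary bound $d(X_a,X_b)^p \le (d(X_a,X_u)+d(X_u,X_b))^p \le 2^{p-1}\big(d(X_a,X_u)^p + d(X_u,X_b)^p\big)$ (triangle inequality followed by convexity of $r \mapsto r^p$ for $p\ge 1$) shows, after taking the supremum over $D$, that $\omega(s,t) \le 2^{p-1}\big(\omega(s,u)+\omega(u,t)\big)$. This crossing-interval control is precisely the crux of the argument, and the place where the statement is genuinely nontrivial: for $p>1$ refinement can \emph{decrease} the variation sum, so the natural superadditivity of $\omega$ points the wrong way and must be replaced by this one-sided estimate with the multiplicative constant $2^{p-1}$.

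Finally, I would choose a partition $0=\tau_0<\tau_1<\dots<\tau_K=T$ with each $\tau_{j+1}-\tau_j \le \delta_0$ and $K \le T/\delta_0 + 1$, so that $\omega(\tau_j,\tau_{j+1}) \le M$ by the first step. Iterating the quasi-subadditivity estimate across these $K$ blocks bounds $\omega(0,T)$ by a finite sum of the form $M \sum_{j=1}^{K} (2^{p-1})^{j}$, which is finite. Hence $|X|_{p\textrm{-}\mathrm{var}}^p = \omega(0,T) < \infty$, which establishes the remaining implication and completes the equivalence. Continuity of $X$ plays no essential role in the finiteness equivalence itself, which rests entirely on the mesh-monotonicity of $V_\delta$ and the crossing-interval inequality; it is simply the ambient regularity of the setting in which the statement is used.
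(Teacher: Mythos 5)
Your proof is correct. Note that the paper itself gives no proof of this lemma --- it is quoted from \cite[Proposition 5.6]{FrizVictoir2010book} --- so there is no in-paper argument to compare against; your write-up is a complete, self-contained proof along the standard lines. All three ingredients check out: (i) the monotonicity of $\delta\mapsto V_\delta$ (smaller $\delta$ means fewer admissible dissections), which turns the $\limsup$ into $\inf_{\delta>0}V_\delta$ and gives the trivial inequality $|X|_{p\textrm{-}\mathrm{var}\textrm{-}\mathrm{limsup}}\le |X|_{p\textrm{-}\mathrm{var}}$; (ii) the padding argument showing $\omega(s,t)\le V_{\delta_0}$ whenever $t-s\le\delta_0$, which works because a dissection of a short interval automatically has small mesh and extending it by extra points only adds nonnegative terms; and (iii) the quasi-subadditivity $\omega(s,t)\le 2^{p-1}\bigl(\omega(s,u)+\omega(u,t)\bigr)$ obtained by inserting $u$ and applying the triangle inequality plus convexity of $r\mapsto r^p$ to the single straddling increment. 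Iterating (iii) over $K\le T/\delta_0+1$ blocks of length $\le\delta_0$ then gives finiteness of $\omega(0,T)$, and you correctly identify (iii) as the crux, since for $p>1$ refinement can decrease the variation sum so naive superadditivity is useless. Your closing remark that continuity is not actually used in the finiteness equivalence is also accurate; the hypothesis $X\in C([0,T];\mathcal{X})$ is simply the setting in which the paper applies the lemma.
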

    To distinguish these norms, we refer to the first one as \emph{variation}, while the second one as \emph{infinitesimal variation}, emphasizing that they are not necessarily identical.
    This is only the case for $p=1$ (see e.g. \cite[Proposition 1.14]{FrizVictoir2010book}):
    \begin{equation}\label{eq:var_limsupvar_1}
        | X |_{1\textrm{-}\mathrm{var}}  = | X |_{1\textrm{-}\mathrm{var}\textrm{-}\mathrm{limsup}}.
    \end{equation}
   Regarding this distinction, the following lemma is useful:
    \begin{lemma}\label{lemma:limsupvar0}
        Let $1 \leq q < p < \infty$ and $X \in C^{q\textrm{-} \mathrm{var}} ([0,T];\mathcal{X})$. Then $| X |_{p\textrm{-}\mathrm{var}\textrm{-}\mathrm{limsup}} = 0$. 
    \end{lemma}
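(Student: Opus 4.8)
The plan is to exploit the gap $q<p$ by splitting each increment's exponent as $p=(p-q)+q$ and separating a uniformly small factor from a uniformly bounded one. Fix $\delta>0$ and let $D=\{t_0<t_1<\cdots<t_N\}\in\mathcal{D}_\delta([0,T])$ be any partition of mesh at most $\delta$. For each consecutive pair we have $|t_{i+1}-t_i|\le\delta$, so by the definition of the modulus of continuity (\cref{def:modulus_of_continuity}) each increment satisfies $d(X_{t_i},X_{t_{i+1}})\le w_\delta(X)$. The key estimate is then
\begin{equation}
    \sum_i d(X_{t_i},X_{t_{i+1}})^p
    = \sum_i d(X_{t_i},X_{t_{i+1}})^{p-q}\, d(X_{t_i},X_{t_{i+1}})^{q}
    \le w_\delta(X)^{p-q}\sum_i d(X_{t_i},X_{t_{i+1}})^{q}.
\end{equation}

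Next I would bound the remaining $q$-sum uniformly over all partitions. Since $\sum_i d(X_{t_i},X_{t_{i+1}})^q$ is a $q$-variation sum over the particular dissection $D$, it is dominated by the supremum over all dissections, i.e.\ by $|X|_{q\textrm{-}\mathrm{var}}^q$, which is finite because $X\in C^{q\textrm{-}\mathrm{var}}([0,T];\mathcal{X})$. Taking the supremum over all $D\in\mathcal{D}_\delta([0,T])$ therefore yields
\begin{equation}
    \sup_{t_i\in\mathcal{D}_\delta([0,T])}\sum_i d(X_{t_i},X_{t_{i+1}})^p
    \le w_\delta(X)^{p-q}\,|X|_{q\textrm{-}\mathrm{var}}^q,
\end{equation}
a bound in which only the factor $w_\delta(X)^{p-q}$ depends on $\delta$ and the exponent $p-q$ is strictly positive.

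Finally I would let $\delta\to 0$. Because $X$ is continuous on the compact interval $[0,T]$, it is uniformly continuous, so $w_\delta(X)\to 0$ as $\delta\to 0$; combined with the finiteness of $|X|_{q\textrm{-}\mathrm{var}}^q$ and $p-q>0$, the right-hand side tends to $0$. Hence
\begin{equation}
    |X|_{p\textrm{-}\mathrm{var}\textrm{-}\mathrm{limsup}}^p
    = \limsup_{\delta\to 0}\;\sup_{t_i\in\mathcal{D}_\delta([0,T])}\sum_i d(X_{t_i},X_{t_{i+1}})^p = 0,
\end{equation}
which gives the claim. The argument is essentially routine; the only point requiring care is the uniform-continuity step guaranteeing $w_\delta(X)\to 0$, which relies on compactness of $[0,T]$ and the continuity built into $C^{q\textrm{-}\mathrm{var}}$. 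No additional completeness or local-compactness hypothesis on $(\mathcal{X},d)$ is needed.
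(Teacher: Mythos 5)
Your proof is correct and is the standard argument for this fact: the paper does not prove the lemma itself but cites \cite[Proposition 5.6]{FrizVictoir2010book}, and your splitting $d(X_{t_i},X_{t_{i+1}})^p=d(X_{t_i},X_{t_{i+1}})^{p-q}d(X_{t_i},X_{t_{i+1}})^{q}$, the uniform bound by $w_\delta(X)^{p-q}|X|_{q\textrm{-}\mathrm{var}}^q$, and the use of uniform continuity on the compact interval $[0,T]$ is exactly the argument that reference relies on. Nothing is missing, and your closing remark that no completeness or local compactness of $(\mathcal{X},d)$ is needed is also accurate.
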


    As a next result, we recall the lower semi-continuity of $p$-variation, as stated in \cite[Lemma 5.12]{FrizVictoir2010book}.

    \begin{lemma}[{Lower semi-continuity of $\gamma \mapsto | \gamma |_{p\textrm{-}\mathrm{var}}$}]
        Let $1 \leq p < \infty$.
        The map $\gamma \mapsto |\gamma |_{p\textrm{-}\mathrm{var}}$ from $ \Gamma_T \to [0,\infty]$ is lower semi-continuous with respect to pointwise convergence, and in particular, with respect to uniform convergence. 
    \end{lemma}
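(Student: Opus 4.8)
The plan is to exploit the fact that $p$-variation is \emph{defined} as a supremum over all finite dissections, and that $\gamma \mapsto |\gamma|_{p\textrm{-}\mathrm{var}}$ is therefore a pointwise supremum of functionals each of which is continuous along pointwise-convergent sequences. Since a pointwise supremum of lower semi-continuous functions is again lower semi-continuous, the claim will follow. This is the same pattern already used in the excerpt for $w_\delta$, here with single distances replaced by finite partition sums.

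First I would fix an arbitrary dissection $D = \{0 = t_0 < t_1 < \cdots < t_N = T\} \in \mathcal{D}([0,T])$ and introduce the functional
\[
F_D(\gamma) \coloneqq \left( \sum_{i=0}^{N-1} d(\gamma_{t_i}, \gamma_{t_{i+1}})^p \right)^{1/p}.
\]
The key observation is that $F_D$ depends on $\gamma$ only through its values at the finitely many points $t_0, \dots, t_N$. Hence, if $(\gamma^n) \subset \Gamma_T$ converges pointwise to $\gamma$, then $\gamma^n_{t_i} \to \gamma_{t_i}$ for each $i$, and the continuity of the metric $d$ together with the continuity of $x \mapsto x^{1/p}$ on $[0,\infty)$ yields $F_D(\gamma^n) \to F_D(\gamma)$; that is, each $F_D$ is continuous along pointwise-convergent sequences.

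Second, I would pass to the supremum, taking care to keep the quantifiers in the valid order. For the fixed dissection $D$, the convergence $F_D(\gamma^n) \to F_D(\gamma)$ combined with the trivial bound $F_D(\gamma^n) \leq |\gamma^n|_{p\textrm{-}\mathrm{var}}$ gives
\[
F_D(\gamma) = \lim_{n\to\infty} F_D(\gamma^n) \leq \liminf_{n\to\infty} |\gamma^n|_{p\textrm{-}\mathrm{var}}.
\]
Since the right-hand side no longer depends on $D$, taking the supremum over all $D \in \mathcal{D}([0,T])$ on the left yields
\[
|\gamma|_{p\textrm{-}\mathrm{var}} = \sup_{D} F_D(\gamma) \leq \liminf_{n\to\infty} |\gamma^n|_{p\textrm{-}\mathrm{var}},
\]
which is precisely lower semi-continuity with respect to pointwise convergence. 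The ``in particular'' assertion is then immediate, since uniform convergence on $[0,T]$ implies pointwise convergence.

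There is no genuine obstacle here; it is the standard ``supremum of continuous functionals is lower semi-continuous'' argument. The only point requiring care is the order of operations: one must fix the dissection first, take the limit in $n$ for that single dissection, and only afterwards take the supremum over dissections. This performs the interchange of $\sup_D$ and $\liminf_n$ in the sole direction that is always valid, namely $\sup_D \liminf_n \leq \liminf_n \sup_D$, which is exactly what the lower semi-continuity inequality requires.
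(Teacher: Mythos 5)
Your proof is correct and is exactly the standard argument the paper relies on for this lemma (which it cites from Friz--Victoir rather than proving): the $p$-variation is a pointwise supremum over dissections of functionals $F_D$ that depend on $\gamma$ only through finitely many values and are therefore continuous under pointwise convergence, so the supremum is lower semi-continuous; this is the same pattern the paper itself spells out for $w_\delta$ and invokes for $|\cdot|_{W^{1,p}}$. The one detail worth noting, which causes no problem, is that the paper places the exponent $1/p$ outside the supremum, but since $x\mapsto x^{1/p}$ is increasing and continuous on $[0,\infty)$ the supremum commutes with it, so your $\sup_D F_D(\gamma)$ coincides with $|\gamma|_{p\textrm{-}\mathrm{var}}$.
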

    
    Although $| X |_{p\textrm{-}\mathrm{var}\textrm{-}\mathrm{limsup}}$ characterizes curves of finite $p$-variation in an infinitesimal way, it does not enjoy many of the nice properties that $| X |_{p\textrm{-}\mathrm{var}}$ does.
    For instance, in the case $p > 1 $, unlike $X \mapsto | X |_{p\textrm{-}\mathrm{var}}$, the mapping $X \mapsto  | X |_{p\textrm{-}\mathrm{var}\textrm{-}\mathrm{limsup}}$ is not necessarily lower semi-continuous, as shown in the example below. 
    
    \begin{remark}[{Non-lower semi-continuity of $ \gamma \mapsto | \gamma |_{p\textrm{-}\mathrm{var}\textrm{-}\mathrm{limsup}}$} when $1<p$]\label{rmk:non-lsc-inf-variation}
         Let $(\mathcal{X}, d)$ be a geodesic space. Take $p > 1$ and let $\gamma$ be a continuous path such that $ 0 < | \gamma |_{p\textrm{-}\mathrm{var}\textrm{-}\mathrm{limsup}}< \infty$. Let $(D_n)_{n \in \mathbb{N}}$ be a sequence of dissections of $[0,T]$ with shrinking mesh size $|D_n| \to 0$. Define $\gamma^{D_n}$ to be the piecewise geodesic approximation of $\gamma$ (i.e. the curve that coincides with $\gamma$ at points of $D_n$ and connects in between with geodesics). We know that $\gamma^{D_n} \to \gamma$ uniformly (see e.g. \cite[Lemma 5.19]{FrizVictoir2010book}). For each $n$, the path $\gamma^{D_n} $ is of bounded $1$-variation, which implies, by \cref{lemma:limsupvar0}, that $ | \gamma^{D_n} |_{p\textrm{-}\mathrm{var}\textrm{-}\mathrm{limsup}}=0$ for all $p>1$. Consequently, 
         $$
         \liminf_{n \to \infty} |\gamma^{D_n} |_{p\textrm{-}\mathrm{var}\textrm{-}\mathrm{limsup}}=0 < | \gamma |_{p\textrm{-}\mathrm{var}\textrm{-}\mathrm{limsup}}, 
         $$
         which shows non-lower semi-continuity of the map $ \gamma \mapsto | \gamma |_{p\textrm{-}\mathrm{var}\textrm{-}\mathrm{limsup}}$ at this point. 
    \end{remark}

    \begin{remark}[Non-compactness of sublevels of $\gamma \mapsto d (\gamma_0, \bar{x} ) +  | \gamma |_{p\textrm{-}\mathrm{var}}$ and $  | \gamma |_{p\textrm{-}\mathrm{var}\textrm{-}\mathrm{limsup}}$]
         As in \cref{lemma:non_compact_sublevel_modulus}, even on $\mathcal{X} = \mathbb{R}^{\mathrm{d}}$, paths whose $p$-variations are bounded by a constant are not necessarily equicontinuous.
    \end{remark}
         
\subsection{Fractional Sobolev paths on metric spaces}\label{subsec:Walphap}
    Fractional Sobolev regularity is particularly useful for our work as it captures the regularity of the paths in both small and large time intervals. This is also reflected in the fact that the elements in fractional Sobolev spaces are of finite variation and have certain H\"{o}lder regularity (see the embedding \eqref{eq:FS_H_v_embeddings} below).
    Let us first recall the classical definition:
    \begin{definition}[{Fractional  Sobolev space $W^{\alpha, p}$}]\label{def:Walphap}
        Given $1\leq p<\infty$ and $0<\alpha < 1$, the fractional Sobolev regularity of a measurable function $X: [0,T] \to \mathcal{X}$ over $[s,t] \subset [0,T]$ is defined by  
        \begin{equation}
            | X |_{W^{\alpha,p};[s,t]} \coloneqq \left( \iint_{[s,t]^2} \frac{d(X_u,X_v)^p}{|v-u|^{1+\alpha p}} \d u \d v \right)^{1/p}.
        \end{equation}
        The fractional Sobolev space $W^{\alpha, p}([0,T]; \mathcal{X})$ is the space of measurable functions $X$ such that
        \begin{equation}
            | X |_{W^{\alpha,p}} \coloneqq | X |_{W^{\alpha,p};[0,T]} < \infty.
        \end{equation}
    \end{definition}
    The \emph{fractional Sobolev} spaces $W^{\alpha, p}$ (also known as Sobolev--Slobodeckij spaces) can be viewed as a specific instance of the broader class of \textit{Besov} spaces $B^{\alpha,p,q}$ (see e.g. \cite[Equation (2.1)]{LiuPromelTeichmann2020}). In particular, when $q=p$, we have $B^{\alpha,p,p} = W^{\alpha, p}$. 
    While the classical definition of $B^{\alpha,p,q}$ typically involves double integrals similar to the one mentioned above, there are also other function spaces whose definition involves double sums. Recently, \cite{LiuPromelTeichmann2020} has proved that these definitions are in fact equivalent. 
    For the purpose of our work, we only give their result for the case $q=p$: 
    
    \begin{theorem}[{A discrete characterization of $W^{\alpha,p}$ \cite[Theorem 2.2]{LiuPromelTeichmann2020}}]\label{thm:Walphap_balphap}
        Given $1<p<\infty$ and $\frac{1}{p}<\alpha < 1$, the following are equivalent:
        \begin{itemize}
            \item $X:[0,1] \to \mathcal{X}$ is measurable and  
            \begin{equation}\label{eq:Walphap_integral}
                | X |_{W^{\alpha,p}} \coloneqq \left( \iint_{[0,1]^2} \frac{d(X_s,X_t)^p}{|t-s|^{1+\alpha p}} \d s \d t \right)^{1/p} < \infty.
            \end{equation}
            \item $X:[0,1] \to \mathcal{X}$ is continuous and 
            \begin{equation}\label{eq:Walphap_sum}
                | X |_{b^{\alpha,p}} \coloneqq  \left( \sum_{m=0}^{\infty} \sum_{k=0}^{2^m-1} \frac{d \big(X_{t_{k}^{(m)}}, X_{t_{k+1}^{(m)}}\big)^p }{|\Delta t_m|^{1+\alpha p }}  |\Delta t_m|^2 \right)^{1/p} < \infty,
            \end{equation}
             where $t_k^{(m)} \coloneqq \frac{k}{2^m} $ and $\Delta t_m \coloneqq \frac{1}{2^m}$.
        \end{itemize}
        Furthermore, $| \cdot |_{W^{\alpha,p}}$ and $| \cdot |_{b^{\alpha,p}} $  are equivalent on the set of continuous paths, i.e., there exist positive constants  $c_1,c_2$ depending only on $(\alpha, p)$ such that
        \begin{equation}\label{eq:equiv_norm_Walphap}
             c_1 | X |_{W^{\alpha,p}} \leq | X |_{b^{\alpha,p}}   \leq c_2  | X |_{W^{\alpha,p}}
        \end{equation}
        for all $X \in C([0,1];\mathcal{X})$. 
    \end{theorem}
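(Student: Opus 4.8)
The plan is to establish the two-sided norm equivalence \eqref{eq:equiv_norm_Walphap} directly on continuous paths, since the claimed equivalence of the two conditions then follows once we know that any measurable $X$ with $|X|_{W^{\alpha,p}}<\infty$ admits a continuous representative. The latter I would obtain from the Garsia--Rodemich--Rumsey inequality \cite{Garsia_Rodemich_Rumsey1970,FrizVictoir2010book}, which yields the embedding $W^{\alpha,p}\hookrightarrow C^{(\alpha-1/p)\textrm{-}\mathrm{H\ddot{o}l}}$, valid precisely because $\alpha>1/p$; this is also exactly why continuity must be imposed as a separate hypothesis in the discrete condition \eqref{eq:Walphap_sum}, just as in the H\"older analogue \cref{thm:disc_char_Holder}. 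The first reduction I would make is a change of variables $\tau=t-s$, rewriting
\begin{equation*}
|X|_{W^{\alpha,p}}^p = 2\int_0^1 \tau^{-1-\alpha p}\int_0^{1-\tau} d(X_{s+\tau},X_s)^p \,\d s \,\d \tau,
\end{equation*}
together with a dyadic decomposition of the outer integral into annuli $\{2^{-m-1}\le |t-s| < 2^{-m}\}$, so that both norms are organized scale by scale and can be compared term by term.

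For the lower bound $c_1|X|_{W^{\alpha,p}}\le |X|_{b^{\alpha,p}}$, I would run a chaining (telescoping) argument along the dyadic tree, in the spirit of the proof of \cref{thm:disc_char_Holder} and of Kolmogorov--\v Centsov. For a pair $(s,t)$ in the annulus at scale $m$, one connects $s$ and $t$ to a common scale-$m$ dyadic point and expresses each leg as a telescoping sum of consecutive dyadic increments at finer scales $\ell\ge m$. Raising to the power $p$ with a weighted Jensen/H\"older inequality (inserting geometric weights $2^{-\epsilon(\ell-m)}$), integrating over the annulus, and summing over $m$, one bounds $|X|_{W^{\alpha,p}}^p$ by $|X|_{b^{\alpha,p}}^p$; the weights are summable precisely when $\alpha p>1$, which is where the hypothesis enters.

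For the upper bound $|X|_{b^{\alpha,p}}\le c_2|X|_{W^{\alpha,p}}$, I would recover each pointwise dyadic increment from the integral by a local averaging. Writing $a=t_k^{(m)}$ and $b=t_{k+1}^{(m)}$, for $s$ ranging over a subinterval of length $\sim 2^{-m}$ to the right of $a$ and $t$ over one to the left of $b$ (so that $|t-s|\sim 2^{-m}$), the triangle inequality gives $d(X_a,X_b)\le d(X_a,X_s)+d(X_s,X_t)+d(X_t,X_b)$; averaging over $s$ and $t$ and using $|t-s|^{1+\alpha p}\le 2^{-m(1+\alpha p)}$ converts the middle term into the integral over a box at distance $\sim 2^{-m}$. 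A short bookkeeping of exponents shows that the scaling factor $2^{m(\alpha p-1)}$ in \eqref{eq:Walphap_sum} cancels the Jacobian factors exactly, and since the relevant boxes indexed by $(m,k)$ have bounded overlap (each pair $(s,t)$ lies in only $O(1)$ of them, because the annulus fixes $m$), the middle contributions sum to $\lesssim |X|_{W^{\alpha,p}}^p$.

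The step I expect to be the main obstacle is controlling the two \emph{near-endpoint} terms $d(X_a,X_s)^p$ and $d(X_t,X_b)^p$ produced by the averaging in the upper bound: these are finer-scale increments localized near a dyadic point, and a naive estimate loses the summability in $m$. I would reabsorb them by the same dyadic telescoping as in the lower bound, bounding $d(X_a,X_s)$ for $|s-a|\lesssim 2^{-m}$ by a convergent sum of consecutive dyadic increments at scales $\ell>m$; the resulting double series converges and collapses back into $|X|_{b^{\alpha,p}}^p$ exactly because $\alpha p>1$. This interplay between pointwise values at dyadic points and $L^p$-averaged increments over the continuum is the technical heart of the statement, and it is the condition $\alpha>1/p$ --- through both continuity and the summability of the chaining weights --- that makes it work.
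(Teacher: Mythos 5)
First, a point of comparison: the paper does not prove this theorem at all --- it imports it verbatim from \cite[Theorem 2.2]{LiuPromelTeichmann2020} --- so there is no internal argument to measure your sketch against. On its own terms, your architecture is reasonable (GRR for continuity of finite-$W^{\alpha,p}$ paths, dyadic annulus decomposition, chaining for the bound $c_1|X|_{W^{\alpha,p}}\le |X|_{b^{\alpha,p}}$, which indeed works and in fact needs only $\alpha>0$ once continuity is in hand --- the summability of the chaining weights does not require $\alpha p>1$, contrary to what you assert). The genuine gap is in the upper bound $|X|_{b^{\alpha,p}}\le c_2|X|_{W^{\alpha,p}}$, precisely at the near-endpoint terms you single out as the technical heart: your proposed resolution is circular. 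Telescoping $d(X_{t_k^{(m)}},X_s)$ for $|s-t_k^{(m)}|\lesssim 2^{-m}$ along consecutive dyadic increments at scales $\ell>m$ and applying weighted H\"older produces a bound of the form $|X|_{b^{\alpha,p}}^p\le C_1|X|_{W^{\alpha,p}}^p+C_2|X|_{b^{\alpha,p}}^p$. This cannot be rearranged: (i) the quantity $|X|_{b^{\alpha,p}}^p$ is not known to be finite a priori --- it is exactly the conclusion being sought, and GRR only yields $d(X_{t_k^{(m)}},X_{t_{k+1}^{(m)}})\lesssim 2^{-m(\alpha-1/p)}|X|_{W^{\alpha,p}}$, whose contribution to the sum is $\sum_m 2^{m(\alpha p-1)}\cdot 2^m\cdot 2^{-m(\alpha p-1)}=\sum_m 2^m=\infty$, so no qualitative finiteness is available to start an absorption; and (ii) even granting finiteness, $C_2$ collects a triangle-inequality factor, the weighted-H\"older constant $C_\epsilon$, and the geometric tail $\left(2^{\alpha p-\epsilon}-1\right)^{-1}$, and a short optimization over $\epsilon$ shows their product cannot be pushed below $1$ uniformly for $\alpha p$ close to $1$. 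Truncating the sum at level $M$ does not rescue the scheme either, because the telescoping tail is then controlled only by $2^{M\alpha p}\,\omega(2^{-M})^p$ with $\omega$ the modulus of continuity, which need not vanish.

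The known proofs close this gap by a different mechanism: one represents the point value $X_{t_k^{(m)}}$ as the limit of one-sided window averages $A_j=2^{j}\!\int_{t_k^{(m)}}^{t_k^{(m)}+2^{-j}}X$ (legitimate in the metric setting after an isometric embedding of the compact image of $X$ into a Banach space, or replaced by a Chebyshev-type selection of good sample points), and telescopes $X_{t_k^{(m)}}=A_m+\sum_{j\ge m}(A_{j+1}-A_j)$. Each difference $A_{j+1}-A_j$ is an \emph{averaged} increment at separation $\sim 2^{-j}$ and is therefore controlled directly by the piece of the double integral living on the annulus $|t-s|\sim 2^{-j}$; summing in $j$ and $m$ then lands on $|X|_{W^{\alpha,p}}^p$ with no reappearance of $|X|_{b^{\alpha,p}}$, and this is where $\alpha p>1$ is genuinely used. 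Without an ingredient of this type --- converting point evaluations into quantities the double integral can see --- your upper bound does not close, so as written the proposal does not prove the theorem; citing \cite{LiuPromelTeichmann2020}, as the paper does, remains the honest route.
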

    This result is reminiscent of the discrete characterization of H\"{o}lder curves provided by \cite[Lemma 2]{LyonsVictoir2007}, as stated in \cref{thm:disc_char_Holder}.
    In the second definition, we are once again evaluating $X$ only at dyadic points (i.e. a countable subset of $[0,1]$) but this time, in the form of a double sum.
    Thus, continuity must be an additional assumption.
    This is, however, not needed for the first definition. In fact, under the conditions $1<p<\infty$ and $\frac{1}{p}<\alpha < 1$, the finiteness of  $| X |_{W^{\alpha,p}}$  automatically implies the continuity of $X$ (see \cref{thm:FractionalSobolev-Holder}).
    We repeatedly rely on the equivalence result above. We proceed with the following simple observation:
    
         \begin{remark}[A (trivial) H\"{o}lder-Fractional Sobolev embedding]\label{rmk:Holder-FractionalSobolev}
             It is easy to check that under $0 < \alpha < \upgamma \leq 1$, the $W^{\alpha,p}$-regularity of a curve $X \in C^{\upgamma\textrm{-} \mathrm{H\ddot{o}l}}([0,1];\mathcal{X} )$ is finite. 
             The first norm \eqref{eq:Walphap_integral} can be estimated by
            \begin{align}
                | X |_{W^{\alpha,p}}^p
                & \leq | X |_{\upgamma\textrm{-}\mathrm{H\ddot{o}l}}^p \iint_{[0,1]^2} \frac{1}{|t-s|^{1+\alpha p - \upgamma p}} \d s \d t, \\
                & = | X |_{\upgamma\textrm{-}\mathrm{H\ddot{o}l}}^p \frac{2}{(\upgamma p - \alpha p)(\upgamma p - \alpha p+1)} < + \infty, %\frac{2 T^{(\upgamma p - \alpha p+1)} }
            \end{align}
            where the double integral is finite only when $\alpha  < \upgamma$. Likewise, the second norm \eqref{eq:Walphap_sum} can be estimated by 
            \begin{align}
                | X |^p_{b^{\alpha,p}} 
                & \leq | X |^p_{\upgamma\textrm{-}\mathrm{H\ddot{o}l}} \sum_{m=0}^{\infty} 2^{m  (\alpha p - \upgamma p )} \\
                & =  | X |^p_{\upgamma\textrm{-}\mathrm{H\ddot{o}l}} \frac{1}{1-2^{-(\upgamma p - \alpha p)}} < + \infty,
                \end{align}
            where the geometric series converges only when $\alpha  < \upgamma$.
           In particular, we have the continuous embedding
            \begin{equation}
                C^{\upgamma\textrm{-}\mathrm{H\ddot{o}l}} \subset W^{\alpha, p} \quad \textrm{if} \quad  \alpha  < \upgamma.
            \end{equation}
        \end{remark}
        
        The estimate \eqref{eq:distance_estimate_GRR} below can be derived by applying the \emph{Garsia--Rodemich--Rumsey inequality}, which was originally introduced in \cite{Garsia_Rodemich_Rumsey1970} for the case $\mathcal{X} = \mathbb{R}$.
        A proof for $\mathbb{R}^\mathrm{d}$ can be found in \cite[Theorem 2.1.3]{StroockVaradhan2006} and for the general metric setting in \cite[Theorem A.1]{FrizVictoir2010book}). The results below are well-known \cite[Corollary A.2-3]{FrizVictoir2010book}. 
        
        \begin{theorem}[Fractional Sobolev-H\"{o}lder and -variation embeddings]\label{thm:FractionalSobolev-Holder}  
            Given $1<p<\infty$ and $ \frac{1}{p} < \alpha < 1$, let $X \in W^{\alpha, p} ([0,T];\mathcal{X})$.
            Then there exists a constant $\bar{c}$ depending only on $(\alpha, p)$ such that for all $0 \leq s < t \leq T$,
            \begin{equation}\label{eq:distance_estimate_GRR}
                d (X_s,X_t) \leq \bar{c} |t-s|^{\alpha - \frac{1}{p}} | X |_{W^{\alpha,p};[s,t]},
            \end{equation}
             and in particular,
            \begin{align}
                | X |_{\alpha - \frac{1}{p}\textrm{-}\mathrm{H\ddot{o}l};[s,t]}  & \leq \bar{c} | X |_{W^{\alpha,p};[s,t]}, \\
                | X |_{\frac{1}{\alpha}\textrm{-}\mathrm{var};[s,t]}
                & \leq \bar{c} |t-s|^{\alpha-\frac{1}{p}} | X |_{W^{\alpha,p};[s,t]}, 
            \end{align}
            where a possible choice of the constant is $\bar{c} = \big( 32 \frac{\alpha p +1}{\alpha p -1} \big)^{1/p}$. 
        \end{theorem}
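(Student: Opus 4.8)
The backbone of the argument is the Garsia--Rodemich--Rumsey (GRR) inequality in its metric-space form \cite[Theorem A.1]{FrizVictoir2010book}, which for a continuous $X$ and strictly increasing gauges $\Psi,\psi$ with $\Psi(0)=\psi(0)=0$ bounds $d(X_s,X_t)$ by a single integral of $\Psi^{-1}\!\big(4B/r^2\big)\,\mathrm d\psi(r)$, where $B$ is the double integral $\iint \Psi\big(d(X_u,X_v)/\psi(|u-v|)\big)$. The plan is to specialize this with
\[
\Psi(x)=x^{p},\qquad \psi(r)=r^{\alpha+\frac1p},
\]
chosen precisely so that the GRR hypothesis integral $B$ becomes the Sobolev quantity $\iint_{[s,t]^2} d(X_u,X_v)^p/|u-v|^{1+\alpha p}\,\mathrm du\,\mathrm dv = |X|_{W^{\alpha,p};[s,t]}^{p}$. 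With $\Psi^{-1}(y)=y^{1/p}$ and $\mathrm d\psi(r)=(\alpha+\tfrac1p)\,r^{\alpha+\frac1p-1}\,\mathrm dr$, the GRR conclusion reduces to evaluating $\int_0^{|t-s|} r^{\alpha-\frac1p-1}\,\mathrm dr$. The integrand is integrable at $r=0$ \emph{exactly} because $\alpha-\tfrac1p>0$---this is where the standing assumption $\alpha>1/p$ enters---and produces the factor $|t-s|^{\alpha-\frac1p}$ times a constant depending only on $(\alpha,p)$. Collecting terms yields \eqref{eq:distance_estimate_GRR}; tracking the constants through the dyadic form of GRR used in \cite{FrizVictoir2010book} gives the $p$-th–power estimate $d(X_s,X_t)^p\le 32\tfrac{\alpha p+1}{\alpha p-1}|t-s|^{\alpha p-1}|X|_{W^{\alpha,p};[s,t]}^p$, whence the admissible choice $\bar c=\big(32\tfrac{\alpha p+1}{\alpha p-1}\big)^{1/p}$ after taking $p$-th roots. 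The precise numerical constant is routine book-keeping that I would not belabor.

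The H\"older embedding is then immediate: dividing \eqref{eq:distance_estimate_GRR} by $|t-s|^{\alpha-\frac1p}$, taking the supremum over $s\le u<v\le t$, and using that $|X|_{W^{\alpha,p};[u,v]}\le |X|_{W^{\alpha,p};[s,t]}$ (the double integral is monotone under enlarging the interval) gives $|X|_{(\alpha-\frac1p)\textrm{-}\mathrm{H\ddot{o}l};[s,t]}\le \bar c\,|X|_{W^{\alpha,p};[s,t]}$.

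For the variation embedding the key structural fact is the \emph{super-additivity} of the control $\omega(s,t):=|X|_{W^{\alpha,p};[s,t]}^{p}$: for $s<u<t$ the squares $[s,u]^2$ and $[u,t]^2$ are disjoint subsets of $[s,t]^2$ and the integrand is nonnegative, so $\omega(s,u)+\omega(u,t)\le\omega(s,t)$. Given any dissection $\{t_i\}$ of $[s,t]$, I would apply \eqref{eq:distance_estimate_GRR} on each subinterval, raise to the power $1/\alpha$, and write the summand as $\bar c^{1/\alpha}\,|t_{i+1}-t_i|^{1-\frac{1}{\alpha p}}\,\omega(t_i,t_{i+1})^{\frac{1}{\alpha p}}$, noting $\tfrac1\alpha(\alpha-\tfrac1p)=1-\tfrac{1}{\alpha p}$. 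Since $\tfrac{1}{\alpha p}\in(0,1)$, H\"older's inequality for finite sums together with $\sum_i|t_{i+1}-t_i|=t-s$ and $\sum_i\omega(t_i,t_{i+1})\le\omega(s,t)$ gives
\[
\sum_i d(X_{t_i},X_{t_{i+1}})^{1/\alpha}\le \bar c^{1/\alpha}\,(t-s)^{1-\frac{1}{\alpha p}}\,\omega(s,t)^{\frac{1}{\alpha p}}.
\]
Taking the supremum over dissections and raising to the power $\alpha$ yields the claimed bound $|X|_{\frac1\alpha\textrm{-}\mathrm{var};[s,t]}\le \bar c\,|t-s|^{\alpha-\frac1p}|X|_{W^{\alpha,p};[s,t]}$.

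The main obstacle is a point of rigor rather than of computation: the metric GRR inequality is usually stated for \emph{continuous} $X$, whereas here $X$ is a priori only measurable, and in fact this very theorem is what the text invokes (after \cref{thm:Walphap_balphap}) to deduce that finiteness of $|X|_{W^{\alpha,p}}$ forces continuity. To avoid circularity I would run the GRR argument so that it simultaneously delivers a continuous modification: the estimate first holds for $s,t$ ranging over a full-measure set of times, the resulting uniform H\"older control extends $X$ from a dense set to a unique continuous map, and one then identifies $X$ with this representative---which does not affect the double integral defining $|X|_{W^{\alpha,p};[s,t]}$. All displayed estimates are understood for this continuous representative.
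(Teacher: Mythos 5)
Your proposal is correct and follows exactly the route the paper points to: the paper gives no proof of its own but cites the Garsia--Rodemich--Rumsey inequality (\cite[Theorem A.1, Corollary A.2-3]{FrizVictoir2010book}) for \eqref{eq:distance_estimate_GRR} and the H\"older-inequality/superadditive-control argument of \cite[Theorem 2]{FrizVictoir2006} for the $\tfrac1\alpha$-variation bound, which is precisely what you reconstruct (with the correct gauges $\Psi(x)=x^p$, $\psi(r)=r^{\alpha+1/p}$ and the correct exponent split $1-\tfrac{1}{\alpha p}$). Your closing remark on working with the continuous representative also matches how the paper treats the measurable-versus-continuous issue around \cref{thm:FractionalSobolev-Holder}.
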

        As a result of the theorem above, we have the following continuous embeddings:
        \begin{equation}\label{eq:FS_H_v_embeddings}
            W^{\alpha, p} \subset C^{\alpha - \frac{1}{p}\textrm{-}\mathrm{H\ddot{o}l}} \quad \textrm{and} \quad W^{\alpha, p} \subset C^{\frac{1}{\alpha}\textrm{-}\mathrm{var}}. 
        \end{equation}
        Note that from \eqref{eq:distance_estimate_GRR}, we can immediately conclude that  $X$ has finite $({\alpha - 1/p})^{-1}$-variation.
        But, as shown in \cite[Theorem 2]{FrizVictoir2006}, by applying H\"{o}lder's inequality with a clever choice of exponents, an even stronger statement can be made, namely, $X$ has finite $({\alpha })^{-1}$-variation.
        A consequence of this embedding is that under the conditions $1<p<\infty$ and $ \frac{1}{p} < \alpha < 1$, the elements in $W^{\alpha,p}$ have zero infinitesimal $p$-variation by \cref{lemma:limsupvar0}:
        \begin{equation}
            X \in W^{\alpha,p} \quad \Rightarrow \quad | X |_{p\textrm{-}\mathrm{var}\textrm{-}\mathrm{limsup}} = 0, 
        \end{equation}
        which is not unexpected once the discrete definition \eqref{eq:Walphap_sum} of this space is considered. 
        We now discuss some properties of the $W^{\alpha,p}$-semi-norm.  
        \begin{lemma}[Lower semi-continuity of $\gamma \mapsto|\gamma |_{W^{\alpha,p}}$ and $\gamma \mapsto|\gamma |_{b^{\alpha,p}}$]\label{prop:l.s.c.functional}
            The maps $\gamma \mapsto | \gamma |_{W^{\alpha,p}}$ and $\gamma \mapsto|\gamma |_{b^{\alpha,p}}$ both from $ \Gamma_T \to [0,\infty]$ are lower semi-continuous with respect to pointwise convergence, and in particular, with respect to uniform convergence. 
        \end{lemma}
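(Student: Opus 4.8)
The plan is to reduce both claims to Fatou-type lower-semicontinuity arguments, exploiting that each semi-norm is assembled from the metric $d$ evaluated at pairs of time points and that $d$ is continuous. Since uniform convergence implies pointwise convergence, it suffices to establish lower semi-continuity with respect to the weaker pointwise convergence. Moreover, because $x \mapsto x^{1/p}$ is continuous and non-decreasing on $[0,+\infty]$, lower semi-continuity of the $p$-th powers $\gamma \mapsto |\gamma|_{W^{\alpha,p}}^p$ and $\gamma \mapsto |\gamma|_{b^{\alpha,p}}^p$ already yields the claim for the semi-norms themselves.

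For $|\cdot|_{b^{\alpha,p}}$ I would argue as follows. Fix a sequence $\gamma^n \to \gamma$ pointwise and, for each $M \in \mathbb{N}_0$, consider the truncated sum $S_M(\gamma) \coloneqq \sum_{m=0}^{M} \sum_{k=0}^{2^m-1} |\Delta t_m|^{1-\alpha p}\, d(\gamma_{t_k^{(m)}}, \gamma_{t_{k+1}^{(m)}})^p$, where $|\Delta t_m|^{1-\alpha p}$ is exactly the positive coefficient $\tfrac{|\Delta t_m|^2}{|\Delta t_m|^{1+\alpha p}}$ appearing in \eqref{eq:Walphap_sum}. Being a finite sum of compositions of the continuous map $d$ with the evaluation maps at the dyadic points, $S_M$ is continuous under pointwise convergence, so $S_M(\gamma) = \lim_n S_M(\gamma^n) \le \liminf_n |\gamma^n|_{b^{\alpha,p}}^p$. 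All summands are nonnegative, hence $|\gamma|_{b^{\alpha,p}}^p = \sup_M S_M(\gamma)$, and taking the supremum over $M$ gives $|\gamma|_{b^{\alpha,p}}^p \le \liminf_n |\gamma^n|_{b^{\alpha,p}}^p$. In other words, $|\cdot|_{b^{\alpha,p}}^p$ is a supremum of pointwise-continuous functionals and is therefore lower semi-continuous.

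For $|\cdot|_{W^{\alpha,p}}$ I would invoke Fatou's lemma on the double integral. If $\gamma^n \to \gamma$ pointwise, then for every $(s,t) \in [0,T]^2$ with $s \ne t$ the continuity of $d$ gives $d(\gamma_s^n, \gamma_t^n)^p |t-s|^{-(1+\alpha p)} \to d(\gamma_s, \gamma_t)^p |t-s|^{-(1+\alpha p)}$. The integrands are nonnegative and jointly measurable in $(s,t)$, so Fatou's lemma applies:
\[
|\gamma|_{W^{\alpha,p}}^p = \iint_{[0,T]^2} \lim_{n} \frac{d(\gamma_s^n, \gamma_t^n)^p}{|t-s|^{1+\alpha p}}\, \d s\, \d t \le \liminf_n |\gamma^n|_{W^{\alpha,p}}^p .
\]

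I do not expect a genuine obstacle here: the two arguments are Fatou's lemma for sums and for integrals, respectively. The only points deserving a line of care are the joint measurability of the integrand (immediate from measurability of $\gamma^n, \gamma$ and continuity of $d$) and the reduction from the $p$-th power to the semi-norm via the monotone continuous map $x \mapsto x^{1/p}$. It is also worth recording explicitly that lower semi-continuity with respect to pointwise convergence implies the same with respect to uniform convergence, since any uniformly convergent sequence converges pointwise, which is precisely the ``in particular'' in the statement.
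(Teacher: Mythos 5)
Your proposal is correct and follows essentially the same route as the paper: for $|\cdot|_{b^{\alpha,p}}$ the paper likewise observes that the semi-norm is the limit of a monotone non-decreasing sequence of continuous (partial-sum) functionals, and for $|\cdot|_{W^{\alpha,p}}$ it uses the triangle inequality to get pointwise convergence of $d(\gamma^n_s,\gamma^n_t)$ and then applies Fatou's lemma to the double integral. Your additional remarks on reducing from the $p$-th power via $x\mapsto x^{1/p}$ and on uniform convergence implying pointwise convergence are the same (implicit) reductions the paper relies on.
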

        \begin{proof}
            Since the function $|\cdot |_{b^{\alpha,p}}$ is the limit of a monotone non-decreasing sequence of continuous functions, it is indeed lower semi-continuous. To confirm the other statement, let $(\gamma^n) \subset W^{\alpha,p} ([0,T];\mathcal{X})$, $n \in \mathbb{N}$, be a sequence of  continuous paths such that $\gamma^n \to \gamma$ pointwise on $[0,T]$.
            It is enough to show the lower semi-continuity of the $p$-th power of the function:
            \begin{equation}
                | \gamma |^p_{W^{\alpha,p}} \leq \liminf_{n \to \infty } | \gamma^n |^p_{W^{\alpha,p}}.
            \end{equation}
            First, triangle inequality yields
            \begin{equation}
                |d(\gamma^n_s,\gamma^n_t) - d(\gamma_s,\gamma_t)| \leq d(\gamma_t^n,\gamma_t) + d(\gamma_s^n,\gamma_s).
            \end{equation}
            By taking the limit as $n \to \infty$, two terms on the right-hand side vanish and we obtain
            \begin{equation}\label{eq:pointwise_conv_dst}
                \lim_{n \to \infty} d(\gamma^n_s,\gamma^n_t) = d(\gamma_s,\gamma_t).
            \end{equation}
            %In other words, we confirmed the simple fact that the map $\gamma \mapsto d(\gamma_s,\gamma_t)$ from $\Gamma \to [0,+\infty)$ is continuous (for any $t,s \in [0,T]$). 
            Second, observe that
            \begin{align}
                \liminf_{n \to \infty } | \gamma^n |_{W^{\alpha,p}}^p & =  \liminf_{n \to \infty } \iint_{[0,T]^2} \frac{d(\gamma^n_s,\gamma^n_t)^p}{|t-s|^{1+\alpha p}} \d s \d t \\
                & \geq  \iint_{[0,T]^2} \left( \liminf_{n \to \infty }  \frac{d(\gamma^n_s,\gamma^n_t)^p}{|t-s|^{1+\alpha p}} \right) \d s \d t \\
                & = \iint_{[0,T]^2} \frac{d(\gamma_s,\gamma_t)^p}{|t-s|^{1+\alpha p}} \d s \d t = | \gamma |_{W^{\alpha,p}}^p
            \end{align}
            where we used Fatou's lemma and \eqref{eq:pointwise_conv_dst}.
            The final claim of the proposition follows from the fact that uniform convergence implies pointwise convergence. 
        \end{proof}
        \begin{lemma}[Compact sublevels of $\gamma \mapsto d (\gamma_0, \bar{x} ) +  | \gamma |_{W^{\alpha,p}}$]\label{lemma:compact_sublevel}
            Let $(\mathcal{X},d)$ be a complete metric space in which closed bounded sets are compact.
            Given $1<p<\infty$ and $\frac{1}{p}<\alpha < 1$ and an arbitrary point $\bar{x}\in \mathcal{X}$, the map $\gamma \mapsto \Psi (\gamma) \coloneqq d (\gamma_0, \bar{x} ) +  | \gamma |_{W^{\alpha,p}}$ from $\Gamma_T \to [0,+\infty]$ has compact sublevels in $\Gamma_T$. 
        \end{lemma}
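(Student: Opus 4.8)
The plan is to show that each sublevel set $\mathcal{A}_R \coloneqq \{\gamma \in \Gamma_T : \Psi(\gamma) \leq R\}$, for $R \geq 0$, is both closed and relatively compact in $(\Gamma_T,d_\infty)$; in a metric space these two properties together give compactness (a closed, relatively compact set equals its own compact closure). For closedness I would argue that $\Psi$ is lower semi-continuous: the map $\gamma \mapsto d(\gamma_0,\bar{x})$ is continuous for uniform convergence, since $e_0$ is continuous and $d(\cdot,\bar{x})$ is $1$-Lipschitz, while $\gamma \mapsto |\gamma|_{W^{\alpha,p}}$ is lower semi-continuous by \cref{prop:l.s.c.functional}. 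A sum of a continuous and a lower semi-continuous function is lower semi-continuous, so each $\mathcal{A}_R$ is closed.

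For relative compactness I would invoke the Arzel\`a--Ascoli theorem (\cref{thm:Arzela-Ascoli}), whose hypotheses on $(\mathcal{X},d)$ coincide with ours, and verify its two conditions on $\mathcal{A}_R$. Condition (i), boundedness at $t=0$, is immediate: for every $\gamma \in \mathcal{A}_R$ one has $d(\gamma_0,\bar{x}) \leq \Psi(\gamma) \leq R$. Condition (ii), equicontinuity, is where the fractional Sobolev regularity enters. Here I would use the Garsia--Rodemich--Rumsey estimate \eqref{eq:distance_estimate_GRR} of \cref{thm:FractionalSobolev-Holder}, together with the elementary monotonicity $|\gamma|_{W^{\alpha,p};[s,t]} \leq |\gamma|_{W^{\alpha,p}}$, which holds because the defining double integral over $[s,t]^2$ only grows when enlarged to $[0,T]^2$ (the integrand being nonnegative). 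This yields, for all $s,t$ with $|t-s|\leq \delta$,
\[
    d(\gamma_s,\gamma_t) \leq \bar{c}\,|t-s|^{\alpha-\frac{1}{p}}\,|\gamma|_{W^{\alpha,p};[s,t]} \leq \bar{c}\,\delta^{\alpha-\frac{1}{p}}\,|\gamma|_{W^{\alpha,p}} \leq \bar{c}\,\delta^{\alpha-\frac{1}{p}}\,R,
\]
where the last bound uses $|\gamma|_{W^{\alpha,p}} \leq \Psi(\gamma) \leq R$. Taking the supremum over admissible $s,t$ and then over $\gamma \in \mathcal{A}_R$ gives $\sup_{\gamma \in \mathcal{A}_R} w_\delta(\gamma) \leq \bar{c}\,\delta^{\alpha-1/p}\,R$, which tends to $0$ as $\delta \to 0$ precisely because $\alpha - 1/p > 0$ under the standing assumption $\alpha > 1/p$. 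Hence $\mathcal{A}_R$ is equicontinuous, and Arzel\`a--Ascoli delivers relative compactness.

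This argument is the fractional analogue of \cref{lemma:compact_sublevel_Holder}, and the only genuine input beyond bookkeeping is the positivity of the exponent $\alpha - 1/p$ in the Garsia--Rodemich--Rumsey bound, which is exactly what converts a uniform control of $|\gamma|_{W^{\alpha,p}}$ into a modulus of continuity vanishing with $\delta$. I therefore do not anticipate any real obstacle: the hypothesis $\alpha > 1/p$ is precisely the condition making the embedding $W^{\alpha,p} \subset C^{(\alpha-\frac{1}{p})\textrm{-}\mathrm{H\ddot{o}l}}$ effective, and a closed, pointwise-bounded, equicontinuous family in $C([0,T];\mathcal{X})$ is compact once closed bounded subsets of $\mathcal{X}$ are compact.
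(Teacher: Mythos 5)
Your proposal is correct and follows essentially the same route as the paper's proof: both establish relative compactness via Arzel\`a--Ascoli, using the Garsia--Rodemich--Rumsey estimate together with the monotonicity $|\gamma|_{W^{\alpha,p};[s,t]} \leq |\gamma|_{W^{\alpha,p}}$ to get the uniform modulus of continuity $w_\delta(\gamma) \leq \bar{c}\,\delta^{\alpha-1/p}R$, and both close the sublevel set using continuity of $\gamma \mapsto d(\gamma_0,\bar{x})$ and lower semi-continuity of $\gamma \mapsto |\gamma|_{W^{\alpha,p}}$ (the paper phrases this as a sequential argument on a convergent subsequence rather than as closedness of sublevels of a lower semi-continuous function, but the content is identical).
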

        \begin{proof}
        Let $c \in \mathbb{R}$ be a positive constant. 
        Consider all curves in $\mathcal{A} \coloneqq \left\{ \gamma \in \Gamma_T \, : \,    \Psi (\gamma) \leq c \right\}$. The bound $\Psi (\gamma) \leq c$ firstly implies 
            \begin{equation}\label{eq:Psi_c_bound0}
                \sup_{\gamma \in \mathcal{A}}  d (\gamma_0, \bar{x} )  < + \infty, 
            \end{equation}
        and secondly, we have $| \gamma |_{W^{\alpha,p}} < c$.
        From the latter and under the conditions $1<p<\infty$ and $\frac{1}{p}<\alpha < 1$, we can derive a precise estimate of the H\"{o}lder regularity of the curve using an application of the Garsia--Rodemich--Rumsey inequality, \cref{thm:FractionalSobolev-Holder}, which implies the existence of a constant $\bar{c}(\alpha,p)$ such that for all $0 \leq s < t \leq T$,
        \begin{align}
            d(\gamma_t,\gamma_s)
            & \leq \bar{c}(\alpha,p) |t-s|^{\alpha-\frac1p} \left( \iint_{[s,t]^2} \frac{d(\gamma_u, \gamma_v)^p}{|u-v|^{1+\alpha p}} \d u \d v \right)^{1/p} \\
            & \leq \bar{c}(\alpha,p) |t-s|^{\alpha-\frac1p} \left( \iint_{[0,T]^2} \frac{d(\gamma_u , \gamma_v)^p}{|u-v|^{1+\alpha p}} \d u \d v \right)^{1/p} \\
            & \leq \bar{c}(\alpha,p)  c |t-s|^{\alpha-\frac1p}. 
        \end{align}
        As a result, the modulus of continuity \eqref{eq:modulus_of_continuity} of all curves $\gamma \in \mathcal{A}$ can be estimated by $w_{\delta} (\gamma  ) \leq \bar{c}(\alpha,p) c \delta^{\alpha-\frac1p}$, which tends to zero as $\delta \to 0$ since $ \alpha - \frac1p >0$. In particular, we have
        \begin{equation} \label{eq:Psi_c_equicontinuity}
            \lim_{\delta \to 0} \sup_{\gamma \in \mathcal{A} } w_{\delta} (\gamma) = 0. 
        \end{equation}
        By Arzel\`a-Ascoli \cref{thm:Arzela-Ascoli}, the conditions \eqref{eq:Psi_c_bound0} and \eqref{eq:Psi_c_equicontinuity} ensure that $\mathcal{A}$ is relatively compact in $\Gamma_T$.
        To show that $\mathcal{A}$ is in fact compact, take a sequence $(\gamma^n) \subset \mathcal{A}$.
        By relative compactness of $\mathcal{A}$, we know that there is a convergent subsequence $\gamma^{n_k} \to \gamma$ in $C$.
        By continuity of $\gamma \mapsto d (\gamma_0, \bar{x} )$ and lower semi-continuity of $\gamma \mapsto |\gamma|_{W^{\alpha,p}}$ (\cref{prop:l.s.c.functional}), we have 
        \begin{equation}
            \Psi (\gamma) \leq \liminf_{k \to \infty} \Psi (\gamma_{n_k}) \leq c,
        \end{equation}
        which means that the limit point $\gamma$ also lies in $\mathcal{A}$ and hence the set $\mathcal{A}$ is compact in $\Gamma_T$. 
    \end{proof}

\subsection{Sobolev paths on metric spaces}\label{subsec:W1p}
    The Sobolev space $W^{1,p}$ we use in this paper coincides with the set of $p$-absolutely continuous curves when $1< p<\infty$, as discussed briefly below. 

    \begin{definition}[{$p$-absolute continuity}] 
        Given $1\leq p<\infty$, a function $X:[0,T] \to \mathcal{X}$ is called $p$-absolutely continuous if there exists $m\in L^p([0,T])$ such that
        $$
        d(X_s,X_t) \leq \int_s^t m(r) \d r, \qquad \forall t,s \in [0,T]. 
        $$
        $AC^p([0,T];\mathcal{X})$ denotes the set of all $p$-absolutely continuous curves. 
    \end{definition}

    We know that the metric derivative $|\dot{X}_t|$ of any $X \in AC^1([0,T];\mathcal{X})$ exists a.e. $t\in [0,T]$ \cite{AGS2008GFs}. For any $X\in C([0,T];\mathcal{X})$ and any $[s,t] \subset [0,T]$, we set:
    \begin{equation}
        | X |_{AC^{p};[s,t]} \coloneqq
        \begin{cases}
            \left(\int_{s}^{t} |\dot{X}_r|^p \d r \right)^{1/p}  \quad & \text{ if } X \in AC^1([0,T];\mathcal{X}),\\
            +\infty  & \text{ if } X \in C([0,T];\mathcal{X}) \setminus AC^1([0,T];\mathcal{X}),
        \end{cases}
    \end{equation}
    and we write $| X |_{AC^{p}} \coloneqq | X |_{AC^{p};[0,T]}$ as usual.

    \begin{definition}[{Sobolev space $W^{1, p}$}]\label{def:W1p}
        Given $1<p<\infty$, the Sobolev regularity of a function $X: [0,T] \to \mathcal{X}$ over $[t,s] \subset [0,T]$ is defined by
        \begin{equation}\label{eq:def_W1p}
            | X |_{W^{1,p};[s,t]} \coloneqq \left( \sup_{t_i \in \mathcal{D}([s,t])} \sum_{i} \frac{d(X_{t_i}, X_{t_{i+1}})^p}{|t_{i+1} - t_{i}|^{p-1}} \right)^{1/p}.
        \end{equation}
        The Sobolev space $W^{1, p}([0,T]; \mathcal{X})$ is the space of functions $X$ such that
        \begin{equation}
            | X |_{W^{1,p}} \coloneqq | X |_{W^{1,p};[0,T]} < \infty.
        \end{equation}
    \end{definition}

    On $\mathbb{R}^\d$, it is well-known that the function spaces mentioned above coincide when $p>1$ (see e.g. \cite[Proposition 1.45]{FrizVictoir2010book}). This result also holds in the general metric setting. For a proof in the case $p=2$, see \cite[Theorem 10.2]{ABS2021} or \cite[Proposition 2.3]{BoubelJuillet2025}, which can be generalized to any $p>1$ and is stated below:

    \begin{theorem}
        Given $1<p<\infty$, we have
        \begin{equation}
            AC^p([0,T];\mathcal{X}) = W^{1,p}([0,T];\mathcal{X}).
        \end{equation}
        Furthermore, $| \cdot |_{W^{1,p}}$ and $| \cdot |_{AC^p} $  are equal on the set of continuous paths, i.e.,
        \begin{equation}
            | X |_{AC^{p};[s,t]} = | X |_{W^{1,p};[s,t]},
        \end{equation}
        for all $X\in C([0,T];\mathcal{X}) $ and $0\leq s<t\leq T$.   
    \end{theorem}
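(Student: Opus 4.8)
The plan is to prove the two inequalities $|X|_{W^{1,p};[s,t]} \le |X|_{AC^p;[s,t]}$ and $|X|_{AC^p;[s,t]} \le |X|_{W^{1,p};[s,t]}$ separately for an arbitrary continuous $X$ and every $[s,t]\subseteq[0,T]$; the equality of the two function spaces then follows at once, since each inclusion is witnessed by the finiteness of the corresponding norm via these two inequalities. Throughout I abbreviate $d_i \coloneqq d(X_{t_i},X_{t_{i+1}})$ and $\ell_i \coloneqq t_{i+1}-t_i$ for a dissection $t_i \in \mathcal{D}([s,t])$.

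For the first inequality I would assume $X \in AC^1$ (otherwise $|X|_{AC^p;[s,t]}=+\infty$ and there is nothing to prove), so that the metric derivative $|\dot X_r|$ exists for a.e.\ $r$ and $d_i \le \int_{t_i}^{t_{i+1}}|\dot X_r|\d r$. Applying Hölder's inequality on each subinterval gives $d_i^p \le \ell_i^{p-1}\int_{t_i}^{t_{i+1}}|\dot X_r|^p\d r$, hence $d_i^p/\ell_i^{p-1} \le \int_{t_i}^{t_{i+1}}|\dot X_r|^p\d r$; summing over $i$ and taking the supremum over dissections yields $|X|_{W^{1,p};[s,t]}^p \le \int_s^t|\dot X_r|^p\d r = |X|_{AC^p;[s,t]}^p$. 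This direction is routine.

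The reverse inequality is the substantial part, and there I may assume $|X|_{W^{1,p};[s,t]}<\infty$. I set $V(u)\coloneqq|X|_{W^{1,p};[s,u]}^p$; combining dissections shows that $V$ is nondecreasing and superadditive, so in particular $d(X_u,X_{u+h})^p \le h^{p-1}\big(V(u+h)-V(u)\big)$ for $h>0$. A single use of Hölder in the form $\sum_i d_i \le \big(\sum_i d_i^p/\ell_i^{p-1}\big)^{1/p}\big(\sum_i\ell_i\big)^{1-1/p}$ shows first that the length (the $1$-variation) of $X$ on $[s,t]$ is finite, bounded by $|X|_{W^{1,p};[s,t]}\,(t-s)^{1-1/p}$; let $L$ denote its (continuous, nondecreasing) length function. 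The crucial step is then to show that $X \in AC^1$, equivalently that the Lebesgue--Stieltjes measure $\lambda$ of $L$ has no singular part.

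I expect this no-singular-part claim to be the main obstacle, and it is precisely where the hypothesis $p>1$ is used. I would argue by contradiction: if $\lambda$ had a singular component of mass $\delta_0>0$ carried by a Lebesgue-null set $N$, then for every $\varepsilon>0$ one can cover $N$ by finitely many disjoint intervals $I_k$ with $\sum_k|I_k|<\varepsilon$ yet $\sum_k\lambda(I_k)\ge\delta_0/2$. Choosing dissections of the $I_k$ that nearly realize the length on each $I_k$ and concatenating them produces a sub-dissection with $\sum_{k,i}d_i\ge\delta_0/4$ and $\sum_{k,i}\ell_i<\varepsilon$. The elementary convexity (Hölder) inequality $\sum_{k,i}d_i^p/\ell_i^{p-1}\ge\big(\sum_{k,i}d_i\big)^p/\big(\sum_{k,i}\ell_i\big)^{p-1}$ then forces the energy of this sub-dissection to be at least $(\delta_0/4)^p/\varepsilon^{p-1}$, while it is at most $V(t)-V(s)<\infty$; letting $\varepsilon\to0$ gives the desired contradiction. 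Hence $L$ is absolutely continuous, so $X\in AC^1$ and $|\dot X_r|$ exists a.e. Finally, dividing the single-interval estimate by $h^p$ and letting $h\to0$ at points where both $V$ and $X$ are differentiable gives $|\dot X_r|^p\le V'(r)$ a.e.; integrating and using $\int_s^t V'\le V(t)-V(s)$ for monotone $V$ yields $|X|_{AC^p;[s,t]}^p=\int_s^t|\dot X_r|^p\d r\le V(t)-V(s)=|X|_{W^{1,p};[s,t]}^p$. This establishes the reverse inequality, and together with the first one it proves both the norm equality and the coincidence of the two spaces.
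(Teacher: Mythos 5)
Your argument is correct and complete. Note that the paper itself does not prove this theorem: it defers to the cited references (the $p=2$ case in [ABS2021, Theorem 10.2] and [BoubelJuillet2025, Proposition 2.3]), so your proof should be judged on its own terms. The standard route in those references obtains the $L^p$ density differently: one forms the piecewise-constant difference quotients $g_n(r) \coloneqq d(X_{t_i},X_{t_{i+1}})/\ell_i$ along a refining sequence of partitions, observes that $\lVert g_n\rVert_{L^p}^p$ is exactly the partition energy and hence uniformly bounded, extracts a weak $L^p$ limit $m$ by reflexivity (this is where $p>1$ enters there), and concludes $d(X_u,X_v)\le\int_u^v m$ together with $\lVert m\rVert_p\le |X|_{W^{1,p}}$ by weak lower semicontinuity of the norm. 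You instead work directly with the superadditive set function $V$, first derive finite $1$-variation, then exclude a singular part of the length measure via the concentration argument $\sum d_i^p/\ell_i^{p-1}\ge(\sum d_i)^p/(\sum\ell_i)^{p-1}$ with $\sum\ell_i<\varepsilon$ (your use of $p>1$), and finally bound $|\dot X_r|^p\le V'(r)$ pointwise a.e.\ and integrate. Both proofs are valid; yours is longer but entirely elementary, avoiding weak compactness, and it localizes cleanly to any subinterval $[s,t]$. The only cosmetic point worth flagging is that the paper's convention sets $|X|_{AC^p;[s,t]}=+\infty$ unless $X\in AC^1$ on all of $[0,T]$, whereas your argument (correctly) establishes absolute continuity of $X$ restricted to $[s,t]$; this is a quirk of the paper's definition rather than a gap in your proof.
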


    Accordingly, we mainly focus on the definition $|\cdot|_{W^{1,p}}$ as its properties are easier to observe. First, since it is defined as the supremum of a family of continuous functions, it is a lower semi-continuous map, as mentioned in  \cite[Theorem 10.2]{ABS2021}:

    \begin{lemma}[Lower semi-continuity of $\gamma \mapsto|\gamma |_{W^{1,p}}$]
             Let $1 < p < \infty$. The map $\gamma \mapsto | \gamma |_{W^{1,p}}$ from $ \Gamma_T \to [0,\infty]$ is lower semi-continuous with respect to pointwise convergence, and in particular, with respect to uniform convergence. 
    \end{lemma}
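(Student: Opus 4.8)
The plan is to exploit the structure already emphasized in the text just before the statement: by \cref{def:W1p}, the quantity $|\cdot|_{W^{1,p}}^p$ is a pointwise supremum, over all finite dissections, of functionals that are themselves continuous under pointwise convergence. I would then invoke the elementary principle that a supremum of lower semi-continuous functions is lower semi-continuous.

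First I would fix an arbitrary dissection $D = \{s = t_0 < \cdots < t_N = t\} \in \mathcal{D}([0,T])$ and set $F_D(\gamma) \coloneqq \sum_i \frac{d(\gamma_{t_i}, \gamma_{t_{i+1}})^p}{|t_{i+1}-t_i|^{p-1}}$. Since $F_D$ depends on $\gamma$ only through the finitely many evaluations $\gamma_{t_0}, \dots, \gamma_{t_N}$, and both the metric $d$ and the map $r \mapsto r^p$ are continuous, the functional $F_D : \Gamma_T \to [0,\infty)$ is continuous with respect to pointwise convergence. Concretely, if $\gamma^n \to \gamma$ pointwise then $d(\gamma^n_{t_i}, \gamma^n_{t_{i+1}}) \to d(\gamma_{t_i}, \gamma_{t_{i+1}})$ for each $i$, whence $F_D(\gamma^n) \to F_D(\gamma)$.

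Next, using that $|\gamma|_{W^{1,p}}^p = \sup_{D \in \mathcal{D}([0,T])} F_D(\gamma)$, for any fixed $D$ and any sequence $\gamma^n \to \gamma$ pointwise I would write
\[
F_D(\gamma) = \lim_{n\to\infty} F_D(\gamma^n) \leq \liminf_{n\to\infty} \sup_{D'} F_{D'}(\gamma^n) = \liminf_{n\to\infty} |\gamma^n|_{W^{1,p}}^p.
\]
Taking the supremum over all dissections $D$ on the left-hand side yields $|\gamma|_{W^{1,p}}^p \leq \liminf_n |\gamma^n|_{W^{1,p}}^p$, i.e. lower semi-continuity of the $p$-th power. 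Because $r \mapsto r^{1/p}$ is continuous and nondecreasing on $[0,\infty]$, this transfers to $|\cdot|_{W^{1,p}}$ itself, and since uniform convergence implies pointwise convergence, the statement for the uniform topology follows a fortiori.

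This argument carries no genuine obstacle; it is the exact analogue of the lower semi-continuity of $|\cdot|_{p\textrm{-}\mathrm{var}}$ (also a supremum over dissections of continuous terms) recalled earlier, and the only point deserving a line of care is the continuity of each $F_D$ under pointwise convergence, which is immediate from the finiteness of the dissection. I would note the contrast with $|\cdot|_{p\textrm{-}\mathrm{var}\textrm{-}\mathrm{limsup}}$, whose $\limsup$-over-mesh structure destroys exactly this supremum representation and thereby fails lower semi-continuity (\cref{rmk:non-lsc-inf-variation}).
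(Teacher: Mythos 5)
Your argument is correct and is exactly the paper's justification: the paper notes that $|\cdot|_{W^{1,p}}$ is "defined as the supremum of a family of continuous functions" (each dissection functional depending only on finitely many evaluations) and is therefore lower semi-continuous, citing \cite[Theorem 10.2]{ABS2021}. Your write-up simply makes this one-line observation explicit, including the harmless passage from the $p$-th power to the norm itself.
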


    Next, by the very definition of $|X|_{W^{1,p}}$, we have for all $0\leq s<t\leq T$,
    \begin{equation}\label{eq:distance_estimate_W1p}
    d(X_s,X_t) \leq |t-s|^{1-\frac{1}{p}}|X|_{W^{1,p};[s,t]},
    \end{equation}
    which implies the embeddings \cite[Theorem 1.47]{FrizVictoir2010book}
    \begin{equation}
            W^{1, p} \subset C^{1 - \frac{1}{p}\textrm{-}\mathrm{H\ddot{o}l}} \quad \textrm{and} \quad W^{1, p} \subset C^{1\textrm{-}\mathrm{var}}.
        \end{equation}
    Combining \eqref{eq:distance_estimate_W1p} with the Arzelà-Ascoli \cref{thm:Arzela-Ascoli}, one simply concludes:

    \begin{lemma}[Compact sublevels of $\gamma \mapsto d (\gamma_0, \bar{x} ) +  | \gamma |_{W^{1,p}}$]
            Let $(\mathcal{X},d)$ be a complete metric space in which closed bounded sets are compact.
            Given $1<p<\infty$ and an arbitrary point $\bar{x}\in \mathcal{X}$, the map $\gamma \mapsto \Psi (\gamma) \coloneqq d (\gamma_0, \bar{x} ) +  | \gamma |_{W^{1,p}}$ from $\Gamma_T \to [0,+\infty]$ has compact sublevels in $\Gamma_T$. 
    \end{lemma}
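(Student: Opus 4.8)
The plan is to mirror the argument for \cref{lemma:compact_sublevel}, replacing the Garsia--Rodemich--Rumsey estimate used there with the elementary bound \eqref{eq:distance_estimate_W1p}, which is built directly into the definition of $|\cdot|_{W^{1,p}}$. I would fix a positive constant $c$ and set $\mathcal{A} \coloneqq \{\gamma \in \Gamma_T : \Psi(\gamma) \leq c\}$. Since $\Psi$ is the sum of two nonnegative terms, the bound $\Psi(\gamma)\leq c$ immediately yields both $\sup_{\gamma\in\mathcal{A}} d(\gamma_0,\bar x)\leq c <\infty$, which is condition (i) of Arzel\`a--Ascoli, and $|\gamma|_{W^{1,p}}\leq c$ for every $\gamma\in\mathcal{A}$.

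For equicontinuity I would first record that the $W^{1,p}$-seminorm is monotone under restriction: any partition of a subinterval $[s,t]$ extends to a partition of $[0,T]$, so that $|\gamma|_{W^{1,p};[s,t]}\leq |\gamma|_{W^{1,p};[0,T]}=|\gamma|_{W^{1,p}}\leq c$. Combining this with \eqref{eq:distance_estimate_W1p} gives, for all $0\leq s<t\leq T$ and all $\gamma\in\mathcal{A}$, the uniform H\"older bound $d(\gamma_s,\gamma_t)\leq c\,|t-s|^{1-\frac1p}$. Because $p>1$ forces $1-\frac1p>0$, the modulus of continuity \eqref{eq:modulus_of_continuity} is controlled by $w_\delta(\gamma)\leq c\,\delta^{1-\frac1p}$, which tends to $0$ as $\delta\to 0$ uniformly over $\mathcal{A}$; this is precisely condition (ii). Arzel\`a--Ascoli (\cref{thm:Arzela-Ascoli}) then shows that $\mathcal{A}$ is relatively compact in $\Gamma_T$.

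It remains to upgrade relative compactness to compactness, i.e.\ to verify that $\mathcal{A}$ is closed. Given a sequence $(\gamma^n)\subset\mathcal{A}$ with a subsequence $\gamma^{n_k}\to\gamma$ uniformly, the continuity of $\gamma\mapsto d(\gamma_0,\bar x)$ together with the lower semi-continuity of $\gamma\mapsto|\gamma|_{W^{1,p}}$ established above yields $\Psi(\gamma)\leq\liminf_{k}\Psi(\gamma^{n_k})\leq c$, so the limit point lies in $\mathcal{A}$. Hence $\mathcal{A}$ is compact in $\Gamma_T$.

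I do not anticipate a genuine obstacle here, since the defining supremum in \cref{def:W1p} already encodes the H\"older estimate \eqref{eq:distance_estimate_W1p}, so no external regularity result (such as the Garsia--Rodemich--Rumsey inequality needed in the fractional case) is required. The only small piece of bookkeeping is the restriction-monotonicity of the seminorm, which is what guarantees that the H\"older bound holds uniformly across all subintervals rather than merely on $[0,T]$; this is exactly the ingredient that turns the pointwise estimate into equicontinuity of the whole sublevel set.
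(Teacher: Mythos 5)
Your proposal is correct and follows exactly the route the paper intends: the paper disposes of this lemma in one line ("combining \eqref{eq:distance_estimate_W1p} with Arzel\`a--Ascoli, one simply concludes"), and your write-up is the faithful expansion of that line, modelled on the detailed proof of \cref{lemma:compact_sublevel} with the Garsia--Rodemich--Rumsey step replaced by the elementary bound \eqref{eq:distance_estimate_W1p} and restriction-monotonicity of the seminorm. The closedness step via lower semi-continuity of $\gamma\mapsto|\gamma|_{W^{1,p}}$ is also the intended one, so there is nothing to add.
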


\subsection{Geodesics on metric spaces} 

    \begin{definition}[{Space $Geo([0,1];\mathcal{X})$}]
        A curve $X : [0,1] \to \mathcal{X}$ is called a constant-speed geodesic joining two points $x,y \in \mathcal{X}$ if $X_0=x, X_1=y,$ and 
        $$
        d(X_s,X_t) = |t-s| d(X_0,X_1), \quad \forall t,s\in [0,1].
        $$
        $Geo([0,1];\mathcal{X})$ denotes the set of all constant-speed geodesics.
    \end{definition}
    We recall some notions in metric geometry that we will need.
    \begin{definition}[Length-metric space]
        A metric space $(\mathcal{X},d)$ is called a length space if the distance between two points is equal to the infimum of the length of rectifiable curves joining them.
    \end{definition}

    \begin{definition}[Geodesic-metric space]
        A metric space $(\mathcal{X},d)$ is called a geodesic space if every two points in $\mathcal{X}$ can be joined by a (not necessarily unique) constant-speed geodesic.    
    \end{definition}

    Obviously, every geodesic space is a length space. However, a length space need not be a geodesic space in general unless we have additional structures on the metric space, for example:
    \begin{theorem}[Hopf--Rinow]\label{Hopf-Rinow}
            Let $(\mathcal{X},d)$ be a length metric space. If in addition, it is complete and locally compact, then
            \begin{enumerate}[label=(\roman*), font=\normalfont]
                \item every closed bounded subset of $\mathcal{X}$ is compact;
                \item $\mathcal{X}$ is a geodesic space.
            \end{enumerate}
    \end{theorem}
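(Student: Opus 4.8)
The plan is to prove (i) first --- that $(\mathcal{X},d)$ is \emph{proper}, i.e. every closed bounded set is compact --- and then obtain (ii) from (i) together with the length-space hypothesis. Since any closed bounded set is a closed subset of some closed ball $\bar{B}(x_0,r)\coloneqq\{x:d(x_0,x)\le r\}$, and closed subsets of compact sets are compact, it suffices to show that every closed ball is compact. Fixing a base point $x_0\in\mathcal{X}$, I would study
\[
    S \coloneqq \bigl\{\, r\ge 0 : \bar{B}(x_0,r)\ \text{is compact}\,\bigr\},
\]
and prove $S=[0,\infty)$. Local compactness supplies a compact neighbourhood of $x_0$, hence a compact ball $\bar{B}(x_0,r_0)$ with $r_0>0$, so $S\neq\emptyset$; and $S$ is downward closed, since $\bar{B}(x_0,r')$ is a closed subset of the compact $\bar{B}(x_0,r)$ whenever $r'<r\in S$. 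It then remains to rule out $\rho\coloneqq\sup S<\infty$, which I would do by showing $S$ is both closed and open as a subset of $[0,\infty)$: closedness forces $\rho\in S$, and openness then forces $\rho+\eta\in S$ for some $\eta>0$, contradicting the definition of $\rho$.

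For \emph{closedness} (that $\rho\in S$ when $\rho<\infty$), note $\bar{B}(x_0,\rho)$ is a closed subset of the complete space $\mathcal{X}$, hence complete, so it is enough to prove total boundedness. Given $\epsilon>0$, the ball $\bar{B}(x_0,\rho-\tfrac{\epsilon}{3})$ is compact, hence admits a finite $(\tfrac{\epsilon}{3})$-net $\{z_1,\dots,z_k\}$. For any $x\in\bar{B}(x_0,\rho)$, the \emph{length-space} property yields a rectifiable curve from $x_0$ to $x$ of length at most $\rho+\tfrac{\epsilon}{3}$; parametrizing it by arc length and retracting inward a controlled amount produces a point $y$ with $d(x_0,y)\le\rho-\tfrac{\epsilon}{3}$ and $d(y,x)\le\tfrac{2\epsilon}{3}$. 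Since $y$ lies within $\tfrac{\epsilon}{3}$ of some $z_i$, we get $d(x,z_i)\le\epsilon$, so $\{z_i\}$ is a finite $\epsilon$-net for $\bar{B}(x_0,\rho)$.

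For \emph{openness} (if $r\in S$, $r<\infty$, then $r+\eta\in S$ for some $\eta>0$), local compactness gives each $y\in\bar{B}(x_0,r)$ a compact ball $\bar{B}(y,s_y)$; the open cover $\{B(y,s_y)\}$ of the compact set $\bar{B}(x_0,r)$ has a Lebesgue number $\lambda>0$, whence $\bar{B}(z,\lambda/2)$ is compact for \emph{every} $z\in\bar{B}(x_0,r)$ with a uniform radius. Choosing $\eta$ small (so that the inward retraction along near-minimizing curves lands points of $\bar{B}(x_0,r+\eta)$ within $\lambda/2$ of $\bar{B}(x_0,r)$) and rerunning the total-boundedness/completeness argument shows $\bar{B}(x_0,r+\eta)$ is compact. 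Thus $S$ is nonempty, closed, and open in the connected set $[0,\infty)$, so $S=[0,\infty)$, proving (i).

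For (ii), given $x,y\in\mathcal{X}$ with $R\coloneqq d(x,y)$, the length-space property provides rectifiable curves from $x$ to $y$ of length $L_n\to R$; reparametrized proportionally to arc length on $[0,1]$ they become $L_n$-Lipschitz curves $\gamma^n$, all contained in the now-compact ball $\bar{B}(x,R+1)$ and uniformly equicontinuous. By Arzel\`a--Ascoli (\cref{thm:Arzela-Ascoli}, applicable precisely because of (i)) a subsequence converges uniformly to a limit $\gamma$ with $\gamma_0=x$, $\gamma_1=y$ and $d(\gamma_s,\gamma_t)\le R|t-s|$ for all $s,t$; chaining $R=d(\gamma_0,\gamma_1)\le d(\gamma_0,\gamma_s)+d(\gamma_s,\gamma_t)+d(\gamma_t,\gamma_1)\le R$ forces equality throughout, giving $d(\gamma_s,\gamma_t)=R|t-s|$, a constant-speed geodesic. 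I expect the main obstacle to be exactly the two inward-retraction steps, where the \emph{length} structure is indispensable: metric completeness and local compactness alone do not imply properness, so the argument must everywhere convert the mere bound $d(x_0,x)\le\rho$ into a near-minimizing curve along which the distance to $x_0$ varies continuously, allowing one to land inside an already-compact smaller ball. Making the radii uniform via the Lebesgue-number step and fitting the various $\epsilon$'s and $\eta$'s together quantitatively is the technical heart of the proof.
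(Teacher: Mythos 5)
The paper does not prove this statement: it is quoted as the classical Hopf--Rinow(--Cohn-Vossen) theorem for length spaces, so there is no in-paper argument to compare against. Your proposal is essentially the standard textbook proof (cf.\ Burago--Burago--Ivanov, Thm.~2.5.28, or Gromov/Cohn-Vossen) and the structure is sound: properness via the connectedness argument on $S=\{r:\bar B(x_0,r)\text{ compact}\}$, with the length hypothesis used exactly where you say --- to retract a point of $\bar B(x_0,\rho)$ along a near-minimizing curve into an already-compact smaller ball --- and then geodesics via constant-speed reparametrization, Arzel\`a--Ascoli inside a compact ball, and the chaining inequality $R\le Rs+R(t-s)+R(1-t)=R$ forcing $d(\gamma_s,\gamma_t)=R|t-s|$. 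Two details deserve one more line each when you write it out. In the openness step, $\bar B(x_0,r+\eta)\subset\bigcup_{y\in\bar B(x_0,r)}\bar B(y,\lambda/2)$ is an infinite union of compacta, which by itself proves nothing; you should pass to a finite $\lambda/4$-net $\{w_1,\dots,w_N\}$ of the compact ball $\bar B(x_0,r)$ so that the cover becomes the finite union $\bigcup_j\bar B(w_j,\lambda/2)$, from which total boundedness (and hence, with completeness, compactness) of the closed set $\bar B(x_0,r+\eta)$ follows. And in both retraction steps you should dispose of the degenerate case where the connecting curve is shorter than the amount you retract (then $x$ already lies within the prescribed distance of $x_0$ itself, which is in the net). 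With those patches the argument is complete and correct.
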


\subsection{Piecewise geodesic approximation on geodesic spaces}
    Here we give a practical lemma, in which we compute $b^{\alpha,p}$-norm of a piecewise geodesic curve in a geodesic space. 
    This will greatly help us in later computations.
    Let $(\mathcal{X},d)$ be a geodesic space. Fix $n \in \mathbb{N}_0$ and let $D_n$ be the dyadic dissection of $[0,1]$,
    \begin{gather}
             D_n \coloneqq \{ 0=t_0^{(n)} < t_1^{(n)} < \cdots < t_{N_n}^{(n)} = 1 \}, \\
             t_i^{(n)} \coloneqq \frac{i}{2^n} \quad i \in \left\{ 0,1, \cdots, 2^n \eqqcolon N_n \right\}. 
        \end{gather}
    Given $N_n+1$ points in $\mathcal{X}$ denoted by
    \begin{equation}\label{eq:points_piecewise_geodesic}
        \{x_0,x_1,\cdots,x_{N_n}\},
    \end{equation}
    let $X^n : [0,1] \to \mathcal{X}$ be a continuous curve that passes the points, i.e., 
    \begin{equation}\label{eq:def_piecewise_geodesic}
    X^n \Big( t_i^{(n)} \Big) = x_i \quad \textrm{for all} \quad i \in \left\{ 0,1, \cdots, N_n \right\}
    \end{equation}
    and connects in between by a constant-speed geodesic, i.e., the speed over each time segment is equal to $2^n d(x_i,x_{i+1})$.
    In other words, $X^n$ is a piecewise geodesic connecting the points \eqref{eq:points_piecewise_geodesic}. 
    In the next lemma, we compute $| X^n |_{b^{\alpha,p}}$. This quantity turns out to be independent of the choice of geodesics when geodesics are not unique.

    \begin{lemma}\label{lemma:balphap_Xn}
        Let $(\mathcal{X},d)$ be a geodesic space.
        Fix $n \in \mathbb{N}_0$ and let $D_n$ be the dyadic dissection of $[0,1]$. Given a set of points $\{x_0,x_1,\cdots,x_{2^n}\}$ in $\mathcal{X}$, let $X^n$ be a piece-wise geodesic curve on $D_n$ connecting them, as defined above in \eqref{eq:def_piecewise_geodesic}. Then we have 
        \begin{equation}\label{eq:balphap_Xn}
             | X^n |_{b^{\alpha,p}}^p =  \sum_{m=0}^{n} 2^{m(\alpha p -1)} \hspace{-20pt} \sum_{\substack{i = k \frac{2^n}{2^m} \\ k \in \{0,1,\cdots, 2^m-1\}}} \hspace{-20pt} d (x_{i},x_{i+\frac{2^n}{2^m}})^p + \frac{2^{n (\alpha p - 1 )}}{2^{(p - \alpha p)}-1}  \sum_{i=0}^{2^n-1} d(x_i,x_{i+1})^p. 
        \end{equation}
    \end{lemma}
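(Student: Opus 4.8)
The plan is to evaluate the defining sum \eqref{eq:Walphap_sum} of $|X^n|_{b^{\alpha,p}}^p$ directly, exploiting that the coarse dyadic scales $m\le n$ resolve exactly the knots of the piecewise geodesic, whereas the fine scales $m>n$ only probe the interior of individual geodesic segments. First I would absorb the mesh factors via $|\Delta t_m|^{2}/|\Delta t_m|^{1+\alpha p}=2^{m(\alpha p-1)}$, writing
\[
| X^n |_{b^{\alpha,p}}^p = \sum_{m=0}^{\infty} 2^{m(\alpha p-1)} \sum_{k=0}^{2^m-1} d\big(X^n_{t_k^{(m)}}, X^n_{t_{k+1}^{(m)}}\big)^p,
\]
and split the outer sum at $m=n$.

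For the coarse scales $m\in\{0,1,\dots,n\}$, each dyadic point $t_k^{(m)}=t_{k2^{n-m}}^{(n)}$ is itself a knot, so $X^n_{t_k^{(m)}}=x_{k2^{n-m}}$ by \eqref{eq:def_piecewise_geodesic}. Reindexing with $i=k\,\tfrac{2^n}{2^m}$ turns the inner sum into $\sum_i d(x_i,x_{i+2^n/2^m})^p$, which is precisely the first (finite) sum in \eqref{eq:balphap_Xn}.

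For the fine scales $m>n$, the decisive observation is that every dyadic interval $[t_k^{(m)},t_{k+1}^{(m)}]$ of length $2^{-m}$ lies inside a single geodesic segment $[t_i^{(n)},t_{i+1}^{(n)}]$, since the knots $t_i^{(n)}=t_{i2^{m-n}}^{(m)}$ are again dyadic at scale $m$ and hence no fine interval straddles a knot. On that segment the curve is a constant-speed geodesic of total length $d(x_i,x_{i+1})$ traversed over time $2^{-n}$, so a sub-interval of length $2^{-m}$ contributes distance $2^{n-m}d(x_i,x_{i+1})$; as there are exactly $2^{m-n}$ such sub-intervals per segment, the inner sum collapses to $2^{(m-n)(1-p)}\sum_{i=0}^{2^n-1}d(x_i,x_{i+1})^p$. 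Collecting the exponents of $2$, the $m$-th term equals $2^{mp(\alpha-1)+n(p-1)}$ times the constant $\sum_{i=0}^{2^n-1}d(x_i,x_{i+1})^p$; factoring out the $n$-dependent constants leaves the geometric series $\sum_{m=n+1}^{\infty}2^{mp(\alpha-1)}$, which converges because $\alpha<1$. Summing it and simplifying yields the prefactor $2^{n(\alpha p-1)}/(2^{p-\alpha p}-1)$, i.e.\ the second term of \eqref{eq:balphap_Xn}.

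The main obstacle is the bookkeeping in the fine-scale regime: verifying that no dyadic interval straddles a knot (so the constant-speed identity $d(X_u,X_v)=2^n|v-u|\,d(x_i,x_{i+1})$ applies on the whole interval), correctly counting the $2^{m-n}$ sub-intervals per segment, and carefully tracking the powers of $2$ through the geometric summation. I would close by remarking that the computation uses the geodesic segments only through the endpoint distances $d(x_i,x_{i+1})$, which explains why $|X^n|_{b^{\alpha,p}}$ is independent of the choice of constant-speed geodesics when these are not unique.
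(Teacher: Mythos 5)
Your proposal is correct and follows essentially the same route as the paper's proof: split the outer sum at $m=n$, reindex the coarse scales via $t_k^{(m)}=t_{k2^{n-m}}^{(n)}$ to get the first sum, and use the constant-speed identity on each geodesic segment together with the geometric series $\sum_{m>n}2^{mp(\alpha-1)}$ to produce the prefactor $2^{n(\alpha p-1)}/(2^{p-\alpha p}-1)$. All the exponent bookkeeping in your fine-scale computation checks out.
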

    
    \begin{figure}%[h]
        \centering
        \begin{tikzpicture}[scale=1.5]
        % time-line
        \draw [->] (0+0.5,0) -- (8-0.5,0);
        \draw (8-0.5,0) node[anchor=north] {\scriptsize time};
        % -------------------------------
        % time points
        % time point: 0
        % time point: 1
        % time point: 2
        \draw (2,-0.04) -- (2,0.04);
        \draw (2,-0.05) node[anchor=north] {\scriptsize $t_i^{(n)}$};
        \draw (2,-0.4) node[anchor=north] {\color{red}\scriptsize $t_k^{(m)}$};
        % time point: 3
        \draw (3,-0.04) -- (3,0.04);
        \draw (3,-0.05) node[anchor=north] {\scriptsize $t_{i+1}^{(n)}$};
        % time point: 4
        \draw (4,-0.04) -- (4,0.04);
        % time point: 5
        \draw (5,-0.04) -- (5,0.04);
        % time point: 6
        \draw (6,-0.04) -- (6,0.04);
        \draw (6,-0.05) node[anchor=north] {\scriptsize $t_{i'}^{(n)}$};
        \draw (6,-0.4) node[anchor=north] {\color{red}\scriptsize $t_{k+1}^{(m)}$};
        % time point: 7
        % time point: 8
        % mesh size: Delta t_n
        \draw [<->](4,-0.2) -- (5,-0.2);
        \draw (4.5,-0.2) node[anchor=north] {\scriptsize $\Delta t_n \coloneqq \frac{1}{2^n}$};
        % mesh size: Delta t_m
        \draw [red][<->](2,-0.9) -- (6,-0.9);
        \draw (4,-0.9) node[anchor=north] {\color{red} \scriptsize $\Delta t_m \coloneqq \frac{1}{2^m}$};
        % -------------------------------
        % Piecewise geodesic curve
        \draw [black][-]
        %(0,0.3)--
        (1,0.9)--
        (2,0.6)--
        (3,1.5)--
        (4,0.4)--
        (5,0.8)--
        (6,2.2)--
        (7,2.0)
        %(8,1.0)
        ;
        \filldraw [red] (2,0.6) circle (0.8pt);
        \filldraw [black] (3,1.5) circle (0.8pt);
        \filldraw [black] (4,0.4) circle (0.8pt);
        \filldraw [black] (5,0.8) circle (0.8pt);
        \filldraw [red] (6,2.2) circle (0.8pt);
        \draw [red][dashed]
        (2,0.6)--
        (6,2.2);
        % x labels
        \draw (2,0.65) node[anchor=south] {\scriptsize $x_i$};
        \draw (3,1.55) node[anchor=south] {\scriptsize $x_{i+1}$};
        % -------------------------------
        % timelines
        \draw [red][dotted] (2,0)--(2,0.6);
        \draw [red][dotted] (6,0)--(6,2.2);
        % mention the case
        \draw (1,2) node[anchor=south] {\scriptsize Case ${\color{red} m} \leq n$.};
        \end{tikzpicture}
        
        \vspace{20pt}
        
        \begin{tikzpicture}[scale=1.6]
        % time-line
        \draw [->] (0+0.5,0) -- (8-0.5,0);
        \draw (8-0.5,0) node[anchor=north] {\scriptsize time};
        % -------------------------------
        % time points
        % time point: 0
        % time point: 1
        % time point: 2
        \draw (2,-0.04) -- (2,0.04);
        \draw (2,-0.05) node[anchor=north] {\scriptsize $t_i^{(n)}$};
        % time points in between
        \draw [red](2.25,-0.04) -- (2.25,0.04);
        \draw [red] (2.50,-0.04) -- (2.50,0.04);
        \draw (2.25-0.06,-0.35) node[anchor=north] {\color{red}\scriptsize $t_k^{(m)}$};
        \draw (2.50+0.06,-0.35) node[anchor=north] {\color{red}\scriptsize $t_{k+1}^{(m)}$};
        \draw [red][dotted][-] (2.25,0.6+0.225) -- (2.25,-0.4);
        \draw [red][dotted][-] (2.50,0.6+0.225+0.225) -- (2.50,-0.4);
        \draw [red][dotted][-] (2.25,-0.75) -- (2.25,-0.9);
        \draw [red][dotted][-] (2.50,-0.75) -- (2.50,-0.9);
        % time point: 3
        \draw (3,-0.04) -- (3,0.04);
        \draw (3,-0.05) node[anchor=north] {\scriptsize $t_{i+1}^{(n)}$};
        % time point: 4
        \draw (4,-0.04) -- (4,0.04);
        % time point: 5
        \draw (5,-0.04) -- (5,0.04);
        % time point: 6
        \draw (6,-0.04) -- (6,0.04);
        % time point: 7
        % time point: 8
        % mesh size: Delta t_n
        \draw [<->](4,-0.2) -- (5,-0.2);
        \draw (4.5,-0.2) node[anchor=north] {\scriptsize $\Delta t_n \coloneqq \frac{1}{2^n}$};
        % mesh size: Delta t_m
        \draw [red][<->](2.25,-0.9) -- (2.50,-0.9);
        \draw (2.375,-0.9) node[anchor=north] {\color{red} \scriptsize $\Delta t_m \coloneqq \frac{1}{2^m}$};
        % -------------------------------
        % Piecewise geodesic curve
        \draw [black][-]
        %(0,0.3)--
        (1,0.9)--
        (2,0.6)--
        (3,1.5)--
        (4,0.4)--
        (5,0.8)--
        (6,2.2)--
        (7,2.0)
        %(8,1.0)
        ;
        \filldraw [black] (2,0.6) circle (0.8pt);
        \filldraw [black] (3,1.5) circle (0.8pt);
        \filldraw [black] (4,0.4) circle (0.8pt);
        \filldraw [black] (5,0.8) circle (0.8pt);
        \filldraw [black] (6,2.2) circle (0.8pt);
        \filldraw [red] (2.25,0.6+0.225) circle (0.8pt);
        \filldraw [red] (2.50,0.6+0.225+0.225) circle (0.8pt);
        % x labels
        \draw (2,0.65) node[anchor=south] {\scriptsize $x_i$};
        \draw (3,1.55) node[anchor=south] {\scriptsize $x_{i+1}$};
        % -------------------------------
        % mention the case
        \draw (1,2) node[anchor=south] {\scriptsize Case ${\color{red} m} > n$.};
        \end{tikzpicture}
        \captionsetup{font=footnotesize}
        \caption{Two cases in the computation of $b^{\alpha,p}$-regularity of a piecewise geodesic curve $X^n$ in the proof of \cref{lemma:balphap_Xn}. Note that $n$ is fixed. \textbf{Top:} $D_m$ is a coarser partition than $D_n$. \textbf{Bottom:} $D_m$ is a finer partition than $D_n$.}
        \label{fig:balphap_Xn}
    \end{figure}
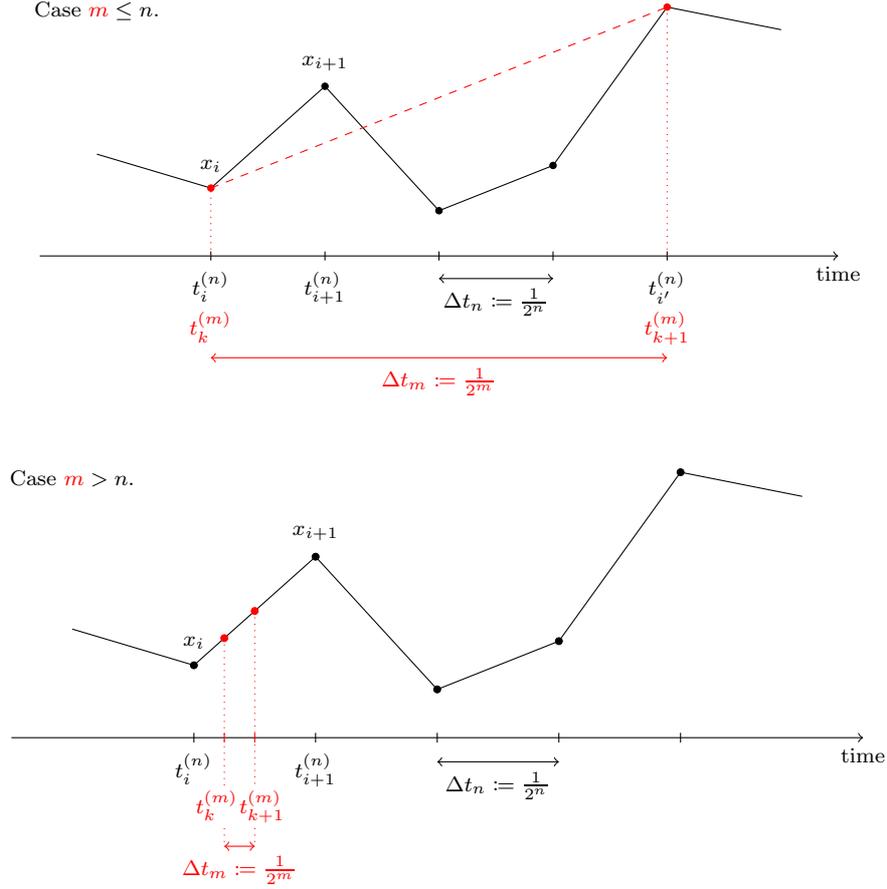

    \begin{proof}
        According to definition \eqref{eq:Walphap_sum}, 
        $$
             | X^n |_{b^{\alpha,p}}^p \coloneqq \sum_{m=0}^{\infty} 2^{m(\alpha p -1)} \sum_{k=0}^{2^m-1} d \big( X^n({t_{k}^{(m)}}),X^n({t_{k+1}^{(m)}}) \big)^p ,
        $$
        where $t_k^{(m)} \coloneqq \frac{k}{2^m} $ and for brevity, we shall denote by $\Delta t_n \coloneqq  \frac{1}{2^n}$ and $\Delta t_m \coloneqq   \frac{1}{2^m}$ for all $m \in \mathbb{N}_0$.
        We split the outer sum into two parts and compute them separately.
        \\
        \textbf{Case $m \leq  n$} (i.e. $D_m$ is a coarser partition than $D_n$). We show that this will sum up to the first term in \eqref{eq:balphap_Xn}. 
        This can be easily done through a re-indexing. Since $D_m \subseteq D_n$, each point $t_k^{(m)}$ of $D_m$ coincides with a point $t_i^{(n)}$ of $D_n$ (as shown in \cref{fig:balphap_Xn} (top)), where the corresponding index $i$ is determined by equating 
        \begin{equation}
        t_k^{(m)} = \frac{k}{2^m}  = \frac{i}{2^n} = t_i^{(n)} \quad \Rightarrow \quad i = k \frac{2^n}{2^m} .
        \end{equation}
        Therefore, we can write
            \begin{align}
                \sum_{m=0}^{n} 2^{m(\alpha p -1)} \sum_{k=0}^{2^m-1} d \big( X^n({t_{k}^{(m)}}), X^n({t_{k+1}^{(m)}}) \big)^p & = \sum_{m=0}^{n} 2^{m(\alpha p -1)} \sum_{k=0}^{2^m-1} d \big( x_{(k)\frac{2^n}{2^m}}, x_{(k+1)\frac{2^n}{2^m}}  \big)^p \\
                & =  \sum_{m=0}^{n} 2^{m(\alpha p -1)} \hspace{-20pt} \sum_{\substack{i = k \frac{2^n}{2^m} \\ k \in \{0,1,\cdots, 2^m-1\}}} \hspace{-20pt} d (x_{i}, x_{i+\frac{2^n}{2^m}})^p. \label{eq:balphap_Xn_m=<n}
            \end{align}
        \textbf{Case $m > n$} (i.e. $D_m$ is a finer partition than $D_n$). We show that this will result in the second term in \eqref{eq:balphap_Xn}. Let $t_i^{(n)} \leq t_{k}^{(m)} \leq t_{k+1}^{(m)} \leq t_{i+1}^{(n)}$ (as in \cref{fig:balphap_Xn} (bottom)). 
        Since the points are connected by geodesics, we know that
        $$
        d \big( X^n({t_{k}^{(m)}}), X^n ({t_{k+1}^{(m)}})  \big) = \frac{\Delta t_m}{\Delta t_n} \,  d (x_{i}, x_{i+1}).
        $$
        Therefore, the remaining part of the sum can be written as 
        \begin{align}
            \sum_{m>n} 2^{m(\alpha p -1)} \sum_{k=0}^{2^m-1} d \big(X^n({t_{k}^{(m)}}),X^n({t_{k+1}^{(m)}})\big)^p
            & = \sum_{m>n} 2^{m(\alpha p -1)}  \sum_{i=0}^{2^n-1} \frac{\Delta t_n}{\Delta t_m} \left( \frac{\Delta t_m}{\Delta t_n} d (x_{i},x_{i+1}) \right)^p \\
            & = 2^{n(p-1)} \sum_{m>n} 2^{- m (p - \alpha p)} \sum_{i=0}^{2^n-1} d (x_{i},x_{i+1})^p \\
            & = \frac{2^{n (\alpha p - 1 )}}{2^{(p-\alpha p) } -1 }  \sum_{i=0}^{2^n-1} d(x_i,x_{i+1})^p \label{eq:balphap_Xn_m>n}
        \end{align}
        Adding \eqref{eq:balphap_Xn_m=<n} and \eqref{eq:balphap_Xn_m>n}, we obtain the result. 
    \end{proof}

\subsection{Path measures and sample path regularity}\label{subsec:path_measures}
    We still consider the metric space $(\mathcal{X},d)$ and, in addition, we let $\mathcal{B}(\mathcal{X})$ denote the $\sigma$-algebra of Borel sets of $\mathcal{X}$ (generated by $d$-open balls). Let $I\coloneqq [0,T]\subset \mathbb{R}$ be a time interval. 
    Here, we first recall two fundamental path spaces, following \cite{RogersWilliams2000_V1,StroockVaradhan2006,FrizVictoir2010book}, and then discuss the measurability of the norms. Recall that $\mathcal{X}^I$ is the space of all functions from $I$ to $\mathcal{X}$. For $t \in I$, define
    \begin{align}
        e_t: \mathcal{X}^I \to \mathcal{X}, \qquad e_t (\gamma) \coloneqq \gamma_t
    \end{align}
    to be the \emph{evaluation map}. Similarly, for $J \subset I$, define
    \begin{align}
        e_J: \mathcal{X}^I \to \mathcal{X}^J, \qquad
        e_J (\gamma) \coloneqq \gamma\big|_{J}
    \end{align}
    to be the \emph{restriction map}. Two relevant path spaces for our work are as follows:
    
    \begin{itemize}
        \item $\big(\mathcal{X}^I, \mathcal{B} (\mathcal{X})^I\big)$. This is the space of all functions from $I$ to $\mathcal{X}$ endowed with the product $\sigma$-algebra $\mathcal{B}(\mathcal{X})^I$, which is defined as the smallest $\sigma$-algebra such that all evaluation maps $e_t:\mathcal{X}^I\to \mathcal{X}$ with $t \in I$ are measurable. Equivalently, one can show that $\mathcal{B}(\mathcal{X})^I$ is generated by the collections of cylindrical sets. In short,
        \begin{align}
            \mathcal{B}(\mathcal{X})^I
             \coloneqq & \sigma \big(e_t : \mathcal{X}^I\to \mathcal{X} \, \big| \ t \in I \big)  \\
             = &  \sigma \big( e_J : \mathcal{X}^I\to \mathcal{X}^J \big|  J \subset I, J \text{ finite} \big).
        \end{align}
    \item $\big(C(I;\mathcal{X}),\mathcal{C} \big)$. This is the space of continuous functions from $I$ to $\mathcal{X}$ endowed with the $\sigma$-algebra $\mathcal{C}$, which is again defined as the smallest $\sigma$-algebra such that all evaluation maps $e_t: C(I;\mathcal{X}) \to \mathcal{X}$ with $t \in I$ are measurable, and equivalently, it is also generated by the collections of cylindrical sets. On the other hand, here we have the supremum distance $d_\infty$, as defined in \eqref{eq:sup_distance}, on the space $C(I;\mathcal{X})$ and there is a natural $\sigma$-algebra of Borel sets (generated by $d_\infty$-open balls). It turns out that this one also coincides with the two just mentioned. In short,
    \begin{align}
        \mathcal{C}
         \coloneqq & \sigma \big(e_t : C(I;\mathcal{X})\to \mathcal{X} \, \big| \, t \in I \big)  \\
         = &  \sigma \big( e_J : C(I;\mathcal{X})\to \mathcal{X}^J \big|  J \subset I, J \text{ finite}\big) \\
        = & \sigma \big( d_\infty\text{-topology} \big).
    \end{align}
    \end{itemize}
    Once the measure structure is fixed, we can discuss probability measures $\pi$ on $(\mathcal{X}^I, \mathcal{B} (\mathcal{X})^I )$ or $(C(I;\mathcal{X}),\mathcal{C} )$, which are referred to as \emph{path measures}. 
    By path measures, we only mean probability measures. 
    Recall that given a path measure $\pi$ on $(\mathcal{X}^I,\mathcal{B}(\mathcal{X})^I)$, it is not possible to answer the following natural question
    $$
    \pi \Big( \big\{ \gamma \in \mathcal{X}^I : \gamma \in C(I;\mathcal{X}) \big\} \Big) \overset{?}{=} 1.
    $$
    as $C(I;\mathcal{X})$ is not a measurable set in $\mathcal{B}(\mathcal{X})^I$, because it is not of the form of $\sigma$-cylinder sets.
    %(Simply speaking, continuity is not a property that can be determined by evaluating the function on a countable set. The problem is that the $\sigma$-algebra $\mathcal{B}(\mathcal{X})^I$ is too small for a space as large as $\mathcal{X}^I$).
    For the same reason, we cannot ask whether a path measure $\pi$ on $(\mathcal{X}^I, \mathcal{B} (\mathcal{X})^I )$ has finite H\"{o}lder energy
    $$
    \int_{\mathcal{X}^I} | \gamma |_{\upgamma\textrm{-}\mathrm{H\ddot{o}l}} \d \pi (\gamma) \overset{?}{<} + \infty
    $$
    because $\gamma \mapsto | \gamma |_{\upgamma\textrm{-}\mathrm{H\ddot{o}l}}$ is not measurable as a function from $(\mathcal{X}^I,\mathcal{B}(\mathcal{X})^I) \to (\bar{\mathbb{R}}_+,\mathcal{B}(\bar{\mathbb{R}}_+))$.  Here $\bar{\mathbb{R}}_+ \coloneqq [0,+\infty]$ and the corresponding $\sigma$-algebra $\mathcal{B}(\bar{\mathbb{R}}_+)$ also contains $\{+\infty\}$. Let us summarize this observation:

    \begin{remark}\label{rmk:measurability_Walphap}
        The functions
        \begin{align}
            \gamma  \mapsto w_{\delta} (\gamma), \qquad 
            \gamma  \mapsto | \gamma |_{\upgamma\textrm{-}\mathrm{H\ddot{o}l}}, \qquad 
            \gamma  \mapsto |\gamma|_{W^{\alpha,p}},
            \qquad 
            \gamma  \mapsto |\gamma|_{W^{1,p}}
        \end{align}
        from $(\mathcal{X}^I,\mathcal{B}(\mathcal{X})^I) \to (\bar{\mathbb{R}}_+,\mathcal{B}(\bar{\mathbb{R}}_+))$ are \emph{not} measurable. However, as functions
        from $(C(I;\mathcal{X}),\mathcal{C}) \to (\bar{\mathbb{R}}_+,\mathcal{B}(\bar{\mathbb{R}}_+))$, they are lower semi-continuous, in particular, measurable.
    \end{remark}
    In contrast to the functions above, the function $\gamma \mapsto | \gamma |_{b^{\alpha,p}}$ has the advantage of measurability. Recall that its definition only relies on a countable sum of distances. As a consequence of Beppo Levi's lemma, we can affirm that $\gamma \mapsto | \gamma |^p_{b^{\alpha}}$ is measurable because it is a pointwise limit of a non-decreasing sequence of measurable functions:
    \begin{remark}\label{rmk:measurability_balphap}
        The function $$\gamma \mapsto |\gamma|_{b^{\alpha,p}}$$ from $(\mathcal{X}^I,\mathcal{B}(\mathcal{X})^I) \to (\bar{\mathbb{R}}_+,\mathcal{B}(\bar{\mathbb{R}}_+))$ is measurable. It is also measurable as a function from $(C(I;\mathcal{X}),\mathcal{C}) \to (\bar{\mathbb{R}}_+,\mathcal{B}(\bar{\mathbb{R}}_+))$.
    \end{remark}

\subsection{Probability measures, narrow convergence, and tightness}\label{subsec:probability_measures}
    In this section, we consider $(\mathcal{X},d)$ to be a complete separable metric space. 
    We let $\mathcal{B}(\mathcal{X})$ be the $\sigma$-algebra of Borel sets of $\mathcal{X}$ (generated by open balls in $\mathcal{X}$) and we let $P(\mathcal{X})$ be the set of all Borel probability measures on $(\mathcal{X}, \mathcal{B}(\mathcal{X}))$.
    The space of continuous and bounded functions $\varphi: \mathcal{X} \to \mathbb{R}$ is denoted by $ C_b(\mathcal{X}) \coloneqq C_b(\mathcal{X};\mathbb{R})$ and it is equipped with the supremum-norm
    $$
    \lVert \varphi \rVert_{\infty } \coloneqq \sup_{x \in \mathcal{X}} |\varphi(x)|.  
    $$
    The narrow topology on $P(\mathcal{X})$ is generated using $C_b(\mathcal{X})$.

    \begin{definition}[Narrow convergence]\label{def:narrow_convergence}
        A sequence $(\mu_n) \subset P(\mathcal{X})$ narrowly converges to $\mu \in P(\mathcal{X})$ if
        \begin{equation}
            \lim_{n \to \infty }\int_{\mathcal{X}} \varphi \d \mu_n = \int_{\mathcal{X}} \varphi \d \mu
        \end{equation}
        for every $\varphi \in C_b(\mathcal{X})$. 
    \end{definition}
    Narrow convergence can be characterized by a subset of $C_b(\mathcal{X})$, namely, the space of Lipschitz bounded functions $\mathrm{Lip}_b(\mathcal{X})$ (see e.g. \cite[Lemma 8.12]{ABS2021}). We also recall that if $(\mu_n) \subset P(\mathcal{X})$ narrowly converges to $\mu \in P(\mathcal{X})$ and $\varphi: \mathcal{X} \to [0,+\infty]$ is a lower semi-continuous function, then
    \begin{equation}\label{eq:narrow_conv_lsc_func}
        \liminf_{n \to \infty } \int_{\mathcal{X}} \varphi \d \mu_n \geq \int_{\mathcal{X}} \varphi \d \mu. 
    \end{equation}

    \begin{definition}[Tightness]\label{def:tightness}
        A family of measures $ \mathcal{K} \subset P(\mathcal{X})$ is said to be tight if for any $ \varepsilon>0$, there exists a compact set $K_\varepsilon \subset \mathcal{X}$ such that for all $\mu \in \mathcal{K}$, we have $\mu (K_\varepsilon^{\complement}) \leq \varepsilon$, or in other words, 
        \begin{equation}\label{eq:tightness}
            \sup_{\mu \in \mathcal{K}} \mu (K_\varepsilon^{\complement}) \leq \varepsilon.
        \end{equation}
    \end{definition}

    We recall the following well-known integral criterion for tightness:

    \begin{lemma}[An integral tightness criterion]\label{lemma:tightness_criterion}
         A family of measures $ \mathcal{K} \subset P(\mathcal{X})$ is tight if and only if there exists a function 
         $\Psi: \mathcal{X} \to [0, + \infty]$ such that
        \begin{enumerate}[font=\normalfont]
            \item its sublevels $\lambda_c (\Psi) \coloneqq \{ |\Psi| \leq c \} \subset \mathcal{X}$ are compact for any $c \geq 0$;
            \item it satisfies the bound
            \begin{equation}\label{eq:tightness_criterion_bound}
                \sup_{\mu \in \mathcal{K}} \int_{\mathcal{X}}  \Psi(x) \d \mu (x) < + \infty.
            \end{equation}
        \end{enumerate}
    \end{lemma}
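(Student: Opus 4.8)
The plan is to prove the two implications separately, treating the ``integral bound $\Rightarrow$ tight'' direction as a routine Markov-inequality argument and reserving most of the effort for the converse construction of the function $\Psi$.

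For \emph{sufficiency}, suppose such a $\Psi$ exists and set $M \coloneqq \sup_{\mu \in \mathcal{K}}\int_{\mathcal{X}}\Psi \d\mu < +\infty$. Given $\varepsilon > 0$, I would choose $c \geq M/\varepsilon$ (so $c \geq 0$) and take $K_\varepsilon \coloneqq \lambda_c(\Psi) = \{\Psi \leq c\}$, which is compact by hypothesis (1). Note that, since compact sets are closed, all sublevels being closed makes $\Psi$ lower semi-continuous, hence Borel measurable, so the integral is well-defined. Markov's inequality then gives, uniformly over $\mu \in \mathcal{K}$,
\[
    \mu(K_\varepsilon^{\complement}) = \mu\big(\{\Psi > c\}\big) \leq \tfrac{1}{c}\int_{\mathcal{X}}\Psi\d\mu \leq \tfrac{M}{c}\leq \varepsilon,
\]
which is exactly the tightness bound \eqref{eq:tightness}.

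For \emph{necessity}, the heart of the matter is to manufacture a single $\Psi$ out of the family of compact sets supplied by tightness. First I would, for each $n \in \mathbb{N}$, use tightness to select a compact set $\tilde{K}_n$ with $\sup_{\mu\in\mathcal{K}}\mu(\mathcal{X}\setminus \tilde{K}_n)\leq 2^{-n}$, and then pass to the increasing family $K_n \coloneqq \bigcup_{j=1}^n \tilde{K}_j$, which remains compact (a finite union of compacts) and still satisfies $\sup_{\mu\in\mathcal K}\mu(\mathcal{X}\setminus K_n)\leq 2^{-n}$. The key device is to define
\[
    \Psi(x) \coloneqq \sum_{n=1}^{\infty} \mathbf{1}_{\mathcal{X}\setminus K_n}(x),
\]
so that $\Psi(x)$ counts how many of the $K_n$ miss $x$. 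Because the $K_n$ increase, once $x$ enters some $K_{n_0}$ it stays there, so $\Psi$ takes values in $\mathbb{N}_0 \cup \{+\infty\}$, and I would verify the clean identity $\{\Psi \leq N-1\} = K_N$ for every $N$: indeed $x\in K_N$ forces $x\in K_n$ for all $n\geq N$, leaving at most $N-1$ missing indices, while $x\notin K_N$ forces $x\notin K_n$ for all $n\leq N$, giving $\Psi(x)\geq N$. Since $\Psi$ is integer-valued, every sublevel $\{\Psi\leq c\}$ coincides with some $K_N$ and is therefore compact, establishing (1); and since each $K_n$ is Borel, $\Psi$ is measurable as a monotone limit of simple functions. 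Finally, Tonelli's theorem yields, uniformly in $\mu\in\mathcal{K}$,
\[
    \int_{\mathcal{X}}\Psi\d\mu = \sum_{n=1}^{\infty} \mu(\mathcal{X}\setminus K_n)\leq \sum_{n=1}^{\infty} 2^{-n} = 1 < +\infty,
\]
which is (2).

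The only genuinely delicate point is the necessity direction, and within it the observation that stacking the indicators of an \emph{increasing} exhaustion by compacts produces a function whose sublevels are exactly those compacts; the summability $\sum_n 2^{-n}<\infty$ is precisely what converts the uniform smallness of the tails into a uniform integral bound. Everything else is bookkeeping, and neither completeness nor separability of $(\mathcal{X},d)$ is actually needed beyond ensuring that compact sets are Borel and that finite unions of compacts are compact.
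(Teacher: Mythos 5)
Your proof is correct. The paper does not actually prove this lemma---it is stated as a well-known criterion (cf.\ Ambrosio--Gigli--Savar\'e, \emph{Gradient Flows}, Remark 5.1.5) with no proof supplied---and your argument is precisely the standard one: Markov's inequality for sufficiency, and for necessity the stacked-indicator function $\Psi=\sum_n \mathbf{1}_{\mathcal{X}\setminus K_n}$ built from an increasing exhaustion by compacts, with the sublevel identity $\{\Psi\leq N-1\}=K_N$ and the geometric tail bound doing the work. The only cosmetic point is in the sufficiency direction: to apply Markov's inequality you should take $c>0$ (e.g.\ $c=\max\{1,M/\varepsilon\}$), since $c\geq M/\varepsilon$ permits $c=0$ when $M=0$; this does not affect the substance of the argument.
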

    
    \begin{theorem}[Prokhorov]\label{thm:Prokhorov}
        A family of measures $\mathcal{K} \subset P (\mathcal{X})$ is tight if and only if it is relatively compact with respect to the narrow topology of $P(\mathcal{X})$. 
    \end{theorem}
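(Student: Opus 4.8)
The plan is to establish the two implications separately, using throughout that $(\mathcal{X},d)$ Polish makes the narrow topology on $P(\mathcal{X})$ metrizable and separable; in particular a family is relatively compact if and only if every sequence drawn from it admits a narrowly convergent subsequence. I would first record the Portmanteau inequalities that follow directly from \eqref{eq:narrow_conv_lsc_func}: taking $\varphi=\mathbf{1}_U$ for $U$ open (lower semi-continuous) gives $\liminf_n\mu_n(U)\ge\mu(U)$, and passing to complements gives $\limsup_n\mu_n(F)\le\mu(F)$ for $F$ closed. These are the only analytic facts I will need beyond separability and completeness.

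For the implication \emph{tight $\Rightarrow$ relatively compact}, I would use a compactification argument. Since $\mathcal{X}$ is separable metric, there is a homeomorphism $\iota$ of $\mathcal{X}$ onto a subset of the Hilbert cube $Q=[0,1]^{\mathbb{N}}$; let $\hat{\mathcal{X}}:=\overline{\iota(\mathcal{X})}\subset Q$, a compact metric space. Given a sequence $(\mu_n)\subset\mathcal{K}$, I push forward to $\hat\mu_n:=\iota_\#\mu_n\in P(\hat{\mathcal{X}})$. Because $\hat{\mathcal{X}}$ is compact, $P(\hat{\mathcal{X}})$ is narrowly compact (it is a weak-$*$ closed subset of the unit ball of $C(\hat{\mathcal{X}})^*$, compact by Banach--Alaoglu, and metrizable since $C(\hat{\mathcal{X}})$ is separable), so a subsequence satisfies $\hat\mu_{n_k}\to\hat\mu$ narrowly. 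Tightness of $\mathcal{K}$ supplies compact sets $K_j\subset\mathcal{X}$ with $\mu_n(\mathcal{X}\setminus K_j)<1/j$ for all $n$; since $\iota(K_j)$ is closed in $\hat{\mathcal{X}}$, Portmanteau gives $\hat\mu(\iota(K_j))\ge\limsup_k\hat\mu_{n_k}(\iota(K_j))\ge 1-1/j$, whence $\hat\mu$ is concentrated on $\iota(\mathcal{X})$ and defines $\mu:=(\iota^{-1})_\#\hat\mu\in P(\mathcal{X})$. The final step is to upgrade $\hat\mu_{n_k}\to\hat\mu$ to $\mu_{n_k}\to\mu$ narrowly in $P(\mathcal{X})$: for $\varphi\in C_b(\mathcal{X})$ one splits the integrals over a tight compact set $K_\varepsilon$ and its complement, approximates $\varphi$ restricted to $\iota(K_\varepsilon)$ by a function in $C(\hat{\mathcal{X}})$ via Tietze extension, and controls the tails by tightness.

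For the converse \emph{relatively compact $\Rightarrow$ tight}, the heart is the claim that for every $\delta>0$ and $\varepsilon>0$ there is a \emph{finite} union $U$ of $\delta$-balls with $\inf_{\mu\in\mathcal{K}}\mu(U)\ge 1-\varepsilon$. I would argue by contradiction: fixing a dense sequence $(x_i)$ and setting $U_n:=\bigcup_{i\le n}B(x_i,\delta)$, failure of the claim yields $\mu_n\in\mathcal{K}$ with $\mu_n(U_n)<1-\varepsilon$; extracting $\mu_{n_k}\to\mu$ and using that $U_m$ is open with $U_m\subset U_{n_k}$ for large $k$, the open-set inequality gives $\mu(U_m)\le\liminf_k\mu_{n_k}(U_m)\le 1-\varepsilon$ for every $m$, contradicting $\mu(U_m)\uparrow 1$. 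Granting the claim, for each $m$ I choose a finite union $U_m$ of $(1/m)$-balls with $\mu(U_m)\ge 1-\varepsilon 2^{-m}$ for all $\mu\in\mathcal{K}$, and set $K:=\bigcap_m\overline{U_m}$. Then $K$ is closed and totally bounded, hence compact by completeness, and $\mu(\mathcal{X}\setminus K)\le\sum_m\mu(\mathcal{X}\setminus U_m)\le\varepsilon$ for all $\mu\in\mathcal{K}$, which is exactly tightness.

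The main obstacle I anticipate is the final matching of topologies in the first implication: the narrow limit lives a priori on the compactification $\hat{\mathcal{X}}$, and a generic test function $\varphi\in C_b(\mathcal{X})$ need not extend continuously to $\hat{\mathcal{X}}$, so transferring convergence back to $P(\mathcal{X})$ genuinely requires the tightness hypothesis to cut off the boundary $\hat{\mathcal{X}}\setminus\iota(\mathcal{X})$. Everything else is a careful but routine interplay of separability (for the countable covers and the embedding into $Q$), completeness (to turn total boundedness of $K$ into compactness), and the two Portmanteau inequalities extracted from \eqref{eq:narrow_conv_lsc_func}.
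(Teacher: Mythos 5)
The paper does not prove this statement: Prokhorov's theorem is recorded in \cref{subsec:probability_measures} as a standard fact from the literature, with no argument supplied, so there is no in-paper proof to compare yours against. Your proposal is the classical textbook proof (essentially Billingsley's), and it is correct. The forward direction via the Urysohn embedding into the Hilbert cube, Banach--Alaoglu compactness of $P(\hat{\mathcal{X}})$, the closed-set Portmanteau inequality to force the limit onto the $\sigma$-compact set $\bigcup_j \iota(K_j)\subset\iota(\mathcal{X})$, and the Tietze cut-off to transfer convergence back to $P(\mathcal{X})$ is sound; you correctly identify that this last transfer is where tightness is genuinely needed. The converse direction --- the contradiction argument producing, for each $\delta,\varepsilon$, a finite union of $\delta$-balls of uniformly large mass, followed by the intersection $K=\bigcap_m\overline{U_m}$ which is totally bounded and closed, hence compact by completeness --- is also correct, and you use completeness exactly where it is required (only in this direction). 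One cosmetic remark: your deduction $\hat\mu(\iota(K_j))\ge\limsup_k\hat\mu_{n_k}(\iota(K_j))$ tacitly uses that $\hat\mu_{n_k}(\iota(K_j))=\mu_{n_k}(K_j)$, which holds because $\iota$ is injective; it is worth stating. No gaps.
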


\subsection{Tightness conditions for path measures}\label{subsec:tightness_conditions}
    Given the results in the previous sections, we now list some tightness conditions for path measures on $\big(C([0,T];\mathcal{X}),\mathcal{C} \big)$. In this paper, only 
    \cref{prop:tightness_balphap} is used. 
    
    \begin{corollary}[A tightness condition via $W^{\alpha,p}$]\label{prop:tightness_Walphap}
         Let $(\mathcal{X},d)$ be a complete metric space in which closed bounded sets are compact.
         Let the family of measures $ \mathcal{K} \subset P(C([0,T];\mathcal{X}))$ satisfy
         \begin{equation}\label{eq:tightness_Walphap}
             \sup_{\pi \in \mathcal{K}} \int_{\Gamma_T} \Big(  d(\gamma_0, \bar{x}) +  | \gamma |_{W^{\alpha,p}} \Big) \d \pi (\gamma) < +\infty
         \end{equation}
         for some $\bar{x} \in \mathcal{X}$ and $1<p<\infty$ and $\frac{1}{p}<\alpha < 1$. Then $\mathcal{K}$ is tight in $P(C([0,T];\mathcal{X}))$.  
    \end{corollary}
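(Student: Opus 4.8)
The plan is to read this off directly from \cref{lemma:tightness_criterion} (the integral tightness criterion), applied not on $\mathcal{X}$ but on the path space $\Gamma_T = C([0,T];\mathcal{X})$ itself, using as coercive functional the very map $\Psi(\gamma) \coloneqq d(\gamma_0,\bar{x}) + |\gamma|_{W^{\alpha,p}}$ whose sublevels were already analyzed in \cref{lemma:compact_sublevel}.

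First I would check that the standing hypotheses of \cref{lemma:tightness_criterion} are met. That criterion is stated for probability measures on a complete \emph{separable} metric space, and here the base space is $(\Gamma_T, d_\infty)$. I therefore need $(\mathcal{X},d)$ to be complete and separable. Completeness is assumed; separability is automatic under the present hypothesis that closed bounded sets are compact, since then $\mathcal{X} = \bigcup_{n\in\mathbb{N}} \{x : d(x,\bar{x}) \leq n\}$ is a countable union of compact, hence separable, sets. As recalled just after \eqref{eq:sup_distance}, completeness and separability of $(\mathcal{X},d)$ transfer to $(\Gamma_T,d_\infty)$, so \cref{lemma:tightness_criterion} is legitimately applicable with $\mathcal{X}$ there replaced by $\Gamma_T$.

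Next I would verify the two conditions of the criterion for $\Psi$. Condition (1), compactness of the sublevels $\{\Psi \leq c\} \subset \Gamma_T$ for every $c \geq 0$, is precisely the content of \cref{lemma:compact_sublevel} under $1<p<\infty$ and $\frac{1}{p}<\alpha<1$; recall its proof combines the Garsia--Rodemich--Rumsey estimate \eqref{eq:distance_estimate_GRR} (yielding equicontinuity of a sublevel) with boundedness at $t=0$, and then invokes Arzelà--Ascoli. Condition (2), the uniform integral bound $\sup_{\pi \in \mathcal{K}} \int_{\Gamma_T} \Psi \,\d\pi < +\infty$, is exactly the assumption \eqref{eq:tightness_Walphap}. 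The conclusion of \cref{lemma:tightness_criterion} then yields that $\mathcal{K}$ is tight in $P(\Gamma_T)$, which is the assertion.

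I do not expect any genuine obstacle here: the statement is a repackaging of \cref{lemma:compact_sublevel} (compact sublevels of $\Psi$) together with \cref{lemma:tightness_criterion} (the integral criterion). The only point that deserves an explicit sentence is the separability of $\Gamma_T$, which is needed to invoke the criterion and which follows from properness of $\mathcal{X}$ as noted above.
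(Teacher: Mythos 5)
Your proposal is correct and follows essentially the same route as the paper: the paper likewise applies the integral tightness criterion (\cref{lemma:tightness_criterion}) on $\Gamma_T$ to the functional $\Psi(\gamma) = d(\gamma_0,\bar{x}) + |\gamma|_{W^{\alpha,p}}$, using \cref{lemma:compact_sublevel} for compactness of the sublevels and the hypothesis \eqref{eq:tightness_Walphap} for the uniform integral bound. The only cosmetic difference is that the paper also notes lower semi-continuity of $\Psi$ (via \cref{prop:l.s.c.functional}) to justify measurability of the integrand, while you instead spell out the separability of $\Gamma_T$; both are minor bookkeeping points.
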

    \begin{proof}
        We first emphasize that the functional $\Psi: \Gamma_T \to [0,+\infty]$ defined as 
        $ \gamma \mapsto \Psi (\gamma) \coloneqq d (\gamma_0, \bar{x} ) +  | \gamma |_{W^{\alpha,p}}.
        $
        is lower-semi continuous (by \cref{prop:l.s.c.functional}), in particular, it is measurable and thus the integral makes sense.  
        By \cref{lemma:compact_sublevel}, we know that the functional $\Psi$ has compact sublevels in $\Gamma_T$ under the condition $1<p<\infty$ and $\frac{1}{p}<\alpha < 1$.
        This, together with the bound above, means that $\Psi$ satisfies both requirements of the integral criterion for tightness  (\cref{lemma:tightness_criterion}). Consequently, the family $\mathcal{K}$ is tight in $P(C([0,T];\mathcal{X}))$.
    \end{proof}

    The following result immediately follows from the previous one and the equivalence of $| \cdot|_{W^{\alpha,p}}$ and $| \cdot |_{b^{\alpha,p}} $ (\cref{thm:Walphap_balphap}) on the set of continuous paths, over which we take the integrals here. 

    \begin{corollary}[A tightness condition via $b^{\alpha,p}$]\label{prop:tightness_balphap}
         Let $(\mathcal{X},d)$ be a complete metric space in which closed bounded sets are compact.
         Let the family of measures $ \mathcal{K} \subset P(C([0,1];\mathcal{X}))$ satisfy
         \begin{equation}\label{eq:tightness_balphap}
             \sup_{\pi \in \mathcal{K}} \int_{\Gamma_1} \Big(  d(\gamma_0, \bar{x}) +  | \gamma |_{b^{\alpha,p}} \Big) \d \pi (\gamma) < +\infty
         \end{equation}
         for some $\bar{x} \in \mathcal{X}$ and $1<p<\infty$ and $\frac{1}{p}<\alpha < 1$. Then $\mathcal{K}$ is tight in $P(C([0,1];\mathcal{X}))$.  
    \end{corollary}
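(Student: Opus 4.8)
The plan is to reduce the statement to the already-established tightness criterion via the fractional Sobolev seminorm, \cref{prop:tightness_Walphap}, by exploiting the two-sided norm equivalence on continuous paths furnished by \cref{thm:Walphap_balphap}. Since the integrals in both \eqref{eq:tightness_Walphap} and \eqref{eq:tightness_balphap} are taken over $\Gamma_1 = C([0,1];\mathcal{X})$, every $\gamma$ in the support of each $\pi \in \mathcal{K}$ is continuous, so \cref{thm:Walphap_balphap} applies pointwise in $\gamma$ and no compatibility between the two integrands beyond this equivalence is needed.

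First I would record that by \cref{thm:Walphap_balphap} there are constants $c_1, c_2 > 0$ depending only on $(\alpha,p)$ with $c_1 |\gamma|_{W^{\alpha,p}} \leq |\gamma|_{b^{\alpha,p}} \leq c_2 |\gamma|_{W^{\alpha,p}}$ for every $\gamma \in \Gamma_1$. The left inequality alone is what matters: it gives $|\gamma|_{W^{\alpha,p}} \leq c_1^{-1} |\gamma|_{b^{\alpha,p}}$ pointwise. Setting $C \coloneqq \max\{1, c_1^{-1}\}$, this yields the pointwise domination
\[
d(\gamma_0,\bar{x}) + |\gamma|_{W^{\alpha,p}} \leq C\big( d(\gamma_0,\bar{x}) + |\gamma|_{b^{\alpha,p}} \big)
\]
for all $\gamma \in \Gamma_1$, where the constant $C$ is introduced precisely to absorb the case $c_1 < 1$ so that the leading $d(\gamma_0,\bar{x})$ term is also dominated.

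Next I would integrate this inequality against an arbitrary $\pi \in \mathcal{K}$ and take the supremum, obtaining
\[
\sup_{\pi \in \mathcal{K}} \int_{\Gamma_1}\big( d(\gamma_0,\bar{x}) + |\gamma|_{W^{\alpha,p}} \big)\d\pi(\gamma) \leq C \sup_{\pi \in \mathcal{K}} \int_{\Gamma_1}\big( d(\gamma_0,\bar{x}) + |\gamma|_{b^{\alpha,p}} \big)\d\pi(\gamma) < +\infty,
\]
where finiteness of the right-hand side is exactly the hypothesis \eqref{eq:tightness_balphap}. One subtle point worth a sentence is measurability of the integrand on the left: it must be $\mathcal{C}$-measurable for this inequality of integrals to be meaningful, which holds because $\gamma \mapsto |\gamma|_{W^{\alpha,p}}$ is lower semi-continuous on $C([0,1];\mathcal{X})$ by \cref{prop:l.s.c.functional} and $\gamma \mapsto d(\gamma_0,\bar{x})$ is continuous. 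Finally I would invoke \cref{prop:tightness_Walphap} with $T = 1$, whose hypothesis \eqref{eq:tightness_Walphap} has now been verified, to conclude that $\mathcal{K}$ is tight in $P(C([0,1];\mathcal{X}))$.

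There is essentially no serious obstacle here: the entire content is contained in the norm equivalence of \cref{thm:Walphap_balphap} together with the prior criterion \cref{prop:tightness_Walphap}. The only things requiring mild care are that the equivalence is valid only on continuous paths, which is automatic since the ambient space is $C([0,1];\mathcal{X})$, and that the constant arising from $c_1$ may exceed one, so one should carry it as the explicit factor $C$ rather than asserting that the two integrals are comparable with constant $1$.
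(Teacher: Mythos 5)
Your proposal is correct and follows exactly the paper's route: the paper also deduces this corollary immediately from \cref{prop:tightness_Walphap} via the norm equivalence of \cref{thm:Walphap_balphap} on continuous paths. Your write-up simply makes explicit the constant $c_1^{-1}$ and the measurability remark, which the paper leaves implicit.
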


    Let's observe that a suitable H\"{o}lder regularity can guarantee the finiteness of the second part of \eqref{eq:tightness_Walphap}. We thus arrive at the well-known \emph{Kolmogorov--Lamperti} tightness result (cf.\,also\,\cite[Corollary A.11]{FrizVictoir2010book}): 
    \begin{corollary}[A tightness condition via H\"{o}lder estimates] \label{crl:tightness_Holder}
         Let $(\mathcal{X},d)$ be a complete metric space in which closed bounded sets are compact. Let the family of measures $ \mathcal{K} \subset P(C([0,T];\mathcal{X}))$ satisfy
         \begin{equation}
        \sup_{\pi \in \mathcal{K}} \int_{\Gamma_T}  d(\gamma_0, \bar{x}) \d \pi (\gamma) < +\infty
        \end{equation}
        for some $\bar{x} \in \mathcal{X}$ and 
        \begin{equation}\label{eq:tightness_Holder}
            \sup_{\pi \in \mathcal{K}} \int_{\Gamma_T} d(\gamma_t, \gamma_s )^p \d \pi (\gamma) \leq c |t-s|^{p \upgamma} \quad\qquad \forall t,s \in [0,T],
        \end{equation}
        for some constant $c$ and $1<p<\infty$ and $\frac{1}{p}<\upgamma \leq 1$. Then $\mathcal{K}$ is tight in $P(C([0,T];\mathcal{X}))$.
    \end{corollary}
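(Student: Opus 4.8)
The plan is to reduce the statement to the already-established fractional Sobolev tightness criterion (\cref{prop:tightness_Walphap}) by showing that the Kolmogorov-type two-point bound \eqref{eq:tightness_Holder} forces a uniform bound on the energy $\int_{\Gamma_T} |\gamma|_{W^{\alpha,p}}^p \d\pi$ for a suitably chosen $\alpha$. Since $\upgamma > \tfrac{1}{p}$ by hypothesis, I would first fix a regularity parameter $\alpha$ with $\frac{1}{p} < \alpha < \upgamma$; such a choice is always available, and this is precisely the place where the strict inequality $\tfrac{1}{p} < \upgamma$ is used.

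For each $\pi \in \mathcal{K}$, I would write out the $W^{\alpha,p}$-energy according to \cref{def:Walphap} and swap the order of integration by Tonelli's theorem, which is legitimate because the integrand $(s,t,\gamma) \mapsto d(\gamma_s,\gamma_t)^p / |t-s|^{1+\alpha p}$ is nonnegative and jointly measurable on $[0,T]^2 \times \Gamma_T$:
\begin{equation}
    \int_{\Gamma_T} |\gamma|_{W^{\alpha,p}}^p \d\pi(\gamma) = \iint_{[0,T]^2} \frac{1}{|t-s|^{1+\alpha p}} \left( \int_{\Gamma_T} d(\gamma_s,\gamma_t)^p \d\pi(\gamma) \right) \d s \, \d t.
\end{equation}
Inserting the hypothesis \eqref{eq:tightness_Holder}, the inner integral is bounded by $c\,|t-s|^{p\upgamma}$ uniformly in $\pi \in \mathcal{K}$, giving
\begin{equation}
    \sup_{\pi \in \mathcal{K}} \int_{\Gamma_T} |\gamma|_{W^{\alpha,p}}^p \d\pi \leq c \iint_{[0,T]^2} |t-s|^{p\upgamma - 1 - \alpha p} \d s \, \d t.
\end{equation}
The final double integral converges near the diagonal exactly when $p\upgamma - 1 - \alpha p > -1$, that is, when $\alpha < \upgamma$, which holds by the choice above; an explicit evaluation yields the finite value $2T^{p(\upgamma-\alpha)+1} / \big( p(\upgamma-\alpha)\,[p(\upgamma-\alpha)+1] \big)$, independent of $\pi$.

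It then remains to convert this into the integrability required by \cref{prop:tightness_Walphap}, which asks for a uniform bound on $\int_{\Gamma_T} \big( d(\gamma_0,\bar{x}) + |\gamma|_{W^{\alpha,p}} \big) \d\pi$. Since each $\pi$ is a probability measure, Jensen's (equivalently Hölder's) inequality gives $\int_{\Gamma_T} |\gamma|_{W^{\alpha,p}} \d\pi \leq \big( \int_{\Gamma_T} |\gamma|_{W^{\alpha,p}}^p \d\pi \big)^{1/p}$, which is uniformly bounded by the previous step; combined with the assumed first-moment bound $\sup_{\pi} \int_{\Gamma_T} d(\gamma_0,\bar{x}) \d\pi < \infty$, this verifies the hypothesis \eqref{eq:tightness_Walphap} of \cref{prop:tightness_Walphap}, and the tightness of $\mathcal{K}$ follows at once. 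There is no serious obstacle here; the only genuinely delicate point is the interplay of exponents. The convergence of the diagonal integral pins down the requirement $\alpha < \upgamma$, while the availability of such an $\alpha$ above $\tfrac{1}{p}$ is exactly guaranteed by the standing assumption $\upgamma > \tfrac{1}{p}$. Everything else is routine bookkeeping.
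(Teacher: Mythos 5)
Your proof is correct and follows essentially the same route as the paper's: fix $\alpha\in(\tfrac1p,\upgamma)$, apply Tonelli together with the two-point bound \eqref{eq:tightness_Holder} to get a uniform bound on $\int_{\Gamma_T}|\gamma|_{W^{\alpha,p}}^p\,\mathrm{d}\pi$ (the double integral you evaluate is exactly the one in \cref{rmk:Holder-FractionalSobolev}), and then invoke \cref{prop:tightness_Walphap}. The only difference is that you spell out the Jensen step converting the $L^p$-bound into the first-moment bound required by \eqref{eq:tightness_Walphap}, a detail the paper leaves implicit.
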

    \begin{proof}
        Choose $\alpha$ such that $\frac{1}{p} < \alpha < \upgamma$.
        For any $\pi \in \mathcal{K}$,  one can easily confirm that $\int_{\Gamma_T}  | \gamma |^p_{W^{\alpha,p}}  \d \pi$ is bounded by a constant independent of $\pi$ (by applying Tonelli’s theorem, using the condition \eqref{eq:tightness_Holder}, and finally recalling \cref{rmk:Holder-FractionalSobolev}). The claim then follows from \cref{prop:tightness_Walphap}.
    \end{proof}
    
    Our next observation is that if \eqref{eq:tightness_balphap} is to be used, it is enough to have the H\"{o}lder regularity \eqref{eq:tightness_Holder} only on the dyadic time points. This leads to a \emph{relaxed} version of the Kolmogorov--Lamperti tightness condition above and is again reminiscent of \cref{thm:disc_char_Holder}.
    
    \begin{corollary}[A tightness condition via H\"{o}lder estimates on a countable set]\label{crl:tightness_Holder_discrete}
         Let $(\mathcal{X},d)$ be a complete metric space in which closed bounded sets are compact. Let the family of measures $ \mathcal{K} \subset P(C([0,1];\mathcal{X}))$ satisfy
         \begin{equation}
        \sup_{\pi \in \mathcal{K}} \int_{\Gamma_1}  d(\gamma_0, \bar{x}) \d \pi (\gamma) < +\infty
        \end{equation}
        for some $\bar{x} \in \mathcal{X}$ and 
        \begin{equation}\label{eq:tightness_Holder_discrete}
         \sup_{\pi \in \mathcal{K}} \int_{\Gamma_1}  d (\gamma_{t_{k}^{(m)}},\gamma_{t_{k+1}^{(m)}})^p \d \pi (\gamma) \leq \tilde{c} |\Delta t_m |^{p\upgamma} \quad\qquad \forall m \in \mathbb{N}_0, \, k \in \{0,1,\cdots, 2^m-1\},
    \end{equation}
    for some constant $\tilde{c}$ and $1<p<\infty$ and $\frac{1}{p}<\upgamma \leq 1$, where $t_k^{(m)} \coloneqq \frac{k}{2^m} $ and $\Delta t_m \coloneqq \frac{1}{2^m}$. Then $\mathcal{K}$ is tight in $P(C([0,1];\mathcal{X}))$.
    \end{corollary}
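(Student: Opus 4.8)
The plan is to reduce the claim to \cref{prop:tightness_balphap} by showing that the discrete H\"older bound \eqref{eq:tightness_Holder_discrete} forces the $b^{\alpha,p}$-energy to be uniformly bounded over $\mathcal{K}$. The reason $|\cdot|_{b^{\alpha,p}}$ is the right quantity here---rather than $|\cdot|_{W^{\alpha,p}}$---is that its definition \eqref{eq:Walphap_sum} is itself a countable sum of $p$-th powers of distances evaluated at dyadic time points, which is exactly the data controlled by \eqref{eq:tightness_Holder_discrete}. Thus no behaviour between dyadic points needs to be estimated by hand; it is already encoded in the norm.

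Concretely, I would first fix a regularity parameter $\alpha$ with $\frac{1}{p} < \alpha < \upgamma$, which is possible precisely because the hypothesis gives $\frac{1}{p} < \upgamma$. Recalling from \eqref{eq:Walphap_sum} that
\begin{equation}
    |\gamma|_{b^{\alpha,p}}^p = \sum_{m=0}^{\infty} 2^{m(\alpha p - 1)} \sum_{k=0}^{2^m - 1} d\big(\gamma_{t_k^{(m)}}, \gamma_{t_{k+1}^{(m)}}\big)^p,
\end{equation}
I would integrate against an arbitrary $\pi \in \mathcal{K}$ and apply Tonelli's theorem (all summands are nonnegative and measurable, and $\gamma \mapsto |\gamma|_{b^{\alpha,p}}$ is measurable by \cref{rmk:measurability_balphap}) to exchange the integral with the double sum, obtaining
\begin{equation}
    \int_{\Gamma_1} |\gamma|_{b^{\alpha,p}}^p \d\pi = \sum_{m=0}^{\infty} 2^{m(\alpha p - 1)} \sum_{k=0}^{2^m - 1} \int_{\Gamma_1} d\big(\gamma_{t_k^{(m)}}, \gamma_{t_{k+1}^{(m)}}\big)^p \d\pi.
\end{equation}
Applying the assumed bound \eqref{eq:tightness_Holder_discrete}, each inner integral is at most $\tilde{c}\, 2^{-mp\upgamma}$, and since the inner sum has $2^m$ terms this yields
\begin{equation}
    \int_{\Gamma_1} |\gamma|_{b^{\alpha,p}}^p \d\pi \leq \tilde{c} \sum_{m=0}^{\infty} 2^{m(\alpha p - 1)} \cdot 2^m \cdot 2^{-mp\upgamma} = \tilde{c} \sum_{m=0}^{\infty} 2^{mp(\alpha - \upgamma)}.
\end{equation}
Because $\alpha < \upgamma$, the exponent $p(\alpha - \upgamma)$ is negative, so this is a convergent geometric series whose value $\tilde{c}\,(1 - 2^{p(\alpha-\upgamma)})^{-1}$ is finite and, crucially, independent of $\pi$.

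Taking the supremum over $\pi \in \mathcal{K}$ then bounds $\sup_{\pi} \int_{\Gamma_1} |\gamma|_{b^{\alpha,p}}^p \d\pi$ by a finite constant; by Jensen's inequality (as $\pi$ is a probability measure and $t \mapsto t^p$ is convex) the same finiteness holds for $\sup_\pi \int_{\Gamma_1} |\gamma|_{b^{\alpha,p}} \d\pi$. Combined with the first hypothesis $\sup_\pi \int_{\Gamma_1} d(\gamma_0, \bar{x}) \d\pi < +\infty$, this verifies the integrability condition \eqref{eq:tightness_balphap} required by \cref{prop:tightness_balphap}, whence $\mathcal{K}$ is tight in $P(C([0,1];\mathcal{X}))$. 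There is no serious obstacle in this argument: the only points demanding care are the choice $\alpha \in (\frac{1}{p}, \upgamma)$ that makes the geometric series summable (this is where the strict inequality $\frac{1}{p} < \upgamma$ is genuinely used), and the Tonelli exchange, both of which are routine once the $b^{\alpha,p}$-norm is chosen in place of $W^{\alpha,p}$.
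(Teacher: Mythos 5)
Your proposal is correct and follows essentially the same route as the paper: choose $\alpha\in(\tfrac1p,\upgamma)$, exchange the integral with the dyadic double sum defining $|\cdot|_{b^{\alpha,p}}^p$ (the paper invokes Beppo Levi where you invoke Tonelli, which is equivalent here), apply \eqref{eq:tightness_Holder_discrete} termwise to reduce to the convergent geometric series $\sum_m 2^{mp(\alpha-\upgamma)}$, and conclude via \cref{prop:tightness_balphap}. The Jensen step passing from the $p$-th power bound to the first-power bound required in \eqref{eq:tightness_balphap} is also fine.
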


    \begin{proof}
        Again choose $\alpha$ such that $\frac{1}{p} < \alpha < \upgamma$.
        For any $\pi \in \mathcal{K}$,  one can similarly confirm that $\int_{\Gamma_T}  | \gamma |^p_{b^{\alpha,p}}  \d \pi$ is bounded by a constant independent of $\pi$ (here first by applying  Beppo Levi’s lemma, second using condition \eqref{eq:tightness_Holder_discrete}, and finally recalling \cref{rmk:Holder-FractionalSobolev}). The claim then follows from \cref{prop:tightness_balphap}.
    \end{proof}

\subsection{Set of lifts on continuous paths}\label{subsec:set_of_lifts}
    In this section, let $(\mathcal{X}, d)$ be a complete separable metric space. 
    Given a family of Borel probability measures $(\mu_t)_{t \in I} \subset P(\mathcal{X})$ indexed by $t \in I \coloneqq [0,T] \subset \mathbb{R}$, we define the following (possibly empty) set 
    \begin{equation}
        \mathrm{Lift}(\mu_t) \coloneqq \Big\{\pi \in P(C(I;\mathcal{X})) : \quad (e_t)_{\#} \pi  = \mu_t \text{ for all } t \in I \Big\}.
    \end{equation}
    We study some properties of this set.

    \begin{lemma}\label{lemma:lift_property_convex}
        The set $\mathrm{Lift}(\mu_t)$, provided it is non-empty, is convex. 
    \end{lemma}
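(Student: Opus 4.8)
The plan is to prove convexity by a direct verification from the definition of $\mathrm{Lift}(\mu_t)$, exploiting the fact that the pushforward operation $\pi \mapsto (e_t)_\# \pi$ is linear in $\pi$. First I would fix two lifts $\pi_0, \pi_1 \in \mathrm{Lift}(\mu_t)$ and a parameter $\lambda \in [0,1]$, and form the candidate convex combination $\pi_\lambda \coloneqq (1-\lambda)\pi_0 + \lambda \pi_1$. The goal is then to check the two defining properties of a lift for $\pi_\lambda$.

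The first step is to confirm that $\pi_\lambda \in P(C(I;\mathcal{X}))$, i.e.\ that a convex combination of probability measures is again a probability measure. This is immediate: $\pi_\lambda$ is a nonnegative measure as a combination of nonnegative measures with nonnegative coefficients, and its total mass is $(1-\lambda)\pi_0(C(I;\mathcal{X})) + \lambda\pi_1(C(I;\mathcal{X})) = (1-\lambda) + \lambda = 1$. The second step is to verify the marginal constraint. For any $t \in I$ and any Borel set $A \subset \mathcal{X}$, I would compute
\begin{equation}
    (e_t)_\# \pi_\lambda(A) = \pi_\lambda\big(e_t^{-1}(A)\big) = (1-\lambda)\,\pi_0\big(e_t^{-1}(A)\big) + \lambda\,\pi_1\big(e_t^{-1}(A)\big) = (1-\lambda)\,(e_t)_\#\pi_0(A) + \lambda\,(e_t)_\#\pi_1(A),
\end{equation}
and since $(e_t)_\#\pi_0 = (e_t)_\#\pi_1 = \mu_t$ by assumption, this reduces to $(1-\lambda)\mu_t(A) + \lambda\mu_t(A) = \mu_t(A)$. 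As $A$ and $t$ were arbitrary, $(e_t)_\#\pi_\lambda = \mu_t$ for all $t \in I$, so $\pi_\lambda \in \mathrm{Lift}(\mu_t)$, which establishes convexity.

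There is no genuine obstacle here: the entire content of the argument is the linearity of the pushforward under a fixed measurable map, together with the triviality that convex combinations preserve the total-mass-one property. The only point worth stating explicitly is that the non-emptiness hypothesis is used solely to guarantee that two lifts $\pi_0, \pi_1$ exist to combine (for an empty set the convexity assertion is vacuous). The proof is therefore a short, self-contained verification requiring no external results beyond the definitions of $P(C(I;\mathcal{X}))$ and of the evaluation maps $e_t$.
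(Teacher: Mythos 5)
Your proposal is correct and follows essentially the same route as the paper: form the convex combination, note it is again a probability measure, and use linearity of the pushforward to verify the marginal constraint. The only cosmetic difference is that you check $(e_t)_\#\pi_\lambda=\mu_t$ on Borel sets directly, whereas the paper tests against functions $\phi\in C_b(\mathcal{X})$; both are equivalent here.
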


    \begin{proof}
        Let $\pi_0,\pi_1 \in \mathrm{Lift}(\mu_t)$ and take $\beta \in [0,1]$. It is clear that $\pi_{\beta} \coloneqq (1-\beta) \pi_0 + \beta \pi_1$ is a path measure. For any $t \in I$ and $\phi \in C_b(\mathcal{X})$, it satisfies 
        \begin{equation}
            \int_{\Gamma_T} \phi (\gamma_t) \d \pi_\beta (\gamma)
            = (1-\beta) \int_{\Gamma_T} \phi (\gamma_t) \d \pi_0 (\gamma) + \beta \int_{\Gamma_T} \phi (\gamma_t) \d \pi_1 (\gamma) = \int_{\mathcal{X}} \phi(x) \d \mu_t(x).
        \end{equation}
    \end{proof}

    \begin{lemma}\label{lemma:lift_property_closed}
        The set $\mathrm{Lift}(\mu_t)$, provided it is non-empty, is closed under narrow convergence.
    \end{lemma}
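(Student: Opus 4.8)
The plan is to show that whenever a sequence $(\pi_n) \subset \mathrm{Lift}(\mu_t)$ converges narrowly to some $\pi \in P(C(I;\mathcal{X}))$, the limit $\pi$ again belongs to $\mathrm{Lift}(\mu_t)$, that is, $(e_t)_\# \pi = \mu_t$ for every $t \in I$. Since $(C(I;\mathcal{X}),d_\infty)$ is a complete separable metric space, the narrow topology on $P(C(I;\mathcal{X}))$ is metrizable, so it suffices to argue with sequences. The entire argument rests on the elementary fact that pushforward under a \emph{continuous} map is narrowly continuous, applied to the evaluation maps $e_t$.

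First I would record that for each fixed $t \in I$ the evaluation map $e_t : C(I;\mathcal{X}) \to \mathcal{X}$ is $1$-Lipschitz with respect to the supremum distance, because
\begin{equation}
d\big(e_t(\gamma), e_t(\gamma')\big) = d(\gamma_t, \gamma'_t) \leq \sup_{s \in I} d(\gamma_s, \gamma'_s) = d_\infty(\gamma, \gamma').
\end{equation}
In particular $e_t$ is continuous. Consequently, for any test function $\psi \in C_b(\mathcal{X})$ the composition $\psi \circ e_t : C(I;\mathcal{X}) \to \mathbb{R}$ is continuous and bounded, hence belongs to $C_b(C(I;\mathcal{X}))$, which is precisely the class of functionals against which narrow convergence on $P(C(I;\mathcal{X}))$ is tested.

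Next I would combine the narrow convergence $\pi_n \to \pi$ with the hypothesis $(e_t)_\# \pi_n = \mu_t$. Fixing $t \in I$ and $\psi \in C_b(\mathcal{X})$, the change-of-variables formula for pushforwards together with narrow convergence (using $\psi \circ e_t$ as test function) gives, as $n \to \infty$,
\begin{equation}
\int_{\mathcal{X}} \psi \, \d \big((e_t)_\# \pi_n\big) = \int_{C(I;\mathcal{X})} \psi\big(e_t(\gamma)\big) \, \d \pi_n(\gamma) \longrightarrow \int_{C(I;\mathcal{X})} \psi\big(e_t(\gamma)\big) \, \d \pi(\gamma) = \int_{\mathcal{X}} \psi \, \d \big((e_t)_\# \pi\big).
\end{equation}
On the other hand, each $\pi_n$ lies in $\mathrm{Lift}(\mu_t)$, so the leftmost integral equals $\int_{\mathcal{X}} \psi \, \d \mu_t$ independently of $n$. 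Passing to the limit therefore yields $\int_{\mathcal{X}} \psi \, \d \mu_t = \int_{\mathcal{X}} \psi \, \d \big((e_t)_\# \pi\big)$ for every $\psi \in C_b(\mathcal{X})$. Since Borel probability measures on a separable metric space are uniquely determined by their integrals against $C_b$, this forces $(e_t)_\# \pi = \mu_t$, and as $t \in I$ was arbitrary we conclude $\pi \in \mathrm{Lift}(\mu_t)$.

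There is no genuine obstacle here; the statement is a soft consequence of the continuity of the evaluation maps. The only point worth stating carefully is that narrow convergence in $P(C(I;\mathcal{X}))$ is tested against bounded continuous functionals \emph{on the path space} $C(I;\mathcal{X})$, and that $\psi \circ e_t$ is such a functional — this is exactly what lets one transfer the convergence from the path measures to each of their one-dimensional marginals.
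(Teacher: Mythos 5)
Your proof is correct and follows essentially the same route as the paper: both arguments test narrow convergence against $\phi \circ e_t \in C_b(C(I;\mathcal{X}))$ for $\phi \in C_b(\mathcal{X})$ and pass to the limit to identify $(e_t)_\#\pi$ with $\mu_t$. The extra details you supply (the $1$-Lipschitz continuity of $e_t$ and the fact that Borel probability measures are determined by integrals against $C_b$) are implicit in the paper's version.
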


    \begin{proof}
        Take a sequence $(\pi_n)_{n \in \mathbb{N}} \subset \mathrm{Lift}(\mu_t)$ with $\pi_n \to \pi $ narrowly on $P(\Gamma_T)$. We need to show $\pi \in \mathrm{Lift}(\mu_t)$. Let $\phi \in C_b(\mathcal{X})$ and note that $\phi \circ e_t \in C_b(\Gamma_T)$. Thus, for any $t \in I$, we have
        \begin{equation}
            \int_{\Gamma_T} \phi (\gamma_t) \d \pi (\gamma)
            = \int_{\Gamma_T} \phi \circ e_t (\gamma ) \d \pi (\gamma)
            = \lim_{n \to \infty} \int_{\Gamma_T} \phi \circ e_t (\gamma ) \d \pi_n (\gamma)
            = \int_{\mathcal{X}} \phi(x) \d \mu_t (x).
        \end{equation}
    \end{proof}
    
\subsection{Compatible measures in \texorpdfstring{$P_p(\mathcal{X})$}{Pp(X)}}\label{subsubsec:compatibility}
    We begin with a simple observation. Let $\nu,\mu_0,\mu_1 \in P_2 (\mathbb{R}^{\mathrm{d}})$ be absolutely continuous measures.
    Denote by $T_{\nu}^{\mu_0}$ and $T_{\nu}^{\mu_1}$ the unique optimal maps from $\nu$ to $\mu_0$ and $\mu_1$, respectively. 
    We know that $(\mathrm{id} , T_{\nu}^{\mu_0} , T_{\nu}^{\mu_1})_{\#} \nu$ provides a multi-coupling whose two-dimensional marginal between $\mu_0$ and $\mu_1$ is not necessarily optimal. More precisely,  
    \begin{equation}\label{eq:compatibility_ineq}
        W_2(\mu_0, \mu_1) \leq  \sqrt{ \int_\mathcal{X} \big| T_{\nu}^{\mu_1} (y) - T_{\nu}^{\mu_0} (y) \big|^2 \d \nu(y) }  \eqqcolon W_{2,\nu}(\mu_0, \mu_1),
    \end{equation}
    which can be interpreted as the fact that $(P_2(\mathbb{R}^{\mathrm{d}}),W_2)$ has nonnegative sectional curvature. 
    This motivates introducing two notations.
    First, whenever the inequality in \eqref{eq:compatibility_ineq} is equality, the collection $\{\mu_0,\mu_1,\nu\}$ is referred to as \emph{compatible}.
    Second, it turns out that the quantity on the right-hand side of \eqref{eq:compatibility_ineq} can also be regarded as a \emph{distance} between $\mu_0$ and $\mu_1$, which is induced by $\nu$ and is usually denoted by $ W_{2,\nu}$. 

    The compatibility property was implicitly defined in \cite[Section 4]{Boissard2015} as an admissible property for a collection of transport maps and was further explored in \cite{PanaretosZemel2020}. 
    What we outlined above can be generalized to more than 3 measures and to general spaces, for which we give the following definition, mentioned in \cite[Remark 2.3.2]{PanaretosZemel2020}. Now, let $(\mathcal{X},d)$ be a complete separable metric space and $p \in [1,\infty)$.

    \begin{definition}[Compatibility of measures in $P_p(\mathcal{X})$]\label{def:compatibility}
         We say a collection of measures $\mathcal{M} \subset P_p(\mathcal{X})$ is compatible (in $p$-Wasserstein space), if, for every finite subcollection of $\mathcal{M}$, there exists a multi-coupling such that all of its two-dimensional marginals are optimal.
    \end{definition} 

    A prominent and well-known example of compatible measures is Wasserstein geodesics, as highlighted in the remark below.
    A compatible collection, however, need not lie on a Wasserstein geodesic. Take for instance an arbitrary measure and translate it along a non-geodesic curve. 
    Another example is any collection of measures in $P_p(\mathbb{R})$ is compatible.
    As shown in \cite[page 49]{PanaretosZemel2020}, any collection of Gaussian measures on $\mathbb{R}^{\mathrm{d}}$ that includes the standard Gaussian distribution and such that the covariance matrices of the measures are simultaneously diagonalizable (i.e. one can diagonalize all with one orthogonal matrix) is compatible.
    See more nontrivial examples presented in \cite[Proposition 4.1]{Boissard2015} and explained in \cite[Section 2.3.2]{PanaretosZemel2020}. 

    Compatibility can easily fail under rotation, as we study in \cref{exp:compatibility_vs_noncompatibility}. In that example, we also stress that if one adopts \cref{def:compatibility} as the definition of compatibility, it is not enough to check the optimality condition of the multi-couplings for subcollections of only three measures as in \eqref{eq:compatibility_ineq} (even if the measures, say on $\mathbb{R}^{\mathrm{d}}$, are absolutely continuous with respect to the Lebesgue measure).
    
    \begin{remark}[Wasserstein geodesics are compatible]\label{rmk:geodesics_are_compatible} 
        Let $(\mathcal{X}, d)$ be a complete separable geodesic space and let $\mu:[0,1] \to P_p(\mathcal{X})$ be a constant-speed geodesic for some $p \geq 1$.
        Here we confirm the elementary and well-known fact that the collection $(\mu_t)_{t \in [0,1]}$, parameterized by $t$, is a compatible collection of measures in $p$-Wasserstein space.
        Let $N \in \mathbb{N}$ and take $0 \leq t_1 \leq \cdots \leq t_N \leq 1$. 
        For the finite collection $\{ \mu_{t_i}$ :  $i \in \left\{1, \cdots, N \right\} \}$, there always exists a measure $\Upsilon_N \in P (\mathcal{X}^N)$ such that
        \begin{equation}
             (\mathrm{Pr}^{i,i+1})_{\#} \Upsilon_{N}  \in \mathrm{OptCpl}(\mu_{t_i}, \mu_{t_{i+1}}) , \qquad  \forall i \in \left\{1, \cdots, N -1 \right\}. \label{eq:Geodesics_are_compatible_constr}
        \end{equation}
        We now verify the optimality of all two-dimensional marginals, i.e., 
        \begin{equation}\label{eq:Geodesics_are_compatible_claim}
            (\mathrm{Pr}^{i,j})_{\#} \Upsilon_{N}  \in \mathrm{OptCpl}(\mu_{t_i}, \mu_{t_{j}}) , \qquad  \forall i,j \in \left\{1, \cdots, N  \right\}.
        \end{equation}
        If $|j-i| = 1$ or 0, nothing is needed to prove. Let's consider $j-i  > 1$. By triangle inequality and Minkowski's inequality for $L^p$ functions, we obtain
        \begin{align}
            W_p(\mu_{t_i},\mu_{t_j})
            & \leq \left\lvert \int d(x_i,x_j)^p \d \Upsilon_N \right\rvert^{1/p}\\
            & \leq \left\lvert \int \Big( d(x_i,x_{i+1}) + \cdots + d(x_{j-1},x_{j}) \Big)^p \d \Upsilon_N \right\rvert^{1/p}\\
             & \leq \left\lvert \int d(x_i,x_{i+1})^p \d \Upsilon_N \right\rvert^{1/p} +  \cdots  + \left\lvert \int d(x_{j-1},x_{j+1})^p \d \Upsilon_N \right\rvert^{1/p} \\
             & = W_p(\mu_{t_i},\mu_{t_{i+1}}) + \cdots + W_p(\mu_{t_{j-1}},\mu_{t_{j}})  = |t_j - t_i| W_p(\mu_{0},\mu_{1}).
        \end{align}
        But we know that $W_p(\mu_{t_i},\mu_{t_j}) = |t_j - t_i| W_p(\mu_{0},\mu_{1})$.
        Therefore, all inequalities are actually equalities. In particular, the first one, which proves the claim \eqref{eq:Geodesics_are_compatible_claim}. 
        We observe that the conclusion in \eqref{eq:Geodesics_are_compatible_claim} holds true regardless of how two-dimensional marginals are glued.
    \end{remark}

     \begin{remark}[Compatibility vs. Consistency]%(Kolmogorov’s extension theorem).
     When a compatible collection of measures is not finite---such as $(\mu_t)_{t \in I}$ that we study---the multi-couplings of finite subcollections arising in the definition of compatibility need not be consistent. This is because optimal plans may not be unique.
    Starting from subcollections of small cardinality to larger ones, it is generally unclear how to produce a consistent family of multi-couplings, even in the case of Wasserstein geodesics. 
    \end{remark}

\subsection{Further remarks}
In this paper, we frequently require a map that connects two points in a geodesic space via a geodesic.
As in some geodesic spaces, there may be multiple geodesics connecting two points, we need a suitable selection map. The following remark is in this regard.    

\begin{remark}[Measurable selection of geodesics]\label{rmk:measurable_selection_geodesics}
         Let $(\mathcal{X}, d)$ be a complete, separable, and geodesic metric space.
         Let $\Upsilon \in P(\mathcal{X} \times \mathcal{X})$. We frequently need to construct a path measure 
         \begin{equation}
             \pi \coloneqq (\ell)_{\#} \Upsilon \, \in P\big( (C([0,1];\mathcal{X}),\mathcal{C}) \big) 
        \end{equation}
        with a measurable geodesic selection map $\ell : \mathcal{X}\times\mathcal{X} \to Geo([0,1];\mathcal{X})$.
        We briefly emphasize that such a map exists and that the above is well-defined (using a similar argument as in the proof of \cite[Proposition 1]{Lisini2007}). Define the multi-valued map $L: \mathcal{X}\times \mathcal{X} \to 2^{Geo ([0,1];\mathcal{X})}$ as follows:
        \begin{equation}
            L(x,y) \coloneqq \{ \gamma \in Geo ([0,1];\mathcal{X}) : \quad \gamma_0= x, \gamma_1 = y \}.
        \end{equation}
        This set is non-empty as $(\mathcal{X},d)$ is a geodesic space. The graph of $L$, denoted by 
        \begin{equation}
            \mathcal{G}(L) \coloneqq \Big\{(x,y,\gamma) \in \mathcal{X}\times\mathcal{X}\times C([0,1];\mathcal{X}): \quad  \gamma \in L(x,y) \Big\},
        \end{equation}
    is a closed set. Indeed, we take a sequence $(x^n,y^n,\gamma^n)_{n \in \mathbb{N}} \subset \mathcal{G}(L)$ convergent to $(x,y,\gamma)$ and show that $(x,y,\gamma) \in \mathcal{G}(L)$. First, we write
    \begin{align}
        d(x,\gamma_0)
        & \leq d(x,x^n) + d(x^n, \gamma_0) \\
        & = d(x,x^n) + d(\gamma_0^n, \gamma_0).
    \end{align}
    Taking the limit $n\to \infty$, the second term on the right-hand side goes to zero as $d_\infty (\gamma^n, \gamma) \to 0$. Thus, we have $\gamma_0  =x$, and similarly $\gamma_1 = y$. To show that $\gamma$ is a constant-speed geodesic, take any $t,s \in [0,1]$ and observe that
    \begin{align}
        d(\gamma_s,\gamma_t)
        & = \lim_{n \to \infty} d(\gamma^n_s,\gamma^n_t) \\
        & = \lim_{n \to \infty} |t-s| d(x^n,y^n) = |t-s|  d(x,y).
    \end{align}
    As $\mathcal{G}(L)$ is closed, it is Borel-measurable, i.e., it belongs to $\mathcal{B}(\mathcal{X}^2\times C)$, which is equal to $\mathcal{B}(\mathcal{X}^2)\otimes\mathcal{C}$, since all spaces are separable here.
    Obviously, $\mathcal{G}(L)$ is also in $\mathcal{B}(\mathcal{X}^2)_\Upsilon \otimes\mathcal{C}$, where $\mathcal{B}(\mathcal{X}^2)_\Upsilon $ denotes the $\Upsilon$-completion of $\mathcal{B}(\mathcal{X}^2)$. It follows from a measurable selection theorem by Aumann (see, e.g., \cite[Theorem III.22]{CastaingValadier1977} and \cite[page 455]{AmbrosioFigalli2009}) that there exists a measurable map $\ell: \big(\mathcal{X}^2,\mathcal{B}(\mathcal{X}^2)_\Upsilon\big) \to \big(C([0,1];\mathcal{X}),\mathcal{C}\big)$ such that $\ell(x,y) \in Geo([0,1];\mathcal{X})$ for $\Upsilon$-a.e. $(x,y)\in \mathcal{X}^2$.
    The above argument for selecting geodesics between two points can easily be generalized to multiple points.
\end{remark}

\section{Main results}\label{sec:results_d}
\subsection{Existence of a minimizer for the variational problem}
    We consider \cref{prob:variational_problem}, as formulated in the introduction.

    \begin{proposition}[Existence of a minimizer]\label{prop:existence_of_minimizer}
        Let $(\mathcal{X}, d)$ be a complete separable metric space, and $I \coloneqq [0,T] \subset \mathbb{R}$. Let $\Psi: C(I;\mathcal{X}) \to [0,+\infty]$ be {\normalfont \textcircled{\small{1}}} a lower semi-continuous map {\normalfont \textcircled{\small{2}}} whose sublevels are relatively compact in $C(I;\mathcal{X})$. Assume that the infimum  \eqref{eq:variational_problem} is finite. Then there exists a minimizer $\pi \in P(C(I;\mathcal{X})) $ to \cref{prob:variational_problem}. 
    \end{proposition}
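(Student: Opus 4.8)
The plan is to apply the direct method in the calculus of variations, so the proof splits into the usual three steps: compactness of a minimizing sequence, lower semi-continuity of the energy, and closedness of the constraint set. First I would fix a minimizing sequence $(\pi_n)_{n} \subset \mathrm{Lift}(\mu_t)$, i.e. one with $\int_{\Gamma_T} \Psi \, \d \pi_n \to m \coloneqq \inf \eqref{eq:variational_problem}$, which is finite by hypothesis. Discarding finitely many terms, we may assume the uniform energy bound $\sup_n \int_{\Gamma_T} \Psi \, \d \pi_n \le m+1 < +\infty$.

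The key observation is that the sublevels of $\Psi$ are not merely relatively compact but in fact compact. Indeed, by the lower semi-continuity hypothesis \textcircled{\small{1}}, each sublevel $\{\Psi \le c\}$ is closed in $C(I;\mathcal{X})$, and a closed relatively compact set (hypothesis \textcircled{\small{2}}) is compact. Recalling that $(C(I;\mathcal{X}), d_\infty)$ is itself a complete separable metric space, the functional $\Psi$ now satisfies both requirements of the integral tightness criterion \cref{lemma:tightness_criterion}, applied on the base space $C(I;\mathcal{X})$. Together with the uniform energy bound, this yields that $\{\pi_n\}$ is tight in $P(C(I;\mathcal{X}))$. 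By Prokhorov's theorem \cref{thm:Prokhorov}, I would then extract a subsequence (not relabeled) that narrowly converges to some $\pi \in P(C(I;\mathcal{X}))$.

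It remains to identify $\pi$ as a minimizer. By \cref{lemma:lift_property_closed} the set $\mathrm{Lift}(\mu_t)$ is closed under narrow convergence, so $\pi \in \mathrm{Lift}(\mu_t)$ is an admissible competitor. Since $\Psi$ is lower semi-continuous and nonnegative, the lower semi-continuity property of integrals against narrowly convergent measures \eqref{eq:narrow_conv_lsc_func} gives
\[
\int_{\Gamma_T} \Psi \, \d \pi \le \liminf_{n \to \infty} \int_{\Gamma_T} \Psi \, \d \pi_n = m,
\]
while the admissibility of $\pi$ forces $\int_{\Gamma_T} \Psi \, \d \pi \ge m$. Hence equality holds and $\pi$ is a minimizer.

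The argument is otherwise entirely routine; the only point that requires genuine care is the passage from relatively compact to compact sublevels, which is precisely where hypothesis \textcircled{\small{1}} is used a second time (beyond its role in the lower semi-continuity of the energy). I would emphasize this step, as it explains why \emph{both} properties of $\Psi$ are needed and why mere relative compactness of sublevels, without the lower semi-continuity already in hand, would not immediately supply the tightness criterion.
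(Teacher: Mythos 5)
Your proof is correct and follows essentially the same route as the paper's: uniform energy bound along a minimizing sequence, compactness of sublevels from lower semi-continuity plus relative compactness, tightness via the integral criterion and Prokhorov, closedness of $\mathrm{Lift}(\mu_t)$ under narrow convergence, and lower semi-continuity of the energy to conclude. The point you highlight about upgrading relatively compact sublevels to compact ones is exactly the (brief) parenthetical remark in the paper's own argument.
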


    \begin{proof}
        Let $(\pi_n)_{n \in \mathbb{N}} \subset \mathrm{Lift}(\mu_t)$ be a minimizing sequence, i.e.,
        \begin{equation}
            \lim_{n \to \infty} \int_{\Gamma_T} \Psi \d \pi_n = \inf_{\pi \in \mathrm{Lift}(\mu_t)} \int_{\Gamma_T} \Psi \d \pi \eqqcolon M < + \infty.
        \end{equation}
        In particular, we have 
        \begin{equation}\label{eq:proof_existence_minimizer}
            \sup_{n} \int_{\Gamma_T} \Psi \d \pi_n  < + \infty.
        \end{equation}
        Since the sublevels of $\Psi$ are compact (which follows from lower semi-continuity and relative compactness), the condition \eqref{eq:proof_existence_minimizer} implies that the family of measures $\{\pi_n \}_{n} \subset P(\Gamma_T)$ is tight by \cref{lemma:tightness_criterion}. By Prokhorov's theorem, there exists a subsequence $\{\pi_{n_k} \}_{k \in \mathbb{N}}$ such that $\pi_{n_k} \to \pi$ narrowly on $P(\Gamma_T)$ as $k \to \infty$. By \cref{lemma:lift_property_closed}, we have $ \pi \in \mathrm{Lift}(\mu_t)$ and thus $
        \int_{\Gamma_T} \Psi \d \pi \geq M$. On the other hand, since $\Psi$ is lower semi-continuous, we obtain the reverse inequality  
        \begin{equation}
            \int_{\Gamma_T} \Psi \d \pi \leq \liminf_{k \to \infty } \int_{\Gamma_T} \Psi \d \pi_{n_k} = M. 
        \end{equation}
        This means that $\int_{\Gamma_T} \Psi \d \pi = M$ and thus $\pi$ is a minimizer.
    \end{proof}
    
\subsection{From path measures to Wasserstein curves}\label{subsec:from_pi_to_mu}
    In this section, we consider the following question: \textit{Given a path measure $\pi $ whose energy with respect to a functional $\Psi$ is finite, what can be deduced about the regularity of the curve of one-dimensional time marginals $t \mapsto \mu_t \coloneqq (e_t)_{\#} \pi$?} The candidates of $\Psi$ which we investigate here are related to the functionals
    \begin{align}
        \gamma  \mapsto w_{\delta} (\gamma), \qquad 
        \gamma  \mapsto | \gamma |_{\theta\textrm{-}\mathrm{H\ddot{o}l}}, \qquad 
        \gamma  \mapsto | \gamma |_{q\textrm{-}\mathrm{var}}, \qquad 
        \gamma  \mapsto |\gamma|_{W^{\alpha,p}},
    \end{align}
    These functionals, as studied in the preliminary  \cref{sec:preliminaries_d}, are lower semi-continuous maps from $\Gamma_T \coloneqq C([0,T];\mathcal{X}) \to [0,+\infty]$.
    In particular, they are measurable and their integrals with respect to $\pi \in P(\Gamma_T)$ make sense.
    In all four cases, using a similar strategy, we prove that $(\mu_t)$ inherits the same kind of regularity, with its regularity bounded from above by the energy of $\pi$.
    In contrast, the same conclusion cannot be drawn for the infinitesimal variation
    $$
    \gamma \mapsto |\gamma|_{q\textrm{-}\mathrm{var}\textrm{-}\mathrm{limsup}}
    $$
    when $q>1$.
    In \cref{exp:infinitesimal_variation}, we provide a counterexample $(\mu_t)$ whose only existing lift fails to provide an upper bound for the infinitesimal variation of $(\mu_t)$.
    As noted in \cref{rmk:non-lsc-inf-variation}, the map $\gamma \mapsto |\gamma|_{q\textrm{-}\mathrm{var}\textrm{-}\mathrm{limsup}}$ is not even lower semi-continuous when $q>1$. 
    
    \begin{theorem}\label{thm:lift_to_mu_C}
        Let $(\mathcal{X},d)$ be a complete separable metric space.
        Let $\pi \in P(C([0,T];\mathcal{X})) $ satisfy
        \begin{equation}\label{eq:lift_to_mu_C_integrability}
            \int_{\Gamma_T} \Big( d(\gamma_0,\bar{x})^p + w_{\delta} (\gamma)^p \Big) \d \pi (\gamma) < + \infty
        \end{equation}
        for some $p \in [1,\infty)$ and $0<\delta\leq T$ and $\bar{x}\in \mathcal{X}$.
        Then, $t \mapsto \mu_t \coloneqq {(e_t)}_{\#} \pi$ is in $ C ([0,T];P_p(\mathcal{X}))$, and moreover,
        \begin{equation}
             w_{\delta} (\mu)^p \leq  \int_{\Gamma_T} w_{\delta} (\gamma)^p \d \pi (\gamma). 
        \end{equation}
    \end{theorem}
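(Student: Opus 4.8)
The plan is to prove the three assertions separately: that each $\mu_t$ has finite $p$-th moment, that $t \mapsto \mu_t$ is $W_p$-continuous, and finally the modulus estimate. The unifying tool throughout is that the pushforward $(e_s,e_t)_\# \pi$ is a (generally non-optimal) coupling of $\mu_s$ and $\mu_t$, so that $W_p^p(\mu_s,\mu_t) \leq \int_{\Gamma_T} d(\gamma_s,\gamma_t)^p \d \pi(\gamma)$, used together with the elementary bound $d(\gamma_s,\gamma_t) \leq w_\delta(\gamma)$ valid whenever $|t-s| \leq \delta$.

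First I would show $\mu_t \in P_p(\mathcal{X})$ for every $t \in [0,T]$, that is, $\int_{\mathcal{X}} d(x,\bar x)^p \d\mu_t = \int_{\Gamma_T} d(\gamma_t,\bar x)^p \d\pi < +\infty$. The subtlety here is that when $\delta < T$ the quantity $w_\delta(\gamma)$ only controls distances at scale $\delta$, so I cannot bound $d(\gamma_t,\gamma_0)$ directly. Instead I would \emph{chain}: choosing a partition $0 = s_0 < \cdots < s_k = t$ with $s_{i+1}-s_i \leq \delta$ and $k \leq \lceil T/\delta\rceil$, the triangle inequality gives $d(\gamma_t,\gamma_0) \leq \sum_i d(\gamma_{s_{i+1}},\gamma_{s_i}) \leq \lceil T/\delta\rceil\, w_\delta(\gamma)$. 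Hence $d(\gamma_t,\bar x) \leq d(\gamma_0,\bar x) + \lceil T/\delta\rceil\, w_\delta(\gamma)$, and after raising to the $p$-th power (using $(a+b)^p \leq 2^{p-1}(a^p+b^p)$ for $p \geq 1$) and integrating, finiteness follows directly from the hypothesis \eqref{eq:lift_to_mu_C_integrability}.

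Next, for $W_p$-continuity at a point $t$, I would fix $s$ with $|s-t|\leq\delta$ and use the coupling bound to get $W_p^p(\mu_s,\mu_t) \leq \int_{\Gamma_T} d(\gamma_s,\gamma_t)^p \d\pi$. For each fixed continuous path $\gamma$, $d(\gamma_s,\gamma_t)^p \to 0$ as $s \to t$ by continuity of $\gamma$, while $d(\gamma_s,\gamma_t)^p \leq w_\delta(\gamma)^p$ is $\pi$-integrable and independent of $s$. The dominated convergence theorem then yields $W_p^p(\mu_s,\mu_t) \to 0$, which together with the moment bound proves $t \mapsto \mu_t \in C([0,T];P_p(\mathcal{X}))$.

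Finally, for the modulus estimate, I would fix any $s,t$ with $|t-s|\leq\delta$; then $d(\gamma_s,\gamma_t) \leq w_\delta(\gamma)$ holds pointwise, so the coupling bound gives $W_p^p(\mu_s,\mu_t) \leq \int_{\Gamma_T} w_\delta(\gamma)^p \d\pi$, a bound uniform over all such pairs. Taking the supremum over $|t-s|\leq\delta$ on the left-hand side produces $w_\delta(\mu)^p \leq \int_{\Gamma_T} w_\delta(\gamma)^p \d\pi$, as claimed. I expect the main obstacle to be the first step: the chaining argument is needed precisely because $w_\delta$ with $\delta<T$ does not directly control the global displacement $d(\gamma_t,\gamma_0)$; once that is handled, the continuity and modulus estimates are routine consequences of the coupling bound and dominated convergence.
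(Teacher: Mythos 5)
Your proposal is correct and follows essentially the same route as the paper's proof: the chaining/triangle-inequality argument with at most $\lceil T/\delta\rceil$ subintervals of length $\le \delta$ for the $p$-th moment bound, dominated convergence (with dominating function $w_\delta(\gamma)^p$) for $W_p$-continuity, and the non-optimal coupling $(e_s,e_t)_\#\pi$ followed by a supremum over $|t-s|\le\delta$ for the modulus estimate. No gaps.
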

    
    \begin{proof}
        The finiteness of the first term in \eqref{eq:lift_to_mu_C_integrability} is nothing but $\mu_0 \coloneqq (e_0)_{\#} \pi \in P_p(\mathcal{X})$. We first prove that $\mu_t \coloneqq (e_t)_{\#} \pi$ actually lies in $P_p(\mathcal{X})$ at all times. We have 
        \begin{align}
            \int_\mathcal{X} d(\bar{x},x)^p \d \mu_t(x) &= \int_{\Gamma_T} d(\bar{x},\gamma_t)^p \d\pi(\gamma)\\
            & \leq 2^{p-1} \int_{\Gamma_T} \Big(  d(\bar{x},\gamma_0)^p + d(\gamma_0,\gamma_t)^p \Big) \d\pi(\gamma). \label{eq_pmoment_lift_to_mu_C} 
        \end{align}
        The first summand on the right-hand side is finite by assumption.
        As for the second summand, suppose first $t\leq \delta$, then
        \begin{equation}
            \int d(\gamma_0,\gamma_t)^p \d\pi \leq \int \left( \sup_{|t-s|\leq \delta }  d(\gamma_s,\gamma_t) \right)^p \d\pi = \int w_{\delta} (\gamma)^p \d \pi < \infty.
        \end{equation}
        Now suppose $\delta < t \leq T$. Take largest $N \in \mathbb{N}$ such that $N \delta \leq t$.
        By  triangular inequality, we can estimate
        \begin{align}
            \int d(\gamma_0,\gamma_t)^p \d\pi
            & \leq \int \Big( d(\gamma_0,\gamma_\delta) + d(\gamma_\delta,\gamma_{2\delta}) + \cdots + d(\gamma_{N\delta},\gamma_t) \Big)^p \d\pi \\
            & \leq (N+1)^p \int w_{\delta} (\gamma)^p \d \pi \\
            & \leq \big(\frac{T}{\delta} + 1 \big)^p \int w_{\delta} (\gamma)^p \d \pi < \infty. 
        \end{align}
        Accordingly, \eqref{eq_pmoment_lift_to_mu_C} is bounded and thus $\mu_t \in P_p(\mathcal{X})$ for all $t \in [0,T]$. 
        \\
        Next, we show that $t \mapsto \mu_t$ is a continuous curve in $p$-Wasserstein space. Fix an arbitrary $t \in [0,T]$ and take a sequence $t_n \to t$ as $n \to \infty$. This sequence will eventually fall in $\delta$-neighbourhood of $t$. Thus, by Lebesgue's dominated convergence theorem on the measure space $(\Gamma_T, \mathcal{B}(\Gamma_T), \pi)$ and the integrability assumption \eqref{eq:lift_to_mu_C_integrability}, we obtain 
        \begin{align}\label{eq:lift_to_mu_C_continuity}
            \lim_{n \to \infty} W_p^p (\mu_t, \mu_{t_n}) & \leq \lim_{n \to \infty} \int_{\Gamma_T} d(\gamma_t,\gamma_{t_n})^p \d \pi (\gamma) = \int_{\Gamma_T} \left( \lim_{n \to \infty} d(\gamma_t,\gamma_{t_n})^p \right) \d \pi (\gamma) = 0.
        \end{align}
        To prove the final claim, consider $s,t \in [0,T]$ such that $|t-s|\leq \delta$. We have
        \begin{align}
            W_p^p(\mu_s,\mu_t) & \leq \int d(\gamma_s,\gamma_t)^p \d\pi(\gamma)\\
            & \leq \int \left( \sup_{|v-u| \leq \delta} d(\gamma_u,\gamma_v) \right)^p \d\pi(\gamma)  =  \int w_{\delta} (\gamma)^p \d \pi (\gamma).
        \end{align}
        Taking supremum over all $|t-s|\leq \delta$ yields the result.
    \end{proof}
    
    \begin{theorem}\label{thm:lift_to_mu_Hol}
        Let $(\mathcal{X},d)$ be a complete separable metric space.
        Let $\pi \in P(C([0,T];\mathcal{X})) $ be concentrated on $C^{\theta\textrm{-} \mathrm{H\ddot{o}l}} ([0,T];\mathcal{X})$ and satisfy 
         \begin{equation}
            \int_{\Gamma_T} \Big( d(\gamma_0,\bar{x})^p + | \gamma |_{\theta\textrm{-}\mathrm{H\ddot{o}l};[0,T]}^p \Big) \d \pi(\gamma) < + \infty
        \end{equation}
        for some $p \in [1,\infty)$ and $\theta\in(0,1]$ and $\bar{x}\in \mathcal{X}$.
        Then, $t \mapsto \mu_t \coloneqq {(e_t)}_{\#} \pi $ is in $ C^{\theta\textrm{-} \mathrm{H\ddot{o}l}} ([0,T];P_p(\mathcal{X}))$, and moreover,
        \begin{equation}
            |\mu|_{\theta\textrm{-}\mathrm{H\ddot{o}l};[s,t]}^p \leq \int_{\Gamma_T} |\gamma|_{\theta\textrm{-}\mathrm{H\ddot{o}l};[s,t]}^p \d \pi(\gamma)
        \end{equation}
        for any $0\leq s<t\leq T$. 
    \end{theorem}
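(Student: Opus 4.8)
The plan is to mirror the proof of \cref{thm:lift_to_mu_C}, exploiting that $(e_u,e_v)_{\#}\pi \in \text{Cpl}(\mu_u,\mu_v)$ is a (generally non-optimal) coupling, so that $W_p^p(\mu_u,\mu_v)$ is controlled by the $\pi$-average of $d(\gamma_u,\gamma_v)^p$. The one genuinely new point compared with \cref{thm:lift_to_mu_C} is that the H\"older seminorm is itself a supremum over pairs of times, and this supremum must be reconciled with the $\pi$-integral; this works cleanly because the bound produced for each fixed pair is uniform in the pair. As a preliminary, since $\pi$ is concentrated on $C^{\theta\textrm{-}\mathrm{H\ddot{o}l}}([0,T];\mathcal{X})$, for $\pi$-a.e. $\gamma$ and all $0 \leq s < t \leq T$ I record the pointwise estimate $d(\gamma_s,\gamma_t) \leq |t-s|^{\theta} | \gamma |_{\theta\textrm{-}\mathrm{H\ddot{o}l};[s,t]} \leq |t-s|^{\theta} | \gamma |_{\theta\textrm{-}\mathrm{H\ddot{o}l};[0,T]}$; raising to the $p$-th power and integrating gives $\int_{\Gamma_T} d(\gamma_s,\gamma_t)^p \d\pi \leq |t-s|^{p\theta}\int_{\Gamma_T} | \gamma |_{\theta\textrm{-}\mathrm{H\ddot{o}l}}^p \d\pi < +\infty$ by the integrability hypothesis.

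Next I would establish $\mu_t \in P_p(\mathcal{X})$ for every $t$ exactly as in \cref{thm:lift_to_mu_C}: writing $\int_{\mathcal{X}} d(\bar{x},x)^p \d\mu_t = \int_{\Gamma_T} d(\bar{x},\gamma_t)^p \d\pi \leq 2^{p-1}\int_{\Gamma_T}\big( d(\bar{x},\gamma_0)^p + d(\gamma_0,\gamma_t)^p \big)\d\pi$, I bound the first summand by the finite first term of the hypothesis and the second by $2^{p-1}T^{p\theta}\int_{\Gamma_T} | \gamma |_{\theta\textrm{-}\mathrm{H\ddot{o}l}}^p \d\pi$ using the preliminary estimate with $s=0$. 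Hence $\mu_t \in P_p(\mathcal{X})$ for all $t \in [0,T]$, so $W_p(\mu_u,\mu_v)$ is finite for all $u,v$ and the H\"older seminorm of the curve $\mu$ is well defined.

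For the main inequality, fix $0 \leq s < t \leq T$ and take any pair $s \leq u < v \leq t$. Since $(e_u,e_v)_{\#}\pi \in \text{Cpl}(\mu_u,\mu_v)$, we have $W_p^p(\mu_u,\mu_v) \leq \int_{\Gamma_T} d(\gamma_u,\gamma_v)^p \d\pi$. Dividing by $|v-u|^{p\theta}$ and using the pointwise bound $d(\gamma_u,\gamma_v)^p/|v-u|^{p\theta} \leq | \gamma |_{\theta\textrm{-}\mathrm{H\ddot{o}l};[u,v]}^p \leq | \gamma |_{\theta\textrm{-}\mathrm{H\ddot{o}l};[s,t]}^p$, valid by monotonicity of the seminorm under the interval inclusion $[u,v]\subset[s,t]$, yields $W_p^p(\mu_u,\mu_v)/|v-u|^{p\theta} \leq \int_{\Gamma_T} | \gamma |_{\theta\textrm{-}\mathrm{H\ddot{o}l};[s,t]}^p \d\pi$. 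The right-hand side does not depend on $u,v$, so taking the supremum over all $s \leq u < v \leq t$ on the left gives $| \mu |_{\theta\textrm{-}\mathrm{H\ddot{o}l};[s,t]}^p \leq \int_{\Gamma_T} | \gamma |_{\theta\textrm{-}\mathrm{H\ddot{o}l};[s,t]}^p \d\pi$, which is the claimed estimate. Specializing to $[s,t]=[0,T]$ shows $| \mu |_{\theta\textrm{-}\mathrm{H\ddot{o}l}} < +\infty$; the estimate for general $s,t$ combined with the preliminary bound gives $W_p(\mu_s,\mu_t) \leq |t-s|^{\theta}\big(\int_{\Gamma_T} | \gamma |_{\theta\textrm{-}\mathrm{H\ddot{o}l}}^p \d\pi\big)^{1/p}$, i.e.\ $\theta$-H\"older (hence narrow) continuity of $t \mapsto \mu_t$, so $\mu \in C^{\theta\textrm{-}\mathrm{H\ddot{o}l}}([0,T];P_p(\mathcal{X}))$.

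I do not expect a real obstacle; the only step demanding a moment of care is the interchange of the defining supremum with the $\pi$-integral in the last paragraph, which is legitimate precisely because each quotient is dominated by the \emph{common} constant $\int_{\Gamma_T} | \gamma |_{\theta\textrm{-}\mathrm{H\ddot{o}l};[s,t]}^p \d\pi$, after which the supremum is taken only on the left. Measurability of the integrands poses no issue, since $\gamma \mapsto | \gamma |_{\theta\textrm{-}\mathrm{H\ddot{o}l};[s,t]}$ is lower semi-continuous on $C([0,T];\mathcal{X})$ as recorded in \cref{subsec:Holder_paths}, and the maps $\gamma \mapsto d(\gamma_u,\gamma_v)$ are continuous.
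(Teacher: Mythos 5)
Your proposal is correct and follows essentially the same route as the paper's proof: establish $\mu_t\in P_p(\mathcal{X})$ via the triangle inequality and the Hölder bound at $s=0$, then use that $(e_u,e_v)_{\#}\pi$ is a (non-optimal) coupling to bound $W_p^p(\mu_u,\mu_v)$ by $|v-u|^{p\theta}\int|\gamma|_{\theta\textrm{-}\mathrm{H\ddot{o}l};[s,t]}^p\d\pi$, and take the supremum over $u,v$ since the right-hand side is independent of the pair. No gaps.
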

    
    \begin{proof}
        The argument is similar to the proof of \cref{thm:lift_to_mu_C}. Let us prove it independently.  
        By assumption, $\mu_0$ has finite $p$-moment. We prove that $\mu_t \coloneqq (e_t)_{\#} \pi$ has also finite $p$-moment for all $t \in (0,T] $ and thus stays in $p$-Wasserstein space. Given the fixed point $\bar{x}\in \mathcal{X}$, we have
        \begin{align}
            \int_\mathcal{X} d(\bar{x},x)^p \d \mu_t(x) &= \int_{\Gamma_T} d(\bar{x},\gamma_t)^p \d\pi(\gamma)\\
            & \leq 2^{p-1} \int_{\Gamma_T} \Big(  d(\bar{x},\gamma_0)^p + d(\gamma_0,\gamma_t)^p \Big) \d\pi(\gamma)\\
            &\leq 2^{p-1} \left( W_p^p (\delta_{\bar{x}},\mu_0) + |T|^{p\theta} \int_{\Gamma_T} |\gamma|_{\theta\textrm{-}\mathrm{H\ddot{o}l};[0,T] }^p \d\pi(\gamma) \right) <\infty,
        \end{align}
        where we used the triangular inequality, the elementary bound $|a+b|^p \leq 2^{p-1} (|a|^p + |b|^p) $, and the fact that $\pi$ is concentrated on $\theta$-H\"{o}lder curves. 
        We conclude that $\mu_t\in P_p(\mathcal{X})$ for all $t\in[0,T]$.
        \\
        Now, fix $0 \leq s <t \leq T$ and let us look at the Wasserstein curve at all times $s \leq u<v \leq t$. Since $(e_{u},e_{v})_{\#} \pi \in \text{Cpl} (\mu_{u}, \mu_{v})$ is not necessarily optimal, we can estimate
        \begin{align}
            W_p^p(\mu_u,\mu_v) & \leq \int_{\Gamma_T} d(\gamma_u,\gamma_v)^p \d\pi(\gamma)\\
            & \leq |v-u|^{p \theta} \int_{\Gamma_T} |\gamma|_{\theta\textrm{-}\mathrm{H\ddot{o}l};[s,t] }^p \d\pi(\gamma),
        \end{align}
        which, in particular, implies the continuity of $t \mapsto \mu_t$ and furthermore, 
        \begin{equation}
            |\mu|_{\theta\textrm{-}\mathrm{H\ddot{o}l};[s,t]} \coloneqq \sup_{s\leq u< v \leq t} \frac{W_p(\mu_u, \mu_v)}{|v-u|^\theta} \leq \left( \int_{\Gamma_T} |\gamma|_{\theta\textrm{-}\mathrm{H\ddot{o}l};[s,t] }^p \d\pi(\gamma) \right)^{1/p}.
        \end{equation}
        Raising this expression to the power $p$, we obtain the result. 
    \end{proof}
    
    \begin{theorem}\label{thm:lift_to_mu_var}
        Let $(\mathcal{X},d)$ be a complete separable metric space.
        Let $\pi \in P(C([0,T];\mathcal{X})) $ be concentrated on $C^{q\textrm{-} \mathrm{var}} ([0,T];\mathcal{X})$ and satisfy
         \begin{equation}\label{eq:lift_to_mu_var_integrability}
            \int_{\Gamma_T} \Big( d(\gamma_0,\bar{x})^p + | \gamma |_{q\textrm{-}\mathrm{var};[0,T]}^p \Big) \d \pi(\gamma)  < + \infty
        \end{equation}
        for some $1 \leq q \leq p < \infty$ and  $\bar{x}\in \mathcal{X}$.
        Then, $t \mapsto \mu_t \coloneqq {(e_t)}_{\#} \pi $ is in $ C^{p\textrm{-} \mathrm{var}} ([0,T];P_p(\mathcal{X}))$, and moreover,
        \begin{equation}\label{eq:lift_to_mu_var}
            |\mu|_{p\textrm{-}\mathrm{var};[s,t]}^p \leq \int_{\Gamma_T} |\gamma|_{q\textrm{-}\mathrm{var};[s,t]}^p \d \pi(\gamma)
        \end{equation}
        for any $0\leq s<t\leq T$. 
    \end{theorem}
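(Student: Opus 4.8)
The plan is to follow the same two-step template used in the proofs of \cref{thm:lift_to_mu_C,thm:lift_to_mu_Hol}: first show that every marginal $\mu_t$ lies in $P_p(\mathcal{X})$, and then bound the relevant regularity functional of the curve $(\mu_t)$ by the energy of $\pi$. For the moment bound I would write
$$
\int_\mathcal{X} d(\bar{x},x)^p \d \mu_t(x) = \int_{\Gamma_T} d(\bar{x},\gamma_t)^p \d\pi(\gamma) \leq 2^{p-1}\int_{\Gamma_T}\Big(d(\bar{x},\gamma_0)^p + d(\gamma_0,\gamma_t)^p\Big)\d\pi(\gamma),
$$
where the first summand is finite by hypothesis. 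For the second, the trivial partition $\{0,t\}$ gives the pointwise bound $d(\gamma_0,\gamma_t) \leq |\gamma|_{q\textrm{-}\mathrm{var};[0,T]}$, hence $d(\gamma_0,\gamma_t)^p \leq |\gamma|_{q\textrm{-}\mathrm{var};[0,T]}^p$, which is $\pi$-integrable by \eqref{eq:lift_to_mu_var_integrability}. Thus $\mu_t \in P_p(\mathcal{X})$ for all $t\in[0,T]$.

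Next I would establish continuity of $t\mapsto\mu_t$ in $W_p$, which is needed to place $(\mu_t)$ in the class of continuous curves implicit in $C^{p\textrm{-}\mathrm{var}}$. Fixing $t$ and $t_n\to t$, the suboptimal-coupling estimate $W_p(\mu_{t_n},\mu_t)^p \leq \int_{\Gamma_T} d(\gamma_{t_n},\gamma_t)^p \d\pi$ combined with the $\pi$-integrable dominating function $|\gamma|_{q\textrm{-}\mathrm{var};[0,T]}^p$ allows an application of dominated convergence; since $\pi$ is concentrated on continuous (indeed $q$-variation) paths, the integrand tends to $0$ pointwise, so $W_p(\mu_{t_n},\mu_t)\to 0$.

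The core step, and the one that genuinely uses the exponent gap $q\leq p$, is the $p$-variation bound. For an arbitrary partition $s=t_0<\cdots<t_N=t$, each $(e_{t_i},e_{t_{i+1}})_\#\pi$ is a (not necessarily optimal) coupling, so $W_p(\mu_{t_i},\mu_{t_{i+1}})^p \leq \int_{\Gamma_T} d(\gamma_{t_i},\gamma_{t_{i+1}})^p\d\pi$; summing over $i$ and interchanging the finite sum with the integral gives
$$
\sum_i W_p(\mu_{t_i},\mu_{t_{i+1}})^p \leq \int_{\Gamma_T}\Big(\sum_i d(\gamma_{t_i},\gamma_{t_{i+1}})^p\Big)\d\pi(\gamma).
$$
Here I would invoke the elementary $\ell^q\hookrightarrow\ell^p$ inequality (valid precisely because $q\leq p$): for nonnegative $(a_i)$ one has $\sum_i a_i^p \leq \big(\sum_i a_i^q\big)^{p/q}$. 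Applying it to the increments along this partition and bounding $\sum_i d(\gamma_{t_i},\gamma_{t_{i+1}})^q$ by the supremum defining $|\gamma|_{q\textrm{-}\mathrm{var};[s,t]}^q$ yields, uniformly in the partition, $\sum_i d(\gamma_{t_i},\gamma_{t_{i+1}})^p \leq |\gamma|_{q\textrm{-}\mathrm{var};[s,t]}^p$. Since the resulting right-hand side is partition-independent, taking the supremum over all partitions of $[s,t]$ on the left produces
$$
|\mu|_{p\textrm{-}\mathrm{var};[s,t]}^p \leq \int_{\Gamma_T} |\gamma|_{q\textrm{-}\mathrm{var};[s,t]}^p \d\pi(\gamma),
$$
the claimed bound; specializing to $[s,t]=[0,T]$ also shows $|\mu|_{p\textrm{-}\mathrm{var}}<\infty$, so $(\mu_t)\in C^{p\textrm{-}\mathrm{var}}([0,T];P_p(\mathcal{X}))$. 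I do not anticipate a serious obstacle: the only delicate point is that the hypothesis controls $q$-variation while the conclusion concerns $p$-variation, and the transition between the two is exactly the $\ell^q\subset\ell^p$ embedding, which is why the assumption $q\leq p$ is essential and cannot be relaxed.
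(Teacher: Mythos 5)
Your proposal is correct and follows essentially the same route as the paper: moment bound via the trivial partition, continuity by dominated convergence with the integrable dominating function $|\gamma|_{q\textrm{-}\mathrm{var};[0,T]}^p$, and the partition-wise coupling estimate followed by a supremum over partitions. The only cosmetic difference is that you apply the $\ell^q\hookrightarrow\ell^p$ inequality directly to the increments of each fixed partition, whereas the paper packages the same fact as the norm inequality $|\gamma|_{p\textrm{-}\mathrm{var}}\leq|\gamma|_{q\textrm{-}\mathrm{var}}$ for $q\leq p$.
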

    
    \begin{proof}
        Let us first observe that   
        \begin{equation}
             d(\gamma_s,\gamma_t)^p \leq  \left( \sup_{s,t \in [0,T]} d(\gamma_s,\gamma_t)^q \right) ^{p/q} \leq | \gamma |_{q\textrm{-}\mathrm{var};[0,T]}^p.
        \end{equation}
        Continuing form \eqref{eq_pmoment_lift_to_mu_C}, we can therefore estimate
         \begin{align}
            \int_\mathcal{X} d(\bar{x},x)^p \d \mu_t(x) \leq 2^{p-1} \int_{\Gamma_T} \Big(  d(\bar{x},\gamma_0)^p + | \gamma |_{q\textrm{-}\mathrm{var};[0,T]}^p \Big) \d\pi(\gamma),
        \end{align}
        which yields that $\mu_t\in P_p(\mathcal{X})$ for all time.
        Furthermore, $t \mapsto \mu_t$ is continuous in $p$-Wasserstein space, as \eqref{eq:lift_to_mu_C_continuity} also holds here. 
        \\
        To prove that $(\mu_t)$ has finite $p$-variation, take $0 \leq s <t \leq T$ and let $D=(t_i)$ be an arbitrary partition of the interval $[s,t]$. As before, $(e_{t_{i}},e_{t_{i+1}})_{\#} \pi \in \text{Cpl} (\mu _{t_{i}}, \mu_{t_{i+1}})$ provides us with an upper estimate for the distance, 
        \begin{align}
            \sum_{i} W_p^p (\mu_{t_{i}}, \mu _{t_{i+1}}) & \leq  \int_{\Gamma_T} \sum_{i} d(\gamma_{t_{i}}, \gamma _{t_{i+1}})^p \d\pi(\gamma) \\
            & \leq \int_{\Gamma_T}  |\gamma|_{p\textrm{-}\mathrm{var};[s,t]}^p \d\pi(\gamma) \\
            & \leq \int_{\Gamma_T}  |\gamma|_{q\textrm{-}\mathrm{var};[s,t]}^p \d\pi(\gamma),
        \end{align}
        where we used the fact that $|\gamma|_{p\textrm{-}\mathrm{var}}\leq |\gamma|_{q\textrm{-}\mathrm{var}}$ on the space of continuous paths if $1 \leq q \leq p$.  
        Now the right-hand side no longer depends on the choice of partition. Taking the supremum over all partitions $D \in \mathcal{D}([s,t])$ yields the result. 
    \end{proof}
    
    \begin{theorem}\label{thm:lift_to_mu_Walphap}
        Let $(\mathcal{X},d)$ be a complete separable metric space.
        Let $\pi \in P(C([0,T];\mathcal{X})) $ be concentrated on $W^{\alpha, p} ([0,T];\mathcal{X})$ and satisfy
        \begin{equation}\label{eq:lift_to_mu_Walphap_assumptiom}
            \int_{\Gamma_T} \Big( d(\gamma_0,\bar{x})^p + | \gamma |_{W^{\alpha,p};[0,T]}^p  \Big) \d \pi(\gamma) < + \infty
        \end{equation}
        for some $1<p<\infty$ and $ \frac{1}{p} < \alpha < 1$ and $\bar{x}\in \mathcal{X}$.
        Then, $t \mapsto \mu_t \coloneqq {(e_t)}_{\#} \pi $ is in $ W^{\alpha, p} ([0,T];P_p(\mathcal{X}))$, and moreover,
        \begin{equation}\label{eq:lift_to_mu_Walphap}
            |\mu|_{W^{\alpha,p};[s,t]}^p \leq \int_{\Gamma_T} |\gamma|_{W^{\alpha,p};[s,t]}^p \d \pi(\gamma)
        \end{equation}
        for any $0\leq s<t\leq T$. The same statement holds for $|\cdot|_{b^{\alpha,p}}$.
    \end{theorem}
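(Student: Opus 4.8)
The plan is to follow the same three-part strategy used in the proofs of \cref{thm:lift_to_mu_C,thm:lift_to_mu_Hol,thm:lift_to_mu_var}: first establish finiteness of the $p$-th moments of each $\mu_t$, then the continuity of $t \mapsto \mu_t$ in $(P_p(\mathcal{X}), W_p)$, and finally the energy estimate \eqref{eq:lift_to_mu_Walphap}. The only genuinely new ingredient, compared with those proofs, is the Garsia--Rodemich--Rumsey inequality \eqref{eq:distance_estimate_GRR} from \cref{thm:FractionalSobolev-Holder}, which here plays the role that the defining pointwise estimates played for the H\"older and variation semi-norms. Since $\pi$ is concentrated on $W^{\alpha,p}([0,T];\mathcal{X})$ and $\alpha>1/p$, that estimate is available $\pi$-a.e.

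For the $p$-th moments I would start, exactly as in \eqref{eq_pmoment_lift_to_mu_C}, from
$$\int_\mathcal{X} d(\bar x, x)^p \d\mu_t(x) \le 2^{p-1}\int_{\Gamma_T}\Big(d(\bar x,\gamma_0)^p + d(\gamma_0,\gamma_t)^p\Big)\d\pi(\gamma),$$
and bound the second summand by applying \eqref{eq:distance_estimate_GRR} with $s=0$, which gives $d(\gamma_0,\gamma_t)\le \bar c\, T^{\alpha-\frac1p}\,|\gamma|_{W^{\alpha,p};[0,T]}$ for $\pi$-a.e. $\gamma$. Thus the right-hand side is controlled by the assumed energy \eqref{eq:lift_to_mu_Walphap_assumptiom}, so $\mu_t\in P_p(\mathcal{X})$ for every $t$. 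Applying the same estimate on a short interval $[u,v]$ yields
$$W_p^p(\mu_u,\mu_v)\le \int_{\Gamma_T} d(\gamma_u,\gamma_v)^p\d\pi(\gamma) \le \bar c^{\,p}\,|v-u|^{\alpha p-1}\int_{\Gamma_T}|\gamma|_{W^{\alpha,p};[0,T]}^p\d\pi(\gamma),$$
and since $\alpha p-1>0$ the right-hand side tends to $0$ as $v\to u$; this gives the continuity of $t\mapsto\mu_t$, and in particular its measurability as a curve in $P_p(\mathcal{X})$.

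The heart of the proof is the energy estimate, which I expect to be short. Fix $0\le s<t\le T$. For each pair $(u,v)\in[s,t]^2$ the coupling $(e_u,e_v)_\#\pi\in\mathrm{Cpl}(\mu_u,\mu_v)$ is admissible but not necessarily optimal, so $W_p^p(\mu_u,\mu_v)\le \int_{\Gamma_T}d(\gamma_u,\gamma_v)^p\d\pi(\gamma)$. Dividing by $|v-u|^{1+\alpha p}$, integrating over $[s,t]^2$, and interchanging the order of integration by Tonelli's theorem (all integrands are nonnegative, and $(u,v)\mapsto W_p(\mu_u,\mu_v)$ is continuous by the previous step, so measurability is not an issue) gives
$$|\mu|_{W^{\alpha,p};[s,t]}^p = \iint_{[s,t]^2}\frac{W_p^p(\mu_u,\mu_v)}{|v-u|^{1+\alpha p}}\d u \d v \le \int_{\Gamma_T}\left(\iint_{[s,t]^2}\frac{d(\gamma_u,\gamma_v)^p}{|v-u|^{1+\alpha p}}\d u \d v\right)\d\pi(\gamma)=\int_{\Gamma_T}|\gamma|_{W^{\alpha,p};[s,t]}^p\d\pi(\gamma).$$
Taking $[s,t]=[0,T]$ and invoking \eqref{eq:lift_to_mu_Walphap_assumptiom} shows $|\mu|_{W^{\alpha,p}}<\infty$, i.e. $(\mu_t)\in W^{\alpha,p}([0,T];P_p(\mathcal{X}))$.

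For the $b^{\alpha,p}$ statement the same mechanism works with the double integral replaced by the dyadic double sum of \eqref{eq:Walphap_sum}: the coupling bound gives $W_p^p(\mu_{t_k^{(m)}},\mu_{t_{k+1}^{(m)}})\le \int_{\Gamma_T} d(\gamma_{t_k^{(m)}},\gamma_{t_{k+1}^{(m)}})^p\d\pi$ for every dyadic pair, and since all terms are nonnegative, Beppo Levi's lemma lets me interchange the countable double sum with the integral against $\pi$, yielding $|\mu|_{b^{\alpha,p}}^p\le \int_{\Gamma_T}|\gamma|_{b^{\alpha,p}}^p\d\pi$. This is in fact the cleaner of the two arguments, since it needs no measurability discussion at all. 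Overall I do not anticipate a serious obstacle: the estimate itself is a one-line consequence of the coupling inequality, and the only points requiring care are the justification of the Tonelli/Beppo Levi interchange (trivial here by nonnegativity) and correct bookkeeping of the dyadic partition on the relevant interval in the $b^{\alpha,p}$ case.
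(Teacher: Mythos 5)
Your proposal is correct and follows essentially the same route as the paper's proof: the Garsia--Rodemich--Rumsey estimate \eqref{eq:distance_estimate_GRR} for the $p$-th moments, the coupling inequality $W_p^p(\mu_u,\mu_v)\le\int d(\gamma_u,\gamma_v)^p\d\pi$ combined with Tonelli for the $W^{\alpha,p}$ bound, and Beppo Levi for the $b^{\alpha,p}$ bound. The only (harmless) difference is that you spell out the continuity of $t\mapsto\mu_t$ explicitly, which the paper leaves implicit in this particular proof.
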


    \begin{proof}
        By assumption, $\pi$-a.e. $\gamma$ has finite $W^{\alpha,p}$-norm.
        This allows us to estimate the modulus of continuity of  $\gamma$ by applying Garsia--Rodemich--Rumsey inequality, as described in \eqref{eq:distance_estimate_GRR}, and  obtain
        $$ d (\gamma_s,\gamma_t) \leq \bar{c} |t-s|^{\alpha - \frac{1}{p}} | \gamma |_{W^{\alpha,p};[s,t]} \quad \mathrm{for }\,\, \pi \textrm{-}\mathrm{a.e.} \, \gamma,$$
        where the constant $\bar{c}$ depends only on $(\alpha,p)$.
        \\
        Now let us show that $\mu_t\in P_p(\mathcal{X})$ for all $t\in(0,T]$.  Using the triangular inequality and the estimate above, we have
        \begin{align}
            \int d(\bar{x},x)^p \d \mu_t(x) &= \int d(\bar{x},\gamma_t)^p \d\pi(\gamma)\\
            & \leq 2^{p-1} \int \Big(  d(\bar{x},\gamma_0)^p + d(\gamma_0,\gamma_t)^p \Big) \d\pi(\gamma)\\
            & \leq 2^{p-1} \left( W_p^p (\delta_{\bar{x}},\mu_0) + \bar{c}^p |t|^{\alpha p - 1 }\int |\gamma|_{W^{\alpha,p};[0,t] }^p \d\pi(\gamma) \right) \\
            & \leq 2^{p-1} \left( W_p^p (\delta_{\bar{x}},\mu_0) + \bar{c}^p |T|^{\alpha p - 1 }\int |\gamma|_{W^{\alpha,p};[0,T] }^p \d\pi(\gamma) \right) < \infty 
            \label{eq:lift_to_mu_Walphap_proof_estimate_1}
        \end{align}
        which shows $\mu_t \in P_p(\mathcal{X})$.
        Next, to prove the second claim, observe that
         \begin{align}
             |\mu|_{W^{\alpha,p};[s,t]}^p
             & = \iint_{[s,t]^2} \frac{W_p^p (\mu_u,\mu_v)}{|v-u|^{1+\alpha p}} \d u \d v \\
             & \leq \iint_{[s,t]^2} \frac{\int d(\gamma_u,\gamma_v)^p \d \pi (\gamma)}{|v-u|^{1+\alpha p}} \d u \d v \\
             & = \int \left( \iint_{[s,t]^2} \frac{ d(\gamma_u,\gamma_v)^p}{|v-u|^{1+\alpha p}} \d u \d v \right) \d \pi (\gamma) \\
             & = \int |\gamma|_{W^{\alpha,p};[s,t]}^p \d \pi(\gamma), \label{eq:lift_to_mu_Walphap_proof_estimate_2}
         \end{align}
         where we used Tonelli's theorem to interchange integrals. Similarly, we have (when $T=1$) that
         \begin{align}
             |\mu|_{b^{\alpha,p}}^p
             & \coloneqq \sum_{m=0}^{\infty} 2^{m(\alpha p -1)} \sum_{k=0}^{2^m-1} W_p^p (\mu_{t_{k}^{(m)}}, \mu_{t_{k+1}^{(m)}}) \\
             & \leq  \lim_{M \to \infty} \int \sum_{m=0}^{M} 2^{m(\alpha p -1)} \sum_{k=0}^{2^m-1} d (\gamma_{t_{k}^{(m)}}, \gamma_{t_{k+1}^{(m)}})^p \d \pi (\gamma) \\
             & = \int \sum_{m=0}^{\infty} 2^{m(\alpha p -1)} \sum_{k=0}^{2^m-1} d (\gamma_{t_{k}^{(m)}}, \gamma_{t_{k+1}^{(m)}})^p \d \pi (\gamma) \\
             & = \int |\gamma|_{b^{\alpha,p}}^p \d \pi (\gamma)
         \end{align}
         where we used Beppo Levi's lemma to interchange limit and integral.  
    \end{proof}

   As a final result in this section, we show that if a path measure $\pi$ and its time marginals $(\mu_t)$ satisfy the equality in the previous theorem, then $(\mu_t)$ is compatible. 
    \begin{proposition}\label{prop:equality_implies_compatibility}
    Let $(\mathcal{X},d)$ be a complete separable metric space.
    Let $\pi \in P(C([0,T];\mathcal{X})) $ be concentrated on $W^{\alpha, p} ([0,T];\mathcal{X})$ and satisfy \eqref{eq:lift_to_mu_Walphap_assumptiom} for some $1<p<\infty$ and $ \frac{1}{p} < \alpha < 1$ and $\bar{x}\in \mathcal{X}$.
    Assume that $\pi$ and $t \mapsto \mu_t \coloneqq {(e_t)}_{\#} \pi $ satisfy the equality
        \begin{align}
            |\mu|_{W^{\alpha,p};[0,T]}^p = \int_{\Gamma_T} |\gamma|_{W^{\alpha,p};[0,T]}^p \d \pi (\gamma),
        \end{align}
        then $(\mu_t)_{t\in [0,T]}$ is compatible in $P_p(\mathcal{X})$.
    \end{proposition}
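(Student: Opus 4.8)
The plan is to extract, from the assumed equality, the optimality of the two-dimensional marginals $(e_u,e_v)_\#\pi$ for \emph{every} pair $u,v\in[0,T]$, and then read off compatibility directly from the finite-dimensional projections of $\pi$.

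First I would revisit the inequality chain in the proof of \cref{thm:lift_to_mu_Walphap}. There, the only inequality used was the pointwise bound
\[
W_p^p(\mu_u,\mu_v)\;\le\;\int_{\Gamma_T} d(\gamma_u,\gamma_v)^p\,\d\pi(\gamma)\qquad\text{for all }u,v\in[0,T],
\]
valid because $(e_u,e_v)_\#\pi\in\mathrm{Cpl}(\mu_u,\mu_v)$ is a (not necessarily optimal) coupling. Dividing by $|v-u|^{1+\alpha p}$ and integrating over $[0,T]^2$ is exactly how one obtains $|\mu|_{W^{\alpha,p}}^p\le\int|\gamma|_{W^{\alpha,p}}^p\,\d\pi$. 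Hence the assumed equality forces
\[
\iint_{[0,T]^2}\frac{1}{|v-u|^{1+\alpha p}}\Big(\int_{\Gamma_T} d(\gamma_u,\gamma_v)^p\,\d\pi - W_p^p(\mu_u,\mu_v)\Big)\,\d u\,\d v = 0 .
\]
Since the integrand in the outer parentheses is nonnegative and the weight $|v-u|^{-(1+\alpha p)}$ is strictly positive off the diagonal (a null set in $[0,T]^2$), I conclude that $W_p^p(\mu_u,\mu_v)=\int_{\Gamma_T} d(\gamma_u,\gamma_v)^p\,\d\pi$ for Lebesgue-a.e. $(u,v)\in[0,T]^2$; equivalently, $(e_u,e_v)_\#\pi\in\mathrm{OptCpl}(\mu_u,\mu_v)$ for a.e. $(u,v)$.

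The step I expect to be the crux is upgrading this \emph{almost everywhere} statement to one holding for \emph{every} pair, which is what \cref{def:compatibility} requires at the arbitrary (hence possibly null) time points of a finite subcollection. I would do this by a continuity argument. Both sides are continuous in $(u,v)$: the map $(u,v)\mapsto W_p^p(\mu_u,\mu_v)$ is continuous because $(\mu_t)\in W^{\alpha,p}(I;P_p(\mathcal{X}))$ is $W_p$-continuous (indeed Hölder, via the Garsia--Rodemich--Rumsey embedding) and $W_p$ obeys the triangle inequality; the map $(u,v)\mapsto\int_{\Gamma_T} d(\gamma_u,\gamma_v)^p\,\d\pi$ is continuous by dominated convergence, since $\pi$-a.e. $\gamma$ is continuous and the estimate \eqref{eq:distance_estimate_GRR} yields the uniform majorant $d(\gamma_u,\gamma_v)^p\le \bar c^{\,p}\,T^{\alpha p-1}\,|\gamma|_{W^{\alpha,p}}^p$ (with $\alpha p-1>0$), which is $\pi$-integrable by \eqref{eq:lift_to_mu_Walphap_assumptiom}. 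Two continuous functions agreeing almost everywhere agree everywhere, so in fact $(e_u,e_v)_\#\pi\in\mathrm{OptCpl}(\mu_u,\mu_v)$ for \emph{all} $u,v\in[0,T]$.

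Finally I would verify \cref{def:compatibility} directly. Given any finite subcollection $\{\mu_{t_1},\dots,\mu_{t_N}\}$ with $t_1,\dots,t_N\in[0,T]$, I take the multi-coupling $\Upsilon_N\coloneqq(e_{t_1},\dots,e_{t_N})_\#\pi\in P(\mathcal{X}^N)$. Its one-dimensional marginals are the prescribed measures $(e_{t_i})_\#\pi=\mu_{t_i}$, and its two-dimensional marginals are $(\mathrm{Pr}^{i,j})_\#\Upsilon_N=(e_{t_i},e_{t_j})_\#\pi$, which are optimal by the previous step. Thus every finite subcollection of $(\mu_t)_{t\in[0,T]}$ admits a multi-coupling all of whose two-dimensional marginals are optimal, which is precisely compatibility in $P_p(\mathcal{X})$.
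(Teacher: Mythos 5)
Your proposal is correct and follows essentially the same route as the paper's proof: both derive from the assumed equality that the nonnegative, weighted difference $\int_{\Gamma_T} d(\gamma_u,\gamma_v)^p\,\d\pi - W_p^p(\mu_u,\mu_v)$ vanishes almost everywhere, upgrade this to every pair $(u,v)$ by the same continuity argument (continuity of $t\mapsto\mu_t$ on one side, dominated convergence with the Garsia--Rodemich--Rumsey majorant $\bar c^{\,p}T^{\alpha p-1}|\gamma|_{W^{\alpha,p}}^p$ on the other), and conclude compatibility by taking $(e_{t_1},\dots,e_{t_N})_\#\pi$ as the multi-coupling. No gaps.
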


    \begin{proof}
        By \cref{thm:lift_to_mu_Walphap}, $(\mu_t)$ is in $P_p(\mathcal{X})$. To show that $(\mu_t)$ is  compatible in $P_p(\mathcal{X})$, we write 
         \begin{align}
             0 = \int_{\Gamma_T} |\gamma|_{W^{\alpha,p}}^p \d \pi(\gamma) - |\mu|_{W^{\alpha,p}}^p = \iint_{[0,T]^2} \underbrace{ \left(  \frac{\int_{\Gamma_T} d(\gamma_s,\gamma_t)^p \d \pi (\gamma)}{|t-s|^{1+\alpha p}} - \frac{W_p^p (\mu_s,\mu_t)}{|t-s|^{1+\alpha p}}  \right)}_{\eqqcolon f(s,t)}  \d s \d t .
         \end{align}
         Since $\pi$ is a lift of $(\mu_t)$, the function $f$ is non-negative. Thus, the equality above implies that $f=0$ almost everywhere. 
         It follows that $f=0$ everywhere because $f$ is continuous. Indeed, the application
         $$
         (s,t) \mapsto W_p^p (\mu_s,\mu_t)
         $$
         is continuous because $(\mu_t)$ is continuous. Similarly, the application
         $$
         (s,t) \mapsto \int_{\Gamma_T} d(\gamma_s,\gamma_t)^p \d \pi (\gamma)   
         $$
         is also continuous. Take a sequence $(s_m,t_m) \to (s,t)$ as $m \to \infty$ and note that
         \begin{equation}
             \lim_{m \to \infty } \int_{\Gamma_T} d(\gamma_{s_m},\gamma_{t_m})^p \d \pi (\gamma) = \int_{\Gamma_T} \left(  \lim_{m \to \infty } d(\gamma_{s_m},\gamma_{t_m})^p \right) \d \pi (\gamma) = \int_{\Gamma_T} d(\gamma_{s},\gamma_{t})^p \d \pi (\gamma)
         \end{equation}
         by Lebesgue's dominated convergence theorem, where the dominated function
         $$
             d (\gamma_s,\gamma_t)^p \leq \bar{c}^p T^{\alpha p  - 1} | \gamma |^p_{W^{\alpha,p};[0,T]},
         $$
         coming from Garsia--Rodemich--Rumsey inequality \eqref{eq:distance_estimate_GRR}, is integrable by assumption. 
         \\
         Finally, $f=0$ means that $(e_s,e_t)_\# \pi \in \mathrm{OptCpl}(\mu_s, \mu_t)$ for  all $(s,t) \in [0,T]^2$. This immeditely imply that $(\mu_t)$ is compatible, as for every finite collection $\{ \mu_{t_i}$: $t_i \in [0,T]$, $i \in \left\{1, \cdots, N \right\}\}$, we have a multi-coupling given by $(e_{t_1},e_{t_2}, \cdots, e_{t_N})_\# \pi$ whose two-dimensional marginals are all optimal.
    \end{proof}

    \subsection{From Wasserstein curves to path measures: a superposition principle}\label{subsec:from_mu_to_pi}
    In this section, we go in the reverse direction as in the previous section: \textit{We start with a curve $t \mapsto \mu_t $ in a Wasserstein space with fractional Sobolev regularity and compatibility and then construct a lift $\pi$ that realizes its regularity.}  
    To this end, we require the underlying space $\mathcal{X}$ to have additional structure. We follow Construction \hyperref[itm:ConstructionB]{{\normalfont \textcircled{\small{B}}}} as explained in the introduction.

    \begin{theorem}\label{thm:optimal_lift_mu_Walphap_compatible}
        Let $(\mathcal{X}, d)$ be a complete, separable, and locally compact length metric space, and $I \coloneqq [0,T] \subset \mathbb{R}$.
        Let $(\mu_t) \in  W^{\alpha,p}(I;P_p(\mathcal{X}))$ with $1<p<\infty$ and $\frac{1}{p}< \alpha <  1$.
        Assume that $(\mu_t)_{t \in I}$ is compatible in $P_p(\mathcal{X})$.
        Then, construction \hyperref[itm:ConstructionB]{{\normalfont \textcircled{\small{B}}}} converges narrowly (up to a subsequence) to a probability measure $\pi \in P(C(I;\mathcal{X}))$ satisfying
        \begin{enumerate}[label=(\roman*), font=\normalfont]
            \item $\pi$ is concentrated on $W^{\alpha,p}(I;\mathcal{X})$;
            \item $(e_t)_\#\pi=\mu_t$ for all $t\in I$;
            \item $(e_s,e_t)_\# \pi \in \mathrm{OptCpl}(\mu_s, \mu_t)$ for all $s,t \in I$; and in particular,
            \begin{align}\label{eq:optimality_pi_1}
            |\mu|_{W^{\alpha,p}}^p = \int_{\Gamma_T} |\gamma|_{W^{\alpha,p}}^p \d \pi (\gamma) .
            \end{align}
        \end{enumerate}
        The same statement holds for $|\cdot|_{b^{\alpha,p}}$.
    \end{theorem}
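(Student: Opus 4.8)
The plan is to realize the construction as a sequence of piecewise-geodesic path measures, prove a uniform $b^{\alpha,p}$-energy bound that forces tightness, extract a narrow limit by Prokhorov, and then identify the limit by pushing dyadic optimality through the limit and extending it to all time-pairs by continuity. After the affine reparametrization $t\mapsto t/T$ I would assume $I=[0,1]$, and invoke Hopf--Rinow (\cref{Hopf-Rinow}) so that $(\mathcal{X},d)$ is geodesic and closed bounded sets are compact. For each $n$, compatibility of the finite family $\{\mu_{t_i^{(n)}}\}_i$ furnishes a multi-coupling $\Upsilon_n\in P(\mathcal{X}^{2^n+1})$ with \emph{all} two-dimensional marginals optimal (in particular satisfying \eqref{eq:compatibility_dyadic}), and I set $\pi_n\coloneqq(\ell)_\#\Upsilon_n$ with the measurable multi-point geodesic interpolation $\ell$ of \cref{rmk:measurable_selection_geodesics}. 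Since a $\pi_n$-typical curve is the piecewise geodesic through $(x_0,\dots,x_{2^n})$, \cref{lemma:balphap_Xn} computes $|\gamma|_{b^{\alpha,p}}^p$ explicitly; integrating against $\Upsilon_n$ and using optimality of every $(\mathrm{Pr}^{i,j})_\#\Upsilon_n$ converts each $d(x_i,x_j)^p$ into the matching $W_p^p$. The first block of \cref{lemma:balphap_Xn} then becomes exactly the partial $b^{\alpha,p}$-sum of $(\mu_t)$ up to level $n$, while the second block is $\tfrac{1}{2^{p-\alpha p}-1}$ times the level-$n$ summand of the convergent series $|\mu|_{b^{\alpha,p}}^p$, hence vanishes as $n\to\infty$. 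This gives $\int|\gamma|_{b^{\alpha,p}}^p\,\d\pi_n\to|\mu|_{b^{\alpha,p}}^p$ and a uniform bound.

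Next I would combine this uniform bound with $\int d(\gamma_0,\bar x)\,\d\pi_n=\int d(\cdot,\bar x)\,\d\mu_0<\infty$ (the first marginal is fixed and $\mu_0\in P_p$) and apply the tightness criterion \cref{prop:tightness_balphap}; Prokhorov (\cref{thm:Prokhorov}) yields a narrowly convergent subsequence $\pi_{n_k}\to\pi$. The marginal identity (ii) follows because for dyadic $t$ the equality $(e_t)_\#\pi_n=\mu_t$ is eventually stable and survives the narrow limit, and then extends to all $t$ by continuity of $t\mapsto\mu_t$ together with continuity of $t\mapsto(e_t)_\#\pi$ (the latter from $\pi$-a.e.\ continuity of paths). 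Using the norm equivalence \cref{thm:Walphap_balphap} on continuous paths I would upgrade the $b^{\alpha,p}$-bound to $\sup_n\int|\gamma|_{W^{\alpha,p}}^p\,\d\pi_n<\infty$, and lower semicontinuity (\cref{prop:l.s.c.functional}) under narrow convergence then gives $\int|\gamma|_{W^{\alpha,p}}^p\,\d\pi<\infty$, which already secures (i).

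For the optimality (iii), I would first note that for every dyadic pair $(s,t)$ the marginal $(e_s,e_t)_\#\pi_n=(\mathrm{Pr})_\#\Upsilon_n$ is optimal once $n$ is large, so the liminf inequality \eqref{eq:narrow_conv_lsc_func} applied to the continuous cost $\gamma\mapsto d(\gamma_s,\gamma_t)^p$ gives $\int d(\gamma_s,\gamma_t)^p\,\d\pi\le W_p^p(\mu_s,\mu_t)$, while the reverse holds since $\pi$ is a coupling; thus $(e_s,e_t)_\#\pi$ is optimal on the \emph{dense} set of dyadic pairs. I would then promote this to all $(s,t)$ by continuity of both $(s,t)\mapsto W_p^p(\mu_s,\mu_t)$ and $(s,t)\mapsto\int d(\gamma_s,\gamma_t)^p\,\d\pi$, the latter by dominated convergence with the Garsia--Rodemich--Rumsey majorant $d(\gamma_s,\gamma_t)^p\le\bar c^{\,p}\,|\gamma|_{W^{\alpha,p}}^p\in L^1(\pi)$ from \cref{thm:FractionalSobolev-Holder}. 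Substituting the resulting pointwise equality $\int d(\gamma_s,\gamma_t)^p\,\d\pi=W_p^p(\mu_s,\mu_t)$ into the double-integral and double-sum definitions (Tonelli and Beppo Levi, exactly as in \cref{thm:lift_to_mu_Walphap}) turns both inequalities there into the asserted equalities \eqref{eq:optimality_pi_1}, simultaneously for $|\cdot|_{W^{\alpha,p}}$ and $|\cdot|_{b^{\alpha,p}}$.

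The hard part is precisely this global optimality (iii): the construction controls only two-dimensional marginals at dyadic times, and because $|\cdot|_{W^{\alpha,p}}$ is non-local one cannot, unlike Lisini's consecutive gluing, read off optimality of far-apart pairs for free. The resolution is to use compatibility so that \emph{every} dyadic marginal — not merely neighboring ones — is optimal, which makes the controlled set of pairs dense in $[0,1]^2$, and then to transport optimality through the narrow limit and across the closure via the \cref{thm:FractionalSobolev-Holder} majorant. The remaining points — existence and measurability of the multi-point selection $\ell$, and the genuine vanishing of the tail term in \cref{lemma:balphap_Xn} — are routine given the cited results.
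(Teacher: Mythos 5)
Your proposal is correct and follows essentially the same route as the paper: the same piecewise-geodesic construction $\pi_n=(\ell)_\#\Upsilon_n$, the same energy computation via \cref{lemma:balphap_Xn} converted to Wasserstein distances by optimality of the dyadic marginals, tightness via \cref{prop:tightness_balphap} and Prokhorov, lower semi-continuity for (i), and identification of the marginals and of the optimal couplings at dyadic times followed by a continuity/dominated-convergence extension (with the Garsia--Rodemich--Rumsey majorant) to all times, then Tonelli and Beppo Levi for \eqref{eq:optimality_pi_1}. The only differences are organizational (you establish the dyadic identities first and then pass to general times, whereas the paper approximates general times by dyadic ones inside the limits), which does not change the substance of the argument.
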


    \begin{proof}
        Let us prove the result for the case $T=1$, i.e., $I = [0,1]$, which is clearly not restrictive. We write
        $\Gamma \coloneqq \Gamma_1 \coloneqq C([0,1];\mathcal{X})$. 
        \\
        \textbf{Step 0 (Construction of $\{\pi_n\}_{n \in \mathbb{N}} \subset P(\Gamma)$).}
        We carry out construction \hyperref[itm:ConstructionB]{{\normalfont \textcircled{\small{B}}}}. 
        We take the dyadic sequence of partitions $(D_n)_{n \in \mathbb{N}}$ of $I$, i.e., for any integer $n \in \mathbb{N}$, we divide the interval $I$ into $N_n \coloneqq 2^n$ equal pieces,
        \begin{align}
            & D_n \coloneqq \{ 0=t_0^{(n)} < t_1^{(n)} < \cdots < t_{N_n}^{(n)} = 1 \}, \label{eq:const_notation_Dn}\\
            & t_i^{(n)} \coloneqq \frac{i}{2^n} \quad i \in \left\{ 0,1, \cdots, 2^n = N_n \right\}.  \label{eq:const_notation_tin}
        \end{align}
        Let $\bm{\mathcal{X}}_n$ denote the product space
        \begin{equation}\label{eq:const_notation_Xn}
            \bm{\mathcal{X}}_n \coloneqq \mathcal{X}_0 \times \mathcal{X}_1 \times \cdots \times \mathcal{X}_{N_n}
        \end{equation}
        with $\{ \mathcal{X}_i \}$ representing copies of $\mathcal{X}$. 
        The compatibility assumption, in the sense of \cref{def:compatibility}, ensures the existence of a measure on the product space,
        \begin{equation}
            \Upsilon_{n} \in P(\bm{\mathcal{X}}_n),
        \end{equation}
        such that it is a multi-coupling, i.e.,
        \begin{equation}\label{eq:1D_marginal_Upsilon}
            (\text{Pr}^i)_{\#} \Upsilon_{n} = \mu_{t_i^{(n)}}, \quad \forall i \in \left\{ 0,1, \cdots, N_n \right\},
        \end{equation}
        and, moreover, 
        \begin{gather}\label{eq:2D_marginal_Upsilon}
            (\text{Pr}^{i,i+\frac{2^n}{2^m}})_{\#} \Upsilon_{n} \in \text{OptCpl}\big(\mu_{t^{(n)}_{i}},\mu_{t^{(n)}_{i+\frac{2^n}{2^m}}}\big) , 
        \end{gather}
        for all $i \in \big\{k \frac{2^n}{2^m} \big| k \in \{ 0,1,\cdots, 2^m-1\} \big\} $ and $ m \in \{0,1,\cdots n\}$.    
        The maps $\text{Pr}^i$, $\text{Pr}^{i,j}$ are projections from $\bm{\mathcal{X}}_n$ to the $(i)$-th, $(i,j)$-th component, respectively.
        (See \cref{fig:construction} for an illustration of the condition above and recall \cref{rmk:ompatibility_dyadic}.)
    
        Next, since $(\mathcal{X},d)$ is assumed to be a complete and locally compact length metric space, the Hopf--Rinow \cref{Hopf-Rinow} ensures the existence of at least one geodesic between any two points.
        We consider a geodesic selection and interpolation map connecting the points with constant-speed geodesics: 
        \begin{equation}\label{eq:def_geod_interp_0}
            \ell : \, \bm{x}=(x_0,\cdots ,x_{N_n}) \, \in \, \bm{\mathcal{X}}_n \mapsto \ell_{\bm{x}} \in C(I;\mathcal{X})
        \end{equation}
        by defining 
        \begin{align}\label{eq:def_geod_interp}
            \ell_{\bm{x}}(t) & \coloneqq x_i, \quad t = t^{(n)}_i, \quad  i \in \{ 0,1, \cdots, N_n \};
        \end{align}
        and connecting by a constant speed geodesics in between (i.e. the speed over each time segment is equal to $2^n d(x_i,x_{i+1})$).
        In other words, $\ell_{\bm{x}}$ is a piecewise geodesic connecting the points.
        As discussed in \cref{rmk:measurable_selection_geodesics} for the two-point case, one can always find a $(\mathcal{B}(\mathcal{X}^{2^n+1})_{\Upsilon_n},\mathcal{C})$-measurable geodesic selection map $\ell_{\bm{x}}$, despite the possible non-uniqueness of geodesics. Here, $\mathcal{B}(\mathcal{X}^{2^n+1})_{\Upsilon_n}$ denotes the $\Upsilon_n$-completion of $\mathcal{B}(\mathcal{X}^{2^n+1})$.            
        Therefore, using this measurable map, we can build a sequence of path measures:
        \begin{equation}\label{eq:pi_n_Upsilon}
             \pi_n \coloneqq (\ell)_{\#} \Upsilon_n \in P(C(I;\mathcal{X})), \quad \forall n \in \mathbb{N}. 
        \end{equation}
        \\
        \textbf{Step 1 (Tightness of $\{\pi_n\}_{n \in \mathbb{N}} \subset P(\Gamma)$).} 
        We show that the family measures $\{\pi_n\}$ is tight in $P(C([0,1];\mathcal{X}))$.
        To this end, we use the tightness condition developed for path measures in \cref{prop:tightness_balphap} using the $b^{\alpha,p}$-norm. Note that since $(\mathcal{X},d)$ is assumed to be a complete and locally compact length metric space, the Hopf--Rinow theorem \ref{Hopf-Rinow} ensures that closed bounded sets in $\mathcal{X}$ are compact. Hence, we can use \cref{prop:tightness_balphap}.
        Our goal is to show 
        \begin{equation}\label{eq:tightness_computation_claim}
             \sup_{n \in \mathbb{N}} \int_{\Gamma} \Big(  d(\gamma_0, \bar{x})^p +  | \gamma |^p_{b^{\alpha,p}} \Big) \d \pi_n (\gamma) < +\infty. 
        \end{equation}
        For the first term, we have
        \begin{align}\label{eq:tightness_computation_step_00}
            \int_{\Gamma} d(\gamma_0, \bar{x})^p \d \pi_n (\gamma)
            %& = \int_{\Gamma} d(e_0 (\gamma), \bar{x})^p \d \pi_n (\gamma) \\
            %& = \int_{\bm{\mathcal{X}}_n} d(e_0 (\ell_{\bm{x}}), \bar{x})^p \d \Upsilon_n (\bm{x}) \\
            & = \int_{\bm{\mathcal{X}}_n} d(x_0, \bar{x})^p \d \Upsilon_n (\bm{x}) 
            = \int_{\mathcal{X}} d (x,\bar{x})^p \d \mu_0 (x) < + \infty, 
        \end{align}
        where \eqref{eq:pi_n_Upsilon} and \eqref{eq:1D_marginal_Upsilon} are used in the first and second equality, respectively.
        The last expression is indeed finite because $\mu_0 \in P_p(\mathcal{X})$.
        \\
        For the second term, we write
        \begin{align}
            \int_{\Gamma}  & | \gamma |^p_{b^{\alpha,p}} \d \pi_n (\gamma)  = \int_{\bm{\mathcal{X}}_n}  | \ell_{\bm{x}} |^p_{b^{\alpha,p}} \d \Upsilon_n (\bm{x}) \label{eq:tightness_computation_step_0} \\
            & = \int_{\bm{\mathcal{X}}_n} \Big( \sum_{m=0}^{n} 2^{m(\alpha p -1)} \hspace{-20pt} \sum_{\substack{i = k \frac{2^n}{2^m} \\ k \in \{0,1,\cdots, 2^m-1\}}} \hspace{-20pt} d (x_{i},x_{i+\frac{2^n}{2^m}})^p + \frac{2^{n (\alpha p - 1 )}}{2^{(p - \alpha p)}-1}  \sum_{i=0}^{2^n-1} d(x_i,x_{i+1})^p \Big) \d \Upsilon_n 
            \label{eq:tightness_computation_step_1}\\
            & = \sum_{m=0}^{n} 2^{m(\alpha p -1)} \hspace{-20pt} \sum_{\substack{i = k \frac{2^n}{2^m} \\ k \in \{0,1,\cdots, 2^m-1\}}} \hspace{-20pt} \int d (x_{i},x_{i+\frac{2^n}{2^m}})^p \d \Upsilon_n + \frac{2^{n (\alpha p - 1 )}}{2^{(p - \alpha p)}-1}  \sum_{i=0}^{2^n-1} \int d(x_i,x_{i+1})^p \d \Upsilon_n  \label{eq:tightness_computation_step_2} \\
            & =  \sum_{m=0}^{n} 2^{m(\alpha p -1)} \hspace{-20pt} \sum_{\substack{i = k \frac{2^n}{2^m} \\ k \in \{0,1,\cdots, 2^m-1\}}} \hspace{-20pt} W_p^p( \mu_{t^{(n)}_i},\mu_{t^{(n)}_{i+2^n/2^m}})  + \frac{2^{n (\alpha p - 1 )}}{2^{(p - \alpha p)}-1}  \sum_{i=0}^{2^n-1} W_p^p (\mu_{t^{(n)}_{i}},\mu_{t^{(n)}_{i+1}}) \label{eq:tightness_computation_step_3} \\
            &  = \sum_{m=0}^{n} 2^{m(\alpha p -1)} \,\, \sum_{k=0}^{2^m-1} W_p^p( \mu_{t^{(m)}_{k}},\mu_{t^{(m)}_{k+1}})  + \frac{2^{n (\alpha p - 1 )}}{2^{(p - \alpha p)}-1}  \sum_{i=0}^{2^n-1} W_p^p (\mu_{t^{(n)}_{i}},\mu_{t^{(n)}_{i+1}}) \label{eq:tightness_computation_step_4}\\
             & \leq | \mu |^p_{b^{\alpha,p}} + \frac{1}{2^{(p - \alpha p)}-1} | \mu |^p_{b^{\alpha,p}} \label{eq:tightness_computation_step_5} 
             \\
             & = \frac{1}{ 1- 2^{-(p - \alpha p)}} \, | \mu |^p_{b^{\alpha,p}}  \label{eq:tightness_computation_step_6} \\
             & \leq \frac{c_2(\alpha,p)^p}{ 1- 2^{-(p - \alpha p)}} \, | \mu |^p_{W^{\alpha,p}} < + \infty. \label{eq:tightness_computation_step_7}
        \end{align}
        Let us clarify the computation steps. 
        After applying  the push-forward \eqref{eq:pi_n_Upsilon} in the first step \eqref{eq:tightness_computation_step_0}, we note that for fixed $n$ and $\bm{x}=(x_0,\cdots ,x_{N_n})$, the curve $t \mapsto \ell_{\bm{x}}(t) $ is piecewise geodesic.
        We have previously computed the $b^{\alpha,p}$-semi-norm of such curves in \eqref{eq:balphap_Xn}, which we now use to obtain \eqref{eq:tightness_computation_step_1}. 
        The compatibility property \eqref{eq:2D_marginal_Upsilon} is used to go from \eqref{eq:tightness_computation_step_2} to  \eqref{eq:tightness_computation_step_3}, where even the 2-D marginals of $\Upsilon_n$ that are separated by more than one step are given by the Wasserstein distance.  
        The inequality in  \eqref{eq:tightness_computation_step_5} is justified by extending the upper limit of the first sum  in \eqref{eq:tightness_computation_step_4} to infinity, which then corresponds exactly to the definition $|\mu|^p_{b^{\alpha,p}}$. The second sum is nothing but one summand in the definition of  $|\mu|^p_{b^{\alpha,p}}$, so it can be bounded accordingly.
        To bound \eqref{eq:tightness_computation_step_6}, we use the equivalence \eqref{eq:equiv_norm_Walphap} of the semi-norms $|\cdot|_{W^{\alpha,p}}$ and $|\cdot|_{b^{\alpha,p}}$.
        The last expression in \eqref{eq:tightness_computation_step_7} is indeed finite by assumption.
        \\
        In summary, the computations in  (\ref{eq:tightness_computation_step_00})-(\ref{eq:tightness_computation_step_6}) confirm that the bound \eqref{eq:tightness_computation_claim} holds and thus, the family of measures $\{\pi_n\}_{n\in \mathbb{N}} \subset P(\Gamma)$ is tight. Then Prokhorov theorem implies that the set  $\{\pi_n\}_{n\in \mathbb{N}}$ is relatively (sequentially) compact with respect to the narrow topolog of $P(\Gamma)$, i.e., there exists a subsequence $\{\pi_{n_k}\}_{k\in \mathbb{N}} $ such that $\pi_{n_k} \to \pi$ narrowly on $P(\Gamma)$ as $k \to \infty$ to a limit point $\pi \in P(\Gamma)$.
        \\
        \textbf{Step 2 ($\pi$ is concentrated on $W^{\alpha,p}(I;\mathcal{X})$).}
        Here we prove a property related to the support of the limit measure $\pi$.
        Since $(\pi_{n_k})_k \subset P(\Gamma)$ narrowly converges to $\pi \in P(\Gamma)$ and $\gamma \mapsto | \gamma |^p_{W^{\alpha,p}}$ is a lower semi-continuous function from $\Gamma \to [0,+\infty]$, we have
        \begin{equation}
             \int_{\Gamma}|\gamma|_{W^{\alpha, p}}^p \d \pi (\gamma) \leq \liminf_{k \to \infty } \int_{\Gamma}  | \gamma |^p_{W^{\alpha,p}} \d \pi_{n_k} (\gamma) < + \infty.
        \end{equation}
       Notice that the integral on the right-hand side has been shown, in the previous step, to be bounded independent of $n_k$.
       We conclude 
        \begin{equation}
            |\gamma|_{W^{\alpha, p}} < + \infty \quad \textrm{for } \pi\textrm{-a.e. } \gamma \in \Gamma.
        \end{equation}  
    
        \noindent
        \textbf{Step 3 ($(e_t)_\#\pi=\mu_t$ for all $t\in I$).}   
        We need to show that for any $t \in I$,
        \begin{equation}
            \int_{\Gamma} \phi(\gamma_t) \d \pi (\gamma) = \int_{\mathcal{X}}  \phi (x) \d \mu_t (x) 
        \end{equation}
        holds for any $\phi \in C_b(\mathcal{X})$.
        It is enough to show this only for bounded Lipschitz functions. Take $\phi \in \mathrm{Lip}_b (\mathcal{X})$. We have 
        \begin{align}
            \int_{\Gamma} \phi (\gamma_t) \d \pi (\gamma) \,
            & \overset{(a)}{=}  \lim_{k \to \infty}  \int_{\Gamma} \phi (\gamma_{[2^{n_{k}} t]/2^{n_{k}} })  \d \pi_{n_k} (\gamma) \\
            & \overset{(b)}{=}  \lim_{k \to \infty} \int_{\bm{\mathcal{X}}_{n_k}} \phi(x_{[2^{n_k} t]}) \d \Upsilon_{n_k} (\bm{x}) \\
            & \overset{(c)}{=} \lim_{k \to \infty}  \int_{\mathcal{X}} \phi (x) \d \mu_{[2^{n_k} t]/2^{n_k} } (x) \\
            & \overset{(d)}{=}  \int_{\mathcal{X}} \phi (x) \d \mu_t (x).
        \end{align}
        As for the step $(a)$, observe that
        \begin{multline}
            \bigg|  \int \phi (\gamma_t)  \d \pi   - \int \phi (\gamma_{[2^{n_k} t]/2^{n_k} } )  \d \pi_{n_k}  \bigg|  \\
             \leq  \bigg|  \int \phi (\gamma_t)  \d \pi   - \int \phi (\gamma_t)  \d \pi_{n_k}   \bigg| + \bigg|  \int \phi (\gamma_t)  \d \pi_{n_k}     - \int \phi (\gamma_{[2^{n_k} t]/2^{n_k} } )  \d \pi_{n_k}  \bigg|.
        \end{multline}
        By taking limit $k \to \infty$, the first term on the right-hand side goes to zero by the narrow convergence of $(\pi_{n_k})_k$ to $\pi$ (note that the map $\gamma \mapsto \phi (\gamma_t)$ from $\Gamma \to \mathbb{R} $ is indeed continuous and bounded).
        To show that the second term also vanishes in the limit, we further estimate
        \begin{align}    
            \bigg|  \int \phi (\gamma_t)  \d \pi_{n_k}     - \int \phi (\gamma_{[2^{n_k} t]/2^{n_k} } )  \d \pi_{n_k}  \bigg| & \leq \mathrm{Lip} (\phi) \int d (\gamma_t, \gamma_{[2^{n_{k}} t]/2^{n_{k}} } )  \d \pi_{n_k} \\
            & \leq \mathrm{Lip} (\phi)  \bar{c}(\alpha,p) \left(t- \frac{[2^{n_{k}} t]}{2^{n_{k}} }\right)^{\alpha - \frac{1}{p}} \left( \int | \gamma |^p_{W^{\alpha,p}} \d \pi_{n_k} \right)^{\frac1p}, \label{eq:proof_lift_computation_step_a2}
         \end{align}
         where $d (\gamma_t, \gamma_{[2^{n_{k}} t]/2^{n_{k}} } )$ is estimated using Garsia--Rodemich--Rumsey inequality as described in \eqref{eq:distance_estimate_GRR} and we used Jensen's inequality for the integral. Note that thanks to the calculations in the proof of tightness, we now know that $\int | \gamma |^p_{W^{\alpha,p}} \d \pi_{n_k} $ is uniformly finite, which means that $\pi_{n_k}$-a.e. $\gamma$ has finite $W^{\alpha,p}$-semi-norm.
         In the limit, the last expression above approaches zero since 
         $$
         \frac{[N t]}{N} \to t \quad \mathrm{as} \quad N \to \infty. 
         $$
         Steps $(b)$-$(c)$ simply follow from the construction. More precisely, for fixed $k$, the 1-D marginals of $\pi_{n_k}$ at time $t$ coincide with $\mu_t$ whenever $t$ is of the form $t = \frac{i}{2^{n_k}}$ for some integer $i$. 
        \\
        Finally, step $(d)$ follows from the fact that $t \mapsto \mu_t$ is a continuous curve in $P_p(\mathcal{X})$, in particular, a narrowly continuous curve in $P(\mathcal{X})$. 
         \\
        \textbf{Step 4 ($(e_s,e_t)_\# \pi \in \mathrm{OptCpl}(\mu_s, \mu_t)$ for all $s,t \in I$).}
        For this claim to hold, the compatibility assumption is important.
        Given the previous step, we know that $(e_s,e_t)_\# \pi \in \mathrm{Cpl}(\mu_s, \mu_t)$ for all $t,s \in I$.
        This provides us with an estimate for the Wasserstein distance
        \begin{equation}\label{eq:proof_opti_inequ1}
            W_p^p ( \mu_t , \mu_s ) \leq  \int_\Gamma d (\gamma_t,\gamma_s)^p \d \pi (\gamma).
        \end{equation}
        To demonstrate the reverse inequality, we write 
        \begin{align}
            \int_\Gamma d (\gamma_t,\gamma_s)^p \d \pi (\gamma)
            & \overset{(a)}{\leq }  \liminf_{m \to \infty}  \int_{\Gamma}  d (\gamma_{[2^m t]/2^m}, \gamma_{[2^m s]/2^m} )^p  \d \pi (\gamma) \\
             & \overset{(b)}{\leq }  \liminf_{m \to \infty} \left( \liminf_{k \to \infty}  \int_{\Gamma} d (\gamma_{[2^m t]/2^m}, \gamma_{[2^m s]/2^m} )^p  \d \pi_{n_k} (\gamma) \right) \\
             & \overset{(c)}{=} \liminf_{m \to \infty}  W_p^p \big( \mu_{[2^m t]/2^m} , \mu_{[2^m s]/2^m} \big) \\
            & \overset{(d)}{=}  W_p^p ( \mu_t , \mu_s ),
            \label{eq:proof_opti_inequ2}
        \end{align}
        where in step $(a)$, we applied Fatou's lemma on the measure space $(\Gamma, \mathcal{B}(\Gamma), \pi)$. Notice that for every continuous curve $\gamma \in \Gamma$, we have % Ph.D. notes p.446
        \begin{equation}
            d (\gamma_t,\gamma_s)^p = \lim_{m \to \infty} d (\gamma_{[2^m t]/2^m}, \gamma_{[2^m s]/2^m} )^p .
        \end{equation}
        Regarding step $(b)$, fix $m$ in the integrand and note that the map 
        $$
        \gamma \mapsto d (\gamma_{[2^m t]/2^m}, \gamma_{[2^m s]/2^m} )^p
        $$
        from $\Gamma \to [0, + \infty)$ is continuous (as explained in \eqref{eq:pointwise_conv_dst}), in particular, lower-semi continuous.
        Therefore, this step follows from the narrow convergence of $(\pi_{n_k})_k$ to $\pi$ (recall \eqref{eq:narrow_conv_lsc_func}).
        \\
        Step $(c)$ is a direct consequence of the construction of $\pi_{n_k}$ and the compatibility assumption.
        \\
        Finally, step $(d)$ is due to the fact that the curve $\mu:[0,1] \to P_p(\mathcal{X})$ is a continuous curve.
        \\
        To summarize, \eqref{eq:proof_opti_inequ1} and \eqref{eq:proof_opti_inequ2} give us equality. 
        \\
        \textbf{Step 5 ($|\mu|_{W^{\alpha,p}}^p = \int |\gamma|_{W^{\alpha,p}}^p \d \pi $).}
        By assumption, we have $|\mu|_{W^{\alpha,p}} < \infty$.
        Using the optimality of $\pi$, as shown in the previous step, we can compute
        \begin{align}
             |\mu|_{W^{\alpha,p}}^p
             & \coloneqq \iint_{[0,1]^2} \frac{W_p^p (\mu_s,\mu_t)}{|t-s|^{1+\alpha p}} \d s \d t \\
             & = \iint_{[0,1]^2} \frac{\int d(\gamma_s,\gamma_t)^p \d \pi (\gamma)}{|t-s|^{1+\alpha p}} \d s \d t \\
             & = \int \left( \iint_{[0,1]^2} \frac{ d(\gamma_s,\gamma_t)^p}{|t-s|^{1+\alpha p}} \d s \d t \right) \d \pi (\gamma) \\
             & = \int |\gamma|_{W^{\alpha,p}}^p \d \pi(\gamma), \label{eq:proof_opti_Wap_norm}
         \end{align}
        where we used Tonelli's theorem. Similarly, we have
         \begin{align}
             |\mu|_{b^{\alpha,p}}^p
             & \coloneqq \sum_{m=0}^{\infty} 2^{m(\alpha p -1)} \sum_{k=0}^{2^m-1} W_p^p (\mu_{t_{k}^{(m)}}, \mu_{t_{k+1}^{(m)}}) \\
             & = \lim_{M \to \infty} \int \sum_{m=0}^{M} 2^{m(\alpha p -1)} \sum_{k=0}^{2^m-1} d (\gamma_{t_{k}^{(m)}}, \gamma_{t_{k+1}^{(m)}})^p \d \pi (\gamma) \\
             & = \int \sum_{m=0}^{\infty} 2^{m(\alpha p -1)} \sum_{k=0}^{2^m-1} d (\gamma_{t_{k}^{(m)}}, \gamma_{t_{k+1}^{(m)}})^p \d \pi (\gamma) \\
             & = \int |\gamma|_{b^{\alpha,p}}^p \d \pi (\gamma) \label{eq:proof_opti_bap_norm}
         \end{align}
         where we used Beppo Levi's lemma. This completes the proof.
    \end{proof}
    
    An immediate consequence of the previous theorem and the embeddings
    \begin{align}
        C^{\upgamma\textrm{-}\mathrm{H\ddot{o}l}}  \subset W^{\alpha, p}  \subset C^{\alpha - \frac{1}{p}\textrm{-}\mathrm{H\ddot{o}l}}, \qquad 
        W^{\alpha, p}   \subset C^{\frac{1}{\alpha}\textrm{-}\mathrm{var}}
    \end{align}
    for $\frac{1}{p}< \alpha < \upgamma \leq  1$ is the obtaining a lift for $\upgamma$-H\"{o}lder compatible paths in $p$-Wasserstein space:     \begin{corollary}\label{crl:optimal_lift_mu_Holder_compatible}
        Let $(\mathcal{X}, d)$ be a complete, separable, and locally compact length metric space, and $I \coloneqq [0,T] \subset \mathbb{R}$.
        Let $(\mu_t) \in C^{\upgamma\textrm{-}\mathrm{H\ddot{o}l}} (I;P_p(\mathcal{X}))$ for some $1<p<\infty$ and $\frac{1}{p}<\upgamma \leq 1$.
        Assume that $(\mu_t)_{t \in I}$ is  compatible in $P_p(\mathcal{X})$.
        Then, construction \hyperref[itm:ConstructionB]{{\normalfont \textcircled{\small{B}}}} converges narrowly (up to a subsequence) to a probability measure $\pi \in P(C(I;\mathcal{X}))$ satisfying
        \begin{enumerate}[label=(\roman*), font=\normalfont]
            \item $\pi$ is concentrated on $W^{\alpha,p}(I;\mathcal{X}) \subset C^{(\alpha - \frac{1}{p})\textrm{-} \mathrm{H\ddot{o}l}}(I;\mathcal{X})$ for any $\alpha \in (\frac{1}{p},\upgamma)$;
            %, and in particular, 
            %\begin{enumerate}[label=(\roman*), font=\normalfont]
            %    \item[$\diamond$] $\pi$ is concentrated on $ C^{(\alpha - \frac{1}{p})\textrm{-} \mathrm{H\ddot{o}l}}(I;\mathcal{X})$;
            %    \item[$\diamond$] $\pi$ is concentrated on $ C^{\frac{1}{\alpha}\textrm{-} \mathrm{var}}(I;\mathcal{X})$;
            %\end{enumerate}
            \item $(e_t)_\#\pi=\mu_t$ for all $t\in I$;
            \item $(e_s,e_t)_\# \pi \in \mathrm{OptCpl}(\mu_s, \mu_t)$ for all $s,t \in I$; and for any $\alpha \in (\frac{1}{p},\upgamma)$, we have  \eqref{eq:optimality_pi_1} and
            \begin{align}\label{eq:optimal_lift_mu_Holder_compatible}
               | \mu |_{\upgamma\textrm{-}\mathrm{H\ddot{o}l}}^p \geq c \int_{\Gamma_T} | \gamma |^p_{\alpha - \frac{1}{p}\textrm{-}\mathrm{H\ddot{o}l}} \d \pi (\gamma) \geq c | \mu |_{\alpha-\frac{1}{p}\textrm{-}\mathrm{H\ddot{o}l}}^p,
            \end{align}
            where $c = c(\upgamma,\alpha,p,T)$ is an explicit positive constant.
        \end{enumerate}
    \end{corollary}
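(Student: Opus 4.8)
The plan is to reduce everything to \cref{thm:optimal_lift_mu_Walphap_compatible} and then chain the H\"older--fractional-Sobolev embeddings. First I would fix an auxiliary exponent $\alpha \in (\frac{1}{p}, \upgamma)$. Since $\alpha < \upgamma$, the trivial embedding of \cref{rmk:Holder-FractionalSobolev}, applied to the $W_p$-curve $(\mu_t)$, gives $(\mu_t) \in W^{\alpha,p}(I; P_p(\mathcal{X}))$ together with a quantitative bound $|\mu|_{W^{\alpha,p}}^p \leq C_{\mathrm{HS}}(\upgamma,\alpha,p,T)\, |\mu|_{\upgamma\textrm{-}\mathrm{H\ddot{o}l}}^p$. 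The compatibility hypothesis is unchanged, so all assumptions of \cref{thm:optimal_lift_mu_Walphap_compatible} are met. Running Construction \hyperref[itm:ConstructionB]{{\normalfont \textcircled{\small{B}}}} and extracting a narrowly convergent subsequence $\pi_{n_k} \to \pi$ (via the $b^{\alpha,p}$-tightness criterion for this single $\alpha$) therefore produces a limit $\pi$ satisfying items (ii), (iii), and the energy identity \eqref{eq:optimality_pi_1}; these are independent of the particular exponent used for tightness.

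For item (i) I would observe that the tightness computation in the proof of \cref{thm:optimal_lift_mu_Walphap_compatible} bounds $\int_{\Gamma} |\gamma|_{b^{\alpha',p}}^p \d \pi_{n_k}$ by a constant multiple of $|\mu|_{W^{\alpha',p}}^p$, uniformly in $k$, and that $|\mu|_{W^{\alpha',p}} < \infty$ for \emph{every} $\alpha' \in (\frac{1}{p}, \upgamma)$ by the same embedding. By lower semi-continuity of $\gamma \mapsto |\gamma|_{W^{\alpha',p}}^p$ along narrow convergence, it follows that $\int_{\Gamma} |\gamma|_{W^{\alpha',p}}^p \d \pi < \infty$, so $\pi$ is concentrated on $W^{\alpha',p}(I;\mathcal{X})$ for every such $\alpha'$; the inclusion $W^{\alpha',p} \subset C^{(\alpha'-\frac{1}{p})\textrm{-}\mathrm{H\ddot{o}l}}$ is the Garsia--Rodemich--Rumsey embedding of \cref{thm:FractionalSobolev-Holder}. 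Thus the single limit measure $\pi$ serves simultaneously for all admissible exponents.

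The two-sided estimate \eqref{eq:optimal_lift_mu_Holder_compatible} is where the embeddings combine. For the left inequality I would integrate the pathwise bound $|\gamma|_{\alpha - \frac{1}{p}\textrm{-}\mathrm{H\ddot{o}l}}^p \leq \bar{c}^p\, |\gamma|_{W^{\alpha,p}}^p$ (\cref{thm:FractionalSobolev-Holder}) against $\pi$, use the energy identity $\int |\gamma|_{W^{\alpha,p}}^p \d \pi = |\mu|_{W^{\alpha,p}}^p$, and finally the curve-level bound $|\mu|_{W^{\alpha,p}}^p \leq C_{\mathrm{HS}}\, |\mu|_{\upgamma\textrm{-}\mathrm{H\ddot{o}l}}^p$; this yields $|\mu|_{\upgamma\textrm{-}\mathrm{H\ddot{o}l}}^p \geq c \int |\gamma|_{\alpha - \frac{1}{p}\textrm{-}\mathrm{H\ddot{o}l}}^p \d \pi$ with $c = (\bar{c}^p C_{\mathrm{HS}})^{-1}$, an explicit function of $(\upgamma,\alpha,p,T)$. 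For the right inequality, having just established finiteness of $\int |\gamma|_{\alpha-\frac{1}{p}\textrm{-}\mathrm{H\ddot{o}l}}^p \d \pi$ and knowing that $\pi$ is concentrated on $C^{(\alpha-\frac{1}{p})\textrm{-}\mathrm{H\ddot{o}l}}$, I would invoke \cref{thm:lift_to_mu_Hol} with $\theta = \alpha - \frac{1}{p}$ to obtain $|\mu|_{\alpha-\frac{1}{p}\textrm{-}\mathrm{H\ddot{o}l}}^p \leq \int |\gamma|_{\alpha-\frac{1}{p}\textrm{-}\mathrm{H\ddot{o}l}}^p \d \pi$; multiplying by the same $c$ closes the chain.

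The routine parts are the embedding inequalities and the constant bookkeeping. The point requiring care is the claim that a single $\pi$ realizes (i)--(iii) for all $\alpha \in (\frac{1}{p},\upgamma)$ at once: the subsequence is extracted using one fixed exponent, so I must check that the uniform energy bounds and the lower-semi-continuity passage go through for every other admissible exponent \emph{on that same subsequence}, rather than re-extracting a new one. This is guaranteed precisely because $|\mu|_{W^{\alpha',p}} < \infty$ over the whole range, which makes the arguments of Steps 1--2 in the proof of \cref{thm:optimal_lift_mu_Walphap_compatible} applicable verbatim for each $\alpha'$.
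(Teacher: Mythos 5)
Your proposal is correct and follows essentially the same route as the paper: reduce to \cref{thm:optimal_lift_mu_Walphap_compatible} via the trivial embedding of \cref{rmk:Holder-FractionalSobolev}, then chain the energy identity \eqref{eq:optimality_pi_1}, the Garsia--Rodemich--Rumsey bound of \cref{thm:FractionalSobolev-Holder}, and \cref{thm:lift_to_mu_Hol} to get \eqref{eq:optimal_lift_mu_Holder_compatible} with the same explicit constant. Your extra care about a single limit $\pi$ serving all exponents $\alpha$ simultaneously (since Construction \hyperref[itm:ConstructionB]{{\normalfont \textcircled{\small{B}}}} is $\alpha$-independent and $|\mu|_{W^{\alpha',p}}<\infty$ across the whole range) is a point the paper leaves implicit, and you resolve it correctly.
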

    
    \begin{remark}
        In addition to the estimate above, we have for any $\alpha \in (\frac{1}{p},\upgamma)$: 
        \begin{equation}
             | \mu |_{\upgamma\textrm{-}\mathrm{H\ddot{o}l}}^p \geq c \, \int_{\Gamma_T} | \gamma |^p_{\frac{1}{\alpha} \textrm{-}\mathrm{var}} \d \pi (\gamma)  \geq c \, | \mu |_{p\textrm{-}\mathrm{var}}^p,
        \end{equation}
        where $c = c(\upgamma,\alpha,p,T)$ is another explicit positive constant.
    \end{remark}

    \begin{proof}
    Only the inequalities need to be shown.
    Take an arbitrary $\alpha \in (\frac{1}{p},\upgamma)$ and apply the previous results as follows:
        \begin{multline}
            | \mu |_{\upgamma\textrm{-}\mathrm{H\ddot{o}l}}^p \overset{\text{Rem. \ref{rmk:Holder-FractionalSobolev}}}{\geq} \tilde{c} |\mu|_{W^{\alpha,p}}^p \overset{\text{Thm. \ref{thm:optimal_lift_mu_Walphap_compatible}}}{=} \tilde{c} \int_{\Gamma_T} |\gamma|_{W^{\alpha,p}}^p \d \pi (\gamma) \\ \overset{\text{Thm. \ref{thm:FractionalSobolev-Holder}}}{\geq} \frac{\tilde{c}}{{\bar{c}\,}^p} \int_{\Gamma_T} | \gamma |^p_{\alpha - \frac{1}{p}\textrm{-}\mathrm{H\ddot{o}l}} \d \pi (\gamma) \overset{\text{Thm. \ref{thm:lift_to_mu_Hol}}}{\geq}  \frac{\tilde{c}}{{\bar{c}\,}^p} | \mu |_{\alpha-\frac{1}{p}\textrm{-}\mathrm{H\ddot{o}l}}^p.
            \end{multline}
            where 
            $$
            \tilde{c} \coloneqq \frac{(\upgamma p - \alpha p)(\upgamma p - \alpha p+1)}{2 T^{(\upgamma p - \alpha p+1)}}, \quad \text{and} \quad {\bar{c}\,}^p \coloneqq 32 \frac{\alpha p +1}{\alpha p -1}. 
            $$
            Similarly, we have
            \begin{align}
            \tilde{c} \,  \int_{\Gamma_T} |\gamma|_{W^{\alpha,p}}^p \d \pi (\gamma) \overset{\text{Thm.\ref{thm:FractionalSobolev-Holder}}}{\geq} \frac{\tilde{c}}{{\bar{c}}^p} \frac{1}{T^{\alpha p -1}}  \int_{\Gamma_T} | \gamma |^p_{\frac{1}{\alpha}\textrm{-}\mathrm{var}} \d \pi (\gamma) \overset{\text{Thm.\ref{thm:lift_to_mu_var}}}{\geq}   \frac{\tilde{c}}{{\bar{c}}^p} \frac{1}{T^{\alpha p -1}} | \mu |_{p\textrm{-}\mathrm{var}}^p.
            \end{align}
    \end{proof}

\subsection{Characterization of fractional Sobolev compatible curves in Wasserstein spaces}\label{subsec:from_mu_to_pi_and_back}
    By combining \cref{thm:lift_to_mu_Walphap,prop:equality_implies_compatibility,thm:optimal_lift_mu_Walphap_compatible}, we immediately obtain the following characterization:
    \begin{corollary}\label{crl:characterization_Walphap_compatible}
        Let $(\mathcal{X}, d)$ be a complete, separable, and locally compact length metric space, and $I \coloneqq [0,T] \subset \mathbb{R}$. Let $(\mu_t)_{t \in I} \subset P (\mathcal{X})$ with $\mu_0 \in P_p(\mathcal{X})$ and let  $1<p<\infty$ and $\frac{1}{p}< \alpha <  1$. Then the following are equivalent:
    \begin{itemize}
        \item[1.] $(\mu_t) \in  W^{\alpha,p}(I;P_p(\mathcal{X}))$ and $(\mu_t)$ is compatible in $P_p(\mathcal{X})$.
        \item[2.] $(\mu_t)$ has a lift $\pi \in P(C(I;\mathcal{X}))$ such that  $ |\mu|_{W^{\alpha,p}}^p = \int_{\Gamma_T} |\gamma|_{W^{\alpha,p}}^p \d \pi (\gamma) < + \infty.$
    \end{itemize}   
    \end{corollary}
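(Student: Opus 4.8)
The plan is to establish the two implications separately; each is a direct invocation of one of the three results just developed, so the proof is essentially a matter of verifying that the hypotheses of each cited statement are met.

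For the implication $(1) \Rightarrow (2)$, I would assume $(\mu_t) \in W^{\alpha,p}(I;P_p(\mathcal{X}))$ and that $(\mu_t)$ is compatible in $P_p(\mathcal{X})$. Since $\mathcal{X}$ is a complete, separable, locally compact length space, the standing hypotheses of \cref{thm:optimal_lift_mu_Walphap_compatible} are exactly satisfied. That theorem produces a lift $\pi \in P(C(I;\mathcal{X}))$ concentrated on $W^{\alpha,p}(I;\mathcal{X})$ with $(e_t)_\#\pi = \mu_t$ and, by its item (iii), the equality $|\mu|_{W^{\alpha,p}}^p = \int_{\Gamma_T} |\gamma|_{W^{\alpha,p}}^p \d\pi(\gamma)$. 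The finiteness $\int_{\Gamma_T} |\gamma|_{W^{\alpha,p}}^p \d\pi < +\infty$ is then immediate, since the integral equals $|\mu|_{W^{\alpha,p}}^p$, which is finite by the assumption $(\mu_t) \in W^{\alpha,p}$.

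For the reverse implication $(2) \Rightarrow (1)$, I would start from a lift $\pi$ satisfying the displayed equality together with finiteness. The first task is to verify the integrability hypothesis \eqref{eq:lift_to_mu_Walphap_assumptiom} of \cref{thm:lift_to_mu_Walphap}: its second term $\int |\gamma|_{W^{\alpha,p}}^p \d\pi$ is finite by assumption, while the first term satisfies $\int d(\gamma_0,\bar{x})^p \d\pi = \int_{\mathcal{X}} d(x,\bar{x})^p \d\mu_0 < +\infty$ because $\mu_0 \in P_p(\mathcal{X})$ by hypothesis. Moreover, finiteness of $\int |\gamma|_{W^{\alpha,p}}^p \d\pi$ forces $|\gamma|_{W^{\alpha,p}} < +\infty$ for $\pi$-a.e.\ $\gamma$, so $\pi$ is concentrated on $W^{\alpha,p}(I;\mathcal{X})$ --- precisely the concentration presupposed by both \cref{thm:lift_to_mu_Walphap} and \cref{prop:equality_implies_compatibility}. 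Applying \cref{thm:lift_to_mu_Walphap} then yields $(\mu_t) \in W^{\alpha,p}(I;P_p(\mathcal{X}))$, and applying \cref{prop:equality_implies_compatibility}, whose equality hypothesis is exactly the displayed equality of (2), yields that $(\mu_t)$ is compatible in $P_p(\mathcal{X})$. This gives both halves of condition (1).

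Since both implications reduce to direct citations once the hypotheses are matched, there is no genuine analytic obstacle here; the only point requiring care is the bookkeeping just described --- namely that the finiteness of the energy integral in (2) simultaneously supplies the missing integrability term (through $\mu_0 \in P_p(\mathcal{X})$) and the concentration on $W^{\alpha,p}(I;\mathcal{X})$ that the two cited results require. The substance of the characterization therefore resides entirely in the three earlier results, with this corollary merely packaging them.
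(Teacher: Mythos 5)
Your proof is correct and follows exactly the paper's intended argument: the paper itself derives this corollary by "combining" \cref{thm:lift_to_mu_Walphap}, \cref{prop:equality_implies_compatibility}, and \cref{thm:optimal_lift_mu_Walphap_compatible}, with no further details given. Your careful bookkeeping of the integrability hypothesis (via $\mu_0 \in P_p(\mathcal{X})$) and of the concentration on $W^{\alpha,p}(I;\mathcal{X})$ fills in precisely the checks the paper leaves implicit.
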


\subsection{Characterization of geodesics via Besov regularity} On a metric space $(\mathcal{X},d)$ and for any $1<p<\infty$, it is well-known that the following are equivalent:
    \begin{itemize}
        \item[1.] $\gamma: [0,1] \to \mathcal{X}$ is a constant-speed geodesic.
        \item[2.] $\gamma: [0,1] \to \mathcal{X}$ is absolutely continuous and 
        $$ d(\gamma_0,\gamma_1)^p =  \int_{0}^{1} |\dot{\gamma}_t|^p \d t \,\, \big(= |\gamma|^p_{W^{1,p}}\big).$$
    \end{itemize}
    See e.g. \cite[Lemma 9.11]{ABS2021}.
    The goal of this section is to present a similar characterization using the Besov regularity $|\cdot|_{b^{\alpha,p}}$, which subsequently allows us to give a dynamic reformulation of the Wasserstein metric in the next section. We begin with a simple observation, which will be proven later with the main lemma.

    \begin{lemma}\label{lemma:bound_distance_balphap}
    Let $(\mathcal{X},d)$ be a metric space. Let $1\leq p < \infty$ and $0<\alpha<1$. For any (possibly discontinuous) $\gamma : [0,1] \to \mathcal{X}$ with $|\gamma|_{b^{\alpha, p}} < + \infty$, we have 
    \begin{equation}
        d(\gamma_0,\gamma_1)^p \leq \left(1-2^{-(p-\alpha p)}\right) |\gamma|^p_{b^{\alpha, p}}. 
    \end{equation}
    In particular, for any constant-speed geodesic $\gamma: [0,1] \to \mathcal{X}$, we have equality. 
    \end{lemma}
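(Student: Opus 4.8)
The plan is to bound $d(\gamma_0,\gamma_1)^p$ from below by a single geometric series assembled from the dyadic blocks of $|\gamma|^p_{b^{\alpha,p}}$, so that the sharp constant $1-2^{-(p-\alpha p)}$ emerges automatically. For each level $m\ge 0$ set
\[
    \sigma_m := \sum_{k=0}^{2^m-1} d\big(\gamma_{t_k^{(m)}},\gamma_{t_{k+1}^{(m)}}\big), \qquad S_m := \sum_{k=0}^{2^m-1} d\big(\gamma_{t_k^{(m)}},\gamma_{t_{k+1}^{(m)}}\big)^p,
\]
so that by the definition \eqref{eq:Walphap_sum_intro} of the $b^{\alpha,p}$-seminorm we have $|\gamma|^p_{b^{\alpha,p}} = \sum_{m=0}^{\infty} 2^{m(\alpha p-1)} S_m$. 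Since this uses only the values of $\gamma$ at dyadic points, continuity plays no role, which is exactly why the statement is allowed to hold for possibly discontinuous $\gamma$.

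The key steps, in order, are the following. First, the triangle inequality along the level-$m$ dyadic chain gives $d(\gamma_0,\gamma_1) \le \sigma_m$ for every $m$. Second, Hölder's inequality (the power-mean inequality, valid precisely because $p\ge 1$) applied to the $2^m$ summands gives $\sigma_m \le 2^{m(1-1/p)} S_m^{1/p}$, equivalently $S_m \ge 2^{-m(p-1)} \sigma_m^p$. Combining these two estimates yields $S_m \ge 2^{-m(p-1)} d(\gamma_0,\gamma_1)^p$ for every $m$. Substituting this lower bound into the series and summing,
\[
    |\gamma|^p_{b^{\alpha,p}} = \sum_{m=0}^{\infty} 2^{m(\alpha p-1)} S_m \;\ge\; d(\gamma_0,\gamma_1)^p \sum_{m=0}^{\infty} 2^{m(\alpha p - p)} \;=\; \frac{d(\gamma_0,\gamma_1)^p}{1-2^{-(p-\alpha p)}},
\]
where the geometric series converges precisely because $0<\alpha<1$ forces the exponent $p-\alpha p>0$. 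Rearranging gives the asserted inequality $d(\gamma_0,\gamma_1)^p \le \big(1-2^{-(p-\alpha p)}\big)\,|\gamma|^p_{b^{\alpha,p}}$.

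For the equality assertion I would simply insert the constant-speed relation $d(\gamma_s,\gamma_t)=|t-s|\,d(\gamma_0,\gamma_1)$ into the definition: then $d(\gamma_{t_k^{(m)}},\gamma_{t_{k+1}^{(m)}}) = 2^{-m} d(\gamma_0,\gamma_1)$, hence $S_m = 2^{m(1-p)} d(\gamma_0,\gamma_1)^p$, and the very same geometric series gives $|\gamma|^p_{b^{\alpha,p}} = d(\gamma_0,\gamma_1)^p/\big(1-2^{-(p-\alpha p)}\big)$, so equality holds throughout.

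I do not expect a serious obstacle: the argument is lossless at each step exactly for geodesics, which is what makes the constant sharp. The only points requiring care are the exponent bookkeeping ($\alpha p - 1 - (p-1) = -(p-\alpha p)$) and the \emph{direction} of the estimates — because we need a \emph{lower} bound on $|\gamma|^p_{b^{\alpha,p}}$, both the triangle inequality and Hölder must be arranged so as to bound $S_m$ from below, and it is the hypothesis $p\ge 1$ that makes the power-mean step go in the required direction.
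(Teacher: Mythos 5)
Your proof is correct and follows essentially the same route as the paper's: the $b^{\alpha,p}$-sum is bounded from below level by level using the discrete H\"{o}lder (power-mean) inequality together with the triangle inequality along the dyadic chain, and the resulting geometric series $\sum_m 2^{m(\alpha p - p)}$ produces the sharp constant; the equality for constant-speed geodesics is verified by the same direct substitution. The exponent bookkeeping and the direction of both estimates check out, so there is nothing to add.
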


\begin{lemma}[A characterization of constant-speed geodesics via Besov regularity]\label{lemma:characterization_geodesics_balphap}
    Let $(\mathcal{X},d)$ be a metric space. Let $1< p < \infty$ and $0<\alpha<1$. Then the following are equivalent:
    \begin{itemize}
        \item[{\normalfont 1.}] $\gamma: [0,1] \to \mathcal{X}$ is a constant-speed geodesic.
        \item[{\normalfont 2.}] $\gamma: [0,1] \to \mathcal{X}$ is continuous and 
        \begin{equation}
        d(\gamma_0,\gamma_1)^p = \left(1-2^{-(p-\alpha p)}\right) |\gamma|^p_{b^{\alpha, p}}. 
    \end{equation}
    \end{itemize}
\end{lemma}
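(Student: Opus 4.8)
The plan is to deduce both \cref{lemma:bound_distance_balphap} and the equivalence in \cref{lemma:characterization_geodesics_balphap} from a single chain of two elementary inequalities, and then to read off the harder implication $2\Rightarrow 1$ from the equality cases of that chain. Fix $m\in\mathbb{N}_0$ and abbreviate $d_k^{(m)}\coloneqq d(\gamma_{t_k^{(m)}},\gamma_{t_{k+1}^{(m)}})$ with $t_k^{(m)}=k/2^m$. The first ingredient is the iterated triangle inequality along the level-$m$ dyadic polygon,
\[
d(\gamma_0,\gamma_1)\ \le\ \sum_{k=0}^{2^m-1} d_k^{(m)},
\]
and the second is the power-mean (Jensen) inequality $\big(\sum_{k=0}^{2^m-1}a_k\big)^p\le 2^{m(p-1)}\sum_{k=0}^{2^m-1}a_k^{\,p}$ applied with $a_k=d_k^{(m)}$. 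Together they give $d(\gamma_0,\gamma_1)^p\le 2^{m(p-1)}\sum_k (d_k^{(m)})^p$; multiplying by $2^{m(\alpha p-1)}$ and simplifying the exponent $m(\alpha p-1)-m(p-1)=-m(p-\alpha p)$ yields, for every $m$,
\[
2^{-m(p-\alpha p)}\,d(\gamma_0,\gamma_1)^p\ \le\ 2^{m(\alpha p-1)}\sum_{k=0}^{2^m-1}(d_k^{(m)})^p.
\]
Summing over $m\ge 0$, the right-hand side is exactly $|\gamma|^p_{b^{\alpha,p}}$ by \eqref{eq:Walphap_sum}, while the left-hand side telescopes through the geometric series $\sum_{m\ge0}2^{-m(p-\alpha p)}=\big(1-2^{-(p-\alpha p)}\big)^{-1}$, which converges since $p-\alpha p>0$. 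This proves \cref{lemma:bound_distance_balphap}. For $1\Rightarrow2$ I would note that a constant-speed geodesic is Lipschitz (hence continuous) and satisfies $d_k^{(m)}=2^{-m}d(\gamma_0,\gamma_1)$; substituting into \eqref{eq:Walphap_sum} and resumming the same geometric series gives the claimed equality directly (so both inequalities above are in fact equalities at every level).

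\textbf{The implication $2\Rightarrow1$.} Here I would exploit the equality case. Assume $\gamma$ continuous and $d(\gamma_0,\gamma_1)^p=\big(1-2^{-(p-\alpha p)}\big)|\gamma|^p_{b^{\alpha,p}}$; since $d(\gamma_0,\gamma_1)<\infty$ this forces $|\gamma|_{b^{\alpha,p}}<\infty$. Writing $L_m$ and $R_m$ for the two sides of the per-level inequality, we have $L_m\le R_m$, $\sum_m L_m=\sum_m R_m<\infty$, and hence the nonnegative deficits $R_m-L_m$ sum to zero; therefore $L_m=R_m$ for every $m$. Equality in the power-mean step, which is informative precisely because $p>1$ makes $x\mapsto x^p$ strictly convex, forces all $d_k^{(m)}$ (for fixed $m$) to be equal, say to $\delta_m$, and equality in the triangle step forces $d(\gamma_0,\gamma_1)=\sum_k d_k^{(m)}=2^m\delta_m$, so $\delta_m=2^{-m}d(\gamma_0,\gamma_1)$. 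Moreover, inserting any intermediate dyadic indices $0\le j<k\le 2^m$ into the triangle inequality and using its tightness gives the additivity $d(\gamma_{t_j^{(m)}},\gamma_{t_k^{(m)}})=\sum_{i=j}^{k-1}d_i^{(m)}=(k-j)\,\delta_m=(t_k^{(m)}-t_j^{(m)})\,d(\gamma_0,\gamma_1)$. Thus $d(\gamma_s,\gamma_t)=|t-s|\,d(\gamma_0,\gamma_1)$ for all dyadic $s,t\in[0,1]$, and the continuity of $\gamma$ together with the density of the dyadics extends this to all $s,t\in[0,1]$, which is precisely the constant-speed geodesic condition.

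\textbf{Main obstacle.} The routine part is the inequality chain and the geometric series. The delicate part is the equality analysis in $2\Rightarrow1$: one must justify (i) that equality in the \emph{summed} bound forces equality at \emph{every} level $m$, which relies on the finiteness and nonnegativity of the deficits $R_m-L_m$, and (ii) that tightness of the triangle inequality along the whole dyadic polygon yields the \emph{pairwise} relation $d(\gamma_{t_j^{(m)}},\gamma_{t_k^{(m)}})=(k-j)\delta_m$, not merely the consecutive one; this is what upgrades ``equal consecutive gaps'' to a genuine geodesic. The continuity hypothesis in statement 2 is essential precisely because \eqref{eq:Walphap_sum} only sees dyadic points, so without it one could conclude the geodesic identity only on a countable dense set. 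It is also worth recording that the strict convexity used in the power-mean equality is exactly why the characterization requires $1<p<\infty$, whereas the one-sided bound in \cref{lemma:bound_distance_balphap} holds already for $1\le p<\infty$.
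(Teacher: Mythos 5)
Your proof is correct and follows essentially the same route as the paper: the chain "iterated triangle inequality plus discrete power-mean/H\"older inequality, summed against the geometric series $\sum_m 2^{-m(p-\alpha p)}$", followed by the equality analysis (level-by-level equality forced by nonnegative deficits summing to zero, equal consecutive gaps from strict convexity when $p>1$, additivity of distances along the dyadic polygon from tightness of the triangle inequality, and extension from dyadic times to all times by continuity). The only cosmetic difference is that the paper phrases the second inequality as the discrete H\"older inequality rather than the power-mean inequality, which is the same estimate.
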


\begin{proof}[Proof of \cref{lemma:bound_distance_balphap,lemma:characterization_geodesics_balphap}]
    Suppose first $\gamma$ is a constant-speed geodesic. One simply computes
    \begin{align}
                | \gamma |^p_{b^{\alpha,p}} 
                & = d(\gamma_0,\gamma_1)^p \sum_{m=0}^{\infty} 2^{m  (\alpha p -  p )} = \frac{d (\gamma_0, \gamma_1)^p}{1-2^{-(p-\alpha p)}}.
                \end{align}
    Now suppose $\gamma$ is an arbitrary curve with $|\gamma|_{b^{\alpha, p}} < \infty$. Using our notation $t_k^{(m)} \coloneqq \frac{k}{2^m}$, we write
    \begin{align}
        | \gamma |^p_{b^{\alpha,p}}
        & =  \sum_{m=0}^{\infty} 2^{m(\alpha p -p)}  2^{m(p -1)}  \sum_{k=0}^{2^m-1} d \big(\gamma_{t_{k}^{(m)}}, \gamma_{t_{k+1}^{(m)}}\big)^p \\
        & \overset{(a)}{\geq } \sum_{m=0}^{\infty} 2^{m(\alpha p -p)}  \left(\sum_{k=0}^{2^m-1} d \big(\gamma_{t_{k}^{(m)}}, \gamma_{t_{k+1}^{(m)}}\big) \right) ^p \label{eq:balphap_geodesic_a}\\
        & \overset{(b)}{\geq } \sum_{m=0}^{\infty} 2^{m(\alpha p -p)}  d (\gamma_0, \gamma_1)^p \label{eq:balphap_geodesic_b} = \frac{ d (\gamma_0, \gamma_1)^p}{1-2^{-(p-\alpha p)}}.
    \end{align}
   For $p>1$, the inequality $(a)$ follows from the discrete \textrm{H\"{o}lder} inequality. In $(b)$, the triangle inequality is applied.
   Note that that $(a)$ is obviously an equality
    when $p=1$, requiring no estimation. 
    \\
    It remains to prove $2 \Rightarrow 1$ in the second lemma. Note that when $p>1$, we have equality in the discrete \textrm{H\"{o}lder} inequality $(a)$ if and only if $d (\gamma_{t_{k}^{(m)}}, \gamma_{t_{k+1}^{(m)}}) = c_{m}$ for some constant independent of $k$. This, together with the equality in the triangle inequality $(b)$, allows us to determine $c_m$ and moreover conclude that
    \begin{equation}
    d \big(\gamma_{t_{k}^{(m)}},\gamma_{t_{k'}^{(m)}}\big) =  |k' - k | \, |\Delta t _m | d(\gamma_0,\gamma_1) \qquad \forall k,k' \in \{0,1,\cdots, 2^m-1\},\,  m \in \mathbb{N}_0,
    \end{equation}
    where $\Delta t_m \coloneqq \frac{1}{2^m}$.
    In other words, we proved the constant-speed geodesic property on the set of dyadic points. To extend this for arbitrary $ 0 \leq s \leq  t \leq 1$, we use the continuity assumption: 
    \begin{align}
        d (\gamma_s,\gamma_t) 
        & = \lim_{m \to \infty } d \big( \gamma_{\frac{[2^m s]}{2^m}} , \gamma_{\frac{[2^m t]}{2^m}} \big) \\
        & = \lim_{m \to \infty } \left( \frac{[2^m t]}{2^m} - \frac{[2^m s]}{2^m} \right) d(\gamma_0,\gamma_1) \\
        & = \left(t -s \right) d(\gamma_0,\gamma_1). 
    \end{align}
\end{proof}

\subsection{Dynamic formulation of Wasserstein 
 distance via Besov energy}\label{subsec:fractional_Benamou-Brenier}

As a consequence of \cref{lemma:bound_distance_balphap,lemma:characterization_geodesics_balphap}, we obtain a generalization of the metric Benamou--Brenier formula to the fractional setting. Its proof is similar to that for $W^{1,p}$-energy (e.g. \cite[Theorem 9.13]{ABS2021}).

\begin{corollary}\label{crl:dynamic_formulation_Wp}
    Let $(\mathcal{X}, d)$ be a complete, separable, and geodesic metric space.  Let  $1<p<\infty$ and $ 0< \alpha <  1$. Then for every $\mu,\nu \in P_p(\mathcal{X})$, we have 
    \begin{multline}
        W_p^p(\mu,\nu) = \\ \left(1-2^{-(p-\alpha p)}\right)\min \left\{ \int_{\Gamma} |\gamma|_{b^{\alpha,p}}^p \d \pi (\gamma) : \quad \pi \in P(C([0,1];\mathcal{X})), \, (e_0)_\# \pi  = \mu, \, (e_1)_\# \pi  = \nu \right\}. 
    \end{multline}
    In addition, $\pi$ is a minimizer if and only if  $(e_0,e_1)_{\#} \pi \in \mathrm{OptCpl}(\mu,\nu)$ and  $\pi (Geo([0,1];\mathcal{X})) = 1$. % it is concentrated on $Geo([0,1];\mathcal{X})$$.
\end{corollary}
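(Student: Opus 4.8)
The plan is to write $\kappa := 1 - 2^{-(p-\alpha p)} \in (0,1)$ and to denote by $M$ the infimum of $\int_\Gamma |\gamma|_{b^{\alpha,p}}^p \d\pi$ over all \emph{admissible} lifts $\pi$, meaning those with $(e_0)_\#\pi = \mu$ and $(e_1)_\#\pi = \nu$. I would first establish the inequality $W_p^p(\mu,\nu) \le \kappa M$. For any admissible $\pi$, the joint law $(e_0,e_1)_\#\pi$ is a (not necessarily optimal) coupling of $\mu$ and $\nu$, so $W_p^p(\mu,\nu) \le \int_\Gamma d(\gamma_0,\gamma_1)^p \d\pi$. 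Applying the pointwise bound $d(\gamma_0,\gamma_1)^p \le \kappa |\gamma|_{b^{\alpha,p}}^p$ from \cref{lemma:bound_distance_balphap} (valid for every $\gamma$, trivially so when the right-hand side is $+\infty$) and integrating yields $W_p^p(\mu,\nu) \le \kappa \int_\Gamma |\gamma|_{b^{\alpha,p}}^p \d\pi$. Taking the infimum over $\pi$ gives $W_p^p(\mu,\nu) \le \kappa M$.

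Next I would produce a lift attaining equality, which simultaneously proves the reverse inequality and the existence of a minimizer. Choose $\Upsilon \in \mathrm{OptCpl}(\mu,\nu)$, which exists since $\mu,\nu \in P_p(\mathcal{X})$. As $\mathcal{X}$ is geodesic, \cref{rmk:measurable_selection_geodesics} provides a $(\mathcal{B}(\mathcal{X}^2)_\Upsilon,\mathcal{C})$-measurable selection $\ell : \mathcal{X}^2 \to Geo([0,1];\mathcal{X})$ with $\ell(x,y)_0 = x$ and $\ell(x,y)_1 = y$; set $\pi := \ell_\# \Upsilon \in P(C([0,1];\mathcal{X}))$. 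Then $\pi(Geo([0,1];\mathcal{X})) = 1$ and $(e_0)_\#\pi = \mu$, $(e_1)_\#\pi = \nu$. For geodesics the equality case of \cref{lemma:bound_distance_balphap} gives $|\gamma|_{b^{\alpha,p}}^p = \kappa^{-1} d(\gamma_0,\gamma_1)^p$ for $\pi$-a.e. $\gamma$, whence $\kappa \int_\Gamma |\gamma|_{b^{\alpha,p}}^p \d\pi = \int_{\mathcal{X}^2} d(x,y)^p \d\Upsilon = W_p^p(\mu,\nu)$ by optimality of $\Upsilon$. This shows $\kappa M \le W_p^p(\mu,\nu)$, so equality holds throughout and this $\pi$ realizes the minimum.

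For the characterization of minimizers I would run a single chain of (in)equalities. For any admissible $\pi$,
\begin{equation*}
W_p^p(\mu,\nu) \;\le\; \int_\Gamma d(\gamma_0,\gamma_1)^p \d\pi \;\le\; \kappa \int_\Gamma |\gamma|_{b^{\alpha,p}}^p \d\pi,
\end{equation*}
where the first step uses that $(e_0,e_1)_\#\pi$ is a coupling and the second is the pointwise bound. If $(e_0,e_1)_\#\pi \in \mathrm{OptCpl}(\mu,\nu)$ and $\pi(Geo([0,1];\mathcal{X})) = 1$, then both steps are equalities (the first by optimality, the second by the geodesic equality case), so the energy equals $\kappa^{-1} W_p^p(\mu,\nu) = M$ and $\pi$ is a minimizer. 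Conversely, if $\pi$ is a minimizer then the rightmost quantity equals $W_p^p(\mu,\nu)$, forcing both inequalities to be equalities; finiteness of $M$ ensures that $\pi$-a.e. $\gamma$ has finite $b^{\alpha,p}$-norm. Equality in the first step gives optimality of $(e_0,e_1)_\#\pi$, and equality in the second, combined with the pointwise bound, forces $d(\gamma_0,\gamma_1)^p = \kappa |\gamma|_{b^{\alpha,p}}^p$ for $\pi$-a.e. $\gamma$; since such $\gamma$ are continuous, \cref{lemma:characterization_geodesics_balphap} identifies them as constant-speed geodesics, so $\pi(Geo([0,1];\mathcal{X})) = 1$.

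Since the corollary is essentially a packaging of the two Besov–geodesic lemmas, there is no serious analytic obstacle. The only points requiring care are the measurable geodesic selection underlying the pushforward $\ell_\# \Upsilon$ (supplied by \cref{rmk:measurable_selection_geodesics}) and the bookkeeping around the possible $+\infty$ value of $|\gamma|_{b^{\alpha,p}}$, so that the pointwise bound and the $\pi$-a.e. equalities are invoked consistently.
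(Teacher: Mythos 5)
Your proposal is correct and follows essentially the same route as the paper's proof: the lower bound via \cref{lemma:bound_distance_balphap} applied to the coupling $(e_0,e_1)_\#\pi$, the explicit minimizer $\ell_\#\Upsilon$ built from an optimal coupling and the measurable geodesic selection of \cref{rmk:measurable_selection_geodesics}, and the characterization of minimizers by tracking equality in the two-step chain using \cref{lemma:characterization_geodesics_balphap}. The extra care you take with the $+\infty$ values and with the a.e.\ identification of geodesics is consistent with, and slightly more explicit than, the paper's argument.
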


\begin{proof}
    Let $\pi$ be an admissible path measure in the minimization above with $\int_{\Gamma} |\gamma|_{b^{\alpha,p}}^p \d \pi < + \infty$ and denote by $\mu_t \coloneqq (e_t)_{\#} \pi$ for all $t \in [0,1]$. By  \cref{lemma:bound_distance_balphap}, we have 
    \begin{equation}\label{eq:proof_dynamic_Wp_balphap}
        W_p^p (\mu,\nu) \leq \int_{\Gamma} d(\gamma_0,\gamma_1)^p \d \pi (\gamma) \leq  \left(1-2^{-(p-\alpha p)}\right) \int_{\Gamma} |\gamma|^p_{b^{\alpha, p}} \d \pi (\gamma). 
    \end{equation}
    On the other hand, it is easy to show, by direct construction, that a minimizer exists.  Let $\Upsilon \in \mathrm{OptCpl} (\mu,\nu)$  
     and set 
    \begin{equation}
             \pi^* \coloneqq (\ell)_{\#} \Upsilon \, \in P(C([0,1];\mathcal{X})), 
    \end{equation}
    where $\ell : \mathcal{X}\times\mathcal{X} \to Geo([0,1];\mathcal{X})$ is a $(\mathcal{B}(\mathcal{X}^2)_\Upsilon,\mathcal{C})$-measurable geodesic selection, whose existence is discussed in \cref{rmk:measurable_selection_geodesics}. Here, $\mathcal{B}(\mathcal{X}^2)_\Upsilon $ denotes the $\Upsilon$-completion of $\mathcal{B}(\mathcal{X}^2)$.
    \\
    Note that, by its very construction, $(e_0,e_1)_{\#} \pi^* \in \mathrm{OptCpl}(\mu,\nu)$, and $\pi^*$ is concentrated on $Geo([0,1];\mathcal{X})$. By \cref{lemma:characterization_geodesics_balphap} ($1 \Rightarrow 2$), we have 
    \begin{equation}
        W_p^p(\mu,\nu)  = \int_{Geo} d(\gamma_0,\gamma_1)^p \d \pi^* (\gamma) = \left(1-2^{-(p-\alpha p)}\right) \int_{Geo} |\gamma|^p_{b^{\alpha, p}} \d \pi^* (\gamma). 
    \end{equation}
    Finally, note that an admissible path measure $\pi$ is a minimizer if and only if the inequalities in  \eqref{eq:proof_dynamic_Wp_balphap} are equalities, which happens if and only if $(e_0,e_1)_{\#} \pi \in \mathrm{OptCpl}(\mu,\nu)$ and $\pi (Geo([0,1];\mathcal{X})) = 1$ as a consequence of  \cref{lemma:bound_distance_balphap} and \cref{lemma:characterization_geodesics_balphap} ($1 \Leftrightarrow 2$). 
\end{proof}

\section{Counterexamples}\label{sec:counterexamples_d}
\begin{example}[\textbf{A $\frac{1}{p}$-H\"{o}lder continuous/$p$-variation curve in a $p$-Wasserstein space without any lift on continuous paths}]\label{exp:p_Holder_p_Wasserstein}
    This elementary example is well-known in stochastic analysis, see e.g. \cite[Chapter I. Exercise (2.12)]{RevuzYor1999}, and it is studied in detail in \cite[Example 1.1]{AbediLiSchultz2024}.
    Consider the following collection of measures on $\mathcal{X} \coloneqq \mathbb{R}$:
    \begin{equation}
        \mu_t \coloneqq (1-t)\delta_0+  t  \delta_{1}, \qquad \forall t \in I \coloneqq [0,1].
    \end{equation}
    For any $p \in [1,\infty)$, we have 
    \begin{equation}
        W^p_p (\mu_s, \mu_t)  = |t-s|, \qquad \forall s,t \in I,
    \end{equation}
    which implies $(\mu_t) \in C^{\frac1p\textrm{-} \mathrm{H\ddot{o}l}} (I;P_p(\mathbb{R}))$
    and $(\mu_t) \in C^{p\textrm{-} \mathrm{var}} (I;P_p(\mathbb{R}))$ with $ |\mu|^p_{p\textrm{-}\mathrm{var}} = 1$ for all $p \in [1,\infty)$. This Wasserstein curve, however, has no lift on continuous paths.
    \\
    A stochastic process whose distribution is $\mu_t$ is given by the jump process $X_t \coloneqq \mathds{1}_{\{U \leq t\}} $, $t \in I$, where $U \sim \mathrm{Unif}[0,1]$ is a uniformly distributed random variable defined on some probability space. This yields
    \begin{equation}
        \mathbb{E}[|X_t - X_s|^p] = |t-s|, \qquad \forall s,t \in I,
    \end{equation}
    which clearly does not meet the Kolmogorov--\v Centsov continuity criterion. 
    \\
    This example shows the sharpness of the assumption $ 1/p < {\upgamma}$ in \cref{crl:optimal_lift_mu_Holder_compatible}.
\end{example}

\begin{example}[\textbf{A compatible absolutely continuous Wasserstein curve without any lift realizing its higher-order variations, H\"{o}lder regularity, or modulus of continuity}]\label{exp:modulus_Hold_var}
    In \cref{prop:equality_implies_compatibility}, we observed that the existence of a lift realizing $W^{\alpha,p}$-regularity of a Wasserstein curve $(\mu_t)$ imposes a condition on $(\mu_t)$ called compatibility.
    A similar natural question arises: \emph{what conditions are imposed if a lift realizes higher-order variations, H\"{o}lder regularity, or the modulus of continuity?} The goal of this example is to illustrate that this can easily fail to hold, highlighting that imposed conditions are very strong.
    \\
    Unlike the case $p=1$, where the 1-variation of $1$-Wasserstein curves can be realized by (optimal) lifts \cite{AbediLiSchultz2024}, this is not the case for $p$-variation of $p$-Wasserstein curves when $p>1$, even though in both cases, the norm is obtained by taking supremum over all partitions.
    The failure in the case $p>1$  arises because higher-order variations of different curves in the underlying space can potentially be achieved by \emph{different} sequences of partitions, in contrast with $p=1$, where the $1$-variation of curves can be achieved by the \emph{same} sequence of partitions with the shrinking mesh size.
    Recall \eqref{eq:var_limsupvar_1}, where $p=1$ is the only case that variation and its infinitesimal characterization coincide and capture the regularity ``locally''. Here, we observe that higher-order variations behave more like H\"{o}lder regularity and the modulus of continuity, rather than 1-variation.
    \\
    Consider the following collection of measures on $\mathcal{X} \coloneqq \mathbb{R}$:
    \begin{gather}
        \mu_t \coloneqq \frac{1}{2} \delta_{\gamma_t^1} + \frac{1}{2} \delta_{\gamma_t^2}, \qquad \forall t \in I \coloneqq [0,1],
    \end{gather}
    where $ \gamma_t^1 \coloneqq t$ and $\gamma_t^2 \coloneqq 3-|2t-1|$. Since these paths do not cross, we have $|\mathrm{Lift}(\mu_t)| = 1$. The only lift on continuous paths is:
    \begin{equation}
        \pi \coloneqq \frac{1}{2} \delta_{\gamma^1} + \frac{1}{2} \delta_{\gamma^2}.
    \end{equation}
    Obviously, two-dimensional marginals of $\pi$ give the optimal coupling at all pairs of times with respect to all $W_p$, $p\geq 1$, thus $(\mu_t)$ is compatible. By \cite[Theorem 3.3]{AbediLiSchultz2024} or direct computation,
    the $1$-variation of $(\mu_t)$ with respect to $1$-Wasserstein metric satisfies:
        \begin{equation}
        |\mu|_{1\textrm{-}\mathrm{var}} = \int|\gamma|_{1\textrm{-}\mathrm{var}} \d \pi .
        \end{equation}
    Now let us consider $p>1$. We have
    \begin{itemize}
        \item$|\gamma^1|^p_{p\textrm{-}\mathrm{var}} = 1 $ achieved by the partition $(0,1)$. 
        \item $|\gamma^2|^p_{p\textrm{-}\mathrm{var}} = 2 $ achieved by the partition $(0,\frac{1}{2},1)$. 
    \end{itemize}
    Moreover, note that any other partition yields a strictly smaller $p$-variation for these paths. 
    Now let $D=(t_i)$ be an arbitrary dissection of the interval $I$. $D$ is either $(0,1)$, or $(0,\frac{1}{2},1)$, or any other partition. In all three cases, we have
    \begin{align}
        \sum_{i} W_p^p (\mu_{t_{i}}, \mu _{t_{i+1}}) & =  \int \sum_{i} d(\gamma_{t_{i}}, \gamma _{t_{i+1}})^p \d\pi < \int |\gamma|_{p\textrm{-}\mathrm{var}}^p \d\pi.
    \end{align}
    Taking supremum over all possible partitions gives:
    \begin{equation}
            |\mu|_{p\textrm{-}\mathrm{var}}^p < \int|\gamma|_{p\textrm{-}\mathrm{var}}^p \d \pi(\gamma).
        \end{equation}
        By a similar argument or direct computation, one can conclude the same for H\"{o}lder regularities with $\theta \in (0,1]$,
        \begin{equation}
            |\mu|_{\theta\textrm{-}\mathrm{H\ddot{o}l}}^p < \int |\gamma|_{\theta\textrm{-}\mathrm{H\ddot{o}l}}^p \d \pi(\gamma),
        \end{equation}
        as well as for the modulus of continuity with $\delta =1$,
        \begin{equation}
            |\mu|_0^p <  \int |\gamma|_0^p \d \pi (\gamma).
        \end{equation}
        Thus, this compatible Wasserstein curve has no lift that can realize its higher-order variations, H\"{o}lder regularity, or the modulus of continuity. 
\end{example}

\begin{example}[\textbf{A non-absolutely continuous Wasserstein curve from superposition of absolutely continuous curves}]\label{exp:infinitesimal_variation} 
    The goal of this counterexample is twofold. First, we show that it is possible to construct a Wasserstein curve with low regularity by superposing regular curves. Second, we show that the energy of lifts with respect to the infinitesimal higher-order variation \eqref{eq:def_limsup_var}, fails to provide even a bound for the infinitesimal higher-order variation of the corresponding Wasserstein curve, as pointed out in \cref{subsec:from_pi_to_mu}.
    \\
    Specifically, here we construct a $\upgamma$-H\"{o}lder compatible curve $(\mu_t)$ in $p$-Wasserstein space ($1/p < \upgamma < 1$) by superimposing absolutely continuous curves in the underlying space. Here, $(\mu_t)$ can also be seen as the solution to the \emph{continuity equation}. 
    For $(\mu_t)$, there will be only \emph{one} lift $\pi$ on the space of continuous paths, so any construction should lead to this lift.
    Since $(\mu_t)$ forms a compatible collection, the lift here also enjoys the property that all of its two-dimensional marginals are optimal. 
    We show that $\frac{1}{\upgamma}$-variation of $(\mu_t)$ along the sequence of dyadic partitions has a non-zero limit, whereas this variation is zero for all curves in the underlying space (because they are of bounded 1-variation).  From this, we will conclude that
    \begin{equation}\label{eq:infinitesimal_variation_claim}
    |\mu|^{q}_{q\textrm{-}\mathrm{var}\textrm{-}\mathrm{limsup}} > \int_{\Gamma} |\gamma|^q_{q\textrm{-}\mathrm{var}\textrm{-}\mathrm{limsup}} \d \pi (\gamma) = 0, \qquad \text{where } q \coloneqq \frac{1}{\upgamma}. 
    \end{equation}
   
    \begin{figure}%[h]
        \centering
        \hspace{8pt}
        \begin{tikzpicture}
            % x-y axis
            \draw [->] (-0.4,0) -- (9,0);
            \draw [->] (0,0) -- (0,2.2);
            % axis labels
            \draw (9,0) node[anchor=west] {\scriptsize $x$};
            \draw (0,2.2) node[anchor=east] {\scriptsize $y$};
            % position labels
            \draw (-0.05,1.2) -- (0.05,1.2);
            \draw (0,1.2) node[anchor=east] {\scriptsize $1$};
            \draw [|-|](2,1.2+0.25) -- (4,1.2+0.25);
            \draw (3,1.2+0.25) node[anchor=south] {\scriptsize $a > 1$};
            % grid
            \draw [dotted] (0,1.2) -- (9,1.2);
            \draw [dotted] (2,0) -- (2,1.2);
            \draw [dotted] (4,0) -- (4,1.2);
            \draw [dotted] (6,0) -- (6,1.2);
            \draw [dotted] (8,0) -- (8,1.2);
            % particles
            \draw [fill][blue] (0,0) circle [radius=4.00pt];
            \draw [fill][blue] (2,0) circle [radius=2.73pt];
            \draw [fill][blue] (4,0) circle [radius=1.87pt];
            \draw [fill][blue] (6,0) circle [radius=1.27pt];
            \draw [fill][blue] (8,0) circle [radius=0.87pt];
            % particle labels
            \draw (-0.5,0) node[anchor=east] {\scriptsize \color{blue}$ \qquad\, \mu_0 \in P(\mathbb{R}^2)$ \, };
            \draw (0,-0.1) node[anchor=north ] {\scriptsize $j=0$};
            \draw (2,-0.1) node[anchor=north ] {\scriptsize $j=1$};
            \draw (4,-0.1) node[anchor=north ] {\scriptsize $j=2$};
            \draw (6,-0.1) node[anchor=north ] {\scriptsize $j=3$};
            \draw (8,-0.1) node[anchor=north ] {\scriptsize $j=4$};
            \draw (10,-0.1) node[anchor=north ] {\scriptsize $j=\cdots$};
            % info
            \draw (0.2,-1) node[anchor=east] {\scriptsize speed of particles = };
            %\draw (0.2,-1.5) node[anchor=east] {\scriptsize $\#$ of particles on each vertical line = };
            \draw (0.2,-1.5) node[anchor=east] {\scriptsize mass of particles = };
            % info numbers
            \draw (0.2,-1) node[anchor=west] {\scriptsize $2^{j+1}$ };
            %\draw (0.2,-1.5) node[anchor=west] {\scriptsize $1$ };
            \draw (0.2,-1.5) node[anchor=west] {\scriptsize $w_j \, \coloneqq \bar{w} \, 2^{-jp\upgamma}$ };
        \end{tikzpicture}
        
        \vspace{20pt}
        \begin{tikzpicture}
            % plot j = 0  ----------------------
            % x-y axis
            \draw [->] (0,0) -- (4.5,0);
            \draw [->] (0,0) -- (0,2.5);
            % axis labels
            \draw (0,2.5) node[anchor=east] {\scriptsize $y$};
            \draw (4,0) node[anchor=north] {\scriptsize $1$};
            % j label
            \draw (-1,1) node[anchor=east] {\scriptsize $j=0$};
            % position labels
            \draw (-0.05,2) -- (0.05,2);
            \draw (0,2) node[anchor=east] {\scriptsize $1$};
            % curve
            \draw [-] (0,0) -- (2,2) -- (4,0);
            % plot j = 1  ----------------------
            % x-y axis
            \draw [->] (0,0-3) -- (4.5,0-3);
            \draw [->] (0,0-3) -- (0,2.5-3);
            % axis labels
            \draw (0,2.5-3) node[anchor=east] {\scriptsize $y$};
            \draw (4,0-3) node[anchor=north] {\scriptsize $1$};
            % j label
            \draw (-1,1-3) node[anchor=east] {\scriptsize $j=1$};
            % position labels
            \draw (-0.05,2-3) -- (0.05,2-3);
            \draw (0,2-3) node[anchor=east] {\scriptsize $1$};
            % curve
            \draw [-] (0,0-3) -- (1,2-3) -- (2,0-3) -- (3,2-3) -- (4,0-3);
            % plot j = 2  ----------------------
            % x-y axis
            \draw [->] (0,0-6) -- (4.5,0-6);
            \draw [->] (0,0-6) -- (0,2.5-6);
            % axis labels
            \draw (0,2.5-6) node[anchor=east] {\scriptsize $y$};
            % j label
            \draw (-1,1-6) node[anchor=east] {\scriptsize $j=2$};
            % position labels
            \draw (-0.05,2-6) -- (0.05,2-6);
            \draw (0,2-6) node[anchor=east] {\scriptsize $1$};
            % curve
            \draw [-] (0,0-6) -- (0.5,2-6) -- (1,0-6) -- (1.5,2-6) -- (2,0-6) -- (2.5,2-6) -- (3,0-6) -- (3.5,2-6) -- (4,0-6);
            % x axis labels
            \draw (4.5,0-6) node[anchor=west] {\scriptsize time};
            \draw (0,0-6) node[anchor=north] {\scriptsize $0$};
            \draw (4,0-6) node[anchor=north] {\scriptsize $1$};
            % time-lines
            \draw [dotted] (1,0-6) -- (1,0-1);
            \draw [dotted] (2,0-6) -- (2,2);
            \draw [dotted] (3,0-6) -- (3,0-1);

            \draw [dotted] (0.5,2-6) -- (0.5,0-6);
            \draw [dotted] (1.5,2-6) -- (1.5,0-6);
            \draw [dotted] (2.5,2-6) -- (2.5,0-6);
            \draw [dotted] (3.5,2-6) -- (3.5,0-6);
            % Delta t
            \draw [<->](2,0-6.25) -- (3,0-6.25);
            \draw (2.5,0-6.25) node[anchor=north] {\scriptsize $\frac{1}{2^j}$};
        \end{tikzpicture}
        \captionsetup{font=footnotesize}
        \caption{\cref{exp:infinitesimal_variation} yields a compatible $(\mu_t) \in C^{\upgamma \textrm{-} \mathrm{H\ddot{o}l}} ([0,1];P_p(\mathbb{R}^2))$ from superposition of absolutely continuous curves. \textbf{Top:} The initial measure $\mu_0$  consists of a countable family of particles indexed by $j$, whose mass $w_j$ decreases with respect to $j$. As time runs, the particles oscillate vertically between 0 and 1. The lighter a particle is, the faster it moves. Their $x$-coordinates, which are separated by a distance $a>1$, remain constant. \textbf{Bottom:} $y$-coordinates of the first three particles as a function of time, as defined in \eqref{eq:counterexample_yj}. Each curve has constant speed for a.e. $t \in [0,1]$.}
        \label{fig:counterexample}
    \end{figure}
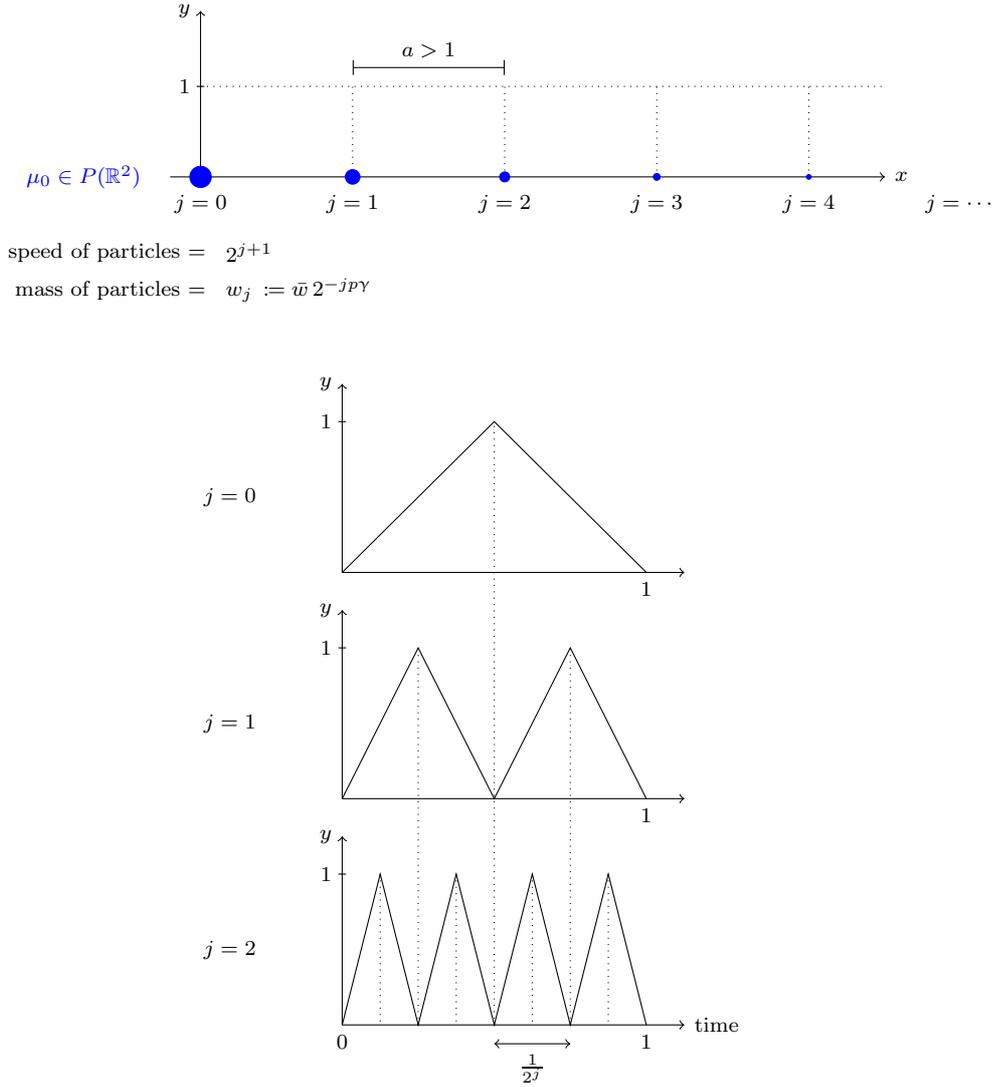  
    \noindent
    \\
    \textbf{Construction of $(\mu_t)$ and $\pi$.} Consider $\mathcal{X} \coloneqq \mathbb{R}^2$ equipped with the Euclidean distance.
    We introduce a countable family of particles indexed by $j \in \{0,1,2,\cdots\}$. Their trajectories $\gamma_t^j = (x^j_t , y^j_t )$ are continuous curves from $I \coloneqq [0,1] \to \mathbb{R}^2$, 
    with coordinates given by
    \begin{equation}
        \begin{cases}
            x_t^j \coloneqq j a  & t \in [0,1],\\
            y_t^j \coloneqq \sum_{l=0}^{2^j-1} \max \left\{1-2^{\left(j+1\right)}\left|t-\frac{1}{2^{\left(j+1\right)}}-\frac{l}{2^{\left(j\right)}}\right| , 0\right\} \qquad &  t \in [0,1],\label{eq:counterexample_yj} 
        \end{cases}
    \end{equation}
    for some $a>1$. 
    In other words, the $x$-coordinates of the particles, which are separated by a distance $a$, remain constant, whereas the $y$-coordinates oscillate between 0 and 1 with a constant speed (each $t \mapsto y^j_t$ consists of $2^j$ tent functions).
    The configuration of particles at time $t=0$ and the $y$-coordinates of the first three curves as a function of time are illustrated in \cref{fig:counterexample}. 
    The speed of each curve is given by
    \begin{equation}
        |\dot{\gamma}^j_t| = 2^{j+1} \quad \textrm{for } \textrm{a.e. } t \in I.
    \end{equation}
    We now assign a mass to each particle, denoted by $w_j$. Take $1<p<\infty$ and $\frac{1}{p}<\upgamma < 1$ and set
    \begin{equation}\label{eq:counterexample_mass}
        w_j \coloneqq \bar{w} \, 2^{-jp\upgamma}, \qquad \bar{w} (p, \upgamma)  \coloneqq 1 - 2^{-p \upgamma},
    \end{equation}
    where $\bar{w}$ is the normalization constant that makes the total mass equal to 1. According to this setting, the lighter a particle is, the faster it moves. We define
    a family of probability measure $(\mu_t) \subset P (\mathbb{R}^2)$ by 
    \begin{equation}\label{eq:counterexample_mu_t}
        \mu_t \coloneqq \sum_{j=0}^\infty w_j \delta_{\gamma^j_t}, \qquad \forall t \in I, 
    \end{equation}
    and a path measure
    $\pi \in P (C(I;\mathbb{R}^2))$ by
    \begin{equation}\label{eq:counterexample_pi}
        \pi \coloneqq  \sum_{j=0}^\infty w_j \delta_{\gamma^j}.
    \end{equation}
    It is clear that $\pi$ is a lift of $(\mu_t)$ and it is the only lift because the curves $(\gamma_t^j)$ do not cross each other nor do their mass $w_j$ vary.  
    It is worth bearing in mind that the entire construction depends on three parameters $ p, \upgamma, a$, which we fix in the range 
   \begin{align}
       1<p<\infty, \qquad \frac{1}{p}<\upgamma < 1, \qquad 1<a.
   \end{align}
    \\
    \textbf{Viewing $(\mu_t)$ as a solution to CE.}
   It is worth mentioning that $(\mu_t)$ can be seen as the solution to the continuity equation \eqref{eq:CE} with the initial condition $\mu_0 \in P(\mathbb{R}^2)$ and the velocity vector field $|v_t(x,y)| = 2^{x/a+1}$ satisfying the 1-integrability condition:        \begin{equation}\label{eq:counterexample_1_integrability_1}
            \int_{0}^{T} \int_{\mathbb{R}^{\mathrm{d}}} |v_t | \d \mu_t \d t  = \bar{w} 2 \sum_{j=0}^\infty 2^{j (1 - \upgamma p)} < + \infty.
        \end{equation}
    but not the $p$-integrability condition:
    \begin{equation}\label{eq:counterexample_1_integrability_p}
            \int_{0}^{T} \int_{\mathbb{R}^{\mathrm{d}}} |v_t |^p \d \mu_t \d t  = \bar{w} 2^p \sum_{j=0}^\infty 2^{j (p - \upgamma p)} = + \infty.
    \end{equation}
    \\
    \textbf{Claim 1 ($(\mu_t) \subset P_p(\mathbb{R}^2)$).} The $p$-th moments of the measures can be estimated by
    \begin{align} \label{eq:counterexample_1_moment}
        \int_{\mathbb{R}^2} \left| x^2+y^2\right|^{p/2} \d \mu_t (x,y)
        & \leq w_0 a^p +  \sum_{j=1}^\infty w_j \big(\sqrt{2} a j \big)^p = \bar{w} a ^p \Big( 1 + 2^{p/2}\sum_{j=1}^\infty j^p 2^{-jp\upgamma} \Big) < + \infty,
    \end{align}
    where the series converges because
    \begin{equation}
        \lim_{j \to \infty} \frac{(j+1)^p 2^{-(j+1)p\upgamma}}{j^p2^{-jp\upgamma}} = 2^{-p \upgamma} < 1. 
    \end{equation}
    \\
    \textbf{Claim 2 ($\int |\gamma|^p_{W^{1,p}} \d \pi = \infty$).}
    It follows from \eqref{eq:counterexample_1_integrability_p} that the $W^{1,p}$-energy of $\pi$ is infinite: 
    \begin{align}
        \int_{\Gamma} |\gamma|^p_{W^{1,p}} \d \pi (\gamma) & = \int_{\Gamma} \int_{0}^{1} |\dot{\gamma}_t|^p \d t \d \pi (\gamma) = + \infty.\label{eq:counterexample_W1p}
    \end{align}
    \\
    \textbf{Claim 3 ($\int |\gamma|^p_{W^{\alpha,p}} \d \pi < \infty $ for all $\alpha \in (\frac{1}{p}, \upgamma)$).}
    Here, we aim to show that, in contrast to the computed $W^{1,p}$-energy above, the $W^{\alpha,p}$-energy of the lift $\pi$ is finite.
    Let's begin by calculating $| \gamma^j |_{b^{\alpha,p}}$.
    Note that each $t \mapsto \gamma_t^j$ is a piece-wise geodesic curve, for which we have already computed the $b^{\alpha,p}$-norm in \cref{lemma:balphap_Xn}. Using this result with $n = j+1 $ and noticing that the only non-zero summand in the first sum in \eqref{eq:balphap_Xn} is for $m=n$, we obtain 
    \begin{align}
        | \gamma^j |_{b^{\alpha,p}}^p
          \, = \, 2^{(j+1)(\alpha p)} + \frac{2^{(j+1) (\alpha p - 1 )}}{2^{(p - \alpha p)}-1}  \sum_{i=0}^{2^{j+1}-1} 1^p  
         \, = \, \frac{1}{2^{- \alpha p} - 2^{- p} } \,  2^{j \alpha p}. 
    \end{align}
    Therefore, the $b^{\alpha,p}$-energy of the lift $\pi$ is given by  
    \begin{align}
         \int_{\Gamma} |\gamma|^p_{b^{\alpha,p}} \d \pi (\gamma)
          =   \sum_{j=0}^\infty w_j | \gamma^j |_{b^{\alpha,p}}^p   
          = \frac{\bar{w} }{2^{ - \alpha p} - 2^{-p} }  \sum_{j=0}^\infty 2^{j(\alpha p- \upgamma p )} < + \infty, \label{eq:counterexample_balphap}
    \end{align}
    where the geometric series converges only when $\alpha < \upgamma$.
    This also implies that $W^{\alpha,p}$-energy of the lift is finite by the equivalence of these norms in \cref{thm:Walphap_balphap}.
    This, together with $\mu_0 \in P_p(\mathbb{R}^2)$, allows us to apply \cref{thm:lift_to_mu_Walphap} to get 
    \begin{equation}
        |\mu|_{W^{\alpha,p}}^p \leq \int_\Gamma |\gamma|_{W^{\alpha,p}}^p \d \pi(\gamma) < + \infty. 
    \end{equation}
    In particular, $t \mapsto \mu_t$ is a continuous curve in $p$-Wasserstein space. 
    \\
    \textbf{Claim 4 (all two-dimensional marginals of $\pi$ are optimal when $a>1$).}
    As measures stay in the $p$-Wasserstein space, we can ask about the optimal plans between $\mu_s$ and $\mu_t$. 
    Let us fix $s,t$. 
    Note that the mass is always on the constant-$x$-lines, which are separated by a distance $a>1$, while $|y^j_t - y^j_s|\leq1$.
    It is, therefore, not optimal to transfer mass between $x$-lines, but rather keep it and move only vertically. 
    Accordingly, whenever $a>1$, $\pi$ is  optimal (for all $p\geq 1$) in the sense that
    \begin{equation}
        (e_s,e_t)_\# \pi \in \mathrm{OptCpl}(\mu_s, \mu_t) \quad \textrm{for all } s,t \in [0,1]. 
    \end{equation}
    The case $0<a\leq 1$ might be complicated and we do not claim anything about the optimality of $\pi$ in this case.
    The condition above also implies that $(\mu_t)$ is a compatible family of measures.
    As shown at the end of the proof of \cref{thm:optimal_lift_mu_Walphap_compatible}, we then have
    \begin{equation}
         |\mu|_{W^{\alpha,p}}^p = \int_\Gamma |\gamma|_{W^{\alpha,p}}^p \d \pi(\gamma),
    \end{equation}
    and the same equality holds for $|\cdot|_{b^{\alpha,p}}^p$.
    \\
    Given the optimality of $\pi$ and \eqref{eq:counterexample_W1p}, one can argue, by Lisini's  characterization of absolutely continuous curves \cite{Lisini2007}, 
    and the recent characterization of BV-curves in $1$-Wasserstein space \cite[Corollary 4.1]{AbediLiSchultz2024}, that $t \mapsto \mu_t$ is not an absolutely continuous curve in $p$-Wasserstein space, $p>1$, and it is not a BV-curve in 1-Wasserstein space, respectively.
    \\
    \textbf{Claim 5 ($(\mu_t) \in C^{\upgamma \textrm{-} \mathrm{H\ddot{o}l}} (I;P_p(\mathbb{R}^2))$).}
    Our aim is now to estimate $W_p(\mu_s,\mu_t)$ for any $s,t \in [0,1]$. Let us first consider the time points of the form 
    $$
    s = \frac{k}{2^m}, \quad t = \frac{k+1}{2^m},
    $$
    for some $m \in \mathbb{N}$ and $k \in \{ 0, 1, \cdots 2^m -1\}$. Notice that in the Wasserstein distance between the measures at these times, all curves $\gamma^j$ with $j\geq m$ have zero contribution. Thus, we have 
    \begin{align}
        W_p^p(\mu_{\frac{k}{2^m}}, \mu_{\frac{k+1}{2^m}}) & =  \bar{w}  \sum_{j=0}^{m-1} 2^{-jp\upgamma} \, d (\gamma^j_{\frac{k}{2^m}}, \gamma^j_{\frac{k+1}{2^m}})^p\\
        & = \bar{w}  \sum_{j=0}^{m-1} 2^{-jp\upgamma} \, \left| \frac{\Delta t_m }{\Delta t_{j+1} }\right|^p \\
        & = \frac{\bar{w} }{2^{ - \upgamma p} - 2^{-p} } \Big( | \Delta t_m |^{p \upgamma} - | \Delta t_m |^{p} \Big) \label{eq:counterexample_Wpp} \leq c^p  | \Delta t_m |^{p \upgamma}, \qquad 
    \end{align}
    where we denote by $c \coloneqq (\frac{\bar{w} }{2^{ - \upgamma p} - 2^{-p} })^{\frac1p}$ and use our usual notation $\Delta t_m \coloneqq \frac{1}{2^m}$ for any $m \in \mathbb{N}$.
    In other words, the H\"{o}lder condition is met on all dyadic time points. This, together with continuity of $(\mu_t)$, implies $\upgamma$-H\"{o}lder continuity by  
    \cref{thm:disc_char_Holder}. 
    \\
    \textbf{Claim 6 ($|\mu|^q_{q\textrm{-}\mathrm{var}\textrm{-}\mathrm{limsup}} > \int |\gamma|^q_{q\textrm{-}\mathrm{var}\textrm{-}\mathrm{limsup}} \d \pi$ for $q \coloneqq \frac1\upgamma >1$).}
    First of all, note that all $(\gamma^j)_{j \geq 0}$ are curves of bounded $1$-variation. As a result, \cref{lemma:limsupvar0} tells us that
    for all curves,
    $$
    | \gamma^j|_{q\textrm{-}\mathrm{var}\textrm{-}\mathrm{limsup}} = 0 \quad \textrm{for all } q >1. 
    $$
    Consequently, the integral on the right-hand side of the claim \eqref{eq:infinitesimal_variation_claim} is zero. 
    Second, based on $(\mu_t) \in C^{\upgamma \textrm{-} \mathrm{H\ddot{o}l}} (I;P_p(\mathbb{R}^2))$ and $\frac1\upgamma < p$, we immediately conclude:
    \begin{align}
        &| \mu |_{\frac1\upgamma\textrm{-}\mathrm{var}} < \infty, \qquad
         | \mu |_{\frac1\upgamma\textrm{-}\mathrm{var}\textrm{-}\mathrm{limsup}} < \infty, \\
        &| \mu |_{p\textrm{-}\mathrm{var}} < \infty, \qquad \,
        | \mu |_{p\textrm{-}\mathrm{var}\textrm{-}\mathrm{limsup}} = 0,
    \end{align}
    where we use the preliminary  \cref{lemma:infinitesimal_char_var,lemma:limsupvar0} and the embedding \eqref{eq:var_p_q_embedding}.
    All variations here are based on $p$-Wasserstein distance. 
    Now, we claim that $| \mu |_{1/\upgamma\textrm{-}\mathrm{var}\textrm{-}\mathrm{limsup}} > 0 $.
    It is enough to demonstrate that the variation of $(\mu_t)$ over a sequence of partitions whose mesh size goes to zero is strictly positive. We take the dyadic partition $D_m$ with mesh size $\Delta t_m \coloneqq \frac{1}{2^m}$, over which we computed the Wasserstein distance in \eqref{eq:counterexample_Wpp}. For fixed $m \in \mathbb{N}$, the $\frac1\upgamma$-variation is
    \begin{align}
        \sum_{t_i \in D_m} W_p^{\frac1\upgamma} (\mu_{t_i}, \mu_{t_{i+1}})
        & = \sum_{t_i \in D_m}  c^{\frac1\upgamma}  \Big( | \Delta t_m |^{p \upgamma} - | \Delta t_m |^{p} \Big)^{\frac{1}{p \upgamma}} = c^{\frac1\upgamma} \left( 1 - 2^{m p ( \upgamma - 1 )}\right)^{\frac{1}{p \upgamma}}.
    \end{align}
    Taking the limit yields 
    \begin{equation}
        \lim_{m \to \infty } \sum_{t_i \in D_m} W_p^{\frac1\upgamma} (\mu_{t_i}, \mu_{t_{i+1}}) = c^{\frac1\upgamma} > 0. 
    \end{equation}
\end{example}

\begin{example}[\textbf{Failure of compatibility under rotation and change of regularity under splitting}] \label{exp:compatibility_vs_noncompatibility}%[Basic counterexample to compatibility] % \cite[Definition 2.3.1]{PanaretosZemel2020}
        Consider $\mathcal{X} \coloneqq \mathbb{S}^1$ equipped with its arc length. Here, we consider the perimeter of the circle $\mathbb{S}^1$ to be 2. Take any $p \in [1,\infty)$ and let us start with the simplest constant-speed $p$-Wasserstein geodesic:
       \begin{align}\label{eq:compatibility_mu}
           \mu_t \coloneqq \delta_{e^{i \pi t}}, \qquad \forall t \in [0,1],
       \end{align}
       consisting of one particle moving along the circle with constant speed 1, as shown in \cref{fig:noncompatibility} (left), which also shows its distance $t \mapsto W_p(\mu_0,\mu_t) = t$ from the initial state. This collection of measures is obviously compatible.
       Now let us split the particle into two particles with equal mass, locate them on a diagonal, and again, let them rotate with constant speed 1. Continuing this splitting procedure, arranging the particles equidistantly, and letting them rotate always with constant speed 1 will yield the following:     \begin{align}\label{eq:noncompatibility_mu_j}
           \mu^j_t \coloneqq \frac{1}{2^{j+1}} \sum_{k=0}^{2^{j+1}-1} \delta_{e^{i\pi (t+k/2^j)}}, \qquad  \forall t \in [0,1],
       \end{align}
       where $j \in \{0,1,2,\cdots \}$. \cref{fig:noncompatibility} (right) shows the initial configurations for the first three indices.
       Unlike the original curve $(\mu_t)$, the curves $(\mu_t^j)$ are no longer geodesics but rather piece-wise geodesics oscillating between two states in the Wasserstein space with distance $\frac{1}{2^{j+1}}$. The time taken to transition from one state to the other is also equal to $\frac{1}{2^{j+1}}$, as illustrated by the graphs $t \mapsto W_p(\mu^j_0,\mu^j_t)$ in \cref{fig:noncompatibility}. Note that the metric speed in $p$-Wasserstein space for all $p\geq 1$ does not change under the splitting and stays 1 almost all times. 
       \\
       Denote by $\pi$ and $\pi^j$ the only existing lifts of $(\mu_t)$ and $(\mu_t^j)$ on the space of continuous paths, respectively. 
       The main observation here is that, while this splitting procedure doesn't change the energy of the lifts (with respect to any norm), it may change the regularity of Wasserstein curves with respect to certain norms. Let us first start with an observation on compatibility.
       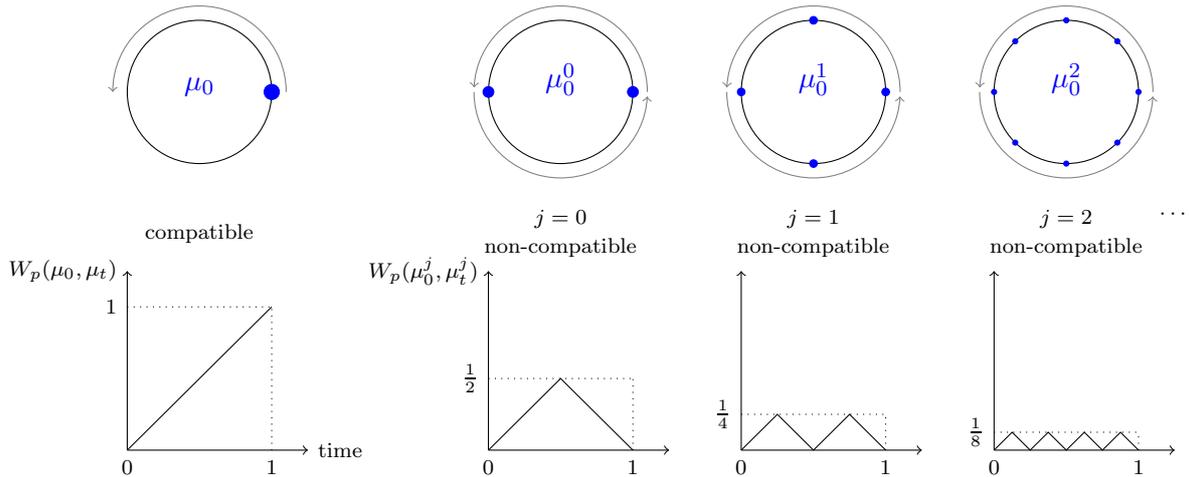
\begin{figure}%[h]
            \centering
            \vspace{10pt}
            \begin{tikzpicture}[scale=0.95]
            % circles
                \draw [-] (0,0) circle [radius=1];
                \draw [-] (5,0) circle [radius=1];
                \draw [-] (8.5,0) circle [radius=1];
                \draw [-] (12,0) circle [radius=1];
                \draw (13.5,-1.5) node[anchor=north] {\scriptsize $\cdots$};
            % \mu_0
            \draw (0,-0.19) node[anchor=south] {\color{blue} $\mu_0$};
            \draw (5,-0.19) node[anchor=south] {\color{blue} $\mu^0_0$};
            \draw (8.5,-0.19) node[anchor=south] {\color{blue} $\mu^1_0$};
            \draw (12,-0.19) node[anchor=south] {\color{blue} $\mu^2_0$};
            % particles
                \draw [fill][blue] (1,0) circle [radius=3pt];
            % particles j=0
                \draw [fill][blue] (1+5,0) circle [radius=2.12pt];
                \draw [fill][blue] (-1+5,0) circle [radius=2.12pt];
            % particles j=1
                \draw [fill][blue] (1+8.5,0) circle [radius=1.5pt];
                \draw [fill][blue] (-1+8.5,0) circle [radius=1.5pt];
                \draw [fill][blue] (0+8.5,1) circle [radius=1.5pt];
                \draw [fill][blue] (0+8.5,-1) circle [radius=1.5pt];
            % particles j=2
                \draw [fill][blue] (1+12,0) circle [radius=1pt];
                \draw [fill][blue] (-1+12,0) circle [radius=1pt];
                \draw [fill][blue] (0+12,1) circle [radius=1pt];
                \draw [fill][blue] (0+12,-1) circle [radius=1pt];
            % on diagonals
                \draw [fill][blue] ({cos(45)+12},{sin(45)}) circle [radius=1pt];
                \draw [fill][blue] ({cos(135)+12},{sin(135)}) circle [radius=1pt];
                \draw [fill][blue] ({cos(225)+12},{sin(225)}) circle [radius=1pt];
                \draw [fill][blue] ({cos(315)+12},{sin(315)}) circle [radius=1pt];
            % rotation arrows	
                \draw[->,gray] (1.2,0) arc [radius=1.2, start angle=0, end angle= 180];
            % rotation arrows j = 1
                \draw[->,gray] (1.2+5,0) arc [radius=1.2, start angle=0, end angle= 177];
                \draw[->,gray] (-1.2+5,0) arc [radius=1.2, start angle=180, end angle= 357];
            % rotation arrows j = 2
                \draw[->,gray] (1.2+8.5,0) arc [radius=1.2, start angle=0, end angle= 177];
                \draw[->,gray] (-1.2+8.5,0) arc [radius=1.2, start angle=180, end angle= 357];
            % rotation arrows j = 3
                \draw[->,gray] (1.2+12,0) arc [radius=1.2, start angle=0, end angle= 177];
                \draw[->,gray] (-1.2+12,0) arc [radius=1.2, start angle=180, end angle= 357];
            % label j
                \draw (5,-1.5) node[anchor=north] {\scriptsize $j=0$};
                \draw (8.5,-1.5) node[anchor=north] {\scriptsize $j=1$};
                \draw (12,-1.5) node[anchor=north] {\scriptsize $j=2$};
            % compatibility
                \draw (0,-1.7) node[anchor=north] {\scriptsize compatible};
                \draw (5,-1.9) node[anchor=north] {\scriptsize non-compatible};
                \draw (8.5,-1.9) node[anchor=north] {\scriptsize non-compatible};
                \draw (12,-1.9) node[anchor=north] {\scriptsize non-compatible};
            % plots
                \draw [->] (-1,-5) -- (-1+2.5,-5);
                \draw [->] (-1,-5) -- (-1,-5+2.5);
            % plot j = 0 
                \draw [->] (-1+5,-5) -- (-1+5+2.5,-5);
                \draw [->] (-1+5,-5) -- (-1+5,-5+2.5);
            % plot j = 1
                \draw [->] (-1+8.5,-5) -- (-1+8.5+2.5,-5);
                \draw [->] (-1+8.5,-5) -- (-1+8.5,-5+2.5);
            % plot j = 2
                \draw [->] (-1+12,-5) -- (-1+12+2.5,-5);
                \draw [->] (-1+12,-5) -- (-1+12,-5+2.5);
            % axis labels
                \draw (-1,-5+2.5) node[anchor=east] {\scriptsize $W_p(\mu_0,\mu_t)$};
                 \draw (-1+5,-5+2.5) node[anchor=east] {\scriptsize $W_p(\mu^j_0,\mu^j_t)$};
                \draw (-1+2.5,-5) node[anchor=west] {\scriptsize time};
            % curves
                \draw [-,xshift=-1 cm,yshift=-5cm] (0,0) -- (2,2);
                \draw [-,xshift=4 cm,yshift=-5cm] (0,0) -- (1,1) -- (2,0);
                \draw [-,xshift=7.5 cm,yshift=-5cm] (0,0) -- (0.5,0.5) -- (1,0) -- (1.5,0.5) -- (2,0);
                \draw [-,xshift=11 cm,yshift=-5cm] (0,0) -- (0.25,0.25) -- (0.5,0) -- (0.75,0.25) -- (1,0) -- (1.25,0.25) -- (1.5,0) -- (1.75,0.25) -- (2,0);
            % dotted lines
                \draw [dotted,xshift=-1 cm,yshift=-5cm] (2,0) -- (2,2);
                \draw [dotted,xshift=-1 cm,yshift=-5cm] (0,2) -- (2,2);
                \draw [dotted,xshift=4 cm,yshift=-5cm] (0,1) -- (2,1);
                \draw [dotted,xshift=4 cm,yshift=-5cm] (2,0) -- (2,1);
                \draw [dotted,xshift=7.5 cm,yshift=-5cm] (0,0.5) -- (2,0.5);
                \draw [dotted,xshift=7.5 cm,yshift=-5cm] (2,0) -- (2,0.5);
                \draw [dotted,xshift=11 cm,yshift=-5cm] (0,0.25) -- (2,0.25);
                \draw [dotted,xshift=11 cm,yshift=-5cm] (2,0) -- (2,0.25);
            % time
            \draw (-1,-5) node[anchor=north] {\scriptsize $0$};
            \draw (-1+2,-5) node[anchor=north] {\scriptsize $1$};
            % distance
            \draw (-1,-5+2) node[anchor=east] {\scriptsize $1$};
            \draw (-1+5,-5+1) node[anchor=east] {\scriptsize $\frac12$};
            \draw (-1+8.5,-5+0.5) node[anchor=east] {\scriptsize $\frac14$};
            \draw (-1+12,-5+0.25) node[anchor=east] {\scriptsize $\frac18$};
            % time j=0
                \draw (-1+5,-5) node[anchor=north] {\scriptsize $0$};
                \draw (-1+2+5,-5) node[anchor=north] {\scriptsize $1$};
            % time j=1
                \draw (-1+8.5,-5) node[anchor=north] {\scriptsize $0$};
                \draw (-1+2+8.5,-5) node[anchor=north] {\scriptsize $1$};
            % time j=2
                \draw (-1+12,-5) node[anchor=north] {\scriptsize $0$};
                \draw (-1+2+12,-5) node[anchor=north] {\scriptsize $1$};
            \end{tikzpicture}
        \captionsetup{font=footnotesize}        \caption{\cref{exp:compatibility_vs_noncompatibility} yields non-compatible piece-wise geodesic curves $(\mu_t^j) \in C ([0,1];P_p(\mathbb{S}^1))$ indexed by $j$ by modifying a constant-speed geodesic $(\mu_t)$ in the $p$-Wasserstein space on $\mathbb{S}^1$, where $p\in [1,\infty)$.
        \textbf{Top:} Initial measures defined in \eqref{eq:compatibility_mu} and \eqref{eq:noncompatibility_mu_j}. Each $\mu_t^j$ consists of $2^{j+1}$ particles with equal mass arranged equidistantly on the circle with a perimeter equal to 2. As time runs, the particles rotate around the circle with constant speed 1. \textbf{Bottom:} Wasserstein distance from the initial measure as a function of time. The metric speed of the Wasserstein curves is 1 for a.e. $t \in [0,1]$ for all $j$, whereas the fractional Sobolev norm tends to $0$ as $j \to \infty$. }
        \label{fig:noncompatibility}
    \end{figure}
       \\
       \textbf{Observation 0 (none of $(\mu_t^j)$ are compatible).}
       Unlike the original measures $(\mu_t)$, the measures $(\mu_t^j)$ are clearly not compatible anymore. Take $(\mu_t^0)$ for instance. As shown in \cref{fig:compatibility3_counterexample}, it is clear that the collections of measures at time points $\{0,\frac14,\frac24\}$ and $\{\frac14, \frac24, \frac34\}$ are compatible. But, for the collection at time $\{0,\frac14,\frac24,\frac34\}$, no multi-coupling exists in which all two-dimensional marginals are optimal, confirming the fact that checking the optimality only for three measures is not enough. (This conclusion remains unchanged even if the measures are absolutely continuous with respect to the Lebesgue measure, say on $\mathbb{R}^\mathrm{d}$. Thinking of this example in $\mathbb{R}^2$ and replacing the points of delta measures with sufficiently small balls does not change the conclusion.)
        \begin{figure}%[h]
            \centering
            \begin{tikzpicture}[scale=0.8]
            \draw [-] (0,0) circle [radius=1.5];
            % lines 
             \draw [dotted] (0,1.5) -- (0,-1.5);
             \draw [dotted] ({1.5*sin(45)},{1.5*cos(45)}) --  ({-1.5*sin(45)},{-1.5*cos(45)});
             \draw [dotted] ({1.5*sin(90)},{1.5*cos(90)}) -- ({-1.5*sin(90)},{-1.5*cos(90)});
             \draw [dotted] ({1.5*sin(135)},{1.5*cos(135)}) -- ({-1.5*sin(135)},{-1.5*cos(135)});
            % mu_0.5
            \draw [fill][blue] ({1.5*sin(90)},{1.5*cos(90)}) circle [radius=2pt];
            \draw [fill][blue] ({-1.5*sin(90)},{-1.5*cos(90)}) circle [radius=2pt];
            \draw ({1.5*sin(90)},{1.5*cos(90)}) node[anchor= west] {{\color{blue}$\mu^0_0$}};
            % mu_0
            \draw [fill][cyan] (0,1.5) circle [radius=2pt];
            \draw [fill][cyan] (0,-1.5) circle [radius=2pt];
            \draw (0,1.5) node[anchor=south] {{\color{cyan}$\mu^0_{\frac24}$}};
            % mu_0.25
            \draw [fill][green] ({1.5*sin(45)},{1.5*cos(45)}) circle [radius=2pt];
            \draw [fill][green] ({-1.5*sin(45)},{-1.5*cos(45)}) circle [radius=2pt];
             \draw ({1.5*sin(45)},{1.5*cos(45)}) node[anchor= west] {{\color{green}$\mu^0_{\frac14}$}};
            % mu_0.75
            \draw [fill][red] ({1.5*sin(135)},{1.5*cos(135)}) circle [radius=2pt];
            \draw [fill][red] ({-1.5*sin(135)},{-1.5*cos(135)}) circle [radius=2pt];
            \draw ({1.5*cos(135)},{1.5*sin(135)}) node[anchor= east] {{\color{red}$\mu^0_{\frac34}\,$}};
            % rotation arrow
            \draw[->,gray] (2.35,0) arc [radius=2.35, start angle=0, end angle= 135];
            \draw[->,gray] (-2.35,0) arc [radius=2.35, start angle=180, end angle= 315];
            \end{tikzpicture}
            \captionsetup{font=footnotesize}
            \caption{\cref{exp:compatibility_vs_noncompatibility}. The measure $(\mu^{j=0}_t)$ defined in \eqref{eq:noncompatibility_mu_j} at 4 time points. 
            The collections at times $\{0,\frac14,\frac24\}$ and $\{\frac14, \frac24, \frac34\}$ are compatible, but not at $\{0,\frac14,\frac24,\frac34\}$. To confirm the compatibility, checking the optimality only for three measures is not enough.}
            \label{fig:compatibility3_counterexample}
        \end{figure}
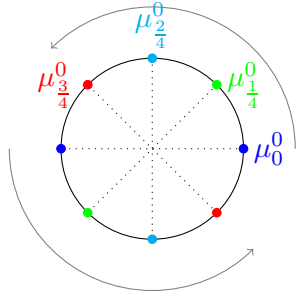
       \\
       \textbf{Observation 1 (modulus of continuity of $(\mu_t^j)$ decreases in $j$).} Let us study the modulus of continuity with the time window $\delta = 1$. Recall our notation in this case: $|X|_0 \coloneqq \sup_{s,t \in [0,1]} d(X_s,X_t)$ for any curve $(X_t)_{t \in [0,1]}$ taking values in a metric space. We have
        \begin{equation}
            \frac{1}{2^{(j+1)p}} = |\mu^j|^p_0 < \int |\gamma|^p_0 \d \pi^j = \int |\gamma|^p_0 \d \pi = |\mu|^p_0 = 1.
        \end{equation}
        While the inequality above always holds thanks to \cref{thm:lift_to_mu_C}, here we even have a strict inequality. 
       \\
       \textbf{Observation 2 (H\"{o}lder regularity of $(\mu_t^j)$ decreases in $j$).} Take $\theta \in (0,1]$ and recall \cref{thm:lift_to_mu_Hol}. Here, we get
       \begin{equation}
       \frac{1}{2^{(j+1)p(1-\theta)}}= |\mu^j|_{\theta\textrm{-}\mathrm{H\ddot{o}l}}^p < \int |\gamma|_{\theta\textrm{-}\mathrm{H\ddot{o}l}}^p \d \pi^j = \int |\gamma|_{\theta\textrm{-}\mathrm{H\ddot{o}l}}^p \d \pi = |\mu|_{\theta\textrm{-}\mathrm{H\ddot{o}l}}^p = 1.
       \end{equation}
       We observe that the H\"{o}lder regularity of $(\mu_t^j)$ also goes to zero as $j \to 0$. 
       \\
       \textbf{Observation 3 ($p$-variation of $(\mu_t^j)$ decreases in $j$ for all $p>1$ while $1$-variation remains constant).} The $1$-variation of the curve $(\mu_t^j)$ with respect to $1$-Wasserstein distance is equal to 1 for all $j$. Also recalling \cite[Theorem 3.3]{AbediLiSchultz2024}, we write
        \begin{equation}
            1 =  |\mu^j|_{1\textrm{-}\mathrm{var}} = \int|\gamma|_{1\textrm{-}\mathrm{var}} \d \pi^j = \int|\gamma|_{1\textrm{-}\mathrm{var}} \d \pi = |\mu|_{1\textrm{-}\mathrm{var}} = 1.
        \end{equation}
       Now take $p>1$ and note that the $p$-variation of the curve $(\mu_t^j)$ with respect to $p$-Wasserstein metric is achieved by the partition $(\frac{i}{2^{j+1}})_{i \in \{0,\cdots 2^{j+1}\}}$. This can be verified using the interpolation result \cite[Proposition 5.5]{FrizVictoir2010book}. We thus obtain
       \begin{equation}
            \frac{1}{2^{(j+1)(p-1)}} = |\mu^j|_{p\textrm{-}\mathrm{var}}^p < \int|\gamma|_{p\textrm{-}\mathrm{var}}^p \d \pi^j = \int|\gamma|_{p\textrm{-}\mathrm{var}}^p \d \pi = |\mu|^p_{p\textrm{-}\mathrm{var}} = 1.
        \end{equation}
        As for the inequality, we recall \cref{thm:lift_to_mu_var}.
        \\
       \textbf{Observation 4 ($W^{1,p}$-regularity of $(\mu_t^j)$ remains constant).}
       Consider $1 < p< \infty$. Each $(\mu_t^j)$ is an absolutely continuous curve in $p$-Wasserstein space with its metric speed $|\dot{\mu}_t^j | = 1$ for almost all time for all $j$. Recalling \cite[Theorem 5]{Lisini2007}, we have here:
        \begin{equation}
           1=|\mu^j|^p_{W^{1,p}} = \int |\gamma|^p_{W^{1,p}} \d \pi^j = \int |\gamma|^p_{W^{1,p}} \d \pi = |\mu|^p_{W^{1,p}}  =  1.
       \end{equation}
       \textbf{Observation 5 ($W^{\alpha,p}$-regularity of $(\mu_t^j)$ decreases in $j$).}
       Consider $1 < p< \infty$ and  $1/p<\alpha< 1$. Then
       \begin{equation}
           \frac{c}{2^{(j+1)p(1-\alpha )}}   = |\mu^j|^p_{b^{\alpha,p}} < \int |\gamma|^p_{b^{\alpha,p}} \d \pi^j  = \int |\gamma|^p_{b^{\alpha,p}} \d \pi = |\mu|^p_{b^{\alpha,p}} = c,
       \end{equation}
       where $c = c(\alpha,p)  \coloneqq 2^{(p - \alpha p )}/(2^{(p - \alpha p ) }- 1)$. Regarding the inequality here,  we recall \cref{thm:lift_to_mu_Walphap}, which is also applicable for the $b^{\alpha,p}$-norm. Since all $(\mu_t^j)$s are piece-wise geodesics, we have applied \cref{lemma:balphap_Xn} to calculate $|\mu^j|_{b^{\alpha,p}}$.
       We observe that while the energy of $\pi^j$ remains constant, $|\mu^j|_{b^{\alpha,p}} \to 0$ as $j \to \infty$. 
       This observation will be used later in the main counterexample of the paper. There, we will start with a lift having infinite energy, and using the trick above, we will decrease the norm of the Wasserstein curve to a finite quantity.
\end{example}

\begin{example}[\textbf{A non-compatible Wasserstein curve of finite $W^{\alpha,p}$-norm but without any lift of finite $W^{\alpha,p}$-energy}]\label{exp:non_compatibile_infinite_energy}
    This is the main counterexample of the paper, whose goal is to show that if we are given $(\mu_t) \in W^{\alpha,p} (I;P_p(\mathcal{X}))$ with $1/p<\alpha< 1$ but without the compatibility property, there can be no lift with finite $W^{\alpha,p}$-energy. 
    Specifically, we construct a non-compatible curve $(\mu_t)_{t \in [0,1]}$ in a $p$-Wasserstein space with finite $W^{\alpha,p}$-norm whose only existing lift $\pi$ has infinite $W^{\alpha,p}$-energy:
    \begin{equation}\label{eq:non_compatibile_infinite_energy_claim}
        |\mu|_{W^{\alpha,p}}^p <  \int_{\Gamma} |\gamma|_{W^{\alpha,p}}^p \d \pi (\gamma) = + \infty.
    \end{equation}
    The construction follows by combining \cref{exp:compatibility_vs_noncompatibility,exp:infinitesimal_variation}, leading to an example on a cylinder. 
    Due to the similarities, we shall not repeat all the details.
    Here, by choosing $\upgamma = \alpha$ for the mass $w_j$ in \eqref{eq:counterexample_mass}, we let the $b^{\alpha,p}$-energy \eqref{eq:counterexample_balphap} of the lift $\pi$ diverge. Meanwhile, by modifying the measures with the particle splitting procedure explained in \cref{exp:compatibility_vs_noncompatibility}, we let the $b^{\alpha,p}$-norm of $(\mu_t)$ decrease and eventually converge to a finite quantity. This procedure clearly won't change the energy. In particular, the computations in the aforementioned examples can be used.
    \\
    \textbf{Construction of $(\mu_t)$ and $\pi$.} Consider a two-dimensional cylinder $\mathcal{X} \coloneqq  \mathbb{S}^{1} \times \mathbb{R} $ equipped with its intrinsic metric, where the perimeter of the circle $\mathbb{S}^{1}$ shall be considered to be equal to 2. 
    We again introduce a countable family of particles. Here they are located on the cylinder in different circles indexed by $j \in \{0,1,2,\cdots\}$ and separated by a distance $a > 2$, as shown in \cref{fig:counterexample_2}. As time runs in the interval $I \coloneqq [0,1]$, the particles rotate around the cylinder at a constant speed while staying on the same circle.
    The speed of particles on circle $j$ is chosen to be $2^{j+1}$. Therefore, the distance of particles from their initial positions as a function of time will be like $(y^j_t)$ in \eqref{eq:counterexample_yj}, whose graphs are shown in \cref{fig:counterexample}.
    As motivated above, the total mass on each circle is chosen here to be
    \begin{equation}
        w_j \coloneqq \bar{w} \, 2^{-jp\alpha}, \qquad \bar{w} (p, \alpha)  \coloneqq 1 - 2^{-p \alpha},
    \end{equation}
    where $1<p<\infty$ and $\frac{1}{p}<\alpha < 1$. 
    In contrast to the previous example, the mass here will be further split equally among $2^{j+1}$ particles on each circle to make it non-compatible in light of \cref{exp:compatibility_vs_noncompatibility}. Finally, we define $(\mu_t)$ and $\pi$ as in \eqref{eq:counterexample_mu_t}-\eqref{eq:counterexample_pi} by a weighted sum of Dirac deltas on the particles. Note that $\pi$ is the only lift of $(\mu_t)$ on the space of continuous paths. Let us emphasize that the construction depends on three parameters $ p, \alpha, a$, which we fix in the range 
    \begin{align}
       1<p<\infty, \qquad \frac{1}{p}<\alpha < 1, \qquad 2<a.
    \end{align}
    \textbf{Viewing $(\mu_t)$ as a solution to CE.} Here, $(\mu_t)$ is the trivial solution to the continuity equation on the cylinder with the initial condition $\mu_0$ and a time-independent velocity vector field:
    \begin{equation}\label{eq:velocity_cylinder}
        v_t(\theta,z) \coloneqq \left( 2^{z/a+1}\right) \hat{\theta},
    \end{equation}
    in cylindrical coordinates. This satisfies the 1-integrability condition but not the $p$-th one as in \eqref{eq:counterexample_1_integrability_1}-\eqref{eq:counterexample_1_integrability_p}.
    \begin{figure}%[h]
        \centering
        \begin{tikzpicture}
            % x-axis
            \draw [-] (0,0) -- (10,0);
            \draw [-] (0,2) -- (10,2);
            \draw [dashed] (10,0) -- (11,0);
            \draw [dashed] (10,2) -- (11,2);
            % ellipses
            \draw (0,1) ellipse [x radius=0.5, y radius=1];
            \draw[-] (4,1) ++(0,-1) arc (-90:90:0.5 and 1);
            \draw[dotted] (4,1) ++(0,1) arc (90:270:0.5 and 1);
            \draw[-] (8,1) ++(0,-1) arc (-90:90:0.5 and 1);
            \draw[dotted] (8,1) ++(0,1) arc (90:270:0.5 and 1);
            % cylindrical coordinate
            \draw[dashed] (0,1) -- (0.5,1);
            \draw[dashed] (0,1) -- (0.4,1.59);
            \draw (0,1.12) node[anchor= west] {\scriptsize $\theta$};
            \draw [dashed](0.4,1.59) -- (0.4+4,1.59);
             \draw (0.4+2,1.59) node[anchor= south] {\scriptsize $z$};
             \draw (0.4+4+0.45,1.59) node[anchor= south] {\scriptsize $v_t(\theta,z)$};
            \draw[->, line width=0.6pt] (0.4+4+0.002,1.59) -- (0.4+4-0.1,1.59+0.38);
            %\draw[dotted] (2,1) ellipse [x radius=0.5, y radius=1];
            % rotation arrows	
            \draw[->,gray] (0.65,1.1) arc (0:180:0.65 and 1.1);
            \draw[<-,gray] (0.65,0.9) arc (0:-180:0.65 and 1.1);
            % distance arrows a
            \draw (6,2+0.2) node[anchor=south] {\scriptsize $a > 2$};
            \draw [|-|](4,2+0.2) -- (8,2+0.2);
            % distance arrows d
            \draw[|-|] (8.1,0.8) ++(0,-1) arc (-90:90:0.65 and 1.2);
            \draw (8.7,1) node[anchor=west] {\scriptsize  length = 1};
            % particles
            \draw [fill][blue] (0.5,1) circle [radius=2.6pt];
            \draw [fill][blue] (-0.5,1) circle [radius=2.6pt];
            %\draw [fill][blue] (2,0) circle [radius=2.73pt];
            \draw [fill][blue] (4,0) circle [radius=1.7pt];
            \draw [fill][blue] (4,2) circle [radius=1.7pt];
            \draw [fill][blue] (4+0.5,1) circle [radius=1.7pt];
            \draw [fill][blue] (4-0.5,1) circle [radius=1.7pt];
            %\draw [fill][blue] (6,0) circle [radius=1.27pt];
            \foreach \angle in {0, 45, 90, 135, 180, 225, 270, 315} \draw [fill][blue] ({8 + 0.5*cos(\angle)}, {1 + 1*sin(\angle)}) circle [radius=0.87pt];
            % particle labels
            \draw (-0.5,1) node[anchor=east] {\scriptsize \color{blue}$ \qquad\, \mu_0 \in P(\mathbb{S}^{1} \times \mathbb{R})$ \, };
            \draw (0,-0.25) node[anchor=north ] {\scriptsize $j=0$};
            \draw (4,-0.25) node[anchor=north ] {\scriptsize $j=1$};
            \draw (8,-0.25) node[anchor=north ] {\scriptsize $j=2$};
            % info
            \draw (0.2,-1) node[anchor=east] {\scriptsize speed of particles = };
            \draw (0.2,-1.5) node[anchor=east] {\scriptsize $\#$ of particles on each circle = };
            \draw (0.2,-2) node[anchor=east] {\scriptsize total mass on each circle = };
            % info numbers
            \draw (0.2,-1) node[anchor=west] {\scriptsize $2^{j+1}$ };
            \draw (0.2,-1.5) node[anchor=west] {\scriptsize $2^{j+1}$ };
            \draw (0.2,-2) node[anchor=west] {\scriptsize $w_j \, \coloneqq \bar{w} \, 2^{-jp\alpha}$ };
        \end{tikzpicture}
        \captionsetup{font=footnotesize}
        \caption{\cref{exp:non_compatibile_infinite_energy} yields a non-compatible $(\mu_t) \in W^{\alpha,p} ([0,1];P_p(\mathbb{S}^{1} \times \mathbb{R}))$ on a cylinder after combining \cref{exp:compatibility_vs_noncompatibility,exp:infinitesimal_variation}. Here $(\mu_t)$ is the trivial solution to the continuity equation on the cylinder with initial condition $\mu_0$ (in blue) and the velocity field $v_t(\theta,z)$ defined in \eqref{eq:velocity_cylinder}. Initially, the particles are located in different circles, indexed by $j$ and separated by $a>2$. As time runs, the particles rotate around the cylinder at a constant speed. Here the mass is further split equally among $2^{j+1}$ particles on each circle to make $(\mu_t)$ non-compatible and decrease its $W^{\alpha,p}$-regularity.}
        \label{fig:counterexample_2}
    \end{figure}
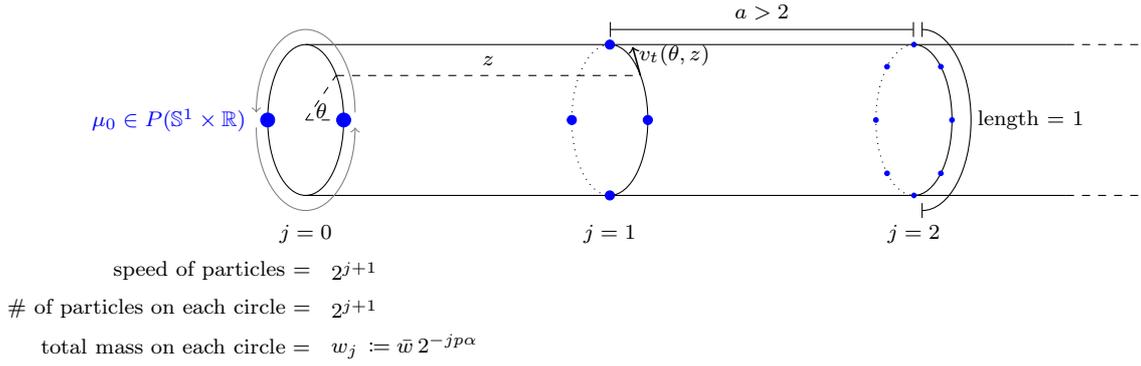
    \\
    \textbf{Claim 1 ($(\mu_t) \subset P_p(\mathcal{X})$).} The $p$-th moments of measures can be estimated similar to \eqref{eq:counterexample_1_moment} to ensure that $(\mu_t)$ stays in the $p$-Wasserstein space. Note that the collection $(\mu_t)$ is \emph{not} compatible as discussed in \cref{exp:compatibility_vs_noncompatibility}. 
    \\
     \textbf{Claim 2 ($\int |\gamma|^p_{W^{1,p}} \d \pi = \infty $).} 
     As in \eqref{eq:counterexample_W1p}, the $W^{1,p}$-energy of $\pi$ remains infinite:  
    \begin{align}
        \int_{\Gamma} |\gamma|^p_{W^{1,p}} \d \pi (\gamma) & = \int_{\Gamma} \int_{0}^{1} |\dot{\gamma}_t|^p \d t \d \pi (\gamma)  = \bar{w} 2^p \sum\nolimits_{j=0}^\infty 2^{j (p - \alpha p )} = + \infty.  \label{eq:counterexample_2_W1p}
    \end{align}
    \\
    \textbf{Claim 3 ($\int |\gamma|^p_{W^{\alpha,p}} \d \pi = \infty $).}
    The chosen quantity for $w_j$ here makes the geometric series in \eqref{eq:counterexample_balphap} diverge:
    \begin{align}
         \int_{\Gamma} |\gamma|^p_{b^{\alpha,p}} \d \pi (\gamma)
          = \frac{\bar{w} }{2^{ - \alpha p} - 2^{-p} }  \sum\nolimits_{j=0}^\infty 2^{j(\alpha p- \alpha p )} = + \infty.
    \end{align}
    \\
    \textbf{Claim 4 ($ |\mu|_{W^{\alpha,p}} < \infty $).} Let us denote by $\mu_t^j$ the probability measure on circle $j$ and write:
    $$
    \mu_t = \sum\nolimits_{j=0}^{\infty} w_j \mu_t^j, \qquad t \in I. 
    $$
    Since the supports of these measures are separated by sufficiently large distance $a>2$, and the mass on each circle remains unchanged in time, we can write:
    \begin{equation}\label{eq:counterexample_2_Wpp_sum}
        W_p^p (\mu_s,\mu_t) = \sum\nolimits_{j=0}^\infty w_j W_p^p(\mu^j_s,\mu^j_t).
    \end{equation}
    Each $(\mu_t^j)$ is a piece-wise geodesic in $P_p(\mathcal{X})$, oscillating between two states separated by the distance $D_j \coloneqq \frac{1}{2^{j+1}}$. The time taken to transition from one state to the other is equal to $\frac{1}{2^{2(j+1)}}$. So we can apply the formula \eqref{eq:balphap_Xn} with $n = 2(j+1)$, in which the only non-zero summand in the first sum is for $m=n$. We obtain 
    \begin{align}
        |\mu^j|^p_{b^{\alpha,p}}
        = D_j^p \, 2^{n \alpha p} + D_j^p \, \frac{2^{n \alpha p}}{2^{p-\alpha p} - 1 }  = \frac{2^{2 \alpha p - p}}{1-2^{\alpha p - p}} \cdot 2^{j(2\alpha p - p )}. 
    \end{align}
    Finally taking \eqref{eq:counterexample_2_Wpp_sum} into account and exchanging the order of summation, we obtain
    \begin{align}
        |\mu|^p_{b^{\alpha,p}} = \sum_{j=0}^{\infty} w_j |\mu^j|^p_{b^{\alpha,p}} = c(\alpha,p) \sum_{j=0}^{\infty} 2^{j (\alpha p - p) } < + \infty,
    \end{align}
    where $c(\alpha,p)$ is a constant. This confirms the claim \eqref{eq:non_compatibile_infinite_energy_claim}.
\end{example}

\noindent\textbf{Data availability.} Data sharing is not applicable to this article as no datasets were generated or analyzed during the current study.

%\vspace{4pt}

%\noindent \textbf{Author Contributions Statement.} This is a single-authored article. The author is responsible for all aspects of the work.

%\renewcommand{\bibfont}{\small}
%\bibliographystyle{plainnat}
%\bibliographystyle{acm}
%\bibliographystyle{siam}
\bibliographystyle{abbrvurl}
\bibliography{ms}

\end{document}